\def\M{\overline{M}}
\def\Ga{\Gamma}
\def\De{\Delta}
\def\h{\text{h}}
\def\H{\text{H}}
\def\dra{\dashrightarrow}
\def\hra{\hookrightarrow}
\def\ra{\rightarrow}
\def\T{\tilde T}
\newcommand{\bp}{\begin{proofx}}
\newcommand{\ep}{\end{proof}}  
\def\bi{\begin{itemize}}
\def\ei{\end{itemize}}  
\def\uone{\underline1}
\def\uzero{\underline0}
\def\oM{\overline{M}}
\def\bP{\Bbb P}
\def\SS{\tilde S}
\def\Hes{\operatorname{Hes}}
\def\Ceva{\operatorname{Ceva}}
\def\bD{\Bbb D}
\def\Aut{\operatorname{Aut}}
\def\Gr{\operatorname{Gr}}
\def\de{\delta}
\def\cO{\Cal O}
\def\codim{\operatorname{codim}}
\def\bZ{\Bbb Z}
\def\bQ{\Bbb Q}
\def\bG{\Bbb G}
\def\bR{\Bbb R}
\def\bA{\Bbb A}
\def\bX{\Bbb X}
\def\cD{\Cal D}
\def\cS{\Cal S}
\def\tS{\tilde S}
\def\oS{\overline{S}}
\def\tS{\tilde{S}}
\def\cW{\Cal W}
\def\oN{\overline{N}}
\def\cD{\Cal D}
\def\cH{\Cal H}
\def\cM{\Cal M}
\def\cZ{\Cal Z}
\def\Spec{\operatorname{Spec}}
\def\PGL{\operatorname{PGL}}
\def\HH{\operatorname{H}}
\def\ga{\gamma}
\def\cO{\Cal O}
\def\O{\cO}
\def\Pic{\operatorname{Pic}}
\def\oPic{\overline{\Pic}}
\def\oPicone{\oPic^{\uone}}
\def\ocPic{\overline{\cPic}}
\def\Spec{\operatorname{Spec}}
\def\Hom{\operatorname{Hom}}
\def\Cal{\mathcal}
\def\bX{\Bbb X}
\def\ov{\overline}
\def\eps{\varepsilon}
\def\al{\alpha}
\def\be{\beta}
\def\ga{\gamma}
\def\arrow{\mathop{\longrightarrow}\limits}
\def\arrow{\mathop{\longrightarrow}\limits}
\def\oM{\overline{M}}
\def\os{\overline{S}}
\def\ocM{\overline{\Cal M}}
\def\tN{\tilde{N}}
\def\tS{\tilde{S}}
\def\ts{\tS}
\def\ts7{\tilde{S}_7}
\def\bP{\Bbb P}
\def\cC{\Cal C}
\def\Bl{\operatorname{Bl}}
\def\Aut{\operatorname{Aut}}
\def\Gr{\operatorname{Gr}}
\def\Hom{\operatorname{Hom}}
\def\cO{\Cal O}
\def\cS{\Cal S}
\def\cD{\Cal D}
\def\Pic{\operatorname{Pic}}
\def\codim{\operatorname{codim}}
\def\bZ{\Bbb Z}
\def\bQ{\Bbb Q}
\def\bG{\Bbb G}
\def\bR{\Bbb R}
\def\bA{\Bbb A}
\def\PP{\Bbb P}
\def\dra{\dashrightarrow}
\def\bX{\Bbb X}
\def\cD{\Cal D}
\def\cS{\Cal S}
\def\tS{\tilde S}
\def\ep{\ddot p}
\def\oS{\overline{S}}
\def\tS{\tilde{S}}
\def\cW{\Cal W}
\def\oN{\overline{N}}
\def\cH{\Cal H}
\def\cM{\Cal M}
\def\Spec{\operatorname{Spec}}
\def\PGL{\operatorname{PGL}}
\def\PGL{\operatorname{PGL}}
\def\Exc{\operatorname{Exc}}
\def\Eff{\operatorname{Eff}}
\def\Cal{\mathcal}
\def\Pic{\operatorname{Pic}}
\def\oN{\overline{N}}
\def\os7p{\oS_7'}
\def\os7{\oS_7}
\def\on6{\oN_6}
\def\n6{\oN_6}
\def\bSi{\bold\Sigma}
\def\Mor{\operatorname{Mor}}
\def\cG{\Cal G}
\def\eset{\emptyset}
\def\cPic{\Cal P ic}
\def\oNE{\ov{\operatorname{NE}}}
\def\oEff{\ov{\operatorname{Eff}}}
\def\NS{\operatorname{NS}}
\newtheoremstyle{mystyle}{}{}{\itshape}{}{\scshape}{.}{ }{}
\theoremstyle{mystyle}
\newtheorem{Theorem}{Theorem}[section]
\newtheorem{Proposition}[Theorem]{Proposition}
\newtheorem{Lemma}[Theorem]{Lemma}
\newtheorem{lemdef}[Theorem]{Lemma--Definition}
\newtheorem{Corollary}[Theorem]{Corollary}
\newtheorem{Claim}[Theorem]{Claim}
\newtheoremstyle{myreview}{}{}{}{}{\scshape}{.}{ }{}
\theoremstyle{myreview}
\newtheorem{Definition}[Theorem]{Definition}
\newtheorem{Example}[Theorem]{Example}
\newtheorem{Remark}[Theorem]{Remark}
\newtheorem{Notation}[Theorem]{Notation}
\newtheorem{Review}[Theorem]{}
\newcounter{et}[Theorem]
\def\cooltag{\tag{\arabic{section}.\arabic{Theorem}.\arabic{et}}\addtocounter{et}{1}}
\begin{document}

\title[Exceptional Loci on $\oM_{0,n}$ and Hypergraph Curves]{Exceptional Loci on $\oM_{0,n}$\\ and Hypergraph Curves}
\author{Ana-Maria Castravet and Jenia Tevelev}

\begin{abstract}
We give a myriad of examples of extremal divisors, rigid curves, and birational morphisms
with unexpected properties for the Grothendieck--Knudsen moduli space $\oM_{0,n}$ 
of stable rational curves.
The basic tool is an isomorphism between $M_{0,n}$ and the Brill--Noether
locus of a very special reducible curve corresponding to a hypergraph.
\end{abstract}

\maketitle

\tableofcontents

\section{Introduction}

For any projective variety $X$, the basic gadgets encoding
combinatorics of 
its birational geometry are 
the Mori cone $\oNE_1\subset\NS_\bR^*$ (the closure of 
the cone generated by effective $1$-cycles) and 
the effective cone $\ov\Eff\subset \NS_\bR$ 
(the closure of the cone generated by effective Cartier divisors).
Here $\NS$ is the Neron-Severi group of $X$.

Let  $X=\oM_{0,n}$ be the moduli space of stable rational 
curves with $n$ marked points.
It is stratified by the topological type of a stable rational curve
and so it has ``natural'' boundary effective divisors and curves.
For example, $\oM_{0,5}$ is isomorphic to the blow-up
of $\bP^2$ in $4$ points, and boundary divisors are the ten
 $(-1)$-curves.
They generate $\oEff(\oM_{0,5})=\oNE_1(\oM_{0,5})$. 
For $n=6$, 
Keel (unpublished) and Vermeire~\cite{V} showed that $\oEff(\oM_{0,6})$ 
is not generated by classes of boundary divisors.
A new divisor has many interesting geometric interpretations
but the known proof of its extremality is ``numerical''
rather than geometric in flavor, namely 
one computes its class and shows that it can not be written as a 
nontrivial sum of pseudoeffective divisors. 
To the best of our knowledge, this was the only known extremal divisor on $\oM_{0,n}$
different from boundary divisors (of course one can also take pull-backs
of the Keel--Vermeire divisor for various forgetful maps $\oM_{0,n}\to\oM_{0,6}$).

Keel and McKernan 
\cite{KM} proved  that $\oNE_1(\oM_{0,6})$ 
is generated by classes of boundary curves 
(according to \cite{KM}, Fulton conjectured that this holds for any~$n$).
They also proved that 
a hypothetical counterexample to Fulton's conjecture  must be a rigid curve.
More precisely,  assume, for simplicity, that $\oNE_1(\oM_{0,n})$ is finitely generated
(otherwise the Fulton's conjecture is definitely false).
Suppose also that a curve $C$ generates an extremal ray.
Then if 
$C\cap M_{0,n}\ne\emptyset$ then $C$ must be a rigid curve.
This statement is not explicitly stated in \cite{KM} in this form,
but can be immediately proved using their methods,
so we included it in the Appendix for the reader's convenience.
Thus it is natural to ask if $\oM_{0,n}$ has rigid curves intersecting~$M_{0,n}$.

Our approach is to study exceptional loci of 
birational morphisms $\oM_{0,n}\to Z$ defined by hypergraphs.
A hypergraph 
$$\Gamma=\{\Gamma_1,\ldots,\Gamma_d\}$$ 
is a collection of subsets (called hyperedges) of the set $N:=\{1,\ldots,n\}$.
We define the hypergraph morphism as the product of forgetful morphisms
$$\pi_\Gamma=\pi_{\Gamma_1}\times\ldots\times\pi_{\Gamma_d}:\,\oM_{0,n}\to
\oM_{0,\Gamma_1}\times\ldots\times\oM_{0,\Gamma_d},$$
where
$\pi_I:\,\oM_{0,n}\to\oM_{0,I}$
is the morphism given by dropping the points of $N\setminus I$
(and stabilizing). Here $Z=\pi_\Gamma(\oM_{0,n})$.
Of course this is the most obvious morphism to consider, but
a priori it is not clear at all how to study~it.
Our main innovation is to interpret $\pi_\Gamma$ in terms 
of Brill--Noether loci of a (reducible) {\em hypergraph curve} (see Section~\ref{BNSection}).
This allows us to study its exceptional loci explicitly.
It would take too long to reproduce all the relevant results in the Introduction,
so, to help the reader, let us give the following (very close) analogy,
which we find useful. Suppose $C$ is a general smooth curve of genus $g$.
By the Brill--Noether theory~\cite{ACGH}, $W^1_{g+1}=\Pic^{g+1}(C)$, $G^1_{g+1}$ is smooth,
and the morphism $G^1_{g+1}\to W^1_{g+1}$ has an exceptional divisor $E$ 
contracted to the locus $W^2_{g+1}$ (of codimension~$3$ in $\Pic^{g+1}$).
So we see immediately that $E$ generates an edge of $\oEff(G^1_{g+1})$. 
This is exactly what we do in this paper to produce exceptional loci on $\oM_{0,n}$,
except that we take a very reducible hypergraph curve instead of a smooth curve
(with some caveats, see below).

The fact that exceptional loci of hypergraph morphisms intersect~$M_{0,n}$
is an amusing feature of genus zero: if~$g\ge1$ then the exceptional locus
of any birational morphism $\oM_{g,n}\to Z$ belongs to the boundary $\oM_{g,n}\setminus M_{g,n}$,
see \cite[0.11]{GKM}. The main result of \cite{GKM} is the reduction of the Fulton's
conjecture for $\oM_{g,m}$ to the Fulton's conjecture for $\oM_{0,n}$.
However, 
our results show that one has to exercise caution because the birational
geometry of $\oM_{0,n}$ is different from the higher genus case.
Our exceptional loci can have irreducible components of all
possible dimensions. Our record is a morphism $\oM_{0,12}\to Z$
with a one-dimensional component of the exceptional locus intersecting $M_{0,12}$
(see Theorem~\ref{HesseMain}) that we obtain using the dual Hesse hypergraph.
The exceptional curve is obviously rigid.
However, we don't know if this curve (or any other exceptional curve)
is a counterexample 
to Fulton's conjecture.
Along the way, we also discover an amusing fact (Prop.~\ref{blowupdescr}) that $\oM_{0,n}$
is covered by blow-ups of $\bP^2$ in $n$ points, generalizing the well-known fact
that moduli of cubic surfaces are generated by hyperplane sections of the Segre cubic threefold.

If we arrange $Z$ to be smooth then the exceptional locus 
is divisorial and generates an extremal ray of~$\ov\Eff(\oM_{0,n})$
(see Corollary~\ref{manyrays}).
We devote the bulk of this paper to the study of these divisors.
For example, we show that the number of new extremal rays of this form grows rapidly with $n$
(see Theorem~\ref{NewDivisors}) by giving a ``Fibonaccian'' inductive construction (Theorem~\ref{FunnyConstruction})
for many of them.
We give various geometric descriptions of these divisors, find out when they are irredicible, etc.

Let us mention one interesting feature of our argument.
In the ``divisorial'' set-up, one of the markings is special, and the image~$\cD$ of the extremal divisor 
with respect to the forgetful morphism $p:\,\oM_{0,n}\to\oM_{0,n-1}$ is also a divisor. 
Since $p$ is flat,
$\cD$ must also generate an extremal ray. 
We show that the Keel--Vermeire
divisor in $\oM_{0,6}$ is the first example of such a divisor (see Example~\ref{KeelVermeire}).
This gives a  transparent
geometric proof of its extremality: though 
it can not be contracted by a birational morphism itself,
its preimage in $\oM_{0,7}$ is contracted by a hypergraph morphism $\oM_{0,7}\to(\bP^1)^4$.
We find this trick of proving extremality of divisors by flat base change quite unexpected.

Here is the outline of the paper.
In Section~\ref{BNSection} we introduce hypergraph curves 
and ther Brill-Nother loci. In Section~\ref{DivSection} we give
our main results about hypergraph divisors.
Most of the proofs are given later, in Sections~\ref{projections} and~\ref{divisorssection}.
In Section~\ref{smaps} we study admissible sheaves on hypergraph curves.
Our results about exceptional and rigid curves on $\oM_{0,n}$ are given in \ref{CurvesSection}.
In the Appendix, we rearrange the proof in \cite{KM}
to show that a (hypothetical) counterexample to Fulton's conjecture must be rigid.

\medskip
\noindent\textsc{Acknowledgements.}
We are very grateful to Sean Keel for many enlightening conversations. We would like to thank James M\textsuperscript{c}Kernan
and Sean Keel for the permission to include their result in the Appendix.
We would like to thank  Valery Alexeev, Lucia Caporaso, Igor Dolgachev, Gabi Farkas, Brendan Hassett,
Anna Kazanova, Bernd Sturmfels and Giancarlo Urzua
for useful discussions. Part of this paper was written while the first author was visiting the Max-Planck Institute in Bonn, Germany.
The second author was partially supported by the NSF grant 
DMS-0701191 and by the Sloan research fellowship.

\section{Hypergraph Curves and their Brill--Noether Loci}\label{BNSection}

We work with schemes over an algebraically closed field $k$.
Let $\Gamma=\{\Gamma_1,\ldots,\Gamma_d\}$ be a hypergraph.
For any $i\in N$, let the valence  $v_i$  
be the number of hyperedges containing $i$.
We assume that
$|\Gamma_i|\ge3$ for any $i$, that $\Gamma$ is connected, and
that $v_i\ge2$ for any $i$.

A curve $\Sigma$ is called a hypergraph curve if it has $d$ irreducible
components, each isomorphic to $\bP^1$ and marked by $\Gamma_i$.
These components are
glued at identical markings as follows:
at each singular point $i\in N$, $\Sigma$
is locally isomorphic to the union of coordinate axes in $\bA^{v_i}$.
A nodal curve $\Sigma^s$, called a stable hypergraph curve, or a stable model of $\Sigma$,
is obtained by inserting a $\bP^1$ with $v_i$ markings 
instead of each singular point of $C$ with $v_i>2$ (see Fig.~\ref{Tetrahedron}).
A stable hypergraph curve is a special case of a graph curve
of Bayer and Eisenbud \cite{BE}, which explains our terminology.

\begin{figure}[htbp]
\includegraphics[width=5in]{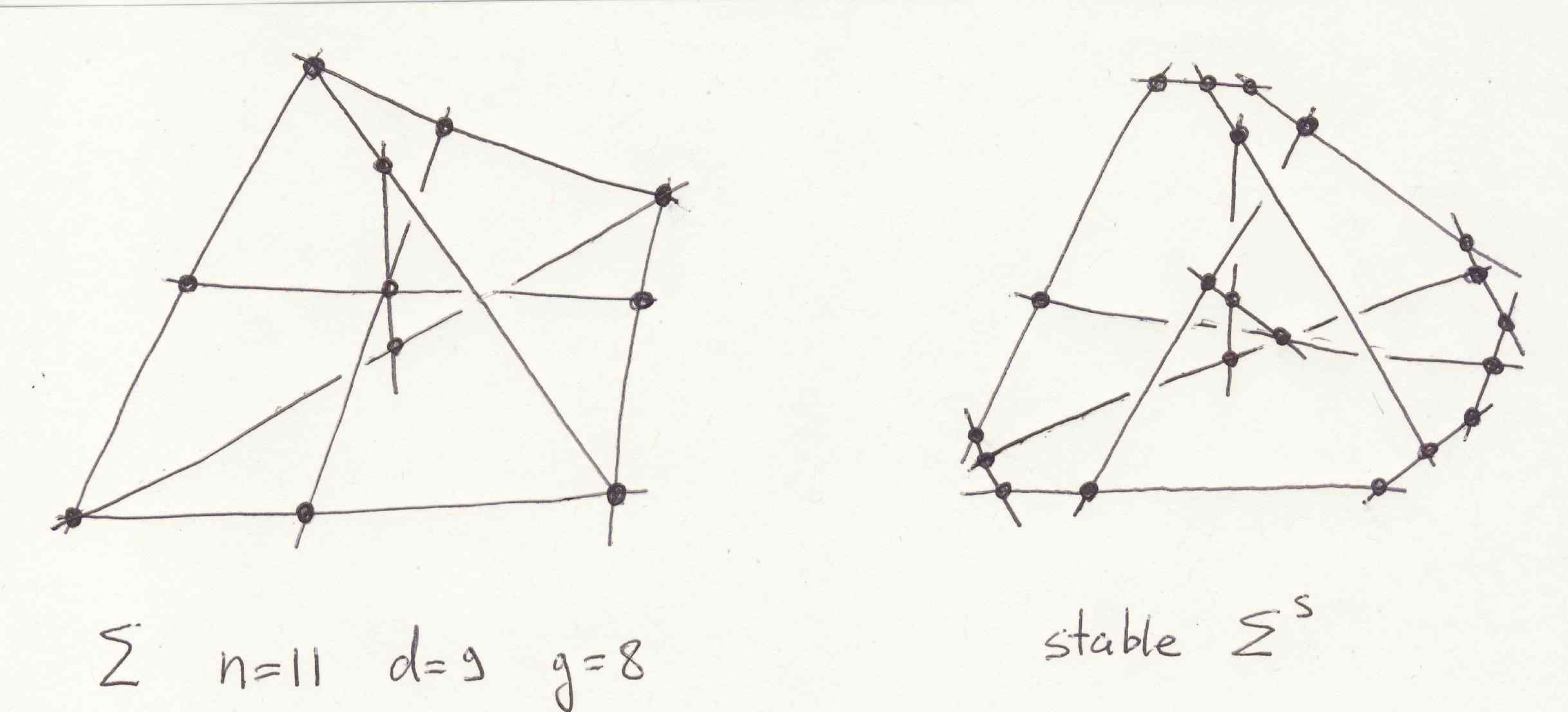}
\caption{\small A hypergraph curve.
}\label{Tetrahedron}
\end{figure}
 
Fix $\Gamma$ and consider 
the functor $\cM_\Gamma:\,Schemes\to Sets$
that associates to a $k$-scheme $S$ the set of isomorphism classes of flat families $\bSi\to S$
such that geometric fibers are  hypergraph  curves.
It  is represented by 
$M_\Gamma:=\prod\limits_{j=1\ldots d} M_{0,\Gamma_j}$.
Similarly, the functor $Schemes\to Sets$
of flat families of stable hypergraph curves  
is represented by 
$M_\Gamma\times\prod\limits_{i=1\ldots n\atop v_i>2} M_{0,v_i}$.
Let $g=p_a(\Sigma)=p_a(\Sigma^s)$ be the arithmetic genus.
It can be computed as follows:
$$g=\dim M_\Gamma+2d-n+1.$$

The universal stable hypergraph
curve is a pull-back of the universal curve of $\ocM_g$ but it is probably worth emphasizing that 
stable hypergraph curves correspond to the same hypergraph $\Gamma$, and so they
have the same number of components, etc. They are not isomorphic only because
cross-ratios of points of attachment are different.

Consider the functor  $\cPic^{\underline i}:Schemes\to Sets$ that sends a scheme $S$
to an object $\bSi$ of $\cM_\Gamma(S)$  equipped with 
an invertible sheaf on $\bSi$ that has degree $i$ on each irreducible component
modulo pull-backs of invertible sheaves on~$S$. Then $\cPic^{\underline 0}\simeq \bG_{m}^g\times\cM_\Gamma$ 
(not canonically)
and $\cPic^{\underline i}$ is a (non-canonically trivial) $\cPic^0$-torsor.

We call an effective Cartier divisor (resp. a linear system)
on a hypergraph curve {\em admissible} if it does not contain singular points
(resp.~is globally generated and separates singular points).
An invertible sheaf $L$ is called admissible if a complete linear system $|L|$ is admissible.
We define the Brill--Noether loci 
following the standard notation of~\cite{ACGH}. Their geometric points have the following description:
$$W^r=\{\hbox{\rm a curve  $\Sigma$,  an admissible $L\in\Pic^{\uone}(\Sigma)$ such that}\  h^0(\Sigma,L)\ge r+1\},$$  
$$C^r=\{\hbox{\rm $\Sigma$, an admissible divisor}\ D\ \hbox{\rm on $\Sigma$ such that}\  \cO(D)\in W^r\},$$
$$G^r=\{L\in W^r,\ \hbox{\rm an admissible pencil}\ V\in\Gr(2,H^0(\Sigma,L))\}\quad (r\geq1),$$
$$\tilde G^r=\{(L,V)\in G^r,\ \hbox{\rm an admissible}\ D\in|V|\},$$
see Fig.~\ref{KHyper} for the illustration.
Note that the image of $\Sigma$ under the complete linear system $|L|$ with 
$L\in W^r\setminus W^{r+1}$ is a union of $d$ intersecting lines in $\bP^r$
(some of which could be equal)
with points of $\Gamma_j$ on $j$-th line.
For example, the curve $\Sigma$  in Fig.~\ref{Tetrahedron} is embedded
by $L\in W^3$.

\begin{figure}[htbp]
\includegraphics[width=3in]{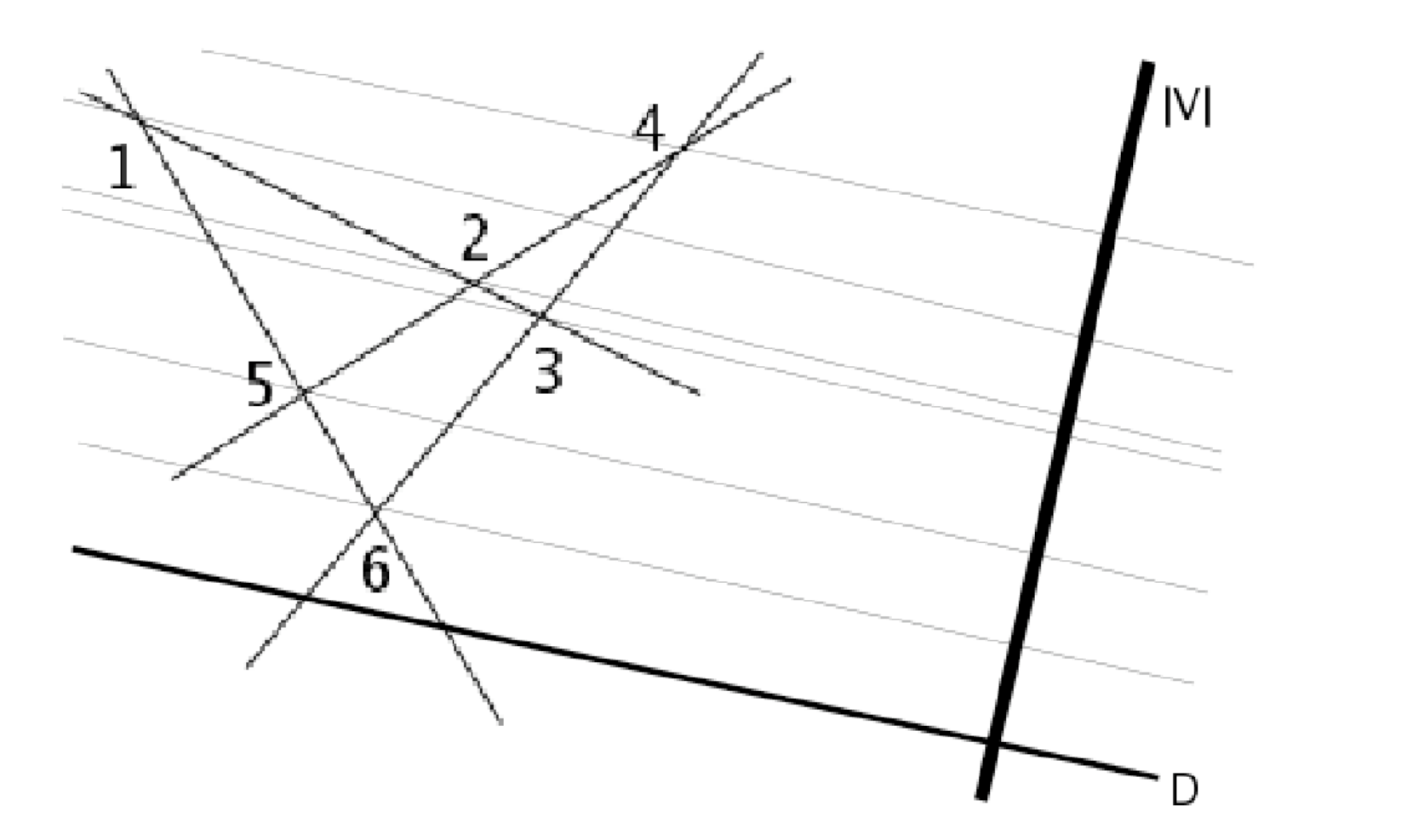}
\caption{\small A curve that corresponds to a hypergraph $\{123,524,346,156\}$ embedded in $\bP^2$ 
by a line bundle $L\in W^2(\Sigma)$, a pencil $|V|$, and a divisor $D\in|V|$.
}\label{KHyper}
\end{figure}

Let us give a more detailed account.
Let $\bSi^{sm}$ be the smooth locus of the universal family $\bSi\to\cM_\Gamma$
with irreducible components $\bSi^{sm}_1,\ldots,\bSi^{sm}_d$.

Let $\cG^1$ be the functor $Schemes\to Sets$ that sends a scheme $S$ to the set of isomorphism classes
of (1) a family $\{p:\,\bSi\to S\}\in\cM_\Gamma(S)$ and (2) a morphism  $f:\,\bSi\to\bP^1_S$
such that (a) images of irreducible components of $\bSi\setminus\bSi^{sm}$ are disjoint
and (b) each irreducible component of $\bSi$ maps isomorphically onto $\bP^1_S$.
Here two morphisms are considered isomorphic if they differ by isomorphisms of $S$-schemes 
both on the source and the target.
Let $v:\,\cG^1\to\cPic^1$ be the natural transformation that sends $(\bSi\to S,f:\,\bSi\to\bP^1_S)$
to $(\bSi\to S,f^*\cO_{\bP^1_S}(1)$.
We will see below in Theorem~\ref{ID} that $\cG^1$ is represented by a reduced scheme $G^1$.
Assume this for now.
For any $r\ge1$, let $G^r\subset G^1$ be a closed subset
(with an induced reduced scheme structure) of points where 
$p_*(f^*\cO_{\bP^1_{G^1}}(1))$ has rank at least $r+1$ (where $(p,f)$ is the universal family).
We define $W^r\subset \Pic^1$ as a scheme-theoretic image of $G^r$.

Let 
$$\cC^0=\bSi^{sm}_1\times_{\cM_\Gamma}\ldots\times_{\cM_\Gamma}\bSi^{sm}_d$$
and let  
$u:\,\cC^0\to\cPic^1$
be the Abel map that sends $(p_1,\ldots,p_d)\in \cC^0(S)$
to $\cO_{\bSi}(p_1+\ldots+p_d)$. 
Geometric fibers of $u$ are open subsets of admissible divisors
in complete linear systems on $\Pic^1(\Sigma_k)$.
Let $\cC^r:=u^{-1}(\cW^r)\subset\cC^0$.

Finally, we define $\tilde\cG^1$ as the functor $Schemes\to Sets$
that sends $S$ to the datum $(\bSi\to S,\ f:\,\bSi\to\bP^1_S)\in\cG^1(S)$ and a section $s:\,S\to \bP^1_S$
disjoint from images of irreducible components of $\bSi\setminus\bSi^{sm}$.
We define $\tilde\cG^r$ as the preimage of $\cG^r$ for the forgetful map $\tilde\cG^r\to\cG^r$.
We also have the natural transformation $\tilde\cG^r\to\cC^0$
that sends $(\bSi\to S,\ f:\,\bSi\to\bP^1_S,s)$ to $f^{-1}(s(S))$.
It factors through $\cC^r$.

\begin{Definition}
For a subset $I\subset N$ with $|I|\ge4$, let 
$\pi_I:\,\oM_{0,n}\to\oM_{0,I}$
be the morphism given by dropping the points of $N\setminus I$
(and stabilizing). 
We define the {\em hypergraph morphism} as the product of forgetful morphisms
$$\pi_\Gamma=\pi_{\Gamma_1}\times\ldots\times\pi_{\Gamma_d}:\,\oM_{0,n}\to
\oM_{0,\Gamma_1}\times\ldots\times\oM_{0,\Gamma_d}.$$
An important  special case is a ``cone'' 
$$\Gamma\cup\{n+1\}:=\{\Gamma_1\cup\{n+1\},\ldots,\Gamma_d\cup\{n+1\}\}$$
and the corresponding hypergraph morphism  $\pi_{\Gamma\cup\{n+1\}}$ of $\oM_{0,n+1}$. 
\end{Definition}

Our philosophy is summarized in the following theorem:

\begin{Theorem}\label{ID}\label{r-DimlFibers}
We have a natural commutative diagram
$$\begin{CD}
\cPic^1 @<u<< \cC^0\hbox to 0pt{$\ \ \hookleftarrow\  \cC^1$} @<<<\tilde \cG^1 @>>>\cG^1 @>v>>\cW^1\\
@VVV                         @|                            @|                          @|      @VVV\\
\cM_\Gamma @<{\prod\pi_{\Gamma_j}}<< \prod\limits_{j=1}^d\cM_{0,\Gamma_j\cup\{n+1\}} @<{\pi_{\Gamma\cup\{n+1\}}}<<    
\cM_{0,n+1} @>\pi_N>> \cM_{0,n} @>{\pi_\Gamma}>> \cM_\Gamma                
\end{CD}$$
An~isomorphism $\tilde G^1\simeq M_{0,n+1}$ on geometric points 
takes $(\Sigma,L,V,D)$ to the rational curve $|V|\simeq\bP^1$ with $n+1$
marked points given by $D$ and the images of points $\Sigma^{sing}$.

The set-theoretic union of the fibers of $\pi_{\Gamma\cup\{n+1\}}$ of dimension at least $r-1$ is~$\tilde G^r$.
In particular, the~exceptional locus\footnote{The exceptional locus of a 
morphism is a (set-theoretical) union of positive-dimensional irreducible components of fibers.} 
of $\pi_{\Gamma\cup\{n+1\}}$ is~$\tilde G^2$.
The~exceptional locus 
of $v$ is~$G^2$, and the~exceptional locus
of~of $\pi_{\Gamma}$ contains $G^2$.
\end{Theorem}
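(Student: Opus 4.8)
The plan is to establish the commutative diagram first and then read off each assertion as a consequence of the identification $\tilde G^1\simeq M_{0,n+1}$ together with the two displayed forgetful morphisms $\pi_N:\,M_{0,n+1}\to M_{0,n}$ and $\pi_\Gamma:\,M_{0,n}\to M_\Gamma$. So first I would set up the functorial isomorphism $\tilde\cG^1\simeq \cM_{0,n+1}$: given a family $(\bSi\to S,\ f:\,\bSi\to\bP^1_S,\ s)$ in $\tilde\cG^1(S)$, the data $f$ contracts each component of $\bSi\setminus\bSi^{sm}$ to a point and embeds each $\bSi^{sm}_j$ isomorphically, so the $v_i$ images of the singular points together with the section $s$ give $n+1$ disjoint sections of $\bP^1_S$, producing a point of $\cM_{0,n+1}(S)$; conversely a genus-zero stable curve with $n+1$ marked points recovers $\bSi$ by gluing the first $n$ points according to the incidences of $\Gamma$, and $f$ is the structure map to $\bP^1$. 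The middle column identities in the diagram are then tautological, the leftmost square records that $f^*\cO(1)$ restricted to each component has degree $1$ (so $v$ lands in $\cPic^{\uone}$), and the rightmost square records that $\pi_\Gamma\circ\pi_N$ sends the curve to the cross-ratios of the attaching points on each line, i.e. to $v(L)\in\cW^1$. The isomorphism on geometric points is exactly the description stated.

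Next, for the fiber-dimension statement, I would compute fibers of $\pi_{\Gamma\cup\{n+1\}}$ directly. A point of the source $\cM_{0,n+1}$ over a point of $\prod_j\cM_{0,\Gamma_j\cup\{n+1\}}$ records, on each line $\bP^1$, the positions of the points of $\Gamma_j$ and the extra point $n+1$; under the identification the target point is the hypergraph curve $\Sigma$ (its $\cM_\Gamma$-part) together with, on each component, the image divisor, which is precisely the pair $(L,V,D)$ with $L=f^*\cO(1)$, $V$ the two-dimensional system cut out by $f$, and $D=f^{-1}(s)$. Thus the fiber of $\pi_{\Gamma\cup\{n+1\}}$ through $(\Sigma,L,V,D)$ is the space of maps $f'$ to $\bP^1$ with the same $(\Sigma,L,V,D)$-data up to automorphism of the target $\bP^1$ fixing the image of $D$; this is identified with the projectivization of the kernel governing deformations of $V$ inside $H^0(\Sigma,L)$, which has dimension $h^0(\Sigma,L)-2$. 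Hence the fiber has dimension $\ge r-1$ exactly when $h^0(\Sigma,L)\ge r+1$, i.e. when $(\Sigma,L,V,D)\in\tilde G^r$; this gives the first claim. Taking $r=2$ gives that the exceptional locus of $\pi_{\Gamma\cup\{n+1\}}$ is $\tilde G^2$.

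For the last two sentences: the map $v:\,\cG^1\to\cW^1$ has fiber over $L\in W^1$ equal to the choices of admissible pencil $V\subset H^0(\Sigma,L)$, which is a point when $h^0=2$ and positive-dimensional (a $\Gr(2,h^0)$, of dimension $2(h^0-2)$) precisely when $h^0\ge3$, i.e. on $G^2$; so the exceptional locus of $v$ is $G^2$. Finally, for $\pi_\Gamma$ itself, I would use the commutative diagram: $\pi_\Gamma$ factors (after the flat base change $\pi_N$, which is surjective with one-dimensional fibers) through $v$ composed with forgetting the point $D$ and the structure $V$. Since $v$ contracts $G^2$ and the subsequent forgetful steps only add fiber dimension, the image of the exceptional locus $\tilde G^2$ of $\pi_{\Gamma\cup\{n+1\}}$ under $\pi_N$, which surjects onto $G^2\subset M_{0,n}$, is contracted by $\pi_\Gamma$; hence the exceptional locus of $\pi_\Gamma$ contains $G^2$. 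I expect the main obstacle to be the bookkeeping in the fiber-dimension computation — carefully checking admissibility (that the relevant divisors and pencils avoid and separate the singular points), and verifying that the deformation space of $f$ fixing $(L,V,D)$ really is cut out by exactly the linear-algebra condition claimed, rather than something of larger dimension coming from automorphisms of $\Sigma$ or of the target; the connectedness and $v_i\ge2$ hypotheses on $\Gamma$ are what rule out the extra automorphisms, and I would need to invoke them explicitly.
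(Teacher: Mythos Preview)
Your proposal is correct and follows the same line as the paper: establish the functorial isomorphisms $\cG^1\simeq\cM_{0,n}$, $\tilde\cG^1\simeq\cM_{0,n+1}$, and $\cC^0\simeq\prod_j\cM_{0,\Gamma_j\cup\{n+1\}}$ by the gluing/ungluing construction, after which the fiber-dimension and exceptional-locus statements are read off from the definitions. The paper's proof is far terser---it records only the isomorphisms and leaves the fiber identifications implicit---whereas you spell out that the fiber of $\tilde G^1\to C^0$ through $(\Sigma,L,V,D)$ is (an open subset of) $\bP\bigl(H^0(\Sigma,L)/\langle s_D\rangle\bigr)$ and that the fiber of $v$ is (an open subset of) $\Gr(2,H^0(\Sigma,L))$.

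Two small points of cleanup. First, your sentence ``the space of maps $f'$ with the same $(\Sigma,L,V,D)$-data'' is misstated: what is fixed along a fiber of $\pi_{\Gamma\cup\{n+1\}}$ is only $(\Sigma,D)$ (equivalently $L=\cO(D)$ and the section $s_D$), and it is precisely $V$ that varies; your subsequent dimension count $h^0(\Sigma,L)-2$ is the correct one for that. Second, for the containment $G^2\subset\Exc(\pi_\Gamma)$ you do not need the detour through $\pi_N$ and base change: the rightmost square of the diagram already exhibits $\pi_\Gamma$ as the composition $G^1\xrightarrow{v}W^1\to M_\Gamma$, so any positive-dimensional fiber of $v$ sits inside a fiber of $\pi_\Gamma$.
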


\begin{proof}
Note that each datum $(\bSi\to S,\ f:\,\bSi\to\bP^1_S)\in\cG^1(S)$ gives rise
to an isomorphism class of a flat family over $S$ with reduced geometric fibers
given by $\bP^1$ and with $n$ disjoint sections (images of irreducible components of $\bSi\setminus\bSi^{sm}$).
This gives a natural transformation $\cG^1\to\cM_{0,n}$ which is in fact an natural isomorphism, because, given a flat
family of marked $\bP^1$'s, we can just glue $d$ copies of $\bP^1_S$ along sections in each $\Gamma_i$.
More precisely, locally along a section $i\in N$, $\bSi$ is isomorphic to a closed subscheme in $\prod_{i\in\Gamma_j}\bP^1_S$
in an obvious way.
This gives a flat family of hypergraph curves over $S$ and its map to $\bP^1_S$, i.e.,~a datum in $\cG^1(S)$.
The same argument shows that $\tilde\cG^1$ is isomorphic to $\cM_{0,n+1}$
and that $\cC^0$ is isomorphic to $\prod\limits_{j=1}^d\cM_{0,\Gamma_j\cup\{n+1\}}$.
\end{proof}

This theorem introduces three very interesting morphisms:
$\pi_{\Gamma\cup\{n+1\}}$, $\pi_{\Gamma}$, and $v$.
Their exceptional loci will be studied in detail in subsequent sections. 

\section{Extremal Divisors on \rm $\oM_{0,n}$}\label{DivSection}

It is easy to work out  when $\pi_{\Gamma\cup\{n+1\}}$ is birational:

\begin{Theorem}\label{coolcondition}
$\pi_{\Gamma\cup\{n+1\}}$ 
is dominant if and only if 
\begin{equation}\label{CondS}
|\bigcup_{j\in S}\Gamma_j|-2\ge\sum_{j\in S}(|\Gamma_j|-2)
\quad\hbox{\rm for any $S\subset\{1,\ldots,d\}$}.
\tag{\ddag}\end{equation}
It is birational if and only if it is dominant  and dimensions match, i.e.
\begin{equation}\label{matchdim}
n-2=\sum\limits_{j=1}^d(|\Gamma_j|-2).
\cooltag\end{equation}
\end{Theorem}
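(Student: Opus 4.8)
The plan is to analyze the hypergraph morphism $\pi_{\Gamma\cup\{n+1\}}:\oM_{0,n+1}\to \prod_j\oM_{0,\Gamma_j\cup\{n+1\}}$ using the Brill--Noether description from Theorem~\ref{ID}. By that theorem, $\oM_{0,n+1}$ is birationally identified with $\tilde G^1$ (equivalently $G^1$, since the extra admissible divisor $D$ does not change dimensions generically), and $\pi_{\Gamma\cup\{n+1\}}$ factors through the Abel map $u:\cC^0\to\cPic^1$ composed with $\prod\pi_{\Gamma_j}$, while the target $\prod_j\cM_{0,\Gamma_j\cup\{n+1\}}$ is identified with $\cC^0$. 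So dominance of $\pi_{\Gamma\cup\{n+1\}}$ is equivalent to dominance of $G^1\dashrightarrow \cC^0$, which in turn amounts to a general admissible divisor $D=p_1+\dots+p_d$ (one point on each component, away from nodes) moving in a pencil, i.e. $h^0(\Sigma,\cO(D))\ge2$ for general such $D$. The strategy is to translate this $h^0$ computation into the numerical condition (\ddag).

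The key steps, in order: \emph{(1)} Fix a general hypergraph curve $\Sigma$ and a general admissible degree-$\uone$ line bundle $L=\cO_\Sigma(D)$, and compute $h^0(\Sigma,L)$ via the normalization sequence. Writing $\nu:\coprod_{j=1}^d\bP^1\to\Sigma$ for the normalization, one has $0\to H^0(\Sigma,L)\to\bigoplus_j H^0(\bP^1,\cO(1))\to\bigoplus_{i\in N}(\text{gluing data at }i)\to\dots$, where the gluing at the node $i$ (a seminormal point locally the union of $v_i$ coordinate axes in $\bA^{v_i}$) imposes $v_i-1$ linear conditions identifying the fiber values on the $v_i$ branches. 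So $h^0(\Sigma,L)\ge \sum_j 2-\sum_{i\in N}(v_i-1)=2d-\sum_i v_i+n$, and one must show that for \emph{general} $\Sigma$ the map to the gluing data has maximal rank on every ``sub-configuration.'' \emph{(2)} Reformulate maximal rank: a subspace of sections vanishing on a sub-collection of components corresponds to picking $S\subset\{1,\dots,d\}$; a section supported on the lines indexed by $S$ is a choice of linear form on each $\bP^1_j$, $j\in S$, matching at every node of $\bigcup_{j\in S}\Gamma_j$ that lies on two or more of these lines. Generic transversality (using that $M_\Gamma=\prod M_{0,\Gamma_j}$ gives us free cross-ratios) forces the space of such matched tuples to have the expected dimension $\max\{0,\,2|S|-(\text{number of internal incidences})\}$; here the number of nodes glued among the lines in $S$ is $\bigl(\sum_{j\in S}|\Gamma_j|\bigr)-\bigl|\bigcup_{j\in S}\Gamma_j\bigr|+(\text{stuff})$—more precisely each element $i\in\bigcup_{j\in S}\Gamma_j$ imposes $(\#\{j\in S: i\in\Gamma_j\}-1)$ conditions. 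Summing, the obstruction to the full section space having the naive dimension on the $S$-part is exactly the failure of $2|S|-\sum_{i\in\bigcup_{j\in S}\Gamma_j}(\cdots-1)\le \dots$, which rearranges into $|\bigcup_{j\in S}\Gamma_j|-2\ge\sum_{j\in S}(|\Gamma_j|-2)$. \emph{(3)} Combine: $h^0(\Sigma,L)\ge2$ for general admissible $L$ $\iff$ the expected-dimension count survives $\iff$ (\ddag) holds for all $S$ (the case $S=\{1,\dots,d\}$ gives $h^0\ge2$ outright once the intermediate $S$ don't over-impose), giving dominance. \emph{(4)} For birationality: $\pi_{\Gamma\cup\{n+1\}}$ is birational iff it is dominant and $\dim\oM_{0,n+1}=\dim\prod_j\oM_{0,\Gamma_j\cup\{n+1\}}$, i.e. $n-2=\sum_j(|\Gamma_j|-1-2)=\sum_j(|\Gamma_j|-2)$... wait, $\dim\oM_{0,\Gamma_j\cup\{n+1\}}=|\Gamma_j|+1-3=|\Gamma_j|-2$, so the target has dimension $\sum_j(|\Gamma_j|-2)$ and $\dim\oM_{0,n+1}=n-2$, yielding exactly \eqref{matchdim}. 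Since a dominant morphism of irreducible varieties of equal dimension is generically finite, the remaining point is that it is generically \emph{one-to-one}: a general point of $\cC^0$ (a general admissible $D$ with $h^0(\cO(D))=2$) determines the pencil $|V|$ uniquely, hence the point of $\tilde G^1\simeq M_{0,n+1}$ uniquely — this is immediate from the identification in Theorem~\ref{ID}.

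The main obstacle is step~(2): establishing that for a \emph{general} point of $M_\Gamma=\prod_j M_{0,\Gamma_j}$ the linear-algebra gluing map has maximal rank simultaneously for every subset $S$, so that the only obstructions to $h^0=2$ are the numerical inequalities (\ddag). One must rule out ``unexpected'' sections coming from special cross-ratios. I would handle this by an explicit genericity/deformation argument: degenerate to, or directly exhibit, one hypergraph curve where each relevant matching system is transverse (for instance by choosing the attachment points on each $\bP^1_j$ in general position and inducting on $|S|$, peeling off one hyperedge at a time and using that adding a line $\bP^1_j$ to the configuration adds a $2$-dimensional space of linear forms but kills $\#\{$nodes of $\Gamma_j$ already present$\}-1\ge1$ of the previously available dimensions, by connectivity and $|\Gamma_j|\ge3$). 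This reduces the whole statement to a finite transversality check that, once set up, is routine. The inequality (\ddag) is precisely the bookkeeping of these incidence counts, and the ``dimensions match'' condition \eqref{matchdim} is just the equality case of (\ddag) for $S=\{1,\dots,d\}$, which is why birationality is dominance plus that single equation.
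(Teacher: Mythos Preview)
Your approach is genuinely different from the paper's, and it has a real gap.

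The paper does not use the Brill--Noether description here at all. Instead it passes to the Kapranov model $\Psi:\oM_{0,n+1}\to\bP^{n-2}$, under which each forgetful map $\pi_{\Gamma_j\cup\{n+1\}}$ becomes a linear projection $\pi_{H_{\Gamma_j}}:\bP^{n-2}\dashrightarrow\bP^{|\Gamma_j|-2}$ from a coordinate subspace. A short linear-algebra lemma then characterises exactly when a product of such projections is dominant (respectively birational), and an elementary computation with the root-hyperplane description of $H_{\Gamma_j}$ turns that criterion into~\eqref{CondS}. No cohomology on $\Sigma$ is needed.

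The gap in your argument is the equivalence ``$\pi_{\Gamma\cup\{n+1\}}$ dominant $\iff$ $h^0(\Sigma,\cO(D))\ge2$ for general $(\Sigma,D)$''. This is false as stated: what you actually need is an \emph{admissible} pencil through $D$, i.e.\ one whose induced map $\Sigma\to\bP^1$ does not contract any component. If \eqref{CondS} holds for $S=\{1,\dots,d\}$ but fails for some proper $S_0$, then Riemann--Roch still gives $h^0(\Sigma,L)\ge2$, yet the restriction $H^0(\Sigma,L)\to H^0(\Sigma_{S_0},L|_{\Sigma_{S_0}})$ generically has rank at most~$1$ (precisely because the expected $h^0$ on the sub-curve drops below~$2$), so every $2$-dimensional $V\subset H^0(\Sigma,L)$ collapses some component of $\Sigma_{S_0}$ and no admissible pencil exists. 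In other words, the conditions \eqref{CondS} for proper $S$ do not control ``maximal rank of the gluing map'' as your step~(2) suggests --- they control admissibility of the generic pencil. Concretely, take $n=10$, $\Gamma_1=\{1,2,3,4\}$, $\Gamma_2=\{1,2,3,5\}$, and four further triples covering $6,\dots,10$ so that all valences are~$2$ and $\sum(|\Gamma_j|-2)=8=n-2$: then \eqref{CondS} holds for the full set but fails for $S_0=\{1,2\}$, and one checks directly that $h^0=2$ generically while every pencil contracts $C_1$ and $C_2$. Your transversality/induction sketch in step~(2) never engages with this phenomenon, so as written the proof would wrongly conclude dominance from $h^0\ge2$ alone.
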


See \ref{projections} for the proof.
Now we use the following simple observation:

\begin{Lemma}
Consider the diagram of morphisms of projective $\bQ$-factorial varieties
$$\begin{CD}
X @>f>> Y\\
@VpVV\\
Z
\end{CD}$$
Suppose that $f$ is birational and that $p$ is faithfully flat.
Let $D$  be an irreducible  component of the exceptional locus of~$f$.
Then $D$ is a divisor that generates an extremal ray of~$\oEff(X)$. If $p(D)\ne Z$ 
and a generic fiber of $p$ along $p(D)$ is irreducible 
then $p(D)$ is a divisor that generates an extremal ray of~$\oEff(Z)$.
\end{Lemma}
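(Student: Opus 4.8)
The plan is to establish the two assertions separately, exploiting the fact that birationality of $f$ makes the pullback $f^*:\NS_\bR(Y)\to\NS_\bR(X)$ an embedding whose image is a codimension-$(\#\text{exceptional divisors})$ face-complement, while flatness of $p$ makes $p^*:\NS_\bR(Z)\to\NS_\bR(X)$ inject divisor classes to divisor classes with controlled intersection behavior. First I would recall the standard negativity/rigidity statement for birational contractions of $\bQ$-factorial varieties: if $f:X\to Y$ is birational with $Y$ normal $\bQ$-factorial (or just normal with $X$ $\bQ$-factorial), then every irreducible component $D$ of the exceptional locus is a divisor, and these exceptional divisors $D_1,\dots,D_k$ are \emph{rigid and extremal} — more precisely, if $E$ is any effective divisor with $\Supp E\subseteq\bigcup D_i$ then the class $[E]$ cannot be written as a sum of two nonproportional pseudoeffective classes; equivalently each $[D_i]$ spans an extremal ray of $\oEff(X)$. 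The cleanest route is: take an ample class $H$ on $Y$; then $f^*H$ is nef but not big-modulo-exceptional, i.e. $f^*H\cdot C=0$ exactly for curves $C$ contracted by $f$, and one shows any decomposition $[D_i]=[A]+[B]$ with $A,B$ pseudoeffective forces $A,B$ to also be supported on the exceptional locus (pairing with $f^*H$ and $f^*(\text{ample})^{n-1}$), and then the classical fact that the classes of exceptional divisors of a birational morphism are linearly independent in $\NS_\bR(X)$ and span a simplicial cone each of whose rays is extremal. I would cite this as the known result it is; the first sentence of the lemma is essentially a repackaging of it.

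For the second assertion, suppose $p(D)\ne Z$ and a generic fiber of $p$ over $p(D)$ is irreducible. Since $p$ is faithfully flat, $p(D)$ is closed (flat morphisms of finite type are open, but here I use that $p$ is proper, being a morphism of projective varieties, so $p(D)$ is closed) of pure codimension one: indeed $\dim D=\dim Z-1+\dim(\text{generic fiber of }p)$ — wait, this needs $D$ not to dominate $Z$, which is the hypothesis, together with flatness giving equidimensional fibers, so $\codim_X D=1$ forces $\codim_Z p(D)=1$ provided the generic fiber of $p|_D$ over $p(D)$ has the same dimension as the generic fiber of $p$; this last point is exactly where irreducibility of the generic fiber of $p$ over $p(D)$ enters — it guarantees $D$ contains that whole fiber and hence $p|_D$ has relative dimension equal to that of $p$, so $\dim p(D)=\dim D-\dim(\text{fiber})=\dim Z-1$. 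Thus $D':=p(D)$ is a prime divisor and (generic fiber irreducible plus $D\to D'$ surjective with irreducible generic fiber) gives $p^*D'=D+\text{(components not dominating }D'\text{)}$; in fact on the open locus where $p$ is smooth and the fiber is irreducible, $p^{-1}(D')=D$ scheme-theoretically up to multiplicity one, so $p^*[D']=[D]+[\text{effective, }p\text{-vertical, not over }D']$.

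Now to show $[D']$ is extremal in $\oEff(Z)$: suppose $[D']=[A']+[B']$ with $A',B'$ effective (pseudoeffective) $\bQ$-divisors. Pull back: $p^*[D']=p^*[A']+p^*[B']$, and since $p$ is flat, $p^*A'$, $p^*B'$ are pseudoeffective (flatness preserves pseudoeffectivity — pullback of an effective Cartier divisor under a flat morphism is effective, and this passes to the closure of the cone). So $[D]+[\text{vertical junk}]=p^*[A']+p^*[B']$. I would then argue that the "vertical junk" term, being supported on fibers of $p$ over a proper closed subset, together with the already-established extremality of $[D]$ in $\oEff(X)$, forces $p^*[A']$ and $p^*[B']$ to be proportional to $[D]$ modulo $p$-vertical classes over proper subsets; since $p^*$ is injective on $\NS_\bR(Z)$ (faithfully flat, so surjective on Chow/Néron–Severi with a section via intersecting with a general complete-intersection curve in the fiber — or more simply $p_*p^*=(\deg\text{fiber})\cdot\mathrm{id}$ up to the fiber dimension adjustment, using that $p$ is flat projective), one concludes $[A']$, $[B']$ are proportional to $[D']$.

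The main obstacle I anticipate is the bookkeeping in the second part: controlling the "$p$-vertical" correction terms when one pulls back $D'$ and when one pulls back a hypothetical decomposition of $[D']$. One must be careful that pulling back an \emph{extremal} decomposition doesn't create a genuinely new effective summand on $X$ that is not $p$-vertical; this is handled by intersecting with $p^*(\text{ample on }Z)^{\dim Z-1}$, which kills everything $p$-vertical over a codimension-$\ge1$ subset and leaves a positive multiple of the fiber class, reducing the decomposition on $Z$ to a numerical identity one can play off against extremality of $[D]$ on $X$. The other delicate point is that $\oEff$ is defined as the closure of the effective cone, so "extremal ray" must be argued for the closed cone; this I would handle by the standard device that $f^*(\text{ample})$ and $f^*(\text{ample})^{n-1}$ give supporting functionals that are strictly positive off the exceptional locus, so the exceptional rays are actually \emph{exposed}, hence remain extremal after taking closure. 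Everything else is routine intersection theory.
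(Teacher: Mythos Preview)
Your treatment of the first assertion is fine and matches the paper's one-line appeal to van der W\"arden purity plus the standard fact that exceptional divisors of a birational morphism between $\bQ$-factorial projective varieties span extremal rays of $\oEff$.

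For the second assertion you work much harder than necessary, and the extra work is where the argument gets shaky. The paper's key observation, which you miss, is that the hypothesis ``generic fiber of $p$ over $p(D)$ is irreducible'' together with flatness forces $p^{-1}(p(D))$ itself to be irreducible, hence equal to $D$. (Flatness makes $p^{-1}(p(D))\to p(D)$ equidimensional, so no component can map to a proper closed subset of $p(D)$; irreducibility of the generic fiber then leaves only one component.) Consequently $p^*[p(D)]$ is a positive multiple of $[D]$ on the nose --- there is no ``vertical junk'' at all. Once you have this, the conclusion is immediate: $p^*$ is injective on $\NS_\bR$ and carries $\oEff(Z)$ into $\oEff(X)$, so a class whose pullback is extremal is itself extremal.

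By contrast, you write $p^*[D']=[D]+[\text{junk}]$ (with the junk moreover mis-located ``not over $D'$'' --- anything in $p^{-1}(D')$ maps into $D'$), and then sketch an argument to control this junk via intersection with $p^*(\text{ample})^{\dim Z-1}$. That sketch is not convincing as written: extremality of $[D]$ says nothing directly about decompositions of $[D]+[E]$, and the proposed cure via pushing to a fiber class is left vague. It may be salvageable, but since the hypothesis was chosen precisely so that no junk appears, the whole detour is beside the point. Recognize that the irreducible-generic-fiber condition is doing exactly the work of making $p^{-1}(p(D))=D$, and the proof collapses to two lines.
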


\begin{proof}
$D$ is a divisor by van der W\"arden's purity theorem, see \cite[21.12.12]{EGA4}.
It is easy to show (and well-known) that it generates a ray (in fact an edge, see Def.~\ref{ConvexEdge}) of $\oEff(X)$.
Since $p$ is flat and $p(D)\ne Z$, $p(D)$ is an irreducible
divisor. Since $p^{-1}(p(D))$ is irreducible (e.g.~by \cite[Lem.~2.6]{T}), $p^{-1}(p(D))=D$.
It follows that $p(D)$ generates a ray (in fact an edge) of $\oEff(Z)$ because 
$\oEff(Z)$ injects in  $\oEff(X)$ by the pull-back $p^*$.
\end{proof}

\begin{Corollary}\label{manyrays}
Suppose that we have \eqref{CondS}, \eqref{matchdim}, and that $W^2\ne\emptyset$.
Irreducible components of the exceptional locus~of 
\begin{equation}\label{pgn}
\pi_{\Gamma\cup\{n+1\}}:\,\oM_{0,n+1}\to\prod\limits_{j=1}^d\oM_{0,\Gamma_j\cup\{n+1\}}
\cooltag\end{equation}
that intersect $M_{0,n+1}$ exist, are divisorial,
and generate extremal rays of $\oEff(\oM_{0,n+1})$.
Their images in $\oM_{0,n}$ (for the forgetful morphism $\pi_N$) 
are divisors that generate extremal rays of $\oEff(\oM_{0,n})$.
\end{Corollary}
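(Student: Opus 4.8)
The plan is to deduce Corollary~\ref{manyrays} by combining Theorem~\ref{ID} with the Lemma, applied to a carefully chosen diagram. First I would set up the triangle of the Lemma with $X=\oM_{0,n+1}$, $f=\pi_{\Gamma\cup\{n+1\}}\colon\oM_{0,n+1}\to Y:=\prod_{j=1}^d\oM_{0,\Gamma_j\cup\{n+1\}}$, and $p=\pi_N\colon\oM_{0,n+1}\to Z:=\oM_{0,n}$. By hypothesis \eqref{CondS} and \eqref{matchdim}, Theorem~\ref{coolcondition} guarantees that $f$ is birational; all three spaces are smooth projective (hence $\bQ$-factorial), and $p=\pi_N$ is the universal-curve/forgetful morphism, which is flat and surjective, so the hypotheses of the Lemma are met.

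Next I would produce the exceptional components intersecting $M_{0,n+1}$. By Theorem~\ref{ID}, the exceptional locus of $\pi_{\Gamma\cup\{n+1\}}$ is $\tilde G^2$, and under the isomorphism $\tilde G^1\simeq M_{0,n+1}$ on geometric points, $\tilde G^2$ is a nonempty subset of $M_{0,n+1}$ precisely when $W^2\ne\emptyset$: indeed a point of $W^2$ gives an admissible $L$ with $h^0(\Sigma,L)\ge3$, hence an admissible pencil $V$ and an admissible divisor $D\in|V|$, producing a point of $\tilde G^2$, which corresponds to a genuine rational curve with $n+1$ distinct marked points, i.e.\ a point of $M_{0,n+1}$. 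So $\tilde G^2\cap M_{0,n+1}\ne\emptyset$, and therefore at least one irreducible component $D$ of the exceptional locus meets $M_{0,n+1}$. Applying the first sentence of the Lemma, $D$ is a divisor generating an extremal ray of $\oEff(\oM_{0,n+1})$; this proves the first assertion.

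For the images in $\oM_{0,n}$, I would verify the two remaining hypotheses of the Lemma's second sentence for $p=\pi_N$: that $p(D)\ne Z$, and that the generic fiber of $p$ over $p(D)$ is irreducible. The latter is automatic since the fibers of the forgetful morphism $\pi_N$ are (stable) rational curves, which are connected; for the generic point of $p(D)$ the fiber is a smooth $\bP^1$, hence irreducible. For $p(D)\ne Z$: since $D\subsetneq\oM_{0,n+1}$ is an irreducible divisor and $\pi_N$ has one-dimensional fibers, if $p(D)=Z$ were the whole space then $p^{-1}(p(D))=\oM_{0,n+1}\ne D$; but the argument in the Lemma (using irreducibility of preimages of divisors under $\pi_N$, \cite[Lem.~2.6]{T}) shows $p^{-1}(p(D))=D$ once $p(D)$ is a divisor, and conversely a dominant $p(D)$ would force $D$ to have codimension $0$ — contradiction. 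More directly, $D$ has codimension one and is not a union of fibers of $p$ (a generic point of $D$ lies in $M_{0,n+1}$, and its $\pi_N$-fiber is not contained in the exceptional locus $\tilde G^2$, which has the \emph{specific} description from Theorem~\ref{ID}), so $p(D)$ is again codimension one, i.e.\ $p(D)\ne Z$. With both hypotheses in hand, the Lemma yields that $p(D)\subset\oM_{0,n}$ is a divisor generating an extremal ray of $\oEff(\oM_{0,n})$, as claimed.

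The main obstacle I anticipate is the verification that $p(D)\ne Z$, i.e.\ ruling out the possibility that an exceptional divisor $D$ of $\pi_{\Gamma\cup\{n+1\}}$ is swept out by fibers of the forgetful map $\pi_N$. This requires understanding the geometry of $\tilde G^2$ relative to the $\tilde G^1\to G^1$ fibration: a fiber of $\pi_N$ over a point of $M_{0,n}$ corresponds, under the identifications of Theorem~\ref{ID}, to varying the section $s$ (equivalently $D$) while keeping $(\Sigma, L, V)$ fixed, i.e.\ to a fiber of $\tilde\cG^1\to\cG^1$; such a fiber lies entirely in $\tilde G^2$ if and only if the corresponding point of $G^1$ already lies in $G^2$. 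Hence $D$ is a union of $\pi_N$-fibers over $p(D)$ exactly when $D$ equals the full preimage of a component of $G^2$ — but then $p(D)$ is the image of that component in $\oM_{0,n}$, which (since $G^2\subsetneq G^1\simeq M_{0,n}$ via the appropriate identification, under hypothesis \eqref{matchdim}) is still a proper closed subset, and indeed a divisor by purity applied downstairs. So in every case $p(D)\ne Z$, and the rest follows formally from the Lemma.
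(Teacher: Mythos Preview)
Your proof is correct and follows exactly the approach the paper intends: the Corollary is stated without proof because it is meant to be an immediate consequence of Theorem~\ref{coolcondition}, Theorem~\ref{ID}, and the preceding Lemma, and you have correctly filled in those details. One small comment: your first pass at the verification that $p(D)\ne Z$ is slightly circular (you invoke the Lemma's conclusion $p^{-1}(p(D))=D$, which already presupposes $p(D)$ is a divisor), but your final paragraph gives the clean argument—by Theorem~\ref{ID}, $\tilde G^2$ is the $\pi_N$-preimage of $G^2$, and $G^2$ is a proper closed subset of $G^1\simeq M_{0,n}$ since $\tilde G^2$ is the exceptional locus of a birational map and hence proper—so $p(D)\subset\overline{G^2}\subsetneq\oM_{0,n}$.
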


\begin{Example}\label{KeelVermeire}
Consider the hypergraph $\Gamma$ from Fig.~\ref{KHyper}. The condition of Th.~\ref{coolcondition}
is clearly satisfied and $W^2\ne\emptyset$. Therefore, we get an extremal divisor
on $\oM_{0,7}$ contracted by a birational morphism $\oM_{0,7}\to(\bP^1)^4$
and its image is an extremal divisor on $\oM_{0,6}$.
One can check that this is the Keel--Vermeire divisor
(or apply the main result of \cite{HT} that  Keel--Vermeire divisors
are unique extremal divisors of $\oM_{0,6}$ intersecting the interior).
Because of this, we will call the reducible curve from Fig.~\ref{KHyper}, the {\em Keel--Vermeire curve}.
\end{Example}

One can wonder if all our divisors are pullbacks of extremal divisors on $\oM_{0,k}$
with respect to  forgetful morphisms $\oM_{0,n}\to\oM_{0,k}$
or even pullbacks of the Keel--Vermeire divisor! In the following theorem we show that this is not the case. 

\begin{Theorem}\label{NewDivisors}
Suppose $\Gamma=\{\Gamma_1,\ldots,\Gamma_{n-2}\}$ is a $3$-graph that satisfies
\begin{equation}\label{CondM}
|\bigcup_{i\in S}\Gamma_i|\ge|S|+3
\quad\hbox{\rm for any $S\subset\{1,\ldots,n-2\}$ with $2\le|S|\le n-3$}
\tag{\dag}\end{equation}
and such that $W^2(\Sigma)\ne\emptyset$. There exists an index, say $n$, that appears in 
only two triples, say $\Gamma_{n-3}$ and  $\Gamma_{n-2}$. Let 
$$\Gamma'=\{\Gamma_1,\ldots,\Gamma_{n-4}\}$$ 
be a hypergraph 
on $N'=N\setminus\{n\}$ and let $\Sigma'$ be the corresponding curve. Assume that 
$W^3(\Sigma')=\emptyset$.

Then we have the following: the exceptional divisor of $\pi_{\Gamma\cup\{n+1\}}$ contains an
irreducible component intersecting $M_{0,n+1}$ such that
its image $D$ in $\M_{0,n}$ surjects onto $\oM_{0,n-1}$
for any forgetful map $\pi_{N\setminus\{i\}}$, $i=1,\ldots,n$.
In particular, $D$ is not a pull-back from $\oM_{0,k}$ for $k<n$.
\end{Theorem}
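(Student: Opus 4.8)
The plan is to exploit the Brill--Noether description of the exceptional locus from Theorem~\ref{ID} and then to run a dimension count showing that the relevant component of $\tilde G^2$, viewed inside $M_{0,n+1}$, dominates $\oM_{0,n}$ under $\pi_N$ and then each of the $\oM_{0,n-1}$ under the further forgetful maps. First I would verify that the combinatorial hypothesis \eqref{CondM} implies \eqref{CondS} and \eqref{matchdim}, so that $\pi_{\Gamma\cup\{n+1\}}$ is birational and Corollary~\ref{manyrays} applies: since $W^2(\Sigma)\neq\emptyset$, there is a divisorial component $E\subset\tilde G^2\subset M_{0,n+1}$ of the exceptional locus, and $D:=\pi_N(E)\subset\oM_{0,n}$ is an extremal divisor. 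It remains to show $D$ surjects onto $\oM_{0,n-1}$ under every $\pi_{N\setminus\{i\}}$.

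The key step is a careful analysis of the component $E$ using the curve $\Sigma'$ on $N'=N\setminus\{n\}$. The index $n$ lies only in $\Gamma_{n-3}$ and $\Gamma_{n-2}$; removing those two hyperedges produces $\Gamma'$ and $\Sigma'$. A point of $W^2(\Sigma)$ is an admissible $L$ on $\Sigma$ mapping $\Sigma$ to three concurrent lines in $\bP^2$ (in the $3$-graph case $d$ lines, one per triple), with the two lines corresponding to $\Gamma_{n-3},\Gamma_{n-2}$ meeting at the point labelled $n$. Restricting $L$ to the subcurve $\Sigma'$ (the union of the remaining $d-2$ components) gives a line bundle in $W^1(\Sigma')$; the hypothesis $W^3(\Sigma')=\emptyset$ forces this restriction to lie in $W^1\setminus W^2$, which pins down the position of the two extra lines relative to the configuration coming from $\Sigma'$ and shows the fibre of $\pi_\Gamma$ (equivalently, of $v$) through a general point of $W^2(\Sigma)$ has exactly the expected dimension, so that $E$ is a genuine divisorial component rather than something of higher codimension. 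This controls precisely which coordinates on $M_{0,n+1}$ (i.e.\ which cross-ratios among the $n+1$ marked points of $|V|\simeq\bP^1$) are constrained along $E$, and one reads off that dropping any single marked point $i$ still leaves enough moduli free that $\pi_{N\setminus\{i\}}(D)=\oM_{0,n-1}$.

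Concretely, to see surjectivity onto $\oM_{0,n-1}$ for $\pi_{N\setminus\{i\}}$, I would argue that a general point of $\oM_{0,n-1}$ — a configuration of $n-1$ points on $\bP^1$ — can be realized as the marked data obtained from some $(\Sigma,L,V,D)\in\tilde G^2$ by forgetting the $i$-th point: indeed the $d$ lines through the embedding point can be chosen through that configuration with one free cross-ratio to absorb, and the condition $W^3(\Sigma')=\emptyset$ guarantees the $W^2$-condition on $\Sigma$ cuts out a subvariety of the expected codimension $1$ in $M_{0,n+1}$, not more, so its image under a forgetful map to $\oM_{0,n-1}$ (which is the composite of codimension-preserving-or-dropping maps $\oM_{0,n+1}\to\oM_{0,n}\to\oM_{0,n-1}$, each a flat fibration with one-dimensional fibres) cannot drop dimension below $\dim\oM_{0,n-1}$. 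Since $D$ is irreducible (being an image of the irreducible $E$) and has image of full dimension in $\oM_{0,n-1}$, the map is surjective. The final sentence is then immediate: if $D$ were pulled back from $\oM_{0,k}$ with $k<n$ via some $\pi_I$, $|I|=k$, then $D$ would be a union of fibres of $\pi_I$ and hence could not dominate $\oM_{0,n-1}$ under $\pi_{N\setminus\{i\}}$ for $i\notin I$ — contradiction.

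\textbf{Main obstacle.} The delicate point is the \emph{exactness} of the dimension count: one must show the component $E$ of $\tilde G^2$ constructed from $W^2(\Sigma)$ really is divisorial and that its defining $W^2$-condition imposes exactly one independent condition on the $n+1$ cross-ratios, so that after forgetting one point the image is still full-dimensional. This is exactly where $W^3(\Sigma')=\emptyset$ enters, and making this implication precise — translating ``$L$ restricts to $W^1\setminus W^2$ on $\Sigma'$'' into a clean statement about the fibres of $\pi_\Gamma$ and hence about the normal geometry of $E$ inside $M_{0,n+1}$ — is the technical heart of the argument; everything else is bookkeeping with forgetful maps and flatness.
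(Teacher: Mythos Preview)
Your proposal has a genuine gap at the crucial step: the surjectivity of $D$ onto $\oM_{0,n-1}$ under each forgetful map. Your argument is essentially ``$D$ has codimension~$1$, the forgetful maps are flat fibrations with one-dimensional fibres, hence the image has full dimension''. This is simply false: a divisor can easily fail to dominate under a $\bP^1$-fibration (the fibre over any divisor downstairs, or indeed any $\psi$-class wall, is a counterexample). The whole content of the theorem is precisely that $D$ is \emph{not} of this vertical type, and that requires a concrete argument, not a soft dimension count. Relatedly, your claim that the restriction of $L\in W^2(\Sigma)$ to $\Sigma'$ lies in $W^1\setminus W^2$ is backwards: the map $\Sigma\to\bP^2$ restricts to $\Sigma'\to\bP^2$ with nondegenerate image, so $L|_{\Sigma'}\in W^2(\Sigma')$, and the hypothesis $W^3(\Sigma')=\emptyset$ says it lies in $W^2\setminus W^3$.

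The paper's route is quite different and much more explicit. One works directly in $\oM_{0,n}$ using the auxiliary morphism $\Pi:=\pi_{\Gamma'\cup\{n\}}:\oM_{0,n}\to(\bP^1)^{n-4}$ (here the index $n$ plays the role of the extra point for $\Gamma'$). The hypothesis $W^3(\Sigma')=\emptyset$ is used, via Theorem~\ref{ID}, to show that $\Pi|_{M_{0,n}}$ has irreducible one-dimensional fibres. One then defines $\bD:=\Pi(\delta_{12}\cap\delta_{34})$ (after arranging $\Gamma_{n-3}=\{1,2,n\}$, $\Gamma_{n-2}=\{3,4,n\}$), checks it is a $(1,\ldots,1)$-divisor in $(\bP^1)^{n-4}$, and sets $D:=\overline{\Pi^{-1}(\bD)\cap M_{0,n}}$. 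Surjectivity onto $\oM_{0,n-1}$ is then an \emph{intersection-number} computation: if $C$ is a general fibre of $\pi_{N\setminus\{i\}}$, one shows $D\cdot C>0$ by computing $\Pi^{-1}(\bD)\cdot C$ via the projection formula and subtracting off the boundary components sitting in $\Pi^{-1}(\bD)$, whose multiplicities are determined by a sequence of lemmas using condition~\eqref{CondM}. Only at the very end does one check that this $D$ is indeed the image of a component of the exceptional locus of $\pi_{\Gamma\cup\{n+1\}}$. Your Brill--Noether heuristics are good intuition for why the theorem should hold, but they do not substitute for this numerical argument.
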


\begin{Remark}
Condition ($\dag$) implies that each index appears in two or three triples, with exactly $6$ indices appearing only in two triples.
\end{Remark}

We give sufficient conditions for irreduciblity of the exceptional locus of $\pi_{\Gamma\cup\{n+1\}}$  on $M_{0,n+1}$: 
\begin{Proposition}\label{Irred}
In the set-up of Theorem \ref{NewDivisors}, assume in addition that 
the locus in $W^2(\Sigma')$ whose geometric points $(\Sigma', L, V)$ are such that $L_{12}=L_{34}$  has codimension at least $2$, 
where $L_{ij}$ is the line determined by the images of the points $i,j$ via the morphism $\Sigma'\ra\PP^2$ given by the 
complete linear system $|L|$. Then the exceptional divisor of $\pi_{\Gamma\cup\{n+1\}}$ has a unique irreducible component  intersecting $M_{0,n+1}$. 
\end{Proposition}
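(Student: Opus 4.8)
The plan is to analyze the irreducible components of the exceptional divisor of $\pi_{\Gamma\cup\{n+1\}}$ through the identification from Theorem~\ref{ID}: this exceptional locus is $\tilde G^2(\Sigma)$, so its components intersecting $M_{0,n+1}$ correspond to components of $\tilde G^2(\Sigma)$ dominating a component of $G^2(\Sigma)$, together with a choice of admissible divisor. Since the forgetful maps $\tilde G^2\to G^2$ and $G^2\to W^2$ have fibers that are open in projective spaces (hence irreducible), it suffices to show that $G^2(\Sigma)$ has a unique irreducible component meeting the admissible locus, equivalently (via the torsor structure $\Pic^{\uone}$) that $W^2(\Sigma)$ does. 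So the whole question reduces to counting components of $W^2(\Sigma)$ that contain an admissible line bundle.

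First I would use the hypothesis that $n$ appears only in $\Gamma_{n-3}$ and $\Gamma_{n-2}$, and that $W^3(\Sigma')=\emptyset$, to relate $W^2(\Sigma)$ to $W^2(\Sigma')$. An admissible $L\in W^2(\Sigma)$ maps $\Sigma$ to a union of lines in $\bP^2$; restricting attention to the sub-hypergraph $\Gamma'$, the two components $\Gamma_{n-3},\Gamma_{n-2}$ of $\Sigma$ carrying the extra index $n$ contribute the two lines $L_{n-3}, L_{n-2}$ through the image of the node $n$. Since $W^3(\Sigma')=\emptyset$, the image of $\Sigma'$ cannot degenerate too much, and one should be able to show that the data of an admissible $L\in W^2(\Sigma)$ is equivalent to: an admissible $L'\in W^2(\Sigma')$ (the restriction), plus the choice of the two remaining lines $L_{n-3},L_{n-2}$ subject to the incidence conditions coming from the other indices of $\Gamma_{n-3},\Gamma_{n-2}$. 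Concretely, each of those two indices $i\ne n$ lies on some other $\Gamma_j$, so its image point is already fixed by $L'$, and the lines $L_{n-3},L_{n-2}$ are forced to pass through prescribed (generically three collinear-constraint-free) points; this exhibits a dominant map from a fibration over $W^2(\Sigma')$ to $W^2(\Sigma)$, with irreducible fibers away from a bad locus. The bad locus is exactly where the incidence conditions become dependent, e.g.\ where $L_{12}=L_{34}$ forces a coincidence among the constrained points; the extra codimension-$\ge 2$ hypothesis is precisely what guarantees this degenerate locus does not produce a second component.

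The main step, then, is: granting that every component of $W^2(\Sigma')$ meeting the admissible locus is accounted for, show there is only one such component of $W^2(\Sigma')$ and that the fibration over it stays irreducible. For the uniqueness on $\Sigma'$ I would either invoke an inductive hypothesis / earlier structural result on $W^2$ of $3$-graphs satisfying ($\dag$) (which underlies Theorem~\ref{NewDivisors}), or argue directly that an admissible $L'\in W^2(\Sigma')$ is determined up to the $\PGL_3$-action plus the moduli of the configuration, which is irreducible because $M_{0,\Gamma_j}$ are irreducible and the gluing incidences cut out an irreducible incidence variety. The role of the new hypothesis is to rule out the one way a second component could sneak in: along the stratum $\{L_{12}=L_{34}\}$, the naive count of incidence conditions on $L_{n-3},L_{n-2}$ drops, so a priori the fiber of our fibration could jump in dimension and split off a component whose closure meets $M_{0,n+1}$; if that stratum has codimension $\ge 2$ in $W^2(\Sigma')$, the jumped locus has codimension $\ge 1$ in the total space and cannot be a component of a (pure-dimensional, by the divisor statement already proved in Theorem~\ref{NewDivisors}) exceptional divisor.

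I expect the hard part to be making the "restriction to $\Sigma'$" correspondence precise and surjective on components — that is, proving that \emph{every} admissible $L\in W^2(\Sigma)$ does restrict to an admissible $L'\in W^2(\Sigma')$ (admissibility could fail if the images of nodes of $\Sigma'$ collide or land on the wrong lines), and conversely that a generic admissible $L'$ extends. This requires using $W^3(\Sigma')=\emptyset$ to prevent the forbidden degenerations and using ($\dag$) for $\Gamma'$ to keep the incidence variety of the remaining two lines irreducible of the expected dimension; the codimension hypothesis on $\{L_{12}=L_{34}\}$ then closes the argument. Once the correspondence is set up, the component count is bookkeeping: $\tilde G^2\to G^2\to W^2$ have irreducible fibers, $W^2(\Sigma)\leftrightarrow$ (fibration over $W^2(\Sigma')$) has irreducible generic fibers by the codimension hypothesis, and $W^2(\Sigma')$ is irreducible on the admissible locus — hence $\pi_{\Gamma\cup\{n+1\}}$ has a unique exceptional component meeting $M_{0,n+1}$.
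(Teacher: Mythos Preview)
Your architecture matches the paper's exactly: reduce via the irreducible-fiber maps $\tilde G^2(\Sigma)\to G^2(\Sigma)\to W^2(\Sigma)$ to showing $W^2(\Sigma)$ is irreducible, study the restriction $\phi\colon W^2(\Sigma)\to W^2(\Sigma')$, and combine pure-dimensionality of $W^2(\Sigma)$ (inherited from purity of the exceptional divisor) with the codimension-$\ge 2$ hypothesis to rule out a second component sitting over the bad locus.

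Two places where the paper is sharper than your sketch. First, the fibers of $\phi$: since $\Gamma_{n-3}=\{1,2,n\}$ and $\Gamma_{n-2}=\{3,4,n\}$, the two ``extra'' lines are already forced to be $L_{12}$ and $L_{34}$ by the restriction $L'$ --- there is no choice of lines at all, only of where the point $n$ lands. The fiber of $\phi$ is therefore a single point when $L_{12}\ne L_{34}$ (namely $n\mapsto L_{12}\cap L_{34}$, provided that intersection is not already the image of a singular point), and an open piece of a line when $L_{12}=L_{34}$. This makes the dimension bookkeeping immediate: the preimage of the bad locus has dimension at most $(n-6)-2+1<n-6=\dim W^2(\Sigma)$, so it cannot be a component.

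Second, and this is where your sketch has a soft gap, the irreducibility of $W^2(\Sigma')$: your proposed routes (an inductive hypothesis, or a direct incidence-variety argument) are vague and not obviously available in the generality of Theorem~\ref{NewDivisors}. The paper's argument is a one-liner you missed: $\Sigma'$ has $n-4$ components and genus $n-6$, so Riemann--Roch gives $h^0(L')\ge 3$ for \emph{every} admissible $L'\in\Pic^{\uone}(\Sigma')$. Thus $W^1(\Sigma')=W^2(\Sigma')$, hence $G^2(\Sigma')=G^1(\Sigma')\cong M_{0,n-1}$ by Theorem~\ref{ID}, which is irreducible; $W^2(\Sigma')$ is its image. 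Replace your ``main step'' with this observation and the proof is complete.
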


We can describe the extremal divisor $D$ in $\oM_{0,n}$ using projections
from the canonical model of a stable hypergraph curve:

\begin{Theorem}\label{canonical}
In the setup of Theorem \ref{NewDivisors}, let $\Sigma^s$ be the stable hypergraph curve.
The canonical invertible sheaf $\omega_{\Sigma^s}$ is very ample.
The canonical embedding $\Sigma^s\subset\bP^{n-4}$ is a union of lines.
Project $\Sigma^s$ away from points on $n-6$ lines of $\Sigma^s\setminus\Sigma$
(one point on each component).
Assume that, for a sufficiently general choice of projection points,
this gives a morphism $g:\Sigma\dra\bP^2$ and that $g^*\O(1)$ is admissible.

Then we have an induced rational map
\begin{equation}\label{Psi}
\psi:(\PP^1)^{n-6}\dra W^2(\Sigma).
\cooltag\end{equation}
with a non-empty domain.
The closure $W$ of the image of $\psi$ is an irreducible component of $W^2(\Sigma)$
and the closure in $\M_{0,n}$ of the preimage of $W$ in $G^2(\Sigma)$ is an extremal divisor of $\M_{0,n}$. 
Under the stronger assumptions of Proposition \ref{Irred}, $W=W^2(\Sigma)$ and we recover the extremal divisor  
$D$ of Theorem \ref{NewDivisors}.  
\end{Theorem}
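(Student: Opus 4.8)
The plan is to carry out four steps: (i) determine the canonical model of $\Sigma^s$ by a combinatorial computation; (ii) realise the projection construction as an algebraic family over $(\bP^1)^{n-6}$ and read off the rational map $\psi$ of~\eqref{Psi}; (iii) prove that $\psi$ is generically injective and combine this with Theorem~\ref{ID} and Corollary~\ref{manyrays} to conclude that $W=\overline{\operatorname{im}\psi}$ is an irreducible component of $W^2(\Sigma)$; and (iv) transport the extremality of Corollary~\ref{manyrays} down to $\oM_{0,n}$ along $\pi_N$ via the flat pull-back Lemma preceding Corollary~\ref{manyrays}. For step~(i): since $\Gamma$ is a $3$-graph, $\dim M_\Gamma=0$, and \eqref{matchdim} forces $d=n-2$, so the genus formula of Section~\ref{BNSection} gives $p_a(\Sigma^s)=2d-n+1=n-3$ and the canonical map targets $\bP^{n-4}$. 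Applying~($\dag$) to pairs $S=\{j,k\}$ gives $|\Gamma_j\cap\Gamma_k|\le1$, so $\Sigma^s$ is a stable curve each of whose components is a $\bP^1$ meeting the rest in exactly three nodes; hence $\omega_{\Sigma^s}$ restricts to $\cO_{\bP^1}(1)$ on every component and has multidegree $(1,\dots,1)$. Since $\omega$ is globally generated on a stable curve of genus $\ge2$, the canonical map is a morphism carrying each component isomorphically onto a line, and it is an embedding once $\Sigma^s$ is not hyperelliptic; I would deduce the latter from the Bayer--Eisenbud criterion \cite{BE}, the dual graph of $\Sigma^s$ being a simple cubic graph whose relevant connectivity is exactly what~($\dag$) encodes. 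This gives the first two assertions.

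For step~(ii): the components of $\Sigma^s$ not contained in $\Sigma$ are the $n-6$ rational bridges $\ell_i\cong\bP^1$ inserted at the valence-$3$ indices; let $\nu:\Sigma^s\to\Sigma$ contract them. Choosing $c_i\in\ell_i$ and projecting $\bP^{n-4}\dra\bP^2$ from $\Lambda:=\langle c_1,\dots,c_{n-6}\rangle$, for general choice $\Lambda$ is a $\bP^{n-7}$ disjoint from every line $Y_j$ of $\Sigma^s$ (equivalently the $c_i$ are linearly independent, equivalently the target really is $\bP^2$). The projection collapses each $\ell_i$ to a point and restricts to an isomorphism onto a line on each $Y_j$, so it extends to a morphism $\Sigma^s\to\bP^2$ factoring as $\Sigma^s\xrightarrow{\nu}\Sigma\xrightarrow{g}\bP^2$ with $g$ the assumed morphism. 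It is cut out by the sections of $\omega_{\Sigma^s}$ vanishing at $E:=c_1+\dots+c_{n-6}$; as $g^*\cO(1)$ is assumed admissible, hence globally generated, this system is the complete base-point-free $|\omega_{\Sigma^s}(-E)|$, so $\nu^*\bigl(g^*\cO(1)\bigr)=\omega_{\Sigma^s}(-E)$ and $h^0(\Sigma,g^*\cO(1))=h^0(\Sigma^s,\omega_{\Sigma^s}(-E))=(n-3)-(n-6)=3$, because linearly independent points impose independent conditions on $|\omega_{\Sigma^s}|$. In particular $g^*\cO(1)\in W^2(\Sigma)$. Running this through a family over the open subset of $(\bP^1)^{n-6}$ where the general-position and admissibility hypotheses hold — non-empty by assumption — defines the rational map $\psi\colon(\bP^1)^{n-6}\dra W^2(\Sigma)$ with non-empty domain.

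For step~(iii), the decisive point: $\psi$ is generically injective. Indeed, from $L:=g^*\cO(1)$ the relation $\nu^*L=\omega_{\Sigma^s}(-E)$ shows $E\in|\omega_{\Sigma^s}\otimes\nu^*L^{-1}|$, and this system is a single reduced point, since by Serre duality and Riemann--Roch on the Gorenstein curve $\Sigma^s$ (and $\nu_*\cO_{\Sigma^s}=\cO_\Sigma$) one computes $h^0(\omega_{\Sigma^s}\otimes\nu^*L^{-1})=h^1(\Sigma^s,\nu^*L)=h^0(\Sigma,L)-\chi(\Sigma^s,\nu^*L)=3-\bigl((n-2)+1-(n-3)\bigr)=1$, using $\deg\nu^*L=n-2$. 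As each $c_i$ is the part of $E$ supported on $\ell_i$, the tuple $(c_1,\dots,c_{n-6})$ is recovered from $L$; hence $\psi$ is injective on its domain and $\dim W=n-6$. On the other hand $\pi_{\Gamma\cup\{n+1\}}$ is birational (Theorem~\ref{coolcondition}), so its exceptional locus $\tilde G^2(\Sigma)$ (Theorem~\ref{ID}) has dimension $\le n-3$, while $\tilde G^2(\Sigma)\to W^2(\Sigma)$ has irreducible $3$-dimensional fibres (a pencil and a divisor) over the locus where $h^0=3$; hence every component of $W^2(\Sigma)$ meeting that locus has dimension $\le(n-3)-3=n-6$. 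Therefore the irreducible $(n-6)$-dimensional $W$, which meets that locus, is an irreducible component of $W^2(\Sigma)$.

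For step~(iv): let $\tilde G^2_W\subseteq\tilde G^2(\Sigma)$ be the closure of the preimage of the $h^0=3$ locus of $W$; it is irreducible of dimension $n-3$, hence — since Corollary~\ref{manyrays} makes the components of this exceptional locus that meet $M_{0,n+1}$ divisorial — a component of the exceptional locus of $\pi_{\Gamma\cup\{n+1\}}$ meeting $M_{0,n+1}$. Apply the flat pull-back Lemma preceding Corollary~\ref{manyrays} with $X=\oM_{0,n+1}$, $f=\pi_{\Gamma\cup\{n+1\}}$ birational, and $p=\pi_N\colon\oM_{0,n+1}\to\oM_{0,n}$ faithfully flat with irreducible generic fibre $\bP^1$: the closure of $\tilde G^2_W$ spans an extremal ray of $\oEff(\oM_{0,n+1})$, and since $\pi_N$ contracts the one-dimensional fibres of $f$ sweeping out $\tilde G^2_W$, its image $\pi_N(\tilde G^2_W)$ is a divisor spanning an extremal ray of $\oEff(\oM_{0,n})$. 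By the middle square of Theorem~\ref{ID} this image is precisely the closure in $\oM_{0,n}$ of the preimage of $W$ in $G^2(\Sigma)$, which is the asserted extremal divisor. Under the hypotheses of Proposition~\ref{Irred} the exceptional locus of $\pi_{\Gamma\cup\{n+1\}}$ has a unique component meeting $M_{0,n+1}$, which must be $\tilde G^2_W$; then $W$ exhausts $W^2(\Sigma)$ and the divisor just produced is the divisor $D$ of Theorem~\ref{NewDivisors}. Besides the generic injectivity of $\psi$ — the reconstruction of the centres $c_i$ from the abstract bundle $g^*\cO(1)$, which is what pins $\dim W$ and forces $W$ to be a full component — the only other point demanding real care is the very ampleness of $\omega_{\Sigma^s}$, i.e.\ extracting the Bayer--Eisenbud connectivity of the dual graph of $\Sigma^s$ from condition~($\dag$).
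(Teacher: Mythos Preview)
Your proof is correct and follows essentially the same route as the paper: very ampleness via the Bayer--Eisenbud connectivity criterion, generic injectivity of $\psi$ via the Riemann--Roch/Serre-duality computation $h^0(\omega_{\Sigma^s}\otimes\nu^*L^{-1})=1$ (the paper phrases this dually as $h^0(\Sigma^s,\cO(Q))=1$), and then the dimension count against the purity-forced dimension of $W^2(\Sigma)$ to conclude $W$ is a component, with extremality read off from Corollary~\ref{manyrays}. The only point where the paper invests substantially more detail than you do is the $3$-edge-connectivity of the dual graph of $\Sigma^s$ needed for \cite[Prop.~2.5]{BE}: this is not quite immediate from~($\dag$) and the paper carries out a careful black/white vertex count with several cases, so you should expect to expand that step.
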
 

We give sufficient conditions for the assumptions in Theorem \ref{canonical}:
\begin{Proposition}\label{ExtraCond}
In the setup of Theorem \ref{NewDivisors}, let $\Sigma^{\alpha}$ be the hypergraph curve corresponding to the hypergraph obtained from $\Gamma$ by 
removing the hyperedge  $\Gamma_{\alpha}$. Assume $W^3(\Sigma^{\alpha})=\eset$ for all $\alpha=1,\ldots,n-2$. Then 
the map $\psi$ of (\ref{Psi}) has a non-empty domain.
\end{Proposition}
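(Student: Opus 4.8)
The plan is to show that under the hypothesis $W^3(\Sigma^\alpha)=\eset$ for all $\alpha$, a sufficiently general choice of projection points in Theorem \ref{canonical} actually produces a morphism $g:\Sigma\dra\bP^2$ with $g^*\O(1)$ admissible, i.e.\ that the rational map $\psi$ of \eqref{Psi} has nonempty domain. First I would set up the canonical embedding $\Sigma^s\subset\bP^{n-4}$ as in Theorem \ref{canonical}: since $\Sigma^s$ is a stable hypergraph curve with $W^3(\Sigma')=\eset$ (from Theorem \ref{NewDivisors}), $\omega_{\Sigma^s}$ is very ample and embeds $\Sigma^s$ as a union of lines $\ell_1,\ldots,\ell_{n-2}$ (the components meeting $\Sigma$) together with the $n-6$ extra lines $\ell'_1,\ldots,\ell'_{n-6}$ coming from $\Sigma^s\setminus\Sigma$. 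One point $q_k\in\ell'_k$ is chosen on each extra line, and we project $\bP^{n-4}\dra\bP^2$ from the linear span $\Lambda=\langle q_1,\ldots,q_{n-6}\rangle$, which is generically $(n-7)$-dimensional. Restricting to $\Sigma$ gives $g:\Sigma\dra\bP^2$.

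The key point is to verify, for general $q_k$, that (i) $g$ is a morphism on $\Sigma$, i.e.\ $\Lambda$ does not meet $\Sigma$, and that it separates the singular points, so that $g^*\O(1)$ is admissible; and (ii) the resulting line bundle lands in $W^2(\Sigma)$ but in general not in $W^3(\Sigma)$. For (i), the span $\Lambda$ meets a given component $\ell_j$ of $\Sigma$ only if the $q_k$ lie in a proper closed subvariety of $\prod \ell'_k$; here is where the hypothesis enters. I would argue: if $\Lambda$ met $\ell_j$ for \emph{every} choice of the $q_k$, then the union of the $n-6$ extra lines together with $\ell_j$ would span a linear subspace of dimension $\le n-6$, which translates (via the canonical embedding and adjunction on the hypergraph curve) into the statement that the hypergraph curve obtained by deleting the corresponding hyperedge carries an admissible bundle with an extra section — contradicting $W^3(\Sigma^\alpha)=\eset$ for the appropriate $\alpha$ (the index of the hyperedge attached to $\ell_j$). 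More precisely, $g^*\O(1)$ has degree $1$ on each component of $\Sigma$; it is the restriction to $\Sigma$ of $\omega_{\Sigma^s}(-\sum q_k)$, and the fibres of $g^*\O(1)$'s linear system over the locus where admissibility fails are cut out by the vanishing conditions that force a secant of $\Lambda$ through $\Sigma$. An open-density argument then shows the bad locus in $\prod\ell'_k\cong(\bP^1)^{n-6}$ is a proper closed subset, so $\psi$ has nonempty domain.

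The main obstacle I anticipate is bookkeeping the dimension count that translates ``$\Lambda$ meets a component of $\Sigma$ for all choices of projection points'' into a Brill--Noether statement on $\Sigma^\alpha$: one must carefully match the span of the $n-6$ extra lines plus one fixed line against the canonical space of $\Sigma^s$, use that deleting a hyperedge $\Gamma_\alpha$ from $\Gamma$ modifies the canonical system in a controlled way (the adjunction/Bayer--Eisenbud description of graph curves), and confirm that an excess of sections there is exactly $h^0$ jumping, i.e.\ membership in $W^3(\Sigma^\alpha)$. Separation of the singular points is comparatively easy: each singular point of $\Sigma$ imposes an independent condition on $\Lambda$ because the $q_k$ lie on the \emph{other} components, so genericity handles it. Once admissibility is established on a dense open subset of $(\bP^1)^{n-6}$, the induced bundles automatically satisfy $h^0\ge3$ (they arise as pullbacks of $\O_{\bP^2}(1)$), hence lie in $W^2(\Sigma)$, giving the nonempty domain of $\psi$ and completing the proof.
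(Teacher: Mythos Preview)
Your approach differs from the paper's, and the step you yourself flag as the main obstacle is a genuine gap.

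The paper does not try to show that a \emph{general} tuple $(q_1,\ldots,q_{n-6})$ works. Instead it reverses the construction. Since $W^2(\Sigma)\ne\emptyset$ is already part of the setup of Theorem~\ref{NewDivisors}, fix $L\in W^2(\Sigma)$ and an admissible $H'\in|L|$. Riemann--Roch on $\Sigma^s$ together with $W^3(\Sigma)=\emptyset$ (which follows from the hypothesis on the $\Sigma^\alpha$) gives $h^0(\omega_{\Sigma^s}(-H'))=1$, so there is a \emph{unique} hyperplane $H\subset\bP^{n-4}$ containing $H'$. The hypothesis $W^3(\Sigma^\alpha)=\emptyset$ is then used to show that $H$ avoids every singular point of $\Sigma^s$: if $H$ contained a singular point lying on a black line $\ell_\alpha$, it would contain all of $\ell_\alpha$ (since $H$ already contains the smooth point $H'\cap\ell_\alpha$); hence some nonzero $s\in H^0(\omega_{\Sigma^s})$ vanishes along $\ell_\alpha$, and restricting $s$ to the stable model $\Phi$ of $\Sigma^\alpha$ and applying Riemann--Roch on $\Phi$ forces $h^0(\Sigma^\alpha,L|_{\Sigma^\alpha})\ge4$, contradicting $W^3(\Sigma^\alpha)=\emptyset$. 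Once $H$ misses the singular locus, the intersections $q_i=H\cap\ell'_i$ with the white lines are smooth points satisfying $L=\omega_{\Sigma^s}(-\sum q_i)$, exhibiting a specific tuple in the domain of $\psi$.

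By contrast, your forward argument never carries out the translation you identify as the obstacle. The inference ``$\Lambda$ meets $\ell_j$ for every $(q_k)$, hence the white lines together with $\ell_j$ span a subspace of dimension $\le n-6$'' is not justified: it presupposes that the white lines are themselves in sufficiently general position in $\bP^{n-4}$, which you have not established and which is not automatic for a graph curve. The further passage from such a span condition to membership in $W^3(\Sigma^\alpha)$ is left as an unexecuted adjunction computation. The paper's backward construction sidesteps both issues: it needs only one good tuple, manufactures it directly from the given $L$, and replaces your unproven span/Brill--Noether dictionary with a clean Riemann--Roch calculation on the genus-$(n-5)$ curve $\Phi$.
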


Proofs of Theorems  \ref{NewDivisors} and  \ref{canonical}, Propositions \ref{Irred} and \ref{ExtraCond} will be given 
in~\ref{divisorssection}.

\begin{Example}
Theorem~\ref{canonical} gives the following model of the Keel--Vermeire divisor:
it is the closure in $\oM_{0,6}$ of the locus of points obtained by projecting
singular points of $\Sigma$ from Fig.~\ref{KHyper} from points in $\bP^2$
(note that $\Sigma$ has genus $3$ and Fig.~\ref{KHyper} is its canonical embedding
as a degree $4$ curve). 

Consider Fig.~\ref{Tetrahedron}. Here $\Sigma^s$ has genus~$8$
and canonically embeds in $\bP^7$. Projecting its away from points of 
$5$ components of $\Sigma^s\setminus\Sigma$ gives a $5$-parameter family of morphisms $\Sigma\to\bP^2$.
Projecting singular points of $\Sigma$ from points of $\bP^2$ gives a $7$-dimensional locus in $\oM_{0,11}$,
an extremal divisor. This example is interesting because $W^2=W^3$.
So the hypergraph morphism $\pi_\Gamma:\,\oM_{0,12}\to(\bP^1)^9$ has a unique exceptional divisor~$D$
intersecting $M_{0,12}$ and fibers of the restriction $\pi_\Gamma|_{D}$
are $2$-dimensional.
\end{Example}

We do not know how to classify all $3$-graphs that satisfy conditions of Theorem~\ref{NewDivisors}.
Moreover, one can show that a random $3$-graph
does not satisfy these conditions with probability that tends to $1$ as $n\to\infty$.
Indeed, one of the main results in the theory of random hypergraphs
is that they are almost surely disconnected. 
It is easy to see that disconnected hypergraphs do not satisfy 
neither $(\dag)$ nor even \eqref{CondS}. 
The hypergraph of Fig.~\ref{Tetrahedron} has $W^3\ne\emptyset$.
So in a sense our hypergraphs are ``rare''.
To convince the reader that they exist, 
let us give a Fibonacci-like
construction\footnote{We are grateful
to Anna Kazanova who suggested this construction to us.} of 
$3$-graphs that satisfy the conditions of Theorem~\ref{NewDivisors}, Proposition \ref{Irred}, and even 
Proposition \ref{ExtraCond}.
In fact, it shows that their number 
grows very rapidly as $n$ goes to infinity.

\begin{Review}\textsc{Construction.}\label{Fibonacci}
We start when $n=6$ with a hypergraph of Fig.~$1$.
Note that the last two triples contain $n$ and the last triple contains $n-1$.
We will keep this property in the inductive construction.
The inductive step:
suppose we have a collection of $n-2$ triples $\Gamma'_1,\ldots,\Gamma'_{n-2}$ for $k=n$.
We define $n-1$ triples for $k=n+1$ as follows: $\Gamma_i:=\Gamma'_i$ for $i=1,\ldots,n-3$;
if $\Gamma'_{n-2}=\{i,n-1,n\}$ then we define $\Gamma_{n-2}:=\{i,n-1,n+1\}$;
and we define $\Gamma_{n-1}:=\{a,n,n+1\}$, where $a$ is any index in $N\setminus(\Gamma_{n-2}\cup\Gamma_{n-3})$, see Fig.~\ref{constructionne}.
\begin{figure}[htbp]
\includegraphics[width=4in]{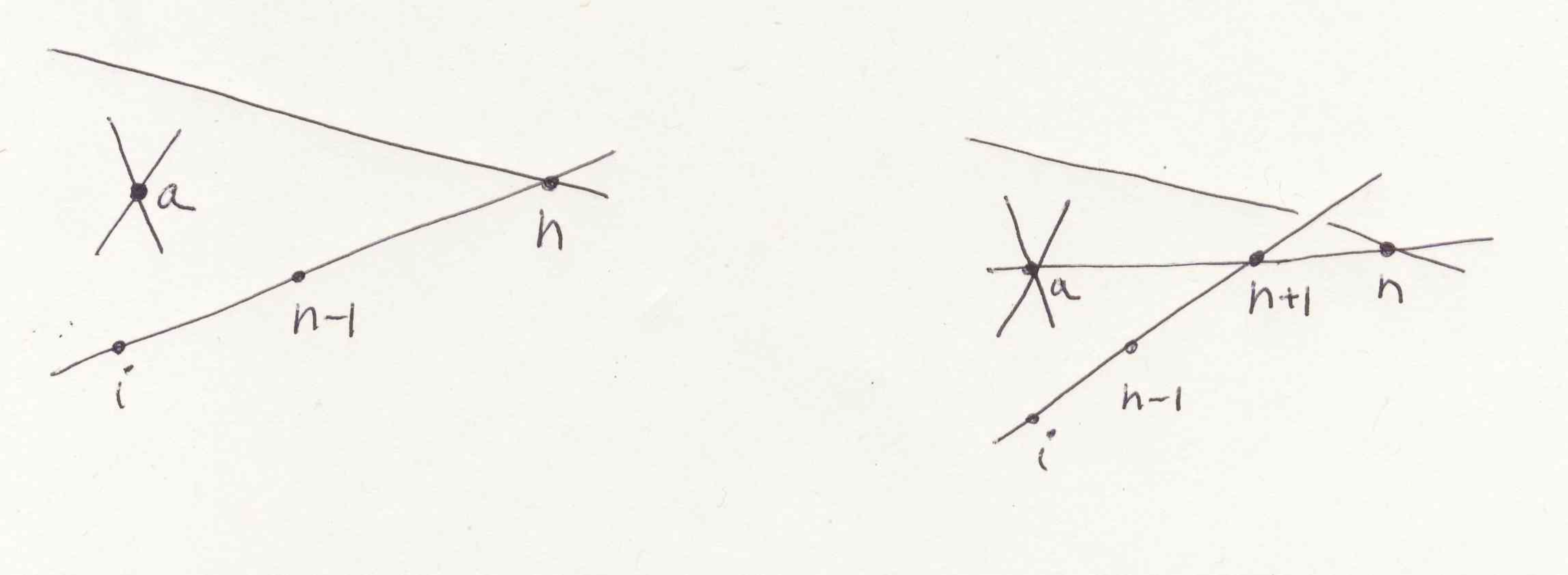}
\caption{}\label{constructionne}
\end{figure}
\end{Review}

\begin{Theorem}\label{FunnyConstruction} 
$\Gamma$ constructed in \ref{Fibonacci} satisfies all conditions of Theorem~\ref{NewDivisors}, Proposition \ref{Irred} and Proposition \ref{ExtraCond}.
\end{Theorem}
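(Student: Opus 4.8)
The plan is to verify, by induction on $n$, that the Fibonacci-like construction of \ref{Fibonacci} produces $3$-graphs satisfying all the combinatorial and Brill--Noether hypotheses appearing in Theorem~\ref{NewDivisors}, Proposition~\ref{Irred}, and Proposition~\ref{ExtraCond}. I would organize the argument into three blocks, each checked by induction: (i) the purely combinatorial condition ($\dag$) together with the auxiliary structure (existence of an index in exactly two triples, the ``last triple contains $n-1$, last two contain $n$'' bookkeeping, and the vanishing $W^3(\Sigma')=\emptyset$ as a consequence of ($\dag$) for $\Gamma'$); (ii) the Brill--Noether nondegeneracy $W^2(\Sigma)\ne\emptyset$ and the codimension statement for the locus $\{L_{12}=L_{34}\}$ required by Proposition~\ref{Irred}; (iii) the vanishing $W^3(\Sigma^\alpha)=\emptyset$ for all $\alpha$, giving Proposition~\ref{ExtraCond}.

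For block (i), the base case $n=6$ (Fig.~1) is a finite check. For the inductive step, write $\Gamma$ for the $(n-1)$-triple graph built from the $(n-2)$-triple graph $\Gamma'$ at level $n$ as in \ref{Fibonacci}: $\Gamma_i=\Gamma'_i$ for $i\le n-3$, $\Gamma_{n-2}=\{i,n-1,n+1\}$ replacing $\Gamma'_{n-2}=\{i,n-1,n\}$, and $\Gamma_{n-1}=\{a,n,n+1\}$ with $a\notin\Gamma_{n-2}\cup\Gamma_{n-3}$. I would check ($\dag$) for $\Gamma$ by splitting on whether a subset $S\subseteq\{1,\dots,n-1\}$ contains the new index $n-1$: if $\Gamma_{n-1}\notin S$ the required inequality follows from ($\dag$) for $\Gamma'$ after noting that on the old triples the union loses at most the single element $n$ and gains $n+1$; if $\Gamma_{n-1}\in S$ then $n+1$ already lies in the union of the other triples exactly when $\Gamma_{n-2}\in S$, so the new element contributed by $\Gamma_{n-1}$ is $n$ (and possibly $n+1$), and the choice $a\notin\Gamma_{n-2}\cup\Gamma_{n-3}$ is precisely what guarantees one does not lose ground — this is where the specific Fibonacci recipe is used. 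The ``exactly $6$ indices of valence two'' statement propagates because the step removes $n$ from one triple and adds $n,n+1$ each to one triple, keeping the valence-two count balanced; the marker property ($\Gamma_{n-1}$ contains $n$ and $n+1$, $\Gamma_{n-2}$ contains $n-1$) is immediate. The vanishing $W^3(\Sigma')=\emptyset$ needed in Theorem~\ref{NewDivisors} follows from the analysis of Brill--Noether loci of hypergraph curves (Theorem~\ref{ID} and the material of Section~\ref{smaps}), since ($\dag$) for $\Gamma'$ forces the arithmetic genus and the dimension count to rule out admissible $g^3$'s; I would invoke the criterion proved in \ref{divisorssection} rather than reprove it here.

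For blocks (ii) and (iii), the key point is that $W^2(\Sigma)$ and the $W^3(\Sigma^\alpha)$ can be described combinatorially in terms of admissible line arrangements (a curve mapping to $\bP^r$ as a union of $d$ lines with prescribed incidences at the singular points of $\Sigma$), exactly as recalled after the definition of the Brill--Noether loci. Adding a new triple $\Gamma_{n-1}=\{a,n,n+1\}$ attached along $n$ (to the old $\Gamma_{n-2}$) and along $n+1$ — and $a$ already present in the old graph — amounts, at the level of line arrangements in $\bP^2$, to adding one more line through two prescribed points of the existing configuration; since one of these, $a$, is a free point of the old arrangement and $n$ is already a node, the new line adds a one-parameter choice, so nonemptiness and the expected dimension of $W^2(\Sigma)$ are preserved, and likewise $W^3(\Sigma^\alpha)=\emptyset$ is preserved for each deletion $\alpha$ because removing any single triple from the enlarged graph leaves a subgraph whose admissible arrangements still cannot span $\bP^3$ (using the inductive vanishing for the corresponding subgraph of $\Gamma'$, together with a direct check when $\alpha\in\{n-2,n-1\}$). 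The codimension $\ge 2$ statement for $\{L_{12}=L_{34}\}$ in $W^2(\Sigma')$ reduces similarly to a dimension count on arrangements: imposing $L_{12}=L_{34}$ is two independent conditions generically, which I would verify by exhibiting one arrangement in $W^2(\Sigma')$ where this locus is cut out transversally, again using that the base case already has this property and that the inductive step only enlarges the parameter space by free parameters.

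The main obstacle, I expect, is the inductive verification of ($\dag$) for the ``mixed'' subsets $S$ that contain $\Gamma_{n-1}$ but not $\Gamma_{n-2}$ (or vice versa): here one must argue carefully that the single index $n$ removed from $\Gamma'_{n-2}$ when forming $\Gamma_{n-2}$ does not destroy any inequality that was previously tight, and that the choice of $a$ outside $\Gamma_{n-2}\cup\Gamma_{n-3}$ always supplies the extra element demanded by the right-hand side of ($\dag$). This is a finite but slightly intricate case analysis, and it is the place where the precise form of the construction (rather than any softer property) is essential; the Brill--Noether parts, by contrast, follow rather formally once the arrangement-theoretic dictionary of Section~\ref{smaps} is in place.
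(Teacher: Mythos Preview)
Your high-level plan (induction on $n$, three blocks) matches the paper's, and your case split for $(\dag)$ is close to what the paper does. But two of your blocks contain genuine gaps.

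\textbf{First gap: the vanishing $W^3(\Sigma')=\emptyset$.} You write that this ``follows from the analysis of Brill--Noether loci'' and that you would ``invoke the criterion proved in \ref{divisorssection} rather than reprove it here.'' There is no such criterion in the paper. Condition $(\dag)$ for $\Gamma'$ does \emph{not} by itself rule out admissible $g^3$'s via a dimension count; the paper proves $W^3(\Sigma')=\emptyset$ by a separate direct induction: if $f:\Sigma'\to\bP^3$ is an admissible map, one uses that exactly one triple of $\Gamma'$ contains the top index, so the remaining points already lie in a plane by the inductive hypothesis, and then the last point is forced into that plane. This is a short argument, but it is its own induction and cannot be replaced by an appeal to Section~\ref{smaps} or \ref{divisorssection}.

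\textbf{Second gap: the codimension $\ge 2$ condition.} Your proposal is to ``exhibit one arrangement where the locus $\{L_{12}=L_{34}\}$ is cut out transversally'' and propagate this from the base case. This does not work as stated: the relevant locus $W^2(\Sigma')$ changes at each inductive step, and a transversality statement at one point of the base case says nothing about the new parameter directions introduced by the construction. The paper instead strengthens the inductive hypothesis: it proves simultaneously, for \emph{all} admissible pairs $a,b$ and $c,d$, that (1) $\{L_{ab}=L_{cd}\}$ has codimension $\ge 2$ in $W^2(\Sigma')$, and (2) $\{c\in L_{ab}\}$ has codimension $\ge 1$. Statement (1) at level $n$ genuinely requires statement (2) at level $n-1$ (when the new index $n-1$ is one of $a,b,c,d$, the fiber of the restriction map $W^2(\Sigma')\to W^2(\Sigma'')$ collapses to a point over the bad locus, and one needs (2) downstairs to conclude). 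Without this mutual strengthening the induction does not close.

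Your treatment of $W^3(\Sigma^\alpha)=\emptyset$ is also too sketchy: the paper does not reduce directly to the corresponding subgraph of $\Gamma'$, but passes through an auxiliary curve $\Sigma'^\alpha$ obtained by deleting \emph{three} hyperedges, and checks (using $(\dag)$) that enough of the indices $i,j,n-2,n-1$ survive in the remaining union to make the restriction map $W^3(\Sigma^\alpha)\to W^3(\Sigma'^\alpha)$ well-defined. Finally, a minor point: the constraint $a\notin\Gamma_{n-2}\cup\Gamma_{n-3}$ is not what drives the $(\dag)$ verification (the new vertex $n+1$ does that work); its role is to ensure $W^2(\Sigma)\ne\emptyset$, i.e.\ that the new line in the planar picture can actually be drawn.
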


\begin{proof}
Firstly, we show that  $\Gamma$ satisfies $(\dag)$.
Suppose $I\subset\{1,\ldots,n-1\}$, $1<|I|<n-1$.
Consider several cases. If $I\subset\{1,\ldots,n-3\}$ then 
$$|\bigcup_{i\in I}\Gamma_i|=|\bigcup_{i\in I}\Gamma'_i|\ge |I|+3$$
by inductive assumption, and we are done.
If $I=I'\cup\{n-2\}$ (resp.~$I=I'\cup\{n-1\}$), where $I'\subset\{1,\ldots,n-3\}$, then
$$|\bigcup_{i\in I}\Gamma_i|\ge|\bigcup_{i\in I'}\Gamma'_i|+1,$$
because $n+1$ belongs to the first union but does not belong to the second union.
So again we are done by the inductive assumption, unless $|I'|=1$, in which case the claim is easy.
So it remains to consider the case 
$I=I'\cup\{n-2, n-1\}$, where $I'\subset\{1,\ldots,n-3\}$ (and note that $|I'|<n-3$).
If $I'$ is empty then the claim is easy. Otherwise, let
$I''=I'\cup\{n-2\}$. Then $1<|I''|<n-2$ and therefore, by inductive assumption,
$$|\bigcup_{i\in I''}\Gamma'_i|\ge|I''|+3.$$
But $$\bigcup_{i\in I}\Gamma_i\supseteqq\bigcup_{i\in I''}\Gamma'_i\sqcup\{n+1\}.$$
This proves that  $\Gamma$ satisfies $(\dag)$.

It is clear from construction that $W^2(\Sigma)\ne\emptyset$.
Let us  show that $W^3(\Sigma')$ is empty.
Let $f:\,\Sigma'\to\bP^3$ be a morphism linear on components of $\Sigma'$
and separating singular points. Let $P_1,\ldots,P_{n}$ be their images 
(for $k=n+1$), i.e., $P_a,P_b,P_c$ are collinear for each $\{a,b,c\}=\Gamma_i$, 
$i=1,\ldots,n-3$.
We claim (arguing by induction) that these points lie on a plane.
Note that, by construction, $\Gamma_{n-3}=\{u,v,n\}$ is the only triple on this list that contains~$n$.
Therefore $P_1,\ldots,P_{n-1}$ lie on a plane by inductive assumption, and so 
$P_n$ lies on the same plane.

We prove now that the locus $W$ in $W^2(\Sigma')$ where $L_{12}=L_{34}$ has codimension at least $2$. 
Note, by Theorem \ref{ID}, $G^2(\Sigma')=G^1(\Sigma')\cong M_{0,n-1}$; in particular, $G^2(\Sigma')$ is irreducible. It follows that 
$W^2(\Sigma')$ is irreducible (the map $G^2(\Sigma')\ra W^2(\Sigma')$ is surjective). Moreover, $\dim W^2(\Sigma')=n-4$.

We prove by induction on $n$ that: (1) If $a,b$, and $c,d$ are two pairs of distinct indices in $\{1,\ldots,n-1\}$
not contained in $\Gamma_j$, for any $j=1,\ldots,n-4$, then the locus $W$ in $W^2(\Sigma')$ where $L_{ab}=L_{cd}$ 
has codimension at least $2$; (2) If  $a,b,c\in\{1,\ldots,n-1\}$ (distinct) with the pair $a,b$ not contained in $\Gamma_j$, for any $j=1,\ldots,n-4$,  
then the locus $V$ in $W^2(\Sigma')$ where $c\in L_{ab}$ has codimension at least $1$.

Let $\Sigma''$ be the hypergraph curve corresponding to $$\Ga''=\{\Gamma_1,\ldots,\Gamma_{n-5}\}$$ (for the case $n-1$ this is the curve
$\Sigma'$). There is a well-defined morphism $\phi:W^2(\Sigma')\ra~W^2(\Sigma'')$ given by restriction. 
From the inductive construction, it is clear that $\Gamma_{n-4}$ is the only triple among $\Gamma_1,\ldots,\Gamma_{n-4}$ that contains $n-1$.
Let $\Gamma_{n-4}=\{n-1,e,f\}$. Note, $\{e,f\}\neq\{a,b\}$. 
The fibers of $\phi$ are irreducible, $1$-dimensional (given by the point $n-1$ moving on the line $L_{ef}$).

Proof of (1): If $a,b,c,d\neq n-1$,  we are done by induction, as $W$ is the preimage under $\phi$ of the similarly defined locus in $W^2(\Sigma'')$.
Assume without loss of generality that $d=n-1$. There are several cases for the components $W_0$ of $W$. If $L_{ab}=L_{ef}$ for all geometric points
of $W_0$, then $W_0$ is contained in the pull-back of codimension $\geq2$ locus in $W^2(\Sigma'')$ (by induction). Therefore, we may assume that 
$L_{ab}\neq L_{ef}$ for a general geometric point of $W_0$. There are two cases. Case (i): If $a,b,e,f$ are all distinct (for a general point of $W_0$). 
Then $d\in L_{ab}$ imposes one condition on elements of $W^2(\Sigma')$ (a non-empty general fiber of $\phi|_{W_0}$ is a point). 
In this case, $\phi(W_0)$ is contained in the locus in  $W^2(\Sigma'')$ where $c\in L_{ab}$. By the induction assumption for (2),  
it follows that $\phi(W_0)$ has codimension at least $1$ in $W^2(\Sigma'')$; hence, $W_0$ has codimension at least $2$ in $W^2(\Sigma')$. 
Case (ii): If $a,b,e,f$ are not all distinct. We may assume $a=e$, and $a,b,f$ distinct. Then $d\in L_{ab}$ implies
$d=L_{ab}\cap L_{ef}=a$, contradiction (not in $W^2(\Sigma')$). 

Proof of (2): As in the proof of (1), we may assume $c=n-1$. Let $V_0$ be a component of $V$. If $a,b,e,f$ are not all distinct (along $V_0$), then our assumptions imply that 
one of the triples $e,a,b$ or $f,a,b$ consists of distinct points and we are done by induction. Assume $a,b,e,f$ are all distinct (for a general point of $V_0$). If
$L_{ab}=L_{ef}$ for all points of $V_0$, then $V_0$ is contained in the locus in $W^2(\Sigma')$ where $e,f\in L_{ab}$ and we are again done by induction. If 
$L_{ab}\neq L_{ef}$ for a general point of $V_0$, then $c\in L_{ab}$ imposes one condition on elements of $W^2(\Sigma')$ as the non-empty general fiber of 
$\phi|_{V_0}$ is a point. 

We prove now that $\Gamma$ satisfies the condition of Proposition \ref{ExtraCond}. This is clear if $n=6$. We assume $n\geq7$. If $\alpha=n-2$ or $n-3$, then $\Sigma'\hra\Sigma^{\alpha}$ induces by restriction a morphism $W^3(\Sigma^{\alpha})\ra W^3(\Sigma')$.  It follows that $W^3(\Sigma^{\alpha})=\eset$. Hence, we may assume $\alpha\neq n-2,n-3$. Let $$\Gamma_{n-2}=\{i,n-1,n\},\quad\Gamma_{n-3}=\{j,n-2,n\}.$$ 

Let $\Sigma'^{\alpha}$ be the hypergraph curve corresponding to the hypergraph obtained from $\Gamma$ by 
removing the hyperedges $\Gamma_{\alpha},\Gamma_{n-3},\Gamma_{n-2}$. Note that $W^3(\Sigma'^{\alpha})=\eset$ implies that $W^3(\Sigma^{\alpha})=\eset$, if the inclusion $\Sigma'^{\alpha}\hra\Sigma^{\alpha}$ induces a  morphism $W^3(\Sigma^{\alpha})\ra W^3(\Sigma')$. This is clearly the case if 
at least three of $i,j,n-2,n-1$ are contained in $$S=\bigcup_{u\neq\alpha,n-3,n-2}\Gamma_{u}.$$ 

We claim that this is always the case. Assume $u,v\in\{i,j,n-2,n-1\}$ are not contained in  $S$. Then $S\subset\{1,\ldots,n-1\}\setminus\{u,v\}$ and therefore $|S|\leq n-3$, which contradicts $(\dag)$ (use $n\geq7$).

We prove by induction on $n\geq7$ that $W^3(\Sigma'^{\alpha})=\eset$ and $W^3(\Sigma^{\alpha})=\eset$. This is clearly true for $n=7$. 
Assume $n\geq8$. If $\alpha=n-4$ then we are done, as $\Sigma'^{\alpha}=\Sigma''$ (the curve $\Sigma'$ for the case $n-1$). 

Assume $\alpha\leq n-5$. Denote by $\Sigma^{\alpha}_{n-1}$ the hypergraph curve corresponding to the hypergraph obtained from $\{\Gamma_1,\ldots,\Gamma_{n-4},\{v,n-2,n-1\}\}$ (defining a curve in our construction, for $n-1$), by removing the hyperedge $\Gamma_{\alpha}$. By the induction assumption, 
$W^3(\Sigma^{\alpha}_{n-1})=\eset$. We claim that the inclusion $\Sigma'^{\alpha}\hra\Sigma^{\alpha}_{n-1}$ induces a morphism 
$W^3(\Sigma^{\alpha}_{n-1})\ra W^3(\Sigma'^{\alpha})$. As $n-1\in\Gamma_{n-4}$ and hence, $n-1\in S$ ($\alpha\leq n-5$), this is clear if $v$ or $n-2$ belong to $S$. 
But this is indeed the case, as $v,n-2\in\bigcup_{u=1}^{n-4}\Gamma_{u}$ and they cannot be both contained in $\Gamma_{\alpha}$ by $(\dag)$.

\end{proof}

The number of possibilities for adding an extra vertex $a$ grows rapidly with $n$
but for $n=6,7,8$ there is just one possibility (up to symmetries), see Fig.~\ref{678}. 
However, for $n=8$, there exist extremal divisors, depicted on  Fig.~\ref{another8},
that cannot be obtained by an inductive construction. The first one shows the only possibility 
(up to symmetries) of having six triples that satisfy $(\dag)$ (other than the one coming from the inductive 
construction) and the conditions in Theorem \ref{NewDivisors} and Propositions \ref{Irred} and \ref{ExtraCond}. 
The second one shows a collection $\Gamma_1,\ldots,\Gamma_5$ (one of which is a $4$-tuple) that satisfy $(\dag)$ and Corollary \ref{manyrays}. One can also show that this divisor is not a pull-back from $\oM_{0,7}$.

\begin{figure}[htbp]
\includegraphics[width=5in]{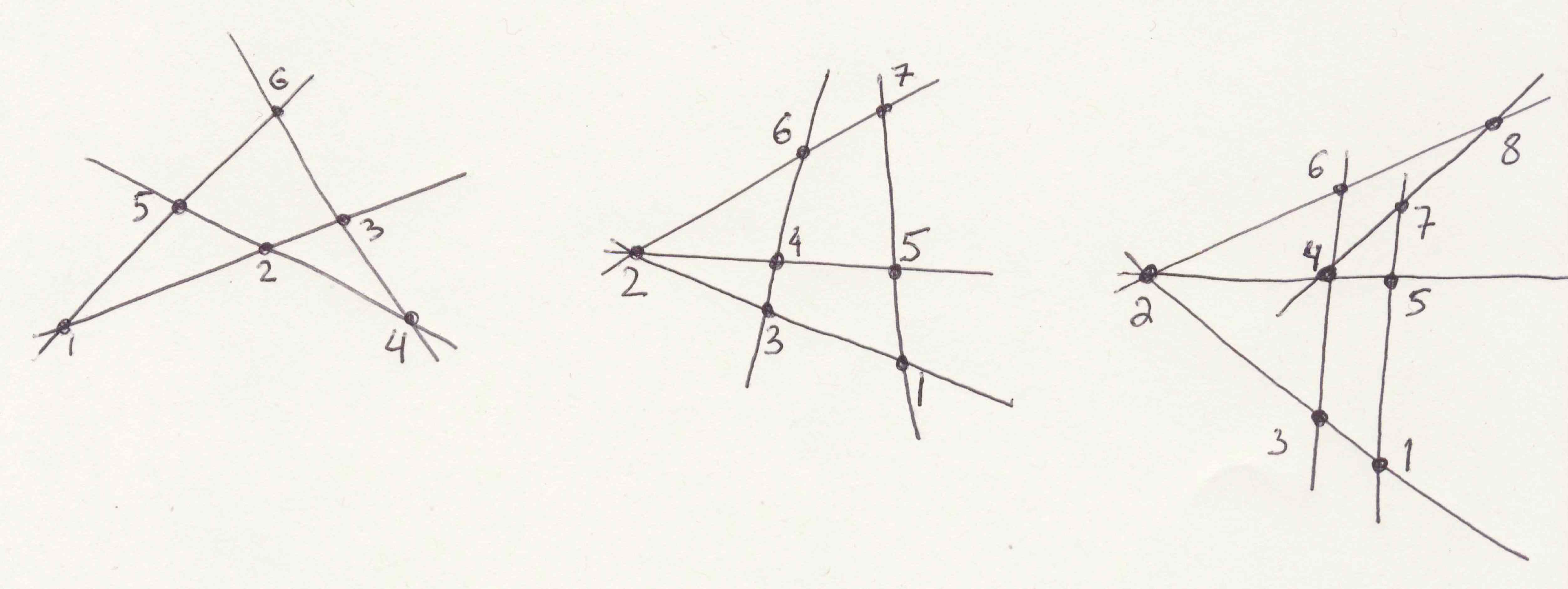}
\caption{\small Extremal divisors on $\oM_{0,6}$, $\oM_{0,7}$, $\oM_{0,8}$ obtained by Construction~\ref{Fibonacci}.
}\label{678}
\end{figure}

\begin{figure}[htbp]
\includegraphics[width=5in]{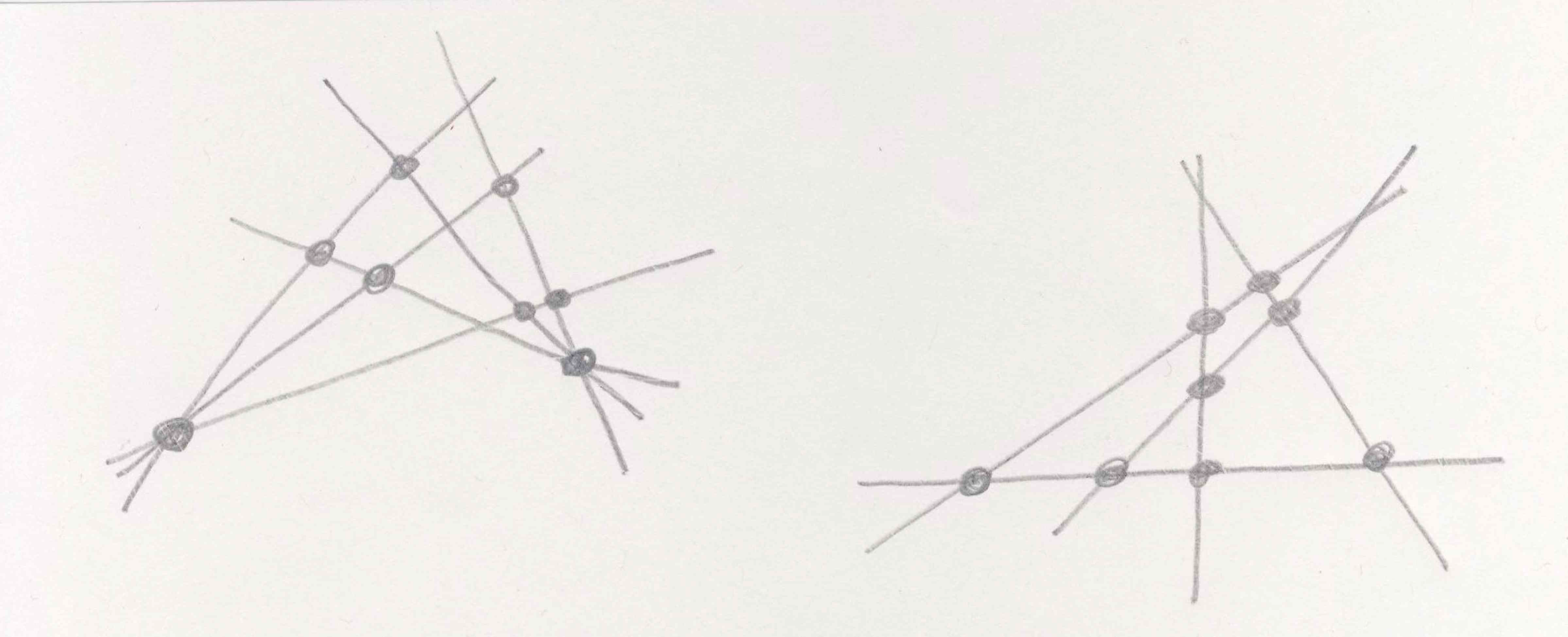}
\caption{\small Two more extremal divisors on~$\oM_{0,8}$.
}\label{another8}
\end{figure}


\section{Admissible Sheaves on Hypergraph Curves}\label{smaps}

What is the geometric meaning of ``strange'' conditions \eqref{CondS}
$$
|\bigcup_{j\in S}\Gamma_j|\ge|S|+2
\quad\hbox{\rm for any $S\subset\{1,\ldots,n-2\}$}
$$
and \eqref{CondM}
$$|\bigcup_{i\in S}\Gamma_i|\ge|S|+3
\quad\hbox{\rm for any $S\subset\{1,\ldots,n-2\}$ with $2\le|S|\le n-3$}$$
(assuming for simplicity that all hyperedges are triples) of Section~\ref{DivSection}?
We will interpret these conditions as statements
about admissible sheaves on~$\Sigma^s$.
But first let us take a look at the morphism 
\begin{equation}\label{morphismV}
v:\,M_{0,n}\to \Pic^{\uone}.
\cooltag\end{equation}
It plays only an auxiliary role in the rest of the paper
because (as it is easy to show) it does not
extend to the morphism from $\oM_{0,n}$ to any compactification of $\Pic^{\uone}$,
and of course $\oM_{0,n}$ is the main subject of this paper.
In fact, even $M_{0,n}$ is clearly a ``wrong'' domain for the morphism $v$.
The condition (a) in the definition of the functor $\cG^1$ seems unnatural
and we can get rid of it:

\begin{lemdef}
The following functors $Schemes\to Sets$ are equivalent:
\begin{itemize}
\item
A scheme $S$ goes to  the set of isomorphism classes
of data $(\bSi, f:\,\bSi\to\bP^1_S)$, where 
$\bSi\in\cM_\Gamma(S)$ and the restriction of $f$
to  each irreducible component of $\bSi$ is an isomorphism.
\item
A scheme $S$ goes to the set of isomorphism classes of flat families $C\to S$
with $n$ sections $s_1,\ldots,s_n$ such that (a) any geometric fiber is isomorphic to $\bP^1$;
(b) for any $\Gamma_\alpha$, sections $s_i$ with $i\in\Gamma_\alpha$ are disjoint.
\end{itemize}
This functor $\cM or^{\uone}$ is represented by a smooth quasiprojective scheme
$\Mor^{\uone}$.
We have natural morphisms 
$$M_{0,n}\hookrightarrow\Mor^{\uone}\arrow^v\Pic^{\uone},$$
where $v$ is the pull-back of $\cO_{\bP^1}(1)$.
\end{lemdef}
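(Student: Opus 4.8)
The statement asserts an equivalence of two functors, the representability of the common functor by a smooth quasiprojective scheme $\Mor^{\uone}$, and the existence of the natural morphisms $M_{0,n}\hookrightarrow\Mor^{\uone}\to\Pic^{\uone}$. The plan is to prove these in that order. First I would establish the equivalence of the two functors: this is a minor variation of the gluing construction already used in the proof of Theorem~\ref{ID}. Given a datum $(C\to S, s_1,\dots,s_n)$ of the second type, for each hyperedge $\Gamma_\alpha$ one has a copy $C_\alpha\to S$ of the family with the sections indexed by $\Gamma_\alpha$ marked; since at each index $i\in N$ the sections agree (all being the section $s_i$ of the single family $C$), one glues the $C_\alpha$ transversally along these sections exactly as in Theorem~\ref{ID}, obtaining $\bSi\in\cM_\Gamma(S)$ together with the tautological map $f\colon\bSi\to C=\bP^1_S$ restricting to an isomorphism on each component. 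Conversely, given $(\bSi,f)$, one pushes forward: $f$ itself exhibits $\bP^1_S$ as the target, and the $n$ sections are the images under $f$ of the $n$ components of $\bSi\setminus\bSi^{sm}$ (equivalently, of the $n$ gluing loci), which are sections of $\bP^1_S$; disjointness of $s_i$ with $i\in\Gamma_\alpha$ is automatic since distinct markings on the single component $\bP^1_S=f(\bSi_\alpha)$ are disjoint. These two constructions are mutually inverse on isomorphism classes, so the functors are equivalent.

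Next I would prove representability. The key point is that the second functor $\cM or^{\uone}$ is an open subfunctor of the functor $\mathcal Mor^{\underline 1, \mathrm{all}}$ of flat $\bP^1$-families with $n$ arbitrary (not necessarily disjoint) sections — indeed, the disjointness conditions in (b) are open conditions on the base. The latter functor is well understood: a flat family of $\bP^1$'s with a section is, after trivializing the section, a $\bP^1$-bundle with a distinguished point, and such data are classified up to the $\PGL_2$-action; concretely one can present $\mathcal Mor^{\underline1,\mathrm{all}}$ as a quotient of an open subscheme of a product of projective bundles over the trivial family $\bP^1_S$, or simply cite that it is the parameter space of morphisms $\bP^1\to\bP^1$ of degree $1$ with $n$ ordered points, which is $\PGL_2 \times (\bP^1)^n / (\text{reparametrization})$ — in any case a smooth quasiprojective scheme (this is standard; cf. the Hom-scheme $\Mor(\bP^1,\bP^1)$ and its smoothness since $H^1(\bP^1, T_{\bP^1})=0$). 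Taking the open locus where the disjointness conditions of (b) hold gives the smooth quasiprojective $\Mor^{\uone}$ representing $\cM or^{\uone}$. I expect this representability step — pinning down a clean, citable presentation of the ambient functor and checking smoothness without excessive bookkeeping — to be the main obstacle, though it is routine in spirit.

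Finally, the two morphisms. The map $v\colon\Mor^{\uone}\to\Pic^{\uone}$ is the natural transformation sending a family $(\bSi,f)$ to $(\bSi, f^*\cO_{\bP^1_S}(1))$; this sheaf has degree $1$ on each component since $f$ restricts to an isomorphism there, so it lands in $\cPic^{\uone}$, and functoriality is clear. The inclusion $M_{0,n}\hookrightarrow\Mor^{\uone}$ is induced by viewing a stable $n$-pointed rational curve (which here, being in $M_{0,n}$ rather than $\oM_{0,n}$, is a smooth $\bP^1$ with $n$ distinct marked points) as a datum of the second type: all $n$ sections are pairwise disjoint, hence a fortiori the sub-collections indexed by the $\Gamma_\alpha$ are disjoint, so condition (b) holds. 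This is a monomorphism of functors (an $n$-pointed $\bP^1$ with all points distinct is recovered from the datum by forgetting the extra disjointness that is already present), hence a locally closed immersion of schemes, and its composite with $v$ recovers the morphism~\eqref{morphismV} already considered. The fact noted in the surrounding text — that $v$ does not extend over any compactification of $\Pic^{\uone}$ — is not needed for this statement and I would not address it here.
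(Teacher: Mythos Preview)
Your equivalence argument and the construction of the morphisms $M_{0,n}\hookrightarrow\Mor^{\uone}\to\Pic^{\uone}$ match the paper's approach (which simply refers back to the gluing in Theorem~\ref{ID}). The difference is in the representability step, and there your chosen route has a genuine snag. You propose to embed $\cM or^{\uone}$ as an open subfunctor of an ambient functor $\mathcal Mor^{\underline 1,\mathrm{all}}$ of flat $\bP^1$-families with $n$ arbitrary sections, and then invoke representability of the latter. But that ambient functor is \emph{not} representable by a scheme: when sections collide the datum acquires nontrivial automorphisms (e.g.\ if all $n$ sections coincide, the automorphism group is the $2$-dimensional stabilizer of a point in $\PGL_2$), so $\mathcal Mor^{\underline 1,\mathrm{all}}$ is only an Artin stack. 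Your informal presentations (``$\PGL_2\times(\bP^1)^n/(\text{reparametrization})$'', Hom-schemes, etc.) do not get around this, and indeed you flag this step as the main obstacle.

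The paper avoids the issue by working directly with $\cM or^{\uone}$ rather than enlarging it. Since $|\Gamma_1|\ge 3$, every object of $\cM or^{\uone}(S)$ already carries three pairwise disjoint sections; these trivialize the $\bP^1$-bundle and provide a slice for the $\PGL_2$-action by sending them to $0,1,\infty$. Hence $\cM or^{\uone}$ is the quotient of an open locus in $(\bP^1)^n$ by a \emph{free} $\PGL_2$-action, or equivalently an explicit open subscheme of $(\bP^1)^{n-3}$. Smoothness and quasiprojectivity are then immediate. This is exactly the ``clean, citable presentation'' you were looking for, and it uses only the disjointness already built into condition~(b); no passage to a larger, non-representable functor is needed.
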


\begin{proof}
The equivalence of functors is established as in the proof of Theorem~\ref{ID}.
Note that the second functor is isomorphic to the quotient of an open locus (with a divisorial boundary) in $(\bP^1)^n$
by the free action of $\PGL_2$. 
One can eliminate the action altogether by fixing (for example) the first three sections in $\Gamma_1$
to be $0$, $1$, and~$\infty$.
This shows that $\cM or^{\uone}$
is represented by a smooth quasiprojective scheme.
\end{proof}

The relation to $\oM_{0,n}$ is as follows.

\begin{Proposition}\label{relationes}
Let 
$\oM_{0,n}^{adm}\subset\oM_{0,n}$
be the complement
to the union of boundary divisors $\delta_I$ such that 
for some $\alpha$, $\beta$, 
$$I\cap\Gamma_\alpha>1\quad\hbox{\rm and}\quad I^c\cap\Gamma_\beta>1.$$
There is a natural morphism 
$$\oM_{0,n}^{adm}\to\Mor^{\uone}$$
extending the embedding of $M_{0,n}$.
\end{Proposition}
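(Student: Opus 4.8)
The strategy is to construct the map $\oM_{0,n}^{adm}\to\Mor^{\uone}$ by glueing local charts, or equivalently by producing a flat family of $n$-pointed $\bP^1$'s over $\oM_{0,n}^{adm}$ satisfying disjointness condition (b) of the Lemma--Definition, and then invoking that $\cM or^{\uone}$ is representable. Over $M_{0,n}$ this family is of course the universal stable curve, which is already smooth with disjoint sections; the issue is to extend it across the boundary divisors $\delta_I$ that remain in $\oM_{0,n}^{adm}$. First I would recall the structure of the universal curve $\cU\to\oM_{0,n}$: over a generic point of a boundary stratum corresponding to a stable tree $T$, the fiber is a chain/tree of $\bP^1$'s. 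To produce a family of irreducible $\bP^1$'s, I would pass to the relative stabilization with respect to a cleverly chosen subset of markings, or rather: contract all but one component of each fiber to a point. The key observation is that for any two markings $i,j$ in a common hyperedge $\Gamma_\alpha$, the condition defining $\oM_{0,n}^{adm}$ guarantees that $i$ and $j$ never lie on "opposite sides" of a node with both sides carrying $>1$ point of hyperedges — so along every boundary divisor $\delta_I$ in $\oM_{0,n}^{adm}$, each $\Gamma_\alpha$ lies (essentially) on one side.

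\textbf{Key steps, in order.} (1) For each boundary divisor $\delta_I$ meeting $\oM_{0,n}^{adm}$, observe from the definition that for every hyperedge $\Gamma_\alpha$ we have $|I\cap\Gamma_\alpha|\le 1$ or $|I^c\cap\Gamma_\alpha|\le 1$; by a direct check (using connectedness of $\Gamma$ and $|\Gamma_\alpha|\ge 3$ from Section~\ref{BNSection}) this says that, after possibly replacing $I$ by $I^c$, all but at most one marking of each $\Gamma_\alpha$ lies in $I^c$. (2) Build the family over $\oM_{0,n}^{adm}$ fiber-theoretically: on a fiber which is a nodal curve $C=\bigcup C_v$, collapse all components except a distinguished one $C_{v_0}$ (the component "closest to" a fixed reference marking, say $1\in\Gamma_1$), pushing every marking $s_i$ to the point where its branch meets $C_{v_0}$. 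This is the relative stabilization of $(C,s_1)$ keeping only $s_1$ as a marking but remembering the other $n-1$ sections as (possibly coincident) points — concretely it is given by the relative $\Proj$ of a line bundle, hence flat. (3) Check disjointness: two sections $s_i,s_j$ with $i,j\in\Gamma_\alpha$ collide in the collapsed fiber only if $i$ and $j$ lie on the same side of some node and that side gets contracted while their common specialization to $C_{v_0}$ coincides — but step (1) forces at most one of $i,j$ to leave the "main" component, so at worst one of the two is on a contracted twig and the other is on $C_{v_0}$, and these still map to distinct points of $C_{v_0}$ since the twig is attached at the single specialization point and the other marking is genuinely elsewhere; a short argument rules out both being contracted to the same point. (4) This family is classified by a morphism $\oM_{0,n}^{adm}\to\Mor^{\uone}$, and on the open locus $M_{0,n}$ the construction does nothing (all fibers already irreducible), so it restricts to the given embedding.

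\textbf{Main obstacle.} The delicate point is step (3): proving that no two sections indexed within a common hyperedge are forced to collide after the contraction. One must analyze, for a chain of components between $C_{v_0}$ and the twig carrying $s_j$, exactly which markings get pushed onto which point of $C_{v_0}$, and confirm that condition $I\cap\Gamma_\alpha\le 1$ \emph{for all} boundary divisors through the stratum (not just the generic codimension-one ones) is enough — i.e.\ that the admissibility condition is preserved under degeneration to deeper strata. I expect this to follow because $\oM_{0,n}^{adm}$ is an open set and a codimension-$k$ stratum in its closure is an intersection of $\delta_{I_1},\dots,\delta_{I_k}$ each of which satisfies the hyperedge condition, so on such a stratum \emph{every} node $I_m$ has each $\Gamma_\alpha$ mostly on one side; combining over all $m$ pins each $\Gamma_\alpha$ to a single "spine" of components, and the distinguished component $C_{v_0}$ can be chosen on that spine. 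Making this combinatorial bookkeeping clean — rather than case-ridden — is the real work; once it is in place, flatness is automatic from the $\Proj$ description and representability of $\cM or^{\uone}$ finishes the proof.
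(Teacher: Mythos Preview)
Your overall strategy --- produce a family of $n$-pointed $\bP^1$'s over $\oM_{0,n}^{adm}$ by contracting the universal curve, then invoke representability of $\Mor^{\uone}$ --- is the right one and is what the paper does. But there is a real gap in step~(2), and you miss in step~(1) the logical point that makes the whole argument short.

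In step (1) you deduce that for each hyperedge $\Gamma_\alpha$ separately one has $|I\cap\Gamma_\alpha|\le1$ or $|I^c\cap\Gamma_\alpha|\le1$. But the definition quantifies over \emph{pairs} $(\alpha,\beta)$: the divisor $\delta_I$ is removed iff there exist $\alpha,\beta$ with $|I\cap\Gamma_\alpha|>1$ and $|I^c\cap\Gamma_\beta|>1$. Negating this yields the stronger statement that one side works \emph{uniformly}: either $|I\cap\Gamma_\alpha|\le1$ for every $\alpha$, or $|I^c\cap\Gamma_\alpha|\le1$ for every $\alpha$. No connectedness is needed. Since $|\Gamma_\alpha|\ge3$, at every node of every fibre all hyperedges point to the same side; hence the minimal subtree spanned by the legs in $\Gamma_\alpha$ is a single vertex, and this vertex is independent of $\alpha$. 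With this in hand your step~(3) becomes a one-line observation rather than the ``main obstacle''.

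The gap in step (2) is that ``the component closest to the marking~$1$'' need not be this distinguished vertex, and ``the relative stabilization of $(C,s_1)$ keeping only $s_1$'' is not well-defined in genus~$0$ (a single marking stabilizes nothing). Concretely, if $\delta_I$ is a retained boundary divisor with $1\in I$ and $|I\cap\Gamma_\alpha|\le1$ for all $\alpha$, then the marking $1$ sits on the branch that must be \emph{contracted}. The paper avoids this by using the forgetful map itself as the contraction: view the universal curve as $C\to\oM_{0,n+1}$ and compose with $\pi_{\Gamma_\alpha\cup\{n+1\}}$. On each geometric fibre this collapses precisely to the single-vertex subtree above; the images of the $n$ sections are then automatically disjoint within every $\Gamma_\alpha$ because at most one marking of $\Gamma_\alpha$ lies on any contracted branch. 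If you prefer your $\Proj$ description, the correct line bundle is $\omega_{C/S}(\sum_{i\in\Gamma_\alpha}s_i)$ for any fixed $\alpha$, not anything built from $s_1$ alone.
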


\begin{proof}
Recall that $\oM_{0,n+1}\to\oM_{0,n}$ is the universal family of $\oM_{0,n}$.
 For a family $C\to S$ of stable $n$-pointed rational curves with sections $s_1,\ldots,s_n$,
consider the induced morphism 
$$F_\alpha:\,C\to\oM_{0,n+1}\arrow^{\Gamma_\alpha\cup\{n+1\}}\oM_{\Gamma_\alpha\cup\{n+1\}}\simeq\bP^1.$$
Consider the families $\bP^1_S\to S$ with sections $F_\alpha(s_1),\ldots,F_\alpha(s_n)$.

We claim that if the induced morphism $S\to\oM_{0,n}$ factors through $\oM_{0,n}^{adm}$
then any of these families (for $\alpha=1,\ldots,d$) gives an object of $\cM or^{\uone}$
(these families are isomorphic for different~$\alpha$'s and therefore give the same object).
This gives a natural transformation $\ocM_{0,n}^{adm}\to \cM or^{\uone}$ we are looking for.

It suffices to check this on geometric points $S=\Spec\Omega$. The $C/\Omega$ is a stable rational curve
and the morphism $F_\alpha:\,C\to\bP^1_\Omega$ can be described as follows. in the dual graph of~$C$,
consider the minimal subtree with legs marked by $\Gamma_\alpha$.
Since $I\cap\Gamma_\alpha>1$ and $I^c\cap\Gamma_\alpha>1$ for any boundary divisor $\delta_I$,
this subtree in fact has just one vertex. The same argument shows that in fact this subtree is independent of $\alpha$.
The morphism $F_\alpha$ just collapses $C$ (and its marked points)
to the irreducible component of $C$ that corresponds to the vertex of the minimal subtree.
\end{proof}

Let us work out $v$ in coordinates. We don't need this formula but it is too simple and nice to ignore.
We have 
$$\Pic^{\underline0}=\H^1(\Pi, \bG_{m,M_\Gamma}),$$
where $\Pi$ is the dual graph of $\Sigma^s$.
So morphisms of $k$-schemes $\Mor^{\uone}\to \Pic^{\uone}$ are classified,
up to the action of $\Pic^{\underline0}$ on $\Pic^{\uone}$, by the group
\begin{equation}\label{fungroup}
\Hom(\H_1(\Pi,\bZ),\ \cO^*(\Mor^{\uone})/\pi_\Gamma^*\cO^*(M_\Gamma)).
\cooltag\end{equation}

\begin{Lemma}
Let $\gamma'=\{\gamma'_1\to\ldots\to\gamma'_{s'}\to\gamma'_1\}$ be a cycle
in $\Pi$, i.e.,~a closed chain of $\bP^1$'s  in $\Sigma^s$ (possibly with repetitions).
Let 
\begin{equation}\label{chaingamma}
\gamma=\{\gamma_1\to\ldots\to\gamma_{s}\to\gamma_1\}
\cooltag\end{equation} 
be its image in $\Sigma$.
Let $a_i=\gamma_i\cap\gamma_{i+1}$ (modulo $s$) and let $b_i\in\gamma_i$ be any
singular point of $\Sigma$ different from $a_i$ and $a_{i-1}$ (modulo $s$), see Fig.~\ref{cycles}.
\begin{figure}[htbp]
\includegraphics[width=2in]{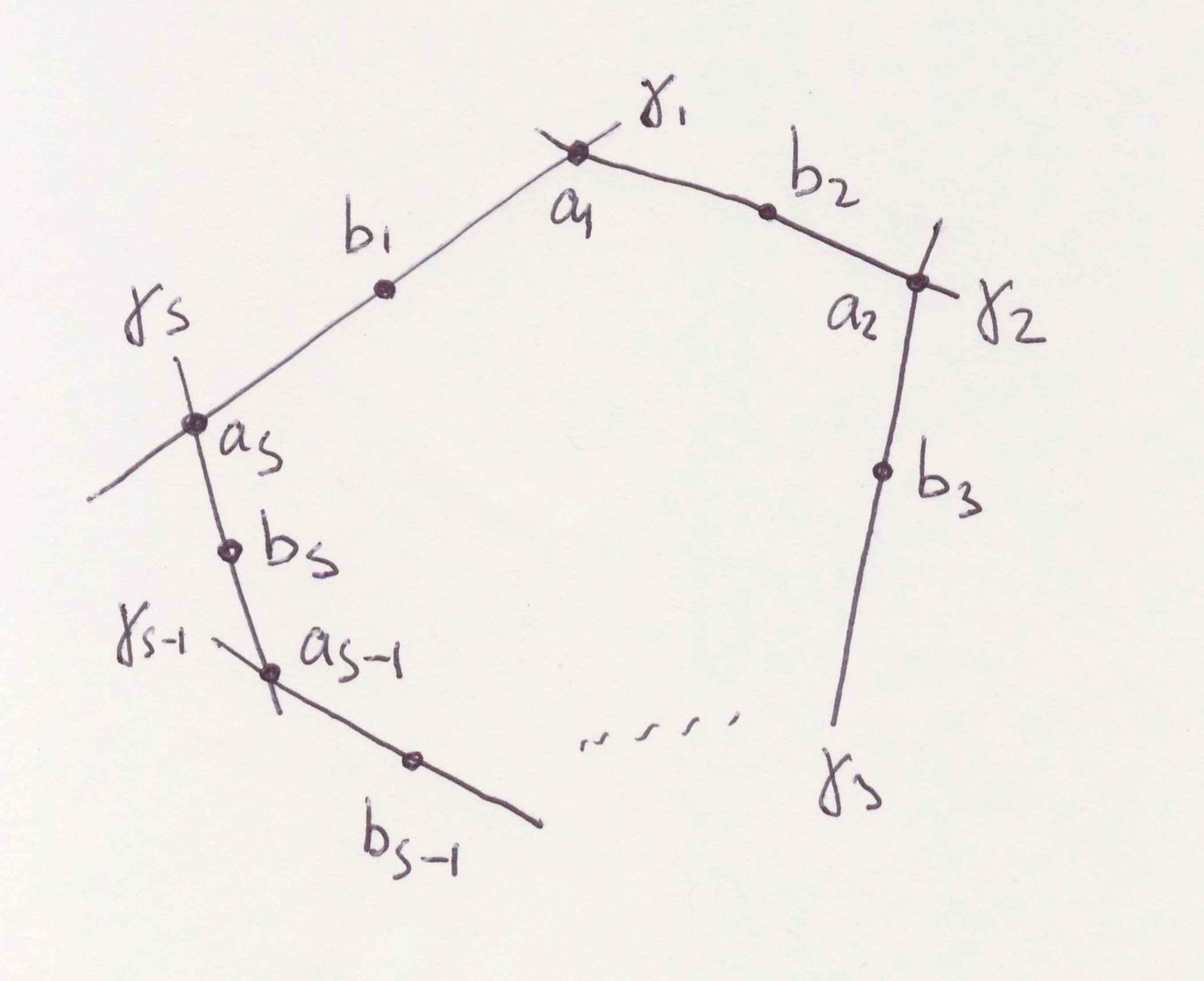}
\caption{}\label{cycles}
\end{figure}
The morphism $v$ of~\eqref{morphismV} corresponds to a linear functional in \eqref{fungroup}
that sends $[\gamma']\in H_1(\Pi,\bZ)$  to 
\begin{equation}\label{hubbabubba}
{x_{b_1}-x_{a_1}\over x_{b_1}-x_{a_s}}{x_{b_2}-x_{a_2}\over x_{b_2}-x_{a_1}}
\dots{x_{b_s}-x_{a_s}\over x_{b_s}-x_{a_{s-1}}}\in\cO^*(\Mor^{\uone})/\pi_\Gamma^*\cO^*(M_\Gamma), 
\cooltag\end{equation}
where we represent an $S$-point of $\Mor^{\uone}$ by an $n$-tuple $x_1,\ldots,x_n\in\bP^1(S)$.
\end{Lemma}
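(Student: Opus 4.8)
The strategy is to reduce the computation of the functional to the universal case and then extract the cocycle explicitly from a careful choice of trivialization. First I would recall that, by the Lemma--Definition, $\Mor^{\uone}$ represents families $C\to S$ of marked $\bP^1$'s, and the morphism $v$ is the pull-back of $\cO_{\bP^1}(1)$, landing in $\Pic^{\uone}$. Writing $\Pic^{\underline0}=\H^1(\Pi,\bG_{m,M_\Gamma})$ via the dual graph $\Pi$ of $\Sigma^s$, a class in $\Pic^{\uone}(\Mor^{\uone})$ modulo $\Pic^{\underline0}$ is determined by a homomorphism $\H_1(\Pi,\bZ)\to\cO^*(\Mor^{\uone})/\pi_\Gamma^*\cO^*(M_\Gamma)$ as in \eqref{fungroup}; so it suffices to evaluate this homomorphism on the basis cycle $[\gamma']$ given by a closed chain \eqref{chaingamma}.

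The key computation is local-to-global along the cycle. I would represent the universal $S$-point by an $n$-tuple $x_1,\ldots,x_n\in\bP^1(S)$, with $x_i$ the coordinate of the section indexed by $i\in N$. On the $j$-th component $\gamma_j\cong\bP^1$ of the image curve $\Sigma$ (equivalently, the copy of $\bP^1_S$ carrying the marked points of $\Gamma_j$), the line bundle $v^*\cO(1)$ restricts to $\cO_{\bP^1}(1)$, and a local generator is provided by a linear form vanishing at a chosen point; the natural choices are the attaching points $a_{j-1}=\gamma_{j-1}\cap\gamma_j$ and $a_j=\gamma_j\cap\gamma_{j+1}$, together with an auxiliary point $b_j\in\gamma_j$ distinct from both. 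Gluing these local trivializations across the singular point $a_j$ introduces a transition function which is precisely the cross-ratio-type quotient $(x_{b_j}-x_{a_j})/(x_{b_j}-x_{a_{j-1}})$ measuring the discrepancy between the normalizations chosen on $\gamma_j$ and $\gamma_{j+1}$; multiplying these transition functions around the full cycle $\gamma_1\to\cdots\to\gamma_s\to\gamma_1$ yields the product \eqref{hubbabubba}. I would make this precise by fixing on each $\gamma_j$ the rational section of $v^*\cO(1)$ that is $1$ at $b_j$ and has a zero at $a_j$ (say), then computing the ratio of two such adjacent sections at the common point $a_j$; the telescoping nature of the product over the cycle is what produces the cyclic pattern of indices, and one checks that changing $b_j$ only modifies the answer by an element of $\pi_\Gamma^*\cO^*(M_\Gamma)$ (a cross-ratio of $a_{j-1},a_j,b_j,b_j'$ pulled back from $M_\Gamma$), so the class in the quotient group is well-defined and independent of the choices of $b_j$.

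Finally I would verify the cocycle is independent of the lift $\gamma'$ of $\gamma$ to $\Pi$: passing through a valence-$>2$ vertex of $\Sigma$ (an inserted $\bP^1$ in $\Sigma^s$) only inserts a pair of attaching points on a component with no marked points from $N$, whose contribution cancels, so the formula depends only on the chain in $\Sigma$; and independence of the base point of the cycle follows from the cyclic symmetry of \eqref{hubbabubba}. The main obstacle is bookkeeping: getting the direction of each transition function right so that the quotients line up into the stated telescoping product rather than its inverse or a shifted variant, and confirming that the ambiguity in each $b_j$ genuinely lands in $\pi_\Gamma^*\cO^*(M_\Gamma)$. Once the local transition function at a single node is identified with the displayed quotient, the rest is a formal multiplication around the cycle.
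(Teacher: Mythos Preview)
Your approach is sound but genuinely different from the paper's. The paper does not compute \v{C}ech cocycles directly. Instead it introduces an auxiliary stable genus~$1$ curve $C$ built from the chain $\gamma_1\to\cdots\to\gamma_s\to\gamma_1$ with markings $b_1,\ldots,b_s$, together with the natural morphism $j:C\times S\to\bSi$. By functoriality it suffices to compute the composition
\[
\Mor^{\uone}\arrow^v\Pic^{\uone}(\Sigma)\arrow^{j^*}\Pic^{\uone}(C)\arrow^{\otimes\,\cO_C(-\sum b_i)}\Pic^{\uzero}(C)=\bG_m.
\]
The paper then normalizes each $\gamma_i$ by $a_{i-1}\mapsto 0$, $a_i\mapsto\infty$, $b_i\mapsto 1$, observes that $\Aut(C)\simeq\bG_m^s$ acts on $\Pic^{\uzero}(C)$ through the product homomorphism $\bG_m^s\to\bG_m$, and identifies the element of $\Aut(C)$ carrying $(b_1,\ldots,b_s)$ to $(f\circ j)^{-1}(\infty)$; its $i$-th coordinate is exactly the $i$-th factor in \eqref{hubbabubba}. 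Your \v{C}ech-cocycle route is more elementary and self-contained; the paper's route is shorter once one knows the $\Aut(C)$ action on $\Pic^{\uzero}(C)$, and it sidesteps all bookkeeping about transition functions.

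One concrete slip in your execution: the section you propose on $\gamma_j$, ``$1$ at $b_j$ with a zero at $a_j$'', vanishes at the very node $a_j$ where you want to compare it with the section on $\gamma_{j+1}$, so the ratio there is undefined. The fix is to take on $\gamma_j$ the section $z-x_{b_j}$ (zero at $b_j$, nonvanishing at both $a_{j-1}$ and $a_j$); then the transition function at the node $a_j$ is $(x_{a_j}-x_{b_{j+1}})/(x_{a_j}-x_{b_j})$, and only after multiplying around the cycle and reindexing do the factors rearrange into the form \eqref{hubbabubba} (up to the sign of the cycle). In particular, the displayed quotient $(x_{b_j}-x_{a_j})/(x_{b_j}-x_{a_{j-1}})$ is the contribution of the \emph{component} $\gamma_j$ after this rearrangement, not the transition function at the \emph{node} $a_j$ as you state. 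With that correction the argument goes through.
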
 

\begin{proof}
Let $C$ be a stable genus $1$ curve with $s$ rational components given by  \eqref{chaingamma}
and marked by $b_1,\ldots,b_s$ (it is unique up to an isomorphism). 
Note that for any flat family of hypergraph curves $\bSi\to S$,
we have a natural morphism $j:\,C\times S\to\bSi$.
By functorialities, it suffices to prove that the morphism 
$$\Mor^{\uone}\arrow^v\Pic^{\uone}(\Sigma)\arrow^{j^*}\Pic^{\uone}(C)\arrow^{\otimes\cO_C(-b_1-\ldots-b_s)}\Pic^{\uzero}(C)=\bG_m$$
is given by \eqref{hubbabubba}.
For any $i=1,\ldots,s$, fix an isomorphism of $\gamma_i$ with $\bP^1$
by sending $a_{i-1}\mapsto 0$, $a_i\mapsto\infty$, and $b_i\mapsto 1$.
If we ignore markings $b_1,\ldots,b_s$ then $\Aut(C)\simeq	\bG_m^s$, where the $i$-th copy of $\bG_m$
acts only on $\gamma_i\simeq\bP^1$ in the standard way (preserving $0$ and $\infty$).
It is immediate from definitions that $\Aut(C)$ acts on $\Pic^{\uzero}(C)$ through the product
homomorphism $\bG_m^s\to\bG_m$.
For any object $(\bSi\to S,\ f:\,\bSi\to\bP^1_S)$ of $\cG^1(S)$, consider the composition
$C\times S\arrow^{f\circ h}\bP^1_S$. Then 
$(f\circ h)^{-1}(\infty)$ has ${x_{b_i}-x_{a_i}\over x_{b_i}-x_{a_{i-1}}}$
as its $i$-th component.
Therefore, 
$$
\left ({x_{b_1}-x_{a_1}\over x_{b_1}-x_{a_s}},\ {x_{b_2}-x_{a_2}\over x_{b_2}-x_{a_1}},\ \ldots,\ 
{x_{b_s}-x_{a_s}\over x_{b_s}-x_{a_{s-1}}}\right)\in \Aut(C)(S)
$$
translates the divisor $(b_1,\ldots,b_s)$ to~$(f\circ h)^{-1}(\infty)$
and the Lemma follows.
\end{proof}

\begin{Example} 
Our running example will be the Keel--Vermeire curve of Fig.~\ref{KHyper}.
The morphism $G^1(\Sigma)=M_{0,6}\arrow^v\bG_m^3$ collapses the 
Keel--Vermeire divisor
$$G^2(\Sigma)\to W^2(\Sigma)=\{\omega_\Sigma\}.$$ 
We read a basis of 1-cycles from Fig.~\ref{KHyper}:
$1\to 5\to 2\to 1$, $4\to 2\to 3\to 4$, and $5\to 4\to6\to5$.
This shows that 
$v((x_1,\ldots,x_6)\mod PGL_2)$ is equal to
\begin{equation}\label{vforKV}
\left({x_6-x_5\over x_6-x_1}{x_4-x_2\over x_4-x_5}{x_3-x_1\over x_3-x_2},\quad
{x_5-x_2\over x_5-x_4}{x_1-x_3\over x_1-x_2}{x_6-x_4\over x_6-x_3},\quad
{x_2-x_4\over x_2-x_5}{x_3-x_6\over x_3-x_4}{x_1-x_5\over x_1-x_6}
\right)
\cooltag\end{equation}

In this case, the morphism of Prop.~\ref{relationes} is an isomorphism:
$\Mor^{\uone}$ is isomorphic to the complement in $\oM_{0,6}$
to the union of boundary divisors other that $\delta_{14}$, $\delta_{26}$, and $\delta_{35}$.
Or, more concretely, by fixing $x_2\mapsto0$, $x_4\mapsto 1$, $x_5\mapsto\infty$, $\Mor^{\uone}$
is isomorphic to the complement in  
$$(\bP^1_{x_1}\setminus\{0,\infty\})\times(\bP^1_{x_3}\setminus\{0,1\})\times(\bP^1_{x_6}\setminus\{1,\infty\})$$
to the union of diagonals.
The morphism \eqref{vforKV}
takes the form
\begin{equation}\label{vforKV}
\left({1\over x_6-x_1}{x_3-x_1\over x_3},\quad
{x_1-x_3\over x_1}{x_6-1\over x_6-x_3},\quad
{x_3-x_6\over x_3-1}{-1\over x_1-x_6}
\right)
\cooltag\end{equation}
\end{Example}

\begin{Review}\textsc{Setup.}
For simplicity, from now on 
we work exclusively in the ``divisorial'' setup of Section~\ref{DivSection}:
assume that $\Gamma$ has $d=n-2$ triples
and that all valences are equal to $2$ or~$3$. 
Neither $\Sigma$ nor $\Sigma^s$ have moduli 
and the dualizing sheaf $\omega_{\Sigma^s}$ has degree $1$ on each component of $\Sigma^s$.We have 
$$\dim M_{0,n}=n-3=g=\dim \Pic^{\uone}(\Sigma).$$

Moreover, we are going to impose the condition \eqref{CondS}
(recall that it is equivalent to $W^1(\Sigma)\ne W^2(\Sigma)$, 
i.e.~to $v$ being birational).
\end{Review}

The torsor $\Pic^{\uone}(\Sigma)$ has a natural proper (but reducible) model $\oPic^{n-2}$,  the 
 ``compactified Jacobian'' of~\cite{OS}, see \cite{Ca} and \cite{Al} for more recent developments.
Its geometric points 
correspond to gr-equivalence classes of admissible sheaves:

\begin{Definition}
A coherent sheaf on $\Sigma^s$ is called {\em admissible}
if it is torsion-free, has rank~$1$ at generic points of $\Sigma^s$, has degree $n-2$, 
and is semi-stable with respect to the canonical polarization~$\omega_{\Sigma^s}$.
\end{Definition}

\begin{Review}
$\oPic^{n-2}$ is a stable toric variety of $\Pic^{\uzero}(\Sigma)$, in partucular its normalization
is a disjoint union of toric varieties. 
By \cite{Si}, it is functorial in the following sense: consider the functor 
$\ocPic^{n-2}:\, Schemes\to Sets$
that assigns to a scheme $S$ the set of coherent sheaves on $\Sigma'\times S$ flat over $S$
and such that its restriction to any geometric fiber $\Sigma'_\Omega$ is admissible.
Then there exists a natural transformation 
$\ocPic^{n-2}\to h_{\oPic^{n-2}}$ 
which has the universal property: for any scheme $T$, any natural 
transformation 
$\ocPic^{n-2}\to h_T$ factors through a unique morphism $\oPic^{n-2}\to T$.
\end{Review}

\begin{Proposition}\label{stabledegree}
Any invertible sheaf $L\in \Pic^{\uone}$ is stable, i.e.
$$\Pic^{\uone}\subset\oPic^{n-2}.$$
\end{Proposition}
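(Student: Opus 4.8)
The plan is to verify semistability of an arbitrary invertible sheaf $L\in\Pic^{\uone}(\Sigma)$ directly from the combinatorial definition of the canonical polarization on $\Sigma^s$. Recall that a torsion-free rank-one sheaf of total degree $n-2$ on $\Sigma^s$ is semistable with respect to $\omega_{\Sigma^s}$ if and only if, for every proper subcurve $Y\subset\Sigma^s$, the degree of the restriction of the sheaf to $Y$ (corrected by the usual torsion-free boundary term) is bounded below by $\deg\omega_{\Sigma^s}|_Y\cdot\frac{n-2}{\deg\omega_{\Sigma^s}}$ minus half the number of nodes $Y\cap\overline{\Sigma^s\setminus Y}$; and \emph{stable} means the inequality is strict. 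Since $L$ is invertible, its restriction to any subcurve $Y$ has no correction term, so I need: for every proper subcurve $Y$,
\[
\deg_Y L > \#\{\text{components of }Y\} - \tfrac12\,\#(Y\cap\overline{\Sigma^s\setminus Y}),
\]
where I have used that $\deg\omega_{\Sigma^s}|_Y$ equals the number of components of $Y$ (each component of $\Sigma^s$ has $\omega$-degree $1$, as recorded in the Setup) and that $\deg\omega_{\Sigma^s}=2g-2=2(n-3)-\cdots$; the normalization constant works out so the target slope on the right is exactly the component count of $Y$.

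First I would reduce to subcurves $Y$ that are unions of components of the \emph{original} hypergraph curve $\Sigma$, i.e. I may ignore the inserted $\bP^1$'s of $\Sigma^s\setminus\Sigma$: for such an exceptional component $E$ (attached at $v_i\ge3$ points to the rest), including or excluding $E$ from $Y$ changes both sides in a controlled way, and a short case check shows the worst inequalities occur when $Y$ is ``saturated'' with respect to these components. So assume $Y$ is a union of $k$ of the $d=n-2$ main components $\bSi_{j_1},\ldots,\bSi_{j_k}$, say indexed by $S\subset\{1,\ldots,d\}$ with $|S|=k$. Then $\deg_Y L=k$ since $L$ has degree $1$ on each component, and the right-hand side is $k-\frac12\delta$ where $\delta$ is the number of nodes of $\Sigma^s$ joining $Y$ to its complement. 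Thus the stability inequality $k > k-\frac12\delta$ is equivalent to $\delta>0$, i.e. to $Y$ being connected to the rest of $\Sigma^s$ — which holds for every proper subcurve because $\Gamma$ is connected (one of our standing hypotheses). Strictness is automatic here since $\delta\ge1$ gives $-\frac12\delta<0$.

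The one subtlety — and the step I expect to be the main obstacle — is making the node-counting between $\Sigma$ and $\Sigma^s$ completely rigorous, so that the slope normalization really produces the clean ``RHS $=$ component count'' that I used above, and so that the reduction from arbitrary subcurves of $\Sigma^s$ to unions of main components is airtight (the exceptional components of $\Sigma^s$ carry $\omega$-degree $1$ too, so a subcurve containing some but not all of them must be handled by the inequality directly). Concretely I would: (1) compute $\deg\omega_{\Sigma^s}=2g-2$ using $g=\dim M_\Gamma+2d-n+1=n-3$ from the Setup, and check the Seshadri slope of an invertible degree-$(n-2)$ sheaf equals $1$ on each of the $g+d-\cdots$ components, pinning down the basic slope inequality; (2) for a general subcurve $Y\subset\Sigma^s$ with $c$ components and $\delta$ connecting nodes, observe $\deg_Y L=c$ and reduce the desired strict inequality to $\delta\ge1$; (3) conclude $\delta\ge1$ from connectedness of $\Sigma^s$, which follows from connectedness of $\Gamma$. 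Since every proper subcurve of a connected curve meets its complement, every $L\in\Pic^{\uone}(\Sigma)$ is stable, giving the claimed inclusion $\Pic^{\uone}\subset\oPic^{n-2}$ via the functorial description of $\oPic^{n-2}$ recalled above.
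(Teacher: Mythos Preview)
There is a genuine gap. Your computation ``$\deg_Y L=c$'' (the total component count of $Y$) is wrong whenever $Y$ contains any of the white components of $\Sigma^s$, and this is where all the content lies. An element $L\in\Pic^{\uone}$ is an invertible sheaf on the \emph{hypergraph} curve $\Sigma$, not on $\Sigma^s$; viewed on $\Sigma^s$ (via pullback along the contraction $\Sigma^s\to\Sigma$) it has degree $1$ on each of the $n-2$ black components and degree $0$ on each of the $n-6$ white components. In particular $\deg L=n-2$, while $\deg\omega_{\Sigma^s}=2n-8$, so the slope term is $\lambda_Y\deg L=\dfrac{(b(Y)+w(Y))(n-2)}{2(n-4)}$, not the component count. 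Your two claimed simplifications (``$\deg_Y L=c$'' and ``$\lambda_Y\deg L=c$'') both fail as soon as $n>6$.

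Once you plug in the correct degrees, the inequality does not reduce to connectedness. For example, take $Y$ to be a single white component: then $\deg_Y L=0$, $\#(Y)=3$, and the inequality becomes $\left|\dfrac{n-2}{2(n-4)}\right|<\dfrac32$, already a nontrivial (though easy) check. More seriously, for larger $Y$ the Gieseker inequality is equivalent to an estimate of the form $(n-5)c(Y)+r(Y)\ge(n-3)b(Y)$ in the paper's notation, and this is exactly where the hypergraph condition \eqref{CondS}, namely $|\bigcup_{j\in S}\Gamma_j|\ge|S|+2$, enters: it gives $c(Y)\ge b(Y)+2$, which together with a short case analysis on $b(Y)$ yields the needed bound. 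Your argument never invokes \eqref{CondS}, which should already signal that something has been lost, since without it the statement is false (for disconnected or degenerate $\Gamma$ one can produce unstable multidegrees). The paper's proof carries out precisely this black/white bookkeeping; you will need to do the same.
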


\begin{proof}
We can use the well-known Gieseker's ``basic inequality'': 
an invertible sheaf $F$ on a stable curve $X$ is stable w.r.t. the canonical polarization $\omega_X$
if and only if 
\begin{equation}\label{GBI}
|\deg (F|_Y)-\lambda_Y\deg F|<{1\over2}|Y\cap\ov{ X\setminus Y}|
\cooltag\end{equation}
for every proper non-empty subcurve $Y\subset X$, where 
$$\lambda_Y={\sum\limits_{X_i\subset Y}\deg \omega_X|_{X_i}\over\deg\omega_X},$$ 
the sum over irreducible components of $Y$.

The dual graph $\Pi$ of $\Sigma'$ has $n-2$ {\em black} vertices that correspond to components of~$\Sigma$
and $n-6$ {\em white} vertices that correspond to $\bP^1$'s inserted at triple points of~$\Sigma$. 
It is ``almost'' bipartite for large $n$:
each white vertex is connected only to black vertices and there are exactly $6$ edges
connecting pairs of black vertices.

By swapping $Y$ and $\Sigma^s\setminus Y$ if necessary, we can assume that  
\begin{equation}\label{diamond}
(n-2)w(Y)-(n-6)b(Y)\ge0,
\cooltag\end{equation}
where $b(Y)$ (resp.~$w(Y)$) is the number of black (resp.~white) components in $Y$.
A simple calculation shows that  \eqref{GBI} is then equivalent to
\begin{equation}\label{lucy}
(n-2)w(Y)-(n-6)b(Y)-(n-4)\#(Y)<0,
\cooltag\end{equation}
where $\#(Y)=|Y\cap\ov{\Sigma^s\setminus Y}|$.
Suppose that $Y$ contains a black component $B_1$ adjacent to a white component $W_1$ of $\Sigma^s\setminus Y$.
Consider a new subcurve $Y_1$ obtained by adding $W_1$ to $Y$.
Note that the LHS in \eqref{diamond} (resp. \eqref{lucy} )
only increases (resp. decreases) when we pass from $Y$ to $Y_1$.
So it suffices to prove \eqref{lucy} for $Y_1$. Doing this as many times as necessary, we can 
assume without loss of generality that all white components of $\Sigma^s$ adjacent to black components of $Y$
also belong to $Y$.

Let $e(Y)$ be the number of white components of $Y$ adjacent to $3$ black components of $\Sigma^s\setminus Y$.
Let $c(Y)$ (resp.~$r(Y)$) be the number of singular points (resp.~nodes) of $\Sigma$ covered by the image of $Y$.
Then we have 
$$w(Y)=e(Y)+c(Y)-r(Y)$$
and (by counting nodes in $Y$)
$$\begin{array}{rl} 
3b(Y)\ =&3(w(Y)-e(Y))+2r(Y)-(\#(Y)-3e(Y))\cr
=&3w(Y)+2r(Y)-\#(Y).\end{array}$$
Using these equations to find $\#(Y)$ and $w(Y)$, we can 
rewrite the LHS in \eqref{lucy} as
$$
-(2n-10)e(Y)-(2n-10)c(Y)-2r(Y)+(2n-6)b(Y).
$$
Therefore, it suffices to prove that
$$
(n-5)c(Y)+r(Y)-(n-3)b(Y)\ge0.
$$
By  \eqref{CondS}, $c(Y)\ge b(Y)+2$. Therefore, it suffices to prove that
\begin{equation}\label{finaleq}
(n-5)+{r(Y)\over 2}\geq b(Y).
\cooltag\end{equation}
If $b\le n-5$ the \eqref{finaleq} is clear.
If  $b=n-4$, then $r(Y)\ge 5$, and if $b=n-3$ then $r(Y)=6$.
In all these cases \eqref{finaleq} also holds.
\end{proof}

\begin{Review}\label{CaporasoStuff}
Let $\oPicone$ be the closure of $\Pic^{\uone}$ in $\oPic^{n-2}$.
It is a (possibly non-normal) toric variety of $\Pic^{\uone}$. Its toric strata of codimension $k$
can be described as follows \cite{Ca}.
Choose $k$ nodes in $\Sigma'$ and let $\hat\Sigma$ be a curve obtained from $\Sigma'$
by inserting $k$ strictly semistable $\bP^1$'s at the chosen nodes.
Start with the multidegree $\uone$ and 
choose an arbitrary multidegree $\hat d$ on $\hat\Sigma$ such that the degree on
each extra $\bP^1$ is $1$ and the degree on one of the neighboring components is lowered by~$1$
(compared to the multidegree $\uone$).
Toric strata of codimension $k$ correspond to multidegrees $\hat d$ as above
that are semistable (i.e.,~satisfy the non-strict Gieseker's basic inequality \eqref{GBI} on $\hat\Sigma$).
The corresponding admissible sheaves on $\Sigma'$ are push-forwards of invertible sheaves $\hat F$ on $\hat\Sigma$
of a given admissible multidegree with respect to the stabilization morphism $\hat\Sigma\to\Sigma'$. 
\end{Review}

\begin{Proposition}
Suppose the hypergraph $\Gamma$ satisfies not only \eqref{CondS} but also \eqref{CondM}.
Boundary divisors of $\oPicone$ can be described as follows (using the language of \ref{CaporasoStuff}).
Either choose one of the $3$ nodes on any of the $n-6$ white components,
insert a $\bP^1$ at it, and lower the degree of the adjacent black component.
Or choose one of the $6$ nodes of $\Sigma$,  
insert a $\bP^1$ at it, and lower the degree of one of the $2$ adjacent black components.
In other words, the polytope of the toric variety $\oPicone$ has $3n-6$ faces.
\end{Proposition}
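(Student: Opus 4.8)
The plan is to read off the boundary divisors from the combinatorial description of toric strata recalled in \ref{CaporasoStuff}, applied with $k=1$. That description says that the boundary divisors of $\oPicone$ -- equivalently, the facets of the polytope of the (normalization of the) toric variety $\oPicone$ -- are in bijection with the data of a node $p$ of $\Sigma'$ together with a choice of one of the two components $A$ through $p$, subject to the condition that the multidegree $\hat d$ on $\hat\Sigma=\Sigma'\cup_p\bP^1$ -- degree $1$ on the inserted $\bP^1$, degree lowered by one on $A$, equal to the pull-back of $\uone$ elsewhere -- is semistable, i.e. satisfies the non-strict Gieseker inequality \eqref{GBI} on $\hat\Sigma$. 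By \eqref{CondM} the hypergraph $\Gamma$ has exactly $6$ vertices of valence two and $n-6$ of valence three, so $\Sigma'$ has $6$ nodes joining two black components and $3$ nodes on each of the $n-6$ white components (joining it to a black component), hence $3n-12$ nodes and $6n-24$ candidate pairs $(p,A)$. So the whole point is to determine which pairs give a semistable $\hat d$; I would prove the semistable ones are exactly the $3n-6$ in the statement: at a node on a white component only lowering the black neighbour is allowed, while at a black--black node both choices are.

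First I would dispose of the forbidden case. At a node $p$ on a white component $W$, the pull-back of $\uone$ has degree $0$ on $W$, so lowering $W$ forces $\deg\hat d|_W=-1$; taking $Y=W$ in \eqref{GBI} on $\hat\Sigma$ (with $\#(Y)=3$, $\deg\omega_{\hat\Sigma}|_W=1$, $\deg\omega_{\hat\Sigma}=2n-8$, total degree $n-2$) gives $\bigl|\deg\hat d|_Y-\lambda_Y(n-2)\bigr|=1+\tfrac{n-2}{2(n-4)}=\tfrac{3n-10}{2(n-4)}>\tfrac32=\tfrac12\#(Y)$, so $\hat d$ is not semistable. This uses nothing beyond the shape of $\Sigma'$, and accounts for the fact that each white-component node contributes only one divisor.

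For the remaining $3n-6$ pairs -- those where the lowered component $A$ is black -- I would verify \eqref{GBI} for every proper subcurve $Z\subset\hat\Sigma$ by comparing $Z$ with $Y=\nu(Z)\subset\Sigma'$, where $\nu\colon\hat\Sigma\to\Sigma'$ contracts the inserted $\bP^1$. A short bookkeeping, split according to whether $Z$ contains the inserted $\bP^1$ and whether it contains $A$, and using the complementation symmetry $Z\leftrightarrow\hat\Sigma\setminus Z$ of \eqref{GBI}, shows that always $\#_{\hat\Sigma}(Z)=\#_{\Sigma'}(Y)$ and $\lambda_Z=\lambda_Y$, while $\deg\hat d|_Z$ differs from $\deg\uone|_Y$ by $0$ or $\pm1$. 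Since $\uone$ is \emph{stable} on $\Sigma'$ (Proposition~\ref{stabledegree}), \eqref{GBI} holds strictly for $Y$; the task is to upgrade this to the non-strict inequality for $Z$ after the shift. This is exactly where \eqref{CondM} (stronger than the \eqref{CondS} used in Proposition~\ref{stabledegree}) enters: rerunning the computation in the proof of Proposition~\ref{stabledegree} with the sharper bound $c(Y)\ge b(Y)+3$ (valid by \eqref{CondM} for $2\le b(Y)\le n-3$) in place of $c(Y)\ge b(Y)+2$ improves \eqref{lucy} and supplies one extra unit of slack in \eqref{GBI} for such $Y$ -- exactly what is needed to absorb the shift -- while the finitely many subcurves $Y$ with $b(Y)\in\{0,1,n-2\}$ are checked directly, as were the cases $b(Y)=n-4,\,n-3$ in Proposition~\ref{stabledegree}. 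Once this is done, the semistable pairs are precisely the $3n-6$ described, and the toric dictionary turns this into ``$3n-6$ faces.''

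The hard part will be this last step: the case analysis relating subcurves of $\hat\Sigma$ to subcurves of $\Sigma'$, and the verification that \eqref{CondM} supplies precisely the slack lost to the degree shift -- essentially a one-notch-finer rerun of Proposition~\ref{stabledegree} together with the low-$b(Y)$ exceptional cases. The description of the toric strata, the forbidden case, and the final count are routine by comparison.
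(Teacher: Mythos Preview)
Your plan is essentially the paper's own proof: rule out lowering a white component by a direct Gieseker check on that single component, then verify semistability for the remaining $3n-6$ choices by rerunning the computation of Proposition~\ref{stabledegree} with the sharper bound $c(Y)\ge b(Y)+3$ coming from \eqref{CondM}. The paper organizes the case analysis slightly differently, introducing a parameter $\eps\in\{-1,0,1\}$ recording whether the test subcurve contains the lowered black component and/or the inserted $\bP^1$, and then reducing to the critical case $\eps=1$ (with a further split on the color of the second neighbour of the inserted $\bP^1$); but the substance is the same.

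One inaccuracy to flag: your bookkeeping claim ``$\#_{\hat\Sigma}(Z)=\#_{\Sigma'}(Y)$ always (after complementation)'' is false. If $Z$ contains both black neighbours of the inserted $\bP^1$ but not the $\bP^1$ itself (or, complementarily, contains the $\bP^1$ but neither neighbour), then $\#_{\hat\Sigma}(Z)=\#_{\Sigma'}(Y)+2$, because a single internal node of $Y$ is replaced by two boundary nodes of $Z$. This does not hurt you: in exactly these cases the degree shift is $\pm1$ while the right-hand side of \eqref{GBI} gains $+1$, so the strict stability of $\uone$ on $\Sigma'$ from Proposition~\ref{stabledegree} already suffices, with no appeal to \eqref{CondM} needed. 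So the argument survives, but the ``short bookkeeping'' sentence should be corrected to say $\#_{\hat\Sigma}(Z)\in\{\#_{\Sigma'}(Y),\,\#_{\Sigma'}(Y)+2\}$, and the $+2$ case disposed of separately as above. The genuine work, as you correctly identify, is the $\#_{\hat\Sigma}(Z)=\#_{\Sigma'}(Y)$ case with a degree shift of $\pm1$, where \eqref{CondM} supplies the missing unit of slack.
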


\begin{proof}
First of all, let us show that an admissible multidegree $\hat d$ on $\hat\Sigma$ is non-negative.
Indeed, let $Y\subset\hat\Sigma$ be one of the stable irreducible components
and suppose that $\deg\hat F|_Y<0$. Then \eqref{GBI} reads
$$
\left|\deg\hat F|_Y-{1\over 2n-8}(n-2)\right|\le {3\over2},
$$ 
which is nonsense.

Now we claim that all other possibilities give semistable multidegrees.
We will follow the argument of Prop.~\ref{stabledegree} and use the same notation.

Let $Y\subset\hat\Sigma$. Let $Z$ be a semistable irreducible component of $\hat\Sigma$.
Let $L$ be the ``lowered'' black component of $\hat\Sigma$.

We define $\eps$ as follows:
$$
\eps=\begin{cases}
1& \hbox{\rm if}\ Y\supset L,\ Y\not\supset Z,\cr
-1& \hbox{\rm if}\  Y\not\supset L,\ Y\supset Z,\cr
0&\hbox{\rm otherwise.}
\end{cases}
$$
By swapping $Y$ and $\hat\Sigma\setminus Y$ if necessary, we can assume that  
\begin{equation}\label{diamond1}
(n-2)w(Y)-(n-6)b(Y)+(2n-8)\eps\ge0.
\cooltag\end{equation}

The inequality  \eqref{GBI} is equivalent to
\begin{equation}\label{lucy0}
(n-2)w(Y)-(n-6)b(Y)+(2n-8)\eps-(n-4)\#(Y)\le0.
\cooltag\end{equation}

As in the proof of Prop.~\ref{stabledegree}, we can assume that all white components of $\hat\Sigma$
adjacent to a black component of $Y$ also belong to $Y$.

We can assume that $\eps\ne0$, because otherwise
inequality  \eqref{lucy0} is equivalent to  the same inequality \eqref{lucy} for $\Sigma^s$, since if we let $\ov{Y}$ be the image of $Y$ in
$\Sigma^s$, then $b(\ov{Y})=b(Y)$, $w(\ov{Y})=w(Y)$, $\#(\ov{Y})\geq\#(Y)$. 

If $\eps=-1$ then \eqref{lucy0} follows from \eqref{lucy} for $\Sigma^s$. 
So we assume that $\eps=1$.

If $Y$ contains the second component $W$ adjacent to $Z$ then again
\eqref{lucy0} follows from the same inequality \eqref{lucy} for $\Sigma^s$
(because $\#(\ov{Y})=\#(Y)+2$). So we can assume that $W\not\subset Y$.
Now consider $2$ cases.

Case A. Suppose that $W$ is white.
Then we have
$$w(Y)=e(Y)+c(Y)-r(Y)-1$$
and (by counting nodes in $Y$)
$$
3b(Y)-1=3(w(Y)-e(Y))+2r(Y)-(\#(Y)-1-3e(Y)),
$$
i.e.
$$
\#(Y)=-3b(Y)+3w(Y)+2r(Y)+2.
$$
Therefore, the LHS in \eqref{lucy0} becomes
$$
(-2n+10)e(Y)+(-2n+10)c(Y)-2r(Y)-(-2n+10)+(2n-6)b(Y)
$$
and so it suffices to prove that
$$
(2n-10)c(Y)+2r(Y)+(-2n+10)-(2n-6)b(Y)\ge0
$$
By \eqref{CondM}, this would follow from
$$
-4b+4n-20+2r\ge0,
$$
which follows from \eqref{finaleq}.

The Case B of a black component $W$
is similar and we leave it to the reader.
\end{proof}

\begin{Example}
The papers \cite{OS} and \cite{Al} contain a recipe for presenting
the polytope of $\oPicone$ as a slice of the hypercube. We won't go into the details here but let us give our favorite example.
Let $\Sigma$ be the Keel--Vermeire curve of Fig.~\ref{KHyper} with $4$ components
indexed by $\{1,2,3,4\}$.
Then the polytope is the rhombic dodecahedron of  Fig.~\ref{garnet}.
\begin{figure}[htbp]
\includegraphics[width=2in]{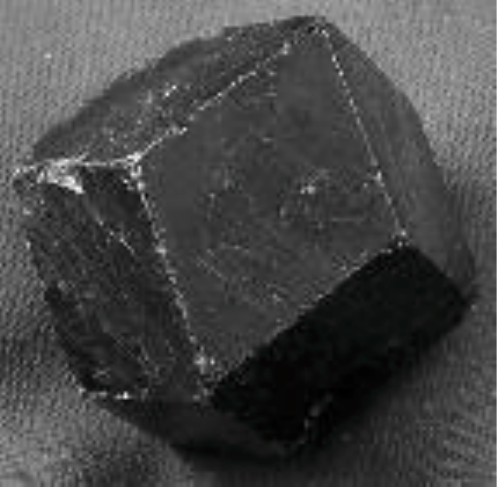}
\caption{\small Compactified Jacobian of the Keel--Vermeire curve.
}\label{garnet}
\end{figure}
The normals to its faces are given by roots $\alpha_{ij}=\{e_i-e_j\}$ of the root system $A_3$, where $i,j\in\{1,2,3,4\}$, 
$i\ne j$.
To describe a pure sheaf from the corresponding toric  codimension~$1$ stratum,
consider a quasi-stable curve $\Sigma_{ij}$
obtained by inserting a $\bP^1$ at the node of $\Sigma$ where $i$-th and $j$-th 
components intersect. Now just pushforward to $\Sigma$ an invertible sheaf
that has degree $1$ on this $\bP^1$ and at any component of $\Sigma_{ij}$ other than the proper transform
of the $i$-th component of $\Sigma$ (where the degree is $0$). 
\end{Example}

\begin{Remark}
What is the most natural compactification of $\Mor^{\uone}$ (and hence~$M_{0,n}$)
with respect to the morphism $v:\,\Mor^{\uone}\to\Pic^{\uone}$?
We note that $\Mor^{\uone}$ is a quotient of 
the space of morphisms $\Sigma^s\to\bP^1$ of multidegree $\uone$
by the free action of $\PGL_2$.
Let $\oM_{\Sigma^s}(\bP^1,n-2)$
be the Kontsevich moduli space 
(over $M_\Gamma=pt\in\oM_g$)
of stable maps $\hat\Sigma\to\bP^1$, where $\hat\Sigma$ is a semistable model of $\Sigma'$.
We would expect that the GIT-quotient of $\oM_{\Sigma^s}(\bP^1,n-2)$
by  $\PGL_2$ (with respect to some polarization) is the most obvious candidate. 
\end{Remark}

\section{Product of Linear Projections}\label{projections}

For a projective subspace $U\subset\bP^r$, let $l(U)=\codim U-1$
and let 
$$\pi_U:\,\PP^r\dra\PP^{l(U)}$$ 
be a linear projection from $U$.

\begin{Lemma}\label{linalg}
Let $U_1,\ldots,U_s\subset\PP^r$ be subspaces such that $U_i\not\subset U_j$ when $i\ne j$.
Then (a) the rational map 
$$\pi=\pi_{U_1}\times\ldots\times\pi_{U_s}:\,\PP^r\dra\PP^{l(U_1)}\times\ldots\times
\PP^{l(U_s)}$$ is dominant
if and only if 
\begin{equation}\label{dim inters}
l\left(\bigcap_{i\in S} U_i\right)\ge \sum_{i\in S}l(U_i)\quad\hbox{for any $S\subset \{1,\ldots,s\}$.}
\cooltag\end{equation}
(b) If $r=l(U_1)+\ldots+l(U_s)$ and $\pi$ is dominant then $\pi$ is birational.
\end{Lemma}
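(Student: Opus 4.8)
The plan is to prove part (a) first by a dimension count on the incidence variety, and then deduce part (b) as an immediate consequence.

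For part (a), I would work with the graph of $\pi$, or more precisely with the closure of its image. Identify each projection $\pi_{U_i}$ with the restriction of the map $\PP^r \dra \PP(W_i)$, where $W_i = \Ann(U_i) \subset (k^{r+1})^*$ is the space of linear forms vanishing on $U_i$, so that $\dim W_i = l(U_i)+1$ and, for $S \subset \{1,\ldots,s\}$, one has $\sum_{i\in S} W_i \subset (k^{r+1})^*$ with $\dim \bigl(\sum_{i\in S} W_i\bigr) = l\bigl(\bigcap_{i\in S} U_i\bigr)+1$. The map $\pi$ is dominant if and only if the image of $\prod_i \PP(W_i)$ is $(\sum_i l(U_i))$-dimensional, equivalently if and only if the multilinear product of the very ample bundles separates enough; the cleanest route is: $\pi$ is dominant $\iff$ the line bundle $\cO(1,\ldots,1)$ restricted to the image spans, $\iff$ the linear span of the image of the Segre-type map has the expected dimension. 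I would instead phrase it via the Veronese/Segre: $\pi$ dominant $\iff$ the natural multiplication map $\bigotimes_i W_i \to \Sym^s(k^{r+1})^*$ (thinking of monomials of multidegree $(1,\ldots,1)$) hits a subspace of dimension $\prod_i(l(U_i)+1)$ minus the dimension of the generic fiber — but this is getting complicated. A more direct approach: $\pi$ is dominant if and only if for a general point $p \in \PP^r$ the differential $d\pi_p$ is surjective, i.e. the subspaces $\Ann(U_i + p) \subset (k^{r+1})^*$ (the directions killed by $d(\pi_{U_i})_p$) are in direct sum. Since $p$ is general, $\dim(U_i + p) = \dim U_i + 1$ generically, and the condition that $\bigcap_{i \in S}(U_i + p)$ has the expected small dimension for all $S$ is exactly a generic transversality statement equivalent to \eqref{dim inters}. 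The forward implication is then clear (restrict to the preimage, fibers of $\pi_{U_i}$ contain the $U_i$'s so intersecting images forces the span condition); the reverse implication is the content — one must check that \eqref{dim inters} for all $S$ really does force generic surjectivity of $d\pi_p$, which is a Grassmannian-intersection computation best done by induction on $s$ using the case $S = \{1,\ldots,s\}$ together with the inductive hypothesis for proper subsets.

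For part (b), suppose $r = l(U_1) + \cdots + l(U_s)$ and $\pi$ is dominant. Then source and target have the same dimension, so $\pi$ is generically finite; I must show the generic fiber is a single reduced point. A general fiber of $\pi$ is $\bigcap_i \pi_{U_i}^{-1}(q_i)$ for general $q_i$, and $\pi_{U_i}^{-1}(q_i)$ is a linear subspace of $\PP^r$ containing $U_i$ of dimension $\dim U_i + 1 = r - l(U_i)$. So the fiber is an intersection of $s$ linear subspaces whose codimensions sum to $\sum_i l(U_i) = r$; by the dominance (which, via \eqref{dim inters} applied to $S = \{1,\ldots,s\}$, forces $l(\bigcap U_i) \ge r$, i.e. $\bigcap U_i = \emptyset$ for the ambient intersection, so the linear pieces meet transversally for general $q_i$) this intersection of linear spaces is transverse and hence either empty or a single reduced point; dominance rules out "empty", so it is one point. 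Therefore $\pi$ is birational.

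The main obstacle I expect is the reverse implication in (a): turning the combinatorial inequalities \eqref{dim inters} for all subsets $S$ into the statement that the relevant subspaces of $(k^{r+1})^*$ (or, dually, the linear fibers through a general point) are in general position. The natural tool is induction on $s$: the top inequality $S = \{1,\ldots,s\}$ handles the "last" factor, and the inequalities for $S' \subsetneq \{1,\ldots,s\}$ feed the inductive hypothesis for the product $\pi_{U_1} \times \cdots \times \pi_{U_{s-1}}$ — but one has to be careful that the inductive hypothesis applies to the subspaces $U_1, \ldots, U_{s-1}$ (which still satisfy the hypothesis $U_i \not\subset U_j$ and the inequality \eqref{dim inters} among themselves) and then bootstrap to include $U_s$; I'd want to phrase the induction so that the generic-transversality step is a clean linear-algebra lemma about direct sums of subspaces with prescribed intersection dimensions.
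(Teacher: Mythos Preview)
Your argument for (b) and for the forward implication in (a) is essentially the paper's: fibers of $\pi$ on $\PP^r\setminus\bigcup_i U_i$ are open subsets of linear subspaces (intersections of the $\langle U_i,q_i\rangle$), so a dominant $\pi$ with matching dimensions is birational; and if \eqref{dim inters} fails for some $S$, the map $\prod_{i\in S}\pi_{U_i}$ factors through $\pi_W$ with $W=\bigcap_{i\in S}U_i$, which is a projection to a space of dimension too small to dominate the product.

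For the reverse implication in (a) your route genuinely diverges from the paper. You propose induction on $s$, using surjectivity of $d\pi_p$ at a general $p$ and a general-position lemma for the subspaces $\langle U_i,p\rangle$. The paper instead inducts on $r$: choose a \emph{general hyperplane $H$ containing $U_s$}; then $\pi_{U_s}(H)$ is a point, and it suffices that $\pi_{U_1}\times\cdots\times\pi_{U_{s-1}}$ restricted to $H\cong\PP^{r-1}$ is dominant. This is the same lemma for the subspaces $U'_i=U_i\cap H$, and one checks \eqref{dim inters} for the $U'_i$ directly, treating separately the case $\bigcap_{i\in S}U_i\subset U_s$ (where one throws $s$ into $S$ and uses the original inequality). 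The advantage of the paper's approach is that the inductive step is a concrete combinatorial verification, whereas your induction on $s$ still needs the lemma you flag as the main obstacle: that the general fiber of $\pi_{U_1}\times\cdots\times\pi_{U_{s-1}}$ meets $U_s$ in the expected dimension. That lemma is not hard, but you have not proved it, and your sketch does not make clear which of the inequalities \eqref{dim inters} you would invoke at that step (you need more than just $S=\{1,\ldots,s\}$ and the proper subsets of $\{1,\ldots,s-1\}$; the subsets containing $s$ are what control $F\cap U_s$). So your plan is workable but incomplete exactly where you say it is, and the paper's hyperplane trick sidesteps that difficulty entirely.
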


\begin{proof}
Let $l_i:=l(U_i)$.
The scheme-theoretic fibers of the morphism $\PP^r\setminus\bigcup_iU_i\to\PP^{l_1}\times\ldots\times\PP^{l_s}$
are open subsets of projective subspaces. This implies (b).

Assume $\pi$ is dominant and that (\ref{dim inters}) is not satisfied, for
example we may assume that $W=U_1\cap\ldots\cap U_m$ has dimension
$w\geq r-(l_1+\ldots+l_m)$.  The projections $\pi_{U_i}$ for $i=1,\ldots,m$ factor through
the projection $\pi_W:\,\PP^r\dra\PP^{r-w-1}$.
It follows that the map:
$$\pi'=\pi_{U_1}\times\ldots\times\pi_{U_m}:\,\PP^r\dra\PP^{l_1}\times\ldots\times\PP^{l_m}$$ factors through $\pi_W$.
If $\pi$ is dominant, then so is $\pi'$, and therefore
the induced map $\PP^{r-w-1}\dra\PP^{l_1}\times\ldots\times\PP^{l_m}$ is dominant, which contradicts
$w\geq r-(l_1+\ldots+l_m)$.

Assume (\ref{dim inters}). We'll show that $\pi$ is dominant. 
We argue by induction on $r$.
Let $H$ be a general hyperplane containing $U_s$.
It suffices to prove that the restriction of $\pi_{U_1}\times\ldots\times\pi_{U_{s-1}}$ on $H$
is dominant.
Subspaces $U'_i:=U_i\cap H$ have codimension~$l_i+1$ in $H$ and, therefore, by induction assumption,
it suffices to prove that 
\begin{equation}\label{dim intersprime}
\dim\bigcap_{i\in S} U'_i<(r-1)-\sum_{i\in S}l_i\quad\hbox{for any $S\subset\{1,\ldots,r-1\}$.}
\cooltag\end{equation}
Let $W:=\bigcap\limits_{i\in S} U_i$. Let $L:=\sum\limits_{i\in S}l_i$.
By   \eqref{dim inters}, $\dim W<r-L$ and, therefore,  
$\dim H\cap W<r-L-1$ (i.e.,~we have \eqref{dim intersprime})  
unless  $W\subset U_s$. But in the latter case
$\dim\bigcap\limits_{i\in S} U'_i=\dim(U_s\cap W)<r-(l_s+L)$ by   \eqref{dim inters}.
\end{proof}

We would like to work out the case when all subspaces $U_1,\ldots,U_s$ are intersections
of subspaces spanned by subsets of points $p_1,\ldots,p_n\in\bP^{n-2}$
in linearly general position.
Let $N=\{1,\ldots,n\}$.
For any non-empty subset $I\subset N$, let $H_I=\langle p_i\rangle_{i\not\in I}$.

\begin{Lemma}\label{crazystuff}
The rational map 
$$\pi=\pi_{H_{\Gamma_1}}\times\ldots\times\pi_{H_{\Gamma_l}}:\,\bP^{n-2}\dra\bP^{|\Gamma_1|-2}\times\ldots\times\bP^{|\Gamma_l|-2}$$
is dominant if and only if \eqref{CondS} holds.
\end{Lemma}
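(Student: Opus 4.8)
The plan is to deduce Lemma~\ref{crazystuff} from the general criterion in Lemma~\ref{linalg}(a) applied to the subspaces $U_j = H_{\Gamma_j} = \langle p_i\rangle_{i\notin\Gamma_j}$. First I would record the two numerical translations that make the two conditions literally match. Since the points $p_1,\ldots,p_n$ are in linearly general position in $\bP^{n-2}$, any subset of $m$ of them spans a $\bP^{m-1}$; hence $\dim H_{\Gamma_j} = (n-|\Gamma_j|)-1$, so $l(U_j) = \codim U_j - 1 = |\Gamma_j|-2$. This already identifies the target $\bP^{l(U_1)}\times\cdots\times\bP^{l(U_l)}$ with $\bP^{|\Gamma_1|-2}\times\cdots\times\bP^{|\Gamma_l|-2}$, and turns the right-hand side $\sum_{j\in S} l(U_j)$ of \eqref{dim inters} into $\sum_{j\in S}(|\Gamma_j|-2)$, the right-hand side of \eqref{CondS}.

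The key remaining point is to compute $l\!\left(\bigcap_{j\in S}U_j\right)$ and show it equals $|\bigcup_{j\in S}\Gamma_j|-2$. The natural claim is that for points in linearly general position,
$$
\bigcap_{j\in S} H_{\Gamma_j} \;=\; H_{\bigcup_{j\in S}\Gamma_j} \;=\; \langle p_i\rangle_{i\notin\bigcup_{j\in S}\Gamma_j}.
$$
One inclusion is immediate: $\langle p_i\rangle_{i\notin\bigcup\Gamma_j}$ is contained in each $\langle p_i\rangle_{i\notin\Gamma_j}$. For the reverse inclusion I would argue that, in linearly general position, the span $\langle p_i : i\in A\rangle$ and $\langle p_i : i\in B\rangle$ of two coordinate subsets meet exactly in $\langle p_i : i\in A\cap B\rangle$ (their dimensions add up correctly: $(|A|-1)+(|B|-1) = (|A\cup B|-1) + (|A\cap B|-1)$, and the span of $A\cup B$ has the expected dimension $|A\cup B|-1$, so the intersection has the minimal possible dimension $|A\cap B|-1$, forcing equality with $\langle p_i\rangle_{i\in A\cap B}$). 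Applying this inductively over $S$ to the complementary sets $N\setminus\Gamma_j$ gives $\bigcap_{j\in S} H_{\Gamma_j} = \langle p_i\rangle_{i\in\bigcap_j (N\setminus\Gamma_j)} = \langle p_i\rangle_{i\notin\bigcup_j\Gamma_j}$, whence $\dim\bigcap_{j\in S}U_j = |N\setminus\bigcup_j\Gamma_j|-1 = n-1-|\bigcup_j\Gamma_j|$ and therefore
$$
l\!\left(\bigcap_{j\in S}U_j\right) = (n-2) - \dim\bigcap_{j\in S}U_j - 1 + 1 = |\textstyle\bigcup_{j\in S}\Gamma_j| - 2.
$$
Finally one should verify the hypothesis $U_i\not\subset U_j$ of Lemma~\ref{linalg}: since $|\Gamma_i|,|\Gamma_j|\ge 3$ and $\Gamma_i\ne\Gamma_j$, the sets $N\setminus\Gamma_i$ and $N\setminus\Gamma_j$ are distinct, and by linear general position neither span contains the other. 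With all identifications in place, condition \eqref{dim inters} becomes exactly $|\bigcup_{j\in S}\Gamma_j|-2\ge\sum_{j\in S}(|\Gamma_j|-2)$, i.e.\ \eqref{CondS}, and Lemma~\ref{linalg}(a) gives the equivalence with dominance of $\pi$.

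The only genuinely nontrivial step is the intersection formula $\bigcap_j H_{\Gamma_j} = H_{\bigcup_j\Gamma_j}$; everything else is bookkeeping with the definition of $l(\cdot)$ and a direct appeal to Lemma~\ref{linalg}. I expect the main obstacle to be stating the linear-general-position intersection lemma cleanly and handling the induction over $|S|$ without circularity (one has to keep track that the complementary sets $N\setminus\Gamma_j$ can be large, so the relevant spans are low-dimensional, but the dimension count above is symmetric in $A$ and $B$ and goes through regardless); a careful choice of coordinates, sending $p_1,\ldots,p_{n-1}$ to the standard projective frame, makes each $H_I$ a coordinate subspace and the intersection statement transparent.
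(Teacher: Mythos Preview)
Your reduction to Lemma~\ref{linalg} is the right idea, but the key computation is wrong: the identity $\bigcap_{j\in S}H_{\Gamma_j}=H_{\bigcup_{j\in S}\Gamma_j}$ fails whenever the sub-hypergraph $\{\Gamma_j\}_{j\in S}$ is disconnected. Your inductive step $\langle p_i:i\in A\rangle\cap\langle p_i:i\in B\rangle=\langle p_i:i\in A\cap B\rangle$ relies on the span of $A\cup B$ having dimension $|A\cup B|-1$, but the $n$ points live in $\bP^{n-2}$ and satisfy one linear relation, so this breaks down as soon as $|A\cup B|=n$, i.e.\ as soon as the corresponding $\Gamma$'s have empty intersection. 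Concretely, with $n=6$, $\Gamma_1=\{1,2,3\}$, $\Gamma_2=\{4,5,6\}$, the planes $H_{\Gamma_1}=\langle p_4,p_5,p_6\rangle$ and $H_{\Gamma_2}=\langle p_1,p_2,p_3\rangle$ in $\bP^4$ meet in a point, not in the empty set your formula predicts.

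The paper handles this by passing to the dual: $H_I$ is projectively dual to the span of the vectors $e_i-e_j$ with $i,j\in I$, so $\bigcap_{j\in S}H_{\Gamma_j}$ is dual to the span of all $e_i-e_j$ with $i,j$ lying in a common hyperedge from $S$. That span has dimension $|\bigcup_{j\in S}\Gamma_j|-e_S$, where $e_S$ is the number of connected components of $\{\Gamma_j\}_{j\in S}$, whence $l\bigl(\bigcap_{j\in S}H_{\Gamma_j}\bigr)=|\bigcup_{j\in S}\Gamma_j|-e_S-1$. Lemma~\ref{linalg} then yields a condition that agrees with \eqref{CondS} only when $e_S=1$; a separate short argument---applying \eqref{CondS} to each connected component of $\{\Gamma_j\}_{j\in S}$ and summing---is needed to prove the two conditions are equivalent. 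Your proof is missing both the correct intersection formula and this final equivalence.
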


\begin{proof}
For any $S\subset\{1,\ldots,l\}$, let 
$e_S$ be the number of connected components of a hypergraph $\{\Gamma_i\}_{i\in S}$
with more than one element.
Let $\cH_S=\bigcap\limits_{i\in S}H_{\Gamma_i}$.

Let $W\subset\bA^{n}_{x_1,\ldots,x_{n}}$ be a hyperplane $\sum x_i=0$.
In appropriate coordinates,
$\bP(W)$ is a projective space dual to $\bP^{n-2}$ and subspaces $H_I\subset\bP^{n-2}$ 
are projectively dual to projectivizations of linear subspaces $\langle x_i-x_j\rangle_{i,j\in I}$.
It follows that $\cH_S$ is projectively dual 
to a subspace $\langle e_i-e_j\rangle_{\exists k\in S:\ i,j\in\Gamma_k}$, which implies 
that 
$$
l(\cH_S)=|\bigcup_{i\in S}\Gamma_i|-e_S-1.
$$
By~Lemma~\ref{linalg}, it follows that $\pi$ is dominant if and only if
\begin{equation}\label{CondC}
|\bigcup_{i\in S}\Gamma_i|-e_S-1\ge\sum_{i\in S}(|\Gamma_i|-2)
\quad\hbox{\rm for any $S\subset\{1,\ldots,l\}$}.
\cooltag\end{equation} 

It remains to check that \eqref{CondC} and \eqref{CondS} are equivalent. 
It is clear that \eqref{CondC} implies \eqref{CondS}.
Now assume  \eqref{CondS}.
Let $I_1,\ldots,I_{e_S}$ be connected components of $\Gamma_S$ 
with more than one  element.
This gives a partition $S=S_1\sqcup\ldots\sqcup S_{e_S}$ such that 
$I_k=\bigcup\limits_{j\in S_k}\Gamma_j$ for any~$k$. 
Applying  \eqref{CondS} for each $S_k$ gives
$$|\bigcup_{i\in S}\Gamma_i|-e_S-1\ge\sum_k\bigl(|\bigcup_{i\in S_k}\Gamma_i|-2\bigr)\ge\sum_k\sum_{i\in S_k}(|\Gamma_i|-2)=\sum_{i\in S}(|\Gamma_i|-2)$$
and this is nothing but \eqref{CondC}.
\end{proof}

For a subset $I\subset N$ with $|I|, |I^c|\ge2$,
let $\delta_I\subset\oM_{0,n}$ be the corresponding boundary divisor.
It is well-known that $\pi_{N}:\,\oM_{0,n+1}\to\oM_{0,n}$ 
is the universal family of~$\oM_{0,n}$ with sections
$\delta_{1,n+1},\ldots,\delta_{n,n+1}$.

\begin{lemdef}[\cite{Ka}]\label{BasicPsi}\label{diagram}\label{specialtriangles}
The line bundle 
$$\psi=\omega_{\pi_{N}}(\delta_{1,n+1}+\ldots+\delta_{n,n+1})$$
 is globally generated and gives the morphism 
 $$\Psi:\,\oM_{0,n+1}\to\bP^{n-2}$$
such that  $\Psi(\delta_{i,n+1})=p_i$ and, more generally, $\Psi(\delta_I)=H_{I}$ for any $I\subset N$.
$\Psi$ is an iterated blow-up of proper transforms of these subspaces 
in the order of increasing dimension. In particular,
$\Psi$ induces an isomorphism $M_{0,n+1}\simeq\bP^{n-2}\setminus\bigcup\limits_{i,j\in N}H_{ij}$.
We have a commutative diagram of maps ($\pi_{H_S}$ is a rational map):
$$
\begin{CD}
\oM_{0,n+1} @>{\Psi}>> \bP^{n-2}\\
@V{\pi_{S\cup\{n+1\}}}VV                 @VV{\pi_{H_S}}V\\
\oM_{0,k+1}  @>{\Psi}>> \bP^{k-2}\\
\end{CD}$$
for each subset $S\subset N$ with $k$ elements.
\end{lemdef}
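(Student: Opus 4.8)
The statement is Kapranov's description of $\oM_{0,n+1}$ as an iterated blowup of $\bP^{n-2}$, together with the compatibility of the forgetful maps with linear projections; I would recall the standard proof and emphasize the diagram. The plan is to proceed in four steps: (1) show $\psi$ is globally generated and the induced $\Psi$ is birational; (2) identify $\Psi(\delta_I)$ with $H_I$; (3) prove $\Psi$ is the claimed iterated blowup; (4) deduce the commutative diagram with $\pi_{H_S}$.

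First I would verify that $\psi=\omega_{\pi_N}(\sum_i\delta_{i,n+1})$ restricts on each fiber $C$ of $\pi_N:\oM_{0,n+1}\to\oM_{0,n}$ (a stable $n$-pointed rational curve, thought of as the universal curve) to $\omega_C(p_1+\ldots+p_n)$, which on a smooth fiber $\bP^1$ with $n\ge3$ marked points is $\cO_{\bP^1}(n-2)$, very ample of the expected dimension; on nodal fibers it is still globally generated, being relatively globally generated with vanishing $R^1$. A relative base-point-freeness / cohomology-and-base-change argument then gives the morphism $\Psi$ to $\bP(\pi_{N*}\psi)\cong\bP^{n-2}$. Restricting to $M_{0,n+1}$: a point of $M_{0,n+1}$ is $n+1$ distinct points on $\bP^1$ up to $\PGL_2$; normalizing the last point and using $\psi$ embeds the fiber as a rational normal curve through which the first $n$ points, in linearly general position, determine the fiber — this recovers the classical fact that $\Psi$ is an isomorphism $M_{0,n+1}\simeq\bP^{n-2}\setminus\bigcup_{i,j}H_{ij}$ and in particular is birational. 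Here the $p_i$ are, by definition, $\Psi(\delta_{i,n+1})$, a genuinely defined subvariety of $\bP^{n-2}$, and one checks these $n$ points are in linearly general position because no proper linear relation survives the genericity of the marked points.

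Next, to compute $\Psi(\delta_I)$ for general $I\subset N$: the boundary divisor $\delta_I\subset\oM_{0,n+1}$ (with $n+1\notin I$, say, and $|I|,|I^c\cup\{n+1\}|\ge2$) parametrizes curves with a node separating the markings of $I$ from those of $I^c\cup\{n+1\}$; collapsing the component carrying $I$ makes the image lie on the component through $p_i$, $i\notin I$, hence $\Psi(\delta_I)\subseteq H_I:=\langle p_i\rangle_{i\notin I}$, and a dimension count (both sides have dimension $n-2-|I|$... adjusting indices, $\dim\delta_I = n-3$ if it is a divisor, and more generally the stratum $\delta_{I}$ for $|I|=k$ maps onto $H_I$ which has codimension $k-1$) forces equality generically. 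I would phrase this for the strata directly: the closed stratum where exactly the markings of $I$ come together maps dominantly, hence surjectively, onto $H_I$.

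Then, that $\Psi$ is the iterated blowup of the proper transforms of the $H_I$ in order of increasing dimension: this is the content of Kapranov's theorem, which I would cite as \cite{Ka}; the key point needed downstream is only the identification of centers and the resulting isomorphism on the open part, both already established. Finally, for the commutative diagram: fix $S\subset N$ with $|S|=k$ and compare $\Psi\circ\pi_{S\cup\{n+1\}}$ with $\pi_{H_S}\circ\Psi$. Both are rational maps $\oM_{0,n+1}\dra\bP^{k-2}$; they agree on the dense open $M_{0,n+1}$ because, under $M_{0,n+1}\simeq\bP^{n-2}\setminus\bigcup H_{ij}$, forgetting the markings $N\setminus S$ corresponds exactly to remembering only the linear data spanned by $\{p_i\}_{i\in S}$, i.e.\ to projecting from $H_S=\langle p_i\rangle_{i\notin S}$; since $\bP^{k-2}$ is separated and $\oM_{0,n+1}$ is reduced (indeed smooth), two rational maps agreeing on a dense open agree wherever both are defined, and on the left-hand side $\Psi\circ\pi_{S\cup\{n+1\}}$ is in fact a morphism. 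The main obstacle is Step 3 — proving the precise iterated-blowup structure, i.e.\ that each center is the proper transform of $H_I$ and the resolution terminates after handling all subsets — but since this is exactly Kapranov's theorem \cite{Ka} I would invoke it rather than reprove it, and concentrate the actual argument on the two facts genuinely used later: the isomorphism $M_{0,n+1}\simeq\bP^{n-2}\setminus\bigcup H_{ij}$ and the commuting square with $\pi_{H_S}$.
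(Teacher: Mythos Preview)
The paper does not prove this statement at all: it is stated as a Lemma--Definition with the attribution \cite{Ka} and no proof environment follows; the paper immediately uses it to deduce Theorem~\ref{coolcondition}. So there is nothing to compare your proposal against except the original Kapranov reference.

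Your sketch is a reasonable outline of how one recovers Kapranov's theorem and is more than the paper provides. A few small comments: your dimension bookkeeping in Step~(2) is visibly shaky (you start a count, abandon it, and switch to ``dominant hence surjective''), which is fine for a sketch but would need cleaning up; and in Step~(4) you should be a bit more careful that the commuting square is asserted as a diagram of \emph{maps} with $\pi_{H_S}$ only rational, so the agreement-on-a-dense-open argument is exactly the right one and nothing more is needed. Since you, like the paper, defer the actual iterated-blowup structure to \cite{Ka}, the substance of what you wrote matches the paper's intent.
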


\begin{proof}[Proof of Theorem~\ref{coolcondition}]
Follows from Lemma~\ref{crazystuff} and Lemma~\ref{specialtriangles}.
\end{proof}

\section{Proof of Theorems~\ref{NewDivisors} and~\ref{canonical}}\label{divisorssection}
By reordering, we can assume that $\Gamma_{n-3}=\{1,2,n\}$ and $\Gamma_{n-2}=\{3,4,n\}$.
We consider $\Gamma'$ as a hypergraph on $N':=\{1,\ldots,n-1\}$.
Consider a morphism
\begin{equation}\label{famousmapPi}
\Pi:=\pi_{\Gamma'\cup\{n\}}:\,\oM_{0,n}\ra(\oM_{0,4})^{n-4}=(\PP^1)^{n-4}.
\cooltag\end{equation}

\begin{Lemma}\label{morphismPi}
$\Pi$ is surjective and $\Pi|_{M_{0,n}}$  has irreducible $1$-dimensional fibers.
\end{Lemma}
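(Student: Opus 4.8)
\textbf{Proof plan for Lemma~\ref{morphismPi}.}
The plan is to deduce both statements from the structural results already established, namely Theorem~\ref{coolcondition} (the combinatorial criterion for dominance and birationality of hypergraph morphisms) together with Lemma~\ref{specialtriangles} and Lemma~\ref{crazystuff}, which identify these morphisms with products of linear projections from the configuration of points $p_1,\dots,p_{n-1}\in\bP^{n-3}$ under the Kapranov model $\Psi:\oM_{0,n}\to\bP^{n-3}$ (applied with $n$ replaced by $n-1$, so the relevant projective space is $\bP^{n-3}$ and the markings are indexed by $N'=\{1,\dots,n-1\}$, with the extra marking $n$ playing the role of ``$n+1$''). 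Concretely, $\Pi=\pi_{\Gamma'\cup\{n\}}$ fits into the commutative square of Lemma~\ref{diagram}, so that under $\Psi$ it becomes the rational map
$$\pi_{H_{\Gamma'_1}}\times\dots\times\pi_{H_{\Gamma'_{n-4}}}:\,\bP^{n-3}\dra(\bP^1)^{n-4},$$
a product of $n-4$ projections from the codimension-$2$ subspaces $H_{\Gamma'_j}=\langle p_i\rangle_{i\notin\Gamma'_j}$.

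\emph{Surjectivity.} First I would check the dominance criterion. Since $\Gamma$ satisfies $(\dag)$, and $\Gamma'=\{\Gamma_1,\dots,\Gamma_{n-4}\}$ is obtained by deleting the two triples $\Gamma_{n-3},\Gamma_{n-2}$, I need the version of \eqref{CondS} for the hypergraph $\Gamma'\cup\{n\}$ on the index set $N'$. For any $S\subset\{1,\dots,n-4\}$, condition $(\dag)$ gives $|\bigcup_{j\in S}\Gamma_j|\ge |S|+3=\sum_{j\in S}(|\Gamma_j|-2)+2$ when $|S|\ge 2$ (this is exactly \eqref{CondS}), while for $|S|\le 1$ the inequality $|\Gamma_j|-2\ge |\Gamma_j|-2$ is trivial; one must also include $n$ in each hyperedge, which only enlarges the left-hand side, so \eqref{CondS} holds for $\Gamma'\cup\{n\}$ as well. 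By Theorem~\ref{coolcondition} (dominance half), $\Pi$ is dominant; being a morphism of projective varieties it is surjective.

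\emph{Fibers.} For the fiber dimension, note that $\dim\oM_{0,n}-\dim(\bP^1)^{n-4}=(n-3)-(n-4)=1$, so the generic fiber is $1$-dimensional, and since $\Pi$ is dominant between projective varieties all fibers have dimension $\ge 1$; I expect generic-flatness plus the fact that the target is smooth to force every fiber over $M_{0,n}$ (where things will be equidimensional) to be purely $1$-dimensional. The substantive point is \emph{irreducibility} of the fibers over the interior $M_{0,n}$. Here I would pass to the $\Psi$-picture: over $\bP^{n-3}\setminus\bigcup H_{ij}\cong M_{0,n}$, the map is the restriction of the product of linear projections $\pi_{H_{\Gamma'_1}}\times\dots\times\pi_{H_{\Gamma'_{n-4}}}$, whose scheme-theoretic fibers on $\bP^{n-3}\setminus\bigcup_i H_{\Gamma'_i}$ are open subsets of \emph{linear} subspaces of $\bP^{n-3}$ (the common fiber of a product of linear projections is cut out by linear equations, namely the intersection of the $n-4$ hyperplane-pencil conditions), hence irreducible; intersecting such a linear space with the complement of the $H_{ij}$'s keeps it irreducible (an open dense subset of an irreducible variety). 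The one thing to verify carefully is that the interior $M_{0,n}\subset\oM_{0,n}$ maps into the locus $\bP^{n-3}\setminus\bigcup_i H_{\Gamma'_i}$ where the projection-product is a genuine morphism with linear fibers — this follows because $M_{0,n}$ maps isomorphically onto $\bP^{n-3}\setminus\bigcup_{i,j}H_{ij}$ under $\Psi$ and $H_{\Gamma'_i}=\bigcap_{j\notin\Gamma'_i} H_{?}$ is contained in the arrangement $\bigcup H_{jk}$. Thus the fiber of $\Pi|_{M_{0,n}}$ over a general (indeed every) point is an open dense subset of an affine/linear space, in particular irreducible and $1$-dimensional.

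\textbf{Main obstacle.} The delicate step is the irreducibility claim for \emph{all} interior fibers rather than just the generic one: a priori a special fiber of the product-of-projections map could acquire extra components when the point of $(\bP^1)^{n-4}$ is special, or the fiber of $\Pi$ could jump in dimension. I expect to handle this by combining (i) the linear description of fibers of $\pi_{H_{\Gamma'_1}}\times\dots\times\pi_{H_{\Gamma'_{n-4}}}$ away from the base loci $H_{\Gamma'_i}$ — these are honest linear spaces of the expected dimension $1$ with no jumping, precisely because the dominance criterion \eqref{CondS} (equivalently the dimension-count in Lemma~\ref{linalg}) guarantees the projection subspaces are ``in general position'' relative to each other — with (ii) the observation that $\Psi$ is an isomorphism over $M_{0,n}$, so no extra components can sneak in from the blown-up boundary. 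If a cleaner bookkeeping argument is wanted, one can instead realize $\Pi|_{M_{0,n}}$ as an iterated fibration of the ``universal $\bP^1$ minus points'' type and induct on $n$, but the projection-model argument above should already suffice.
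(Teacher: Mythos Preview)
Your surjectivity argument is correct and matches the paper's appeal to Theorem~\ref{coolcondition}. Your irreducibility argument via the Kapranov model is also fine: under $\Psi$ the interior fibers of $\Pi$ are open pieces of linear subspaces of $\bP^{n-3}$, hence irreducible. The paper phrases this instead through Theorem~\ref{ID}: the fiber of $\tilde G^1(\Sigma')\to C^0(\Sigma')$ over $(\Sigma',D)$ is the set of admissible pencils in $|\cO(D)|$ containing the section cutting out $D$, an open subset of $\bP^{h^0(\cO(D))-2}$. These are two descriptions of the same object.

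There is, however, a real gap in your argument for the \emph{upper} bound on fiber dimension. You assert that the dominance criterion \eqref{CondS} forces the projection centers $H_{\Gamma'_j}$ into ``general position'' so that every fiber of the product of projections is exactly $1$-dimensional. This is not true: already for two codimension-$2$ linear centers $U_1,U_2\subset\bP^3$ meeting in a point, the map $\pi_{U_1}\times\pi_{U_2}:\bP^3\dra\bP^1\times\bP^1$ satisfies \eqref{dim inters} (equivalently \eqref{CondS}) yet has a $2$-dimensional fiber over the point corresponding to the plane $\langle U_1,U_2\rangle$. Neither generic flatness nor smoothness of the target prevents this.

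What actually caps the fiber dimension at $1$ is the standing hypothesis $W^3(\Sigma')=\emptyset$ from Theorem~\ref{NewDivisors}, which you never invoke. Via Theorem~\ref{ID}, the union of fibers of $\pi_{\Gamma'\cup\{n\}}$ of dimension at least $r-1$ is exactly $\tilde G^r(\Sigma')$; since $W^3(\Sigma')=\emptyset$ forces $\tilde G^3(\Sigma')=\emptyset$, no interior fiber has dimension $\ge 2$. This hypothesis is genuinely independent of $(\dag)$ (the paper lists it separately and gives examples with $W^3\ne\emptyset$), so your combinatorial/linear-algebra route cannot supply the bound on its own. Once you add this input your argument goes through and is essentially equivalent to the paper's.
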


\begin{proof}
This follows from Theorems~\ref{coolcondition} and \ref{ID}, respectively.
\end{proof}

Let $\tilde N:=\{a,b,5,6,\ldots,n\}$.
We identify a stratum $\delta_{12}\cap\delta_{34}\hra\M_{0,n}$ with
$\M_{0,\tilde N}$ by attaching to $a$ (resp.~to $b$) a rational tail with markings $1,2$ (resp.~$3,4$).
Let
$$\bD=\Pi(\delta_{12}\cap\delta_{34})\subset(\bP^1)^{n-4}.$$

\begin{Lemma}\label{birational}
$\bD$ is a divisor birational to $\delta_{12}\cap\delta_{34}$ via~$\Pi$
and $\cO(\bD)\simeq\cO(1,\ldots,1)$.
%
\end{Lemma}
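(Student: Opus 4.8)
The plan is to analyze $\bD$ through the commutative square of Lemma~\ref{diagram}, which lets us replace $\Pi$ and the boundary stratum by linear-algebra data on $\bP^{n-2}$. Under $\Psi:\oM_{0,n}\to\bP^{n-3}$ (applied with one fewer point, since we have dropped marking $n$ in forming $\Pi$), the codimension-two stratum $\delta_{12}\cap\delta_{34}$ corresponds to the blow-up locus sitting over $H_{12}\cap H_{34}$, i.e.\ over $\langle p_i\rangle_{i\ne 1,2}\cap\langle p_i\rangle_{i\ne 3,4}$. Because $p_1,\ldots,p_{n-1}$ are in linearly general position in $\bP^{n-3}$, this intersection is the linear span $\langle p_5,\ldots,p_{n-1}\rangle$, which has dimension $n-7$, hence codimension $2$ in $\bP^{n-3}$ — already a hint that $\bD$ is a divisor. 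The composite $\Pi = \pi_{\Gamma'\cup\{n\}}$ then factors through the product of linear projections $\pi_{H_{\Gamma'_j}}$ restricted to this linear subspace.

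First I would set up the identification of $\delta_{12}\cap\delta_{34}$ with $\oM_{0,\tilde N}$ as in the text and track what $\Pi$ does to it. The key point is that for each hyperedge $\Gamma'_j = \{x,y,z\}\subset N'$, the projection $\pi_{\Gamma'_j}:\oM_{0,n}\to\oM_{0,4}$ only remembers the cross-ratio of the four points $x,y,z,n$; on the stratum $\delta_{12}\cap\delta_{34}$, the point $n$ lives on the component carrying $a$ and $b$, so this cross-ratio is computed entirely from the positions of $a,b,5,\ldots,n$ — in other words $\Pi|_{\delta_{12}\cap\delta_{34}}$ is essentially the hypergraph morphism on $\oM_{0,\tilde N}$ for a modified hypergraph on $\tilde N$ (replacing occurrences of $1,2$ by $a$ and $3,4$ by $b$). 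I would then verify that this modified hypergraph on the $(n-4)$-element set $\tilde N$ satisfies the dimension-count \eqref{matchdim} and condition \eqref{CondS}, so that by Theorem~\ref{coolcondition} the induced morphism is birational onto its image; since the source $\delta_{12}\cap\delta_{34}\cong\oM_{0,\tilde N}$ has dimension $n-5$ and the target $(\bP^1)^{n-4}$ has dimension $n-4$, the image $\bD$ is a divisor and $\Pi$ restricts to a birational map onto it.

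For the line bundle computation, I would compute the class $\cO(\bD)\in\Pic((\bP^1)^{n-4})=\bZ^{n-4}$ by intersecting with the rulings: the coefficient in the $j$-th factor is the degree of $\bD\to\prod_{j'\ne j}\bP^1$, equivalently the number of points of $\bD$ over a general point of $\prod_{j'\ne j}\bP^1$. Using the birationality just established, this equals the number of points of $\oM_{0,\tilde N}$ mapping to that fiber, which by the genus-zero cross-ratio bookkeeping (all the other hyperedge cross-ratios fixed, the $j$-th one free) is exactly $1$ — so $\cO(\bD)\cong\cO(1,\ldots,1)$. Alternatively, and perhaps more cleanly, I would pull back $\cO(1)$ from the $j$-th $\bP^1$ factor to $\oM_{0,n}$, identify it with a $\psi$-type or boundary class via Lemma~\ref{diagram}, restrict to $\delta_{12}\cap\delta_{34}$, and push forward, checking the degree is $1$ in each factor.

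The main obstacle I anticipate is the bookkeeping in the second step: one must make sure that the modified hypergraph on $\tilde N$ genuinely satisfies \eqref{CondS} and the dimension match \eqref{matchdim} (this uses the hypotheses on $\Gamma$ from Theorem~\ref{NewDivisors}, in particular that $n$ appears only in $\Gamma_{n-3},\Gamma_{n-2}$ so that those two triples "disappear" correctly upon restriction), and that the collapsing on the rational tails does not introduce unexpected fibers or drop the dimension further. Equivalently, in the $\bP^{n-2}$ picture, the subtle point is that restricting the product of linear projections to the linear subspace $H_{12}\cap H_{34} = \langle p_5,\ldots,p_{n-1}\rangle$ still yields a birational map onto a hypersurface of multidegree $(1,\ldots,1)$ — this is a special case of Lemma~\ref{crazystuff}/Lemma~\ref{linalg} applied after restriction, and the degree-one claim follows because each $U_i$ restricted to this subspace again has the form $H_J$ for an appropriate $J$, keeping us in the "linearly general position" regime where the fibers of each individual projection on the relevant subspace are points or lines of the expected dimension.
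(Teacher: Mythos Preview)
Your approach is headed the right way but has a structural gap in how you invoke Theorem~\ref{coolcondition}. The restricted morphism $\oM_{0,\tilde N}\to(\bP^1)^{n-4}$ has source of dimension $|\tilde N|-3=n-5$ (note $\tilde N=\{a,b,5,\ldots,n\}$ has $n-2$ elements, not $n-4$) and target of dimension $n-4$, so the dimension match \eqref{matchdim} \emph{fails}; Theorem~\ref{coolcondition} says nothing about ``birational onto its image'' in that situation. You cannot conclude birationality onto $\bD$ this way.

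The paper sidesteps this by an observation that also absorbs your line-bundle step: it suffices to show that for each $k$ the composition
\[
\oM_{0,\tilde N}\longrightarrow(\bP^1)^{n-4}\longrightarrow(\bP^1)^{n-5}
\]
(dropping the $k$-th factor) is birational. Now the source and target dimensions agree, Theorem~\ref{coolcondition} applies to the hypergraph $\{\tilde\Gamma_j\cup\{n\}\}_{j\ne k}$, and one only has to verify \eqref{CondS} for all \emph{proper} subsets $S\subsetneq\{1,\ldots,n-4\}$. This single combinatorial check yields both conclusions simultaneously: the image $\bD$ is a divisor birational to $\delta_{12}\cap\delta_{34}$, and each coordinate projection $\bD\to(\bP^1)^{n-5}$ has degree $1$, i.e.\ $\cO(\bD)\simeq\cO(1,\ldots,1)$. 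Your second paragraph on the class of $\bD$ in fact reduces to exactly this degree computation, so you would end up doing the paper's check anyway; the point is that it is the \emph{only} thing you need.

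The combinatorial verification itself is as you anticipate: the inequality $|\bigcup_{i\in S}\tilde\Gamma_i|\ge|S|+2$ follows from $(\dag)$, with the one delicate case being $1,2,3,4\in\bigcup_{i\in S}\Gamma_i$, where identifying $1,2\mapsto a$ and $3,4\mapsto b$ costs two vertices; there one adjoins $\Gamma_{n-3},\Gamma_{n-2}$ to $S$ and applies $(\dag)$ again to force $|\bigcup_{i\in S}\Gamma_i|\ge|S|+4$.
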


\begin{proof} 
It suffices to prove that all compositions 
$$\oM_{0,\tilde N}\to(\bP^1)^{n-4}\to(\bP^1)^{n-5}$$
are birational, where the last map is one of the projections.
For any $\Gamma\subset N$, let $\tilde\Gamma\subset\tilde N$
be a subset obtained from $\Gamma$ by identifying $1,2$ with $a$ and $3,4$ with $b$.
By~Theorem~\ref{coolcondition}, we have to show that
$$
|\bigcup_{i\in S}\tilde\Gamma_i|\ge|S|+2
\quad\hbox{\rm for any proper subset $S\subset\{1,\ldots,n-4\}$}.
$$
But this follows from $(\dag)$ with only one potential exception:
if $S\subset\{1,\ldots,n-4\}$, $1<|S|<n-4$, and $1,2,3,4\in\bigcup_{i\in S}\Gamma_i$,
then we have to show that in fact $|\bigcup_{i\in S}\Gamma_i|\ge |S|+4$.
But suppose that $|\bigcup_{i\in S}\Gamma_i|<|S|+4$. Then  
$$|\Gamma_{n-3}\cup\Gamma_{n-2}\cup\bigcup_{i\in S}\Gamma_i|<|S|+5=
|S\cup\{n-3,n-2\}|+3,$$
which contradicts $(\dag)$.
\end{proof}

\begin{Proposition}\label{deltauv}
$\delta_{uv}\nsubseteq\Pi^{-1}(\bD)$ for any $u,v\in N'$.
If no triple $\Gamma_i$ ($i=1,\ldots,n-4$) contains both $u$ and~$v$ 
then $\Pi(\delta_{uv})=(\PP^1)^{n-4}$.
In particular,  $\delta_{12}$ and $\delta_{34}$ surject onto $(\bP^1)^{n-4}$.
\end{Proposition}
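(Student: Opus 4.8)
The statement is that for any $u,v\in N'$ the boundary divisor $\delta_{uv}$ is not contained in $\Pi^{-1}(\bD)$, and that in fact $\Pi(\delta_{uv})$ is all of $(\bP^1)^{n-4}$ when no triple $\Gamma_i$, $i=1,\dots,n-4$, contains $\{u,v\}$. The strategy is to recognize $\delta_{uv}$ as a copy of $\oM_{0,\tilde N'}$, where $\tilde N'$ is obtained from $N'$ by replacing $u,v$ with a single marking $w$ (attaching a rational tail with markings $u,v$), and then to apply Theorem~\ref{coolcondition} to the restriction of $\Pi$ to this substratum, exactly as in the proof of Lemma~\ref{birational}. Concretely, restricting $\Pi=\pi_{\Gamma'\cup\{n\}}$ to $\delta_{uv}\cong\oM_{0,\tilde N'}$ yields the hypergraph morphism $\pi_{\tilde\Gamma'\cup\{n\}}$ on $\oM_{0,\tilde N'}$, where $\tilde\Gamma_i$ is $\Gamma_i$ with $u,v$ identified to $w$; so I must check the combinatorial condition $|\bigcup_{i\in S}\tilde\Gamma_i|\ge|S|+2$ for all proper $S\subset\{1,\dots,n-4\}$, together with a dimension count, to conclude that $\Pi|_{\delta_{uv}}$ is dominant (and, in the case at hand, birational onto its image).

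\textbf{Key steps.} First I would set up the identification $\delta_{uv}\cong\oM_{0,\tilde N'}$ and unwind what $\Pi|_{\delta_{uv}}$ is: since $\Pi$ forgets all points except those in each $\Gamma_i'\cup\{n\}$, and since the rational tail carrying $u,v$ gets contracted under each forgetful map to $\oM_{0,4}$ as soon as at most one of $u,v$ lies in $\Gamma_i$, the restriction is the hypergraph morphism for the identified hypergraph $\tilde\Gamma'$ on $\tilde N'$ (when $\{u,v\}\not\subset\Gamma_i$ the marking $w$ simply replaces whichever of $u,v$ appears; when $\{u,v\}\subset\Gamma_i$ the triple $\tilde\Gamma_i$ has only two distinct markings, i.e. that factor degenerates). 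Second, in the generic case where no $\Gamma_i$ ($i\le n-4$) contains both $u$ and $v$: each $\tilde\Gamma_i$ is genuinely a triple on $\tilde N'$, so $\sum_i(|\tilde\Gamma_i|-2)=n-4=|\tilde N'|-2$, the dimension condition \eqref{matchdim} holds, and I verify \eqref{CondS} for $\tilde\Gamma'\cup\{n\}$; this is essentially the computation already carried out for Lemma~\ref{birational}, using $(\dag)$ for $\Gamma$ — the point is that identifying two elements of $N'$ can only decrease the left side of the inequality by at most $1$ per constraint, and the slack in $(\dag)$ (which gives $\ge|S|+3$) absorbs this. Then Theorem~\ref{coolcondition} gives that $\pi_{\tilde\Gamma'\cup\{n\}}$ is birational, hence dominant, so $\Pi(\delta_{uv})=(\bP^1)^{n-4}$, and in particular $\delta_{uv}\not\subset\Pi^{-1}(\bD)$ since $\bD$ is a proper (divisorial) subset by Lemma~\ref{birational}. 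For the remaining case $\{u,v\}\subset\Gamma_{i_0}$ for some $i_0\le n-4$: here $\Pi(\delta_{uv})$ need not be everything, so I argue more crudely that $\Pi(\delta_{uv})\not\subset\bD$. One clean way: observe that $\bD=\Pi(\delta_{12}\cap\delta_{34})$ is cut out by a single equation of multidegree $(1,\dots,1)$ (Lemma~\ref{birational}), so it suffices to exhibit one point of $\delta_{uv}$ whose image avoids $\bD$; equivalently, show $\delta_{uv}\cap(\delta_{12}\cap\delta_{34})$ does not dominate $\Pi(\delta_{uv})$, which follows by a dimension count since $\Pi|_{\delta_{uv}}$ has positive-dimensional general fiber (its fibers over $M_{0,n}$ being the restrictions of the $1$-dimensional fibers of Lemma~\ref{morphismPi}) while $\delta_{uv}\cap\delta_{12}\cap\delta_{34}$ is a proper substratum not containing a whole such fiber.

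\textbf{Main obstacle.} The routine part is the inequality bookkeeping in the generic case — it parallels Lemma~\ref{birational} closely. The genuinely delicate step is the degenerate case $\{u,v\}\subset\Gamma_{i_0}$: there $\Pi|_{\delta_{uv}}$ need not be dominant, the identified hypergraph has a ``degenerate'' edge, and I cannot simply invoke Theorem~\ref{coolcondition}. I expect the cleanest route is to show directly that the $i_0$-th coordinate of $\Pi$ is non-constant on $\delta_{uv}$ and that its general fiber there still meets the complement of $\delta_{12}\cup\delta_{34}$; intersecting with the fact that $\bD$ has multidegree $(1,\dots,1)$, one deduces that a general point of $\delta_{uv}$ maps outside $\bD$. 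The careful point will be verifying that the cross-ratios parametrizing $\delta_{uv}$ actually move all the way off $\bD$ rather than being constrained to lie on it — this is where I would need to use connectivity of $\Gamma$ and condition $(\dag)$ to propagate a non-triviality through the hypergraph, rather than a purely local argument at the edge $\Gamma_{i_0}$.
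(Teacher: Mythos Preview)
Your treatment of the generic case (no $\Gamma_i$ with $i\le n-4$ contains both $u$ and $v$) is correct and matches the paper's argument: identify $\delta_{uv}\cong\oM_{0,\hat N}$ with $\hat N=\{p\}\cup N\setminus\{u,v\}$, check that the identified hypergraph $\hat\Gamma'\cup\{n\}$ still satisfies \eqref{CondS} using $(\dag)$, and apply Theorem~\ref{coolcondition}.

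The degenerate case, however, contains a genuine error. You write that you would ``show directly that the $i_0$-th coordinate of $\Pi$ is non-constant on $\delta_{uv}$,'' but the opposite is true: if $\{u,v\}\subset\Gamma_{i_0}$ then $\pi_{\Gamma_{i_0}\cup\{n\}}$ sends all of $\delta_{uv}$ to the single boundary point of $\oM_{0,\Gamma_{i_0}\cup\{n\}}\cong\bP^1$ where $u$ and $v$ collide. Your fallback argument is also not sound: showing that $\delta_{uv}\cap\delta_{12}\cap\delta_{34}$ fails to dominate $\Pi(\delta_{uv})$ does not by itself show $\Pi(\delta_{uv})\not\subset\bD$, since $\Pi^{-1}(\bD)\cap\delta_{uv}$ can be strictly larger than $\delta_{uv}\cap\delta_{12}\cap\delta_{34}$; and the appeal to ``restrictions of the $1$-dimensional fibers of Lemma~\ref{morphismPi}'' is misplaced because $\delta_{uv}$ lies entirely in the boundary.

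The paper's route in this case is both simpler and different in kind. First, $(\dag)$ with $|S|=2$ forces at most one triple to contain both $u$ and $v$. Then one applies Theorem~\ref{coolcondition} to the \emph{remaining} $n-5$ factors (the identified triples $\hat\Gamma_i$, $i\ne i_0$, still satisfy \eqref{CondS} by $(\dag)$), concluding that
\[
\Pi(\delta_{uv})=\{\mathrm{pt}\}\times(\bP^1)^{n-5}.
\]
Finally, since $\bD$ is an irreducible divisor of multidegree $(1,\ldots,1)$ by Lemma~\ref{birational}, while $\{\mathrm{pt}\}\times(\bP^1)^{n-5}$ is irreducible of the same dimension but of multidegree $(1,0,\ldots,0)$, containment would force equality and hence equality of classes, a contradiction. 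So the ``non-dominance'' in this case is handled not by moving a cross-ratio off $\bD$, but by computing the image exactly and comparing divisor classes.
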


\begin{proof} 
Suppose $u,v\in N'$ and let $\hat N=\{p\}\cup N\setminus\{u,v\}$, $\hat N'=\hat N\cap N'$.
We identify $\delta_{u,v}$ with $\oM_{0,\hat N}$,
where $p$ is the attaching point. For $i=1,\ldots,n-4$, 
let $\hat\Gamma_i\subset\hat N'$ be a subset obtained from $\Gamma_i$
by identifying $u$ and $v$ with $p$. 
By \eqref{CondM}, we have 
\begin{equation}\label{jui}
|\bigcup_{i\in S}\hat\Gamma_i|\ge|S|+2
\quad\hbox{\rm for any $S\subset\{1,\ldots,n-4\}$ with $|S|\ge2$}.
\cooltag\end{equation}
 
Consider two cases. Suppose that there exists a triple, for instance $\Gamma_1$,
that contains both $u$ and $v$.
By \eqref{CondM} (with $|S|=2$), there is at most one triple with this property.
Then $\pi_{\Gamma_1\cup\{n\}}(\delta_{uv})$ is a point and 
$$\pi_{\Gamma_i\cup\{n\}}|_{\delta_{uv}}=\pi_{\hat\Gamma_i\cup\{n\}}\quad\hbox{\rm for $i>1$}.$$
By Theorem~\ref{coolcondition} and \eqref{jui}, it follows that
$$\Pi(\delta_{uv})=pt\times(\PP^1)^{n-5}\not\subseteq\bD,$$
because $\bD$ is a divisor of type $(1,\ldots,1)$.
In the second case, no triple $\Gamma_i$ contains both $u$ and $v$.
Then, arguing as above,
$\Pi(\delta_{uv})=(\PP^1)^{n-4}$.
\end{proof}

\begin{Proposition}\label{bdry chunk}
For any $\delta_S\subset\partial\M_{0,n}$  with $n\in S$, $1,2,3,4\notin S$ one has:
$$\Pi(\delta_S)=\Pi(\delta_S\cap\delta_{12}\cap\delta_{34}).$$
In particular, $\de_S$ is contained in $\Pi^{-1}(\bD)$.
\end{Proposition}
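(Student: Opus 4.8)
The plan is to restrict $\Pi$ to $\delta_S$, show the restriction is a product map, and identify its only nontrivial factor with a hypergraph morphism whose image is governed by Theorem~\ref{coolcondition}, both before and after intersecting with $\delta_{12}\cap\delta_{34}$.

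First I would write $\delta_S\cong\oM_{0,S\cup\{\ast\}}\times\oM_{0,S^c\cup\{\ast\}}$, where $\ast$ is the node. Since $n\in S$ while $1,2,3,4\in S^c$, the divisors $\delta_{12}$ and $\delta_{34}$ are pulled back from the second factor, so $\delta_S\cap\delta_{12}\cap\delta_{34}=\oM_{0,S\cup\{\ast\}}\times Z$, where $Z\subset\oM_{0,S^c\cup\{\ast\}}$, isomorphic to $\oM_{0,(S^c\setminus\{1,2,3,4\})\cup\{a,b,\ast\}}$, is the locus with $1,2$ on a rational tail (attached at $a$) and $3,4$ on a rational tail (at $b$). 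Then, for each $i\le n-4$, I compute $\pi_{\Gamma_i\cup\{n\}}|_{\delta_S}$ by forgetting marked points: since $\Gamma_i\subset N'$ (recall $n$ lies only in $\Gamma_{n-3},\Gamma_{n-2}$) and $n\in S$, stabilization of the resulting $4$-pointed curve forces exactly one of three behaviours, according to $k_i:=|\Gamma_i\cap S^c|$: the restriction is pulled back from $\oM_{0,S\cup\{\ast\}}$ if $k_i\le1$; it is the constant boundary point of $\oM_{0,\Gamma_i\cup\{n\}}\cong\PP^1$ separating $(\Gamma_i\cap S)\cup\{n\}$ from $\Gamma_i\cap S^c$ if $k_i=2$; and it equals $\pi_{\Gamma_i\cup\{\ast\}}$, pulled back from $\oM_{0,S^c\cup\{\ast\}}$, if $k_i=3$, i.e.\ $\Gamma_i\subset S^c$. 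Hence $\Pi|_{\delta_S}$ is a product $\Pi_1\times(\mathrm{const})\times\Pi_2$, with $\Pi_2=\pi_{\Gamma_B\cup\{\ast\}}$, $\Gamma_B=\{\Gamma_i\}_{i\in B}$ and $B=\{\,i\le n-4:\Gamma_i\subset S^c\,\}$; consequently $\Pi(\delta_S)=A\times\{c\}\times\Pi_2(\oM_{0,S^c\cup\{\ast\}})$ and $\Pi(\delta_S\cap\delta_{12}\cap\delta_{34})=A\times\{c\}\times\Pi_2(Z)$ for the \emph{same} $A$ and $c$, so the proposition reduces to $\Pi_2(Z)=\Pi_2(\oM_{0,S^c\cup\{\ast\}})$.

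Now $\Pi_2=\pi_{\Gamma_B\cup\{\ast\}}$ is a hypergraph (``cone'') morphism, and by Theorem~\ref{coolcondition} it is dominant onto $(\PP^1)^B$: the required inequality $|\bigcup_{i\in T}\Gamma_i|\ge|T|+2$ holds trivially for $|T|=1$ and by $(\dag)$ for $|T|\ge2$. By $(\dag)$ applied to $\{i,n-3\}$ and $\{i,n-2\}$, no $\Gamma_i$ with $i\le n-4$ contains both $1,2$ or both $3,4$; so restricting the forgetful maps to $Z$ (which collapses the $\{1,2\}$- and $\{3,4\}$-tails onto $a$ and $b$) identifies $\Pi_2|_Z$, via $Z\cong\oM_{0,(S^c\setminus\{1,2,3,4\})\cup\{a,b,\ast\}}$, with the hypergraph morphism $\pi_{\widehat{\Gamma}_B\cup\{\ast\}}$, where $\widehat{\Gamma}_B=\{\widehat{\Gamma}_i\}_{i\in B}$ is obtained from $\Gamma_B$ by setting $1,2\mapsto a$ and $3,4\mapsto b$ (each $\widehat{\Gamma}_i$ still a triple). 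Applying Theorem~\ref{coolcondition} again, $\Pi_2|_Z$ is dominant onto $(\PP^1)^B$ provided $|\bigcup_{i\in T}\widehat{\Gamma}_i|\ge|T|+2$ for all $T\subseteq B$; since $|\bigcup_{i\in T}\widehat{\Gamma}_i|=|\bigcup_{i\in T}\Gamma_i|-\varepsilon$ with $\varepsilon\in\{0,1,2\}$ the number of the pairs $\{1,2\},\{3,4\}$ contained in $\bigcup_{i\in T}\Gamma_i$, this follows from $(\dag)$ applied to $T$ enlarged by $n-3$ and/or $n-2$ — using $\Gamma_{n-3}=\{1,2,n\}$, $\Gamma_{n-2}=\{3,4,n\}$, $n\notin\bigcup_{i\in T}\Gamma_i$, and $|B|\le n-6$ (which holds because $|S|\ge2$ forces some index of $S\setminus\{n\}$ into at least two of $\Gamma_1,\dots,\Gamma_{n-4}$, none of which then lies in $B$). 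Therefore $\Pi_2(Z)=(\PP^1)^B=\Pi_2(\oM_{0,S^c\cup\{\ast\}})$, so $\Pi(\delta_S)=\Pi(\delta_S\cap\delta_{12}\cap\delta_{34})$; and then $\delta_S\subset\Pi^{-1}(\bD)$, as $\bD=\Pi(\delta_{12}\cap\delta_{34})$ contains $\Pi(\delta_S\cap\delta_{12}\cap\delta_{34})=\Pi(\delta_S)$.

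The main obstacle is this last numerical point: that the ``merged'' hypergraph $\widehat{\Gamma}_B$ still satisfies the dominance condition of Theorem~\ref{coolcondition}. This is where the full strength of $(\dag)$ and the normalization $\Gamma_{n-3}=\{1,2,n\}$, $\Gamma_{n-2}=\{3,4,n\}$ enter, and the small cases ($|T|=1$, $\bigcup_{i\in T}\Gamma_i\supset\{1,2,3,4\}$, $|B|$ near $n-4$) must be checked individually. Everything else — the product decomposition of $\Pi|_{\delta_S}$ and the identification of $\Pi_2|_Z$ with a hypergraph morphism — is routine bookkeeping with forgetful maps, of the same flavour as the proofs of Lemma~\ref{birational} and Proposition~\ref{deltauv}.
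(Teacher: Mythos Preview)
Your proof is correct and follows essentially the same route as the paper's. Both arguments factor $\Pi|_{\delta_S}$ according to $|\Gamma_i\cap S^c|\in\{0,1\}$, $=2$, or $=3$ (your $\Pi_1$, constant, $\Pi_2$ are the paper's $\hat\Pi_C$, $\Pi_B$, $\hat\Pi_A$), observe the first two pieces are unchanged upon intersecting with $\delta_{12}\cap\delta_{34}$, and then show the $\Gamma_i\subset S^c$ piece is surjective onto $(\PP^1)^{|B|}$ both before and after merging $1,2\mapsto a$, $3,4\mapsto b$---the paper simply cites ``the argument at the end of the proof of Lemma~\ref{birational}'' for the merged surjectivity, whereas you spell out the $\varepsilon\in\{0,1,2\}$ cases and the bound $|B|\le n-6$ explicitly.
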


\begin{proof} 
One has:
$$\de_S=\M_{0,S\cup p}\times\M_{0,S^c\cup p},$$
$$\de_{\SS}=\de_S\cap\de_{12}\cap\de_{34}=\M_{0,\SS\cup p}\times\M_{0,\SS^c\cup p},$$
where we denote by $\T\subset\tN$ the set $T$ in which we identify $1=2=a$ and $3=4=b$. 
Note that under our
assumptions $\SS=S$.

The set $\{1,\ldots,n-4\}$ has a partition $A\cup B\cup C$ where:
$$i\in A\Leftrightarrow\Ga_i\subset S^c,\quad i\in B\Leftrightarrow|\Ga_i\cap S^c|=2,\quad i\in C\Leftrightarrow|\Ga_i\cap S^c|=0,1.$$
Let $\alpha=|A|$, $\beta=|B|$, $\gamma=|C|$. Then $\Pi=(\Pi_A,\Pi_B,\Pi_C)$ where
$$\Pi_A=(\pi_{\Ga_i\cup\{n\}})_{i\in A},\quad \Pi_B=(\pi_{\Ga_i\cup\{n\}})_{i\in B},
\quad \Pi_C=(\pi_{\Ga_i\cup\{n\}})_{i\in C}.$$

The morphism $\Pi_A|_{\de_S}:\de_S\ra(\PP^1)^{\alpha}$ factors through the morphism
$$\hat\Pi_A:\,\M_{0,S^c\cup\{p\}}\ra(\PP^1)^{\alpha}$$
given by the cross-ratios $\{\Ga_i\cup\{p\}\}_{i\in A}$.
$\hat\Pi_A$ is surjective by Lemma~\ref{crazystuff} and Lemma~\ref{specialtriangles}. Similarly,
the morphism $\Pi_A|{\de_{\SS}}:\,\de_{\SS}\ra(\PP^1)^{\alpha}$ factors through the map
$\M_{0,\SS^c\cup\{p\}}\ra(\PP^1)^{\alpha}$ given by the cross-ratios $\{\tilde\Ga_i\cup\{p\}\}_{i\in A}$
which is surjective, as Condition $(\dag)$ holds (the argument is identical to the one at the end of the proof of Lemma \ref{birational}).  
Hence, $\hat\Pi_A(\M_{0,S^c\cup\{p\}})=\hat\Pi_A(\M_{0,\SS^c\cup\{p\}})=(\bP^1)^{\alpha}$.

The map $\Pi_C|_{\de_S}:\,\de_S\ra(\PP^1)^{\gamma}$ factors through the map:
$$\hat\Pi_C:\,\M_{0,S\cup\{p\}}\ra(\PP^1)^{\gamma}$$
given by the cross-ratios $\{\Ga_i\cup\{n\}\}_{i\in C}$ after replacing any
$j\in S^c$ with $p$. Similarly, the map $\Pi_C|_{\de_{\SS}}:\,\de_{\SS}\ra(\PP^1)^{\gamma}$
factors through the map $\M_{0,\SS\cup\{p\}}\ra(\PP^1)^{\gamma}$ which is identical to the map $\hat\Pi_C$.
Hence,
$\hat\Pi_C(\M_{0,S\cup\{p\}})=\hat\Pi_C(\M_{0,\SS\cup\{p\}})$.

As $\Pi_B|_{\de_S}:\,\de_S\ra(\PP^1)^{\beta}$ is a constant map, 
it follows that $\Pi(\de_S)=\Pi(\de_{\SS})$.
\end{proof}

\begin{Lemma}\label{multip}
For $u>4$, $\delta_{u,n}$
is contained in 
$\Pi^{-1}(\bD)$ with multiplicity~$1$.
\end{Lemma}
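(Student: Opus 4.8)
The plan is to reduce the statement to a transversality assertion at a general point of $\delta_{u,n}$.

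\emph{Containment and reduction to a birational map.} The set-theoretic inclusion $\delta_{u,n}\subseteq\Pi^{-1}(\bD)$ is Proposition~\ref{bdry chunk} applied to $S=\{u,n\}$ (here $n\in S$ and $1,2,3,4\notin S$ since $u>4$), so $\mult_{\delta_{u,n}}\Pi^{-1}(\bD)\geq1$ and only the reverse bound is at issue, i.e.\ we must show $\Pi^{-1}(\bD)$ is reduced at the generic point of $\delta_{u,n}$. I would restrict to $\delta_{12}$. By Proposition~\ref{deltauv} the morphism $\Pi|_{\delta_{12}}$ is surjective, hence $\delta_{12}\not\subseteq\Pi^{-1}(\bD)$; since the boundary of $\oM_{0,n}$ is a normal crossings divisor and $\delta_{12}$ meets $\delta_{u,n}$ transversally, $\mult_{\delta_{u,n}}\Pi^{-1}(\bD)=\mult_{E_{un}}g^{-1}(\bD)$, where $g:=\Pi|_{\delta_{12}}$ and $E_{un}:=\delta_{12}\cap\delta_{u,n}$. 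Identify $\delta_{12}\cong\oM_{0,n-1}$ (markings $\{q,3,\dots,n\}$, $q$ the node) and write $\hat\Gamma_i$ for $\Gamma_i$ with $1,2$ replaced by $q$; then $g=\prod_{i=1}^{n-4}\pi_{\hat\Gamma_i\cup\{n\}}$. By Theorem~\ref{coolcondition} this is \emph{birational}: the dimensions match, and condition $(\ddag)$ for $\{\hat\Gamma_i\}$ is precisely \eqref{jui} (for the pair $1,2$) together with the trivial case $|S|=1$. Moreover $\bD=g(E_{34})$ with $E_{34}:=\delta_{12}\cap\delta_{34}$, and $g|_{E_{34}}$ is birational onto $\bD$ by Lemma~\ref{birational}.

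\emph{Local model.} Factor $g=\pi_{\mathrm{lin}}\circ\Psi$ via Kapranov's map (Lemma--Definition~\ref{specialtriangles}): $\Psi\colon\oM_{0,n-1}\to\bP^{n-4}$ is the iterated blow-up of the points $p_k$ and the subspaces they span, and $\pi_{\mathrm{lin}}=\prod_i\pi_{H_{\hat\Gamma_i}}$ is a product of linear projections, birational by Lemma~\ref{crazystuff}. Under $\Psi$ one has $\Psi(E_{34})=H_{\{3,4\}}$, a hyperplane, so $\bD=\pi_{\mathrm{lin}}(H_{\{3,4\}})$; and $\Psi(E_{un})=p_u$, with $E_{un}$ the exceptional divisor of $\Psi$ over $p_u$. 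Thus $\mult_{E_{un}}g^{-1}(\bD)$ is the order of vanishing along $E_{un}$ of $\pi_{\mathrm{lin}}^{\ast}F$, for $F$ a local equation of $\bD$ near a general point $\eta$ of $g(E_{un})$. Exactly the factors $\pi_{H_{\hat\Gamma_i}}$ with $u\notin\Gamma_i$ are undefined at $p_u$; the remaining two or three (those with $u\in\Gamma_i$) are submersions near $p_u$ taking there the value ``a vertex of the $i$-th $\bP^1$'', so in the affine coordinate $t_i$ with that vertex at $0$ they vanish to order $1$ along $E_{un}$. Since $\bD$ has multidegree $(1,\dots,1)$ and contains $g(E_{un})$, which is the product of those coordinate hyperplanes $\{t_i=0\}$ with the remaining $\bP^1$-factors, the multilinear form $F$ lies in the ideal generated by those $t_i$; hence $\pi_{\mathrm{lin}}^{\ast}F$ vanishes to order at least $1$ along $E_{un}$, and to order exactly $1$ if and only if the tangent direction of the contraction at $\eta$ — whose components in those two or three $\bP^1$-directions are the nonzero leading coefficients, in the smoothing parameter, of the corresponding cross-ratios — is not tangent to $\bD$ at $\eta$.

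\emph{The transversality, and the main obstacle.} What remains, and what I expect to be the whole difficulty, is this last transversality. I would establish it using the explicit birational parametrization of $\bD$ by $\delta_{12}\cap\delta_{34}\cong\oM_{0,\tilde N}$ from Lemma~\ref{birational}: differentiating $(x_a,x_b,x_5,\dots,x_n)\mapsto\bigl(\lambda(\tilde\Gamma_i\cup\{n\})\bigr)_{i}$ at a general point gives a spanning set for $T_\eta\bD$, while a direct computation of $\partial_s\lambda\big(x_u,x_n;x_{b_i},x_{c_i}\big)$, for $\Gamma_i=\{u,b_i,c_i\}$, produces the contraction direction's components in the two or three special $\bP^1$-directions; one then checks that this vector is not a linear combination of the tangent vectors of $\bD$. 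The combinatorial input is that condition $(\dag)$ keeps the points $x_{b_i},x_{c_i}$ (for $u\in\Gamma_i$) and the triples indexed by the $i$ with $u\notin\Gamma_i$ ``independent enough'' that no such dependence can hold identically as the parameters of $\oM_{0,\tilde N}$ vary; irreducibility of $\bD$ alone does not suffice here, so the combinatorics must be invoked directly at this step. Everything before it is formal, using Theorem~\ref{coolcondition}, Lemma~\ref{crazystuff} and Lemma--Definition~\ref{specialtriangles}.
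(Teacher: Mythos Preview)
Your reduction is essentially correct: restricting to $\delta_{12}$ (the paper uses $\delta_{34}$, which is symmetric) gives a birational morphism $g$ onto $(\bP^1)^{n-4}$, and the multiplicity in question is bounded above by $\mult_{E_{un}}g^{*}\bD$. Your local description via the Kapranov map is also right: for a general point of $E_{un}$ the normal direction maps under $dg$ to a vector whose only nonzero components sit in the $2$ or $3$ factors with $u\in\Gamma_i$. But the argument stops precisely where the content is. You acknowledge this yourself: the claim that this normal image is \emph{not} tangent to $\bD$ at a general point of $g(E_{un})$ is asserted, and a plan is described (``differentiate the parametrization by $\oM_{0,\tilde N}$ and check linear independence''), but the check is never carried out. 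That check is not formal: it requires controlling simultaneously the partial derivatives $\partial F/\partial t_i$ of the multilinear equation of $\bD$ at a point of $g(E_{un})$ \emph{and} the dependence of $g(E_{un})$ itself on the blow-up direction, and showing that no cancellation occurs. Condition~$(\dag)$ certainly has to enter, but you have not shown how. As written, the proposal is a correct reduction to an unproved transversality statement, not a proof.

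The paper avoids this computation entirely by a numerical argument. After restricting to $\delta_{34}$, the map $\rho=\Pi|_{\delta_{34}}$ is birational, so $\rho^{*}\bD=\delta'_{12}+G$ with $\delta'_{12}=\delta_{12}\cap\delta_{34}$ the proper transform and $G$ $\rho$-exceptional. One then intersects with the class $\gamma$ of a general fibre of $\pi_{N'}|_{\delta_{12}\cap\delta_{34}}$: each cross-ratio $\Gamma_i\cup\{n\}$ restricts to an isomorphism on this fibre, so $\rho^{*}\bD\cdot\gamma=n-4$, while each $\delta_{u,n}|_{\delta_{34}}$ meets $\gamma$ with multiplicity $1$ and $\delta'_{12}\cdot\gamma=-1$. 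If some $\delta_{u,n}$ occurred in $\rho^{*}\bD$ with multiplicity $\ge 2$, subtracting all of $\Delta'=(\delta_{12n}+\delta_{34n}+\sum_{j>4}\delta_{jn})|_{\delta_{34}}$ once and the offending component once more leaves an effective divisor of the form $\delta'_{12}+G'$ with $G'$ $\rho$-exceptional whose $\gamma$-degree is $\le -2$; since $\delta'_{12}$ is the unique boundary divisor with $\gamma\cdot\delta'_{12}<0$, this forces $2\delta'_{12}$ into the base locus, hence $G'-\delta'_{12}$ effective, contradicting that $G'$ is exceptional while $\delta'_{12}$ is not. This replaces your local transversality by a two-line intersection-number count and a standard fixed-part argument, and needs no explicit equation for $\bD$ beyond its multidegree.
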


\begin{proof}
Let $\rho$ be the restriction of $\Pi$ to $\delta_{34}\cong\M_{0,n-1}$.
Since no triple of $\Gamma'$ contains both $3$ and $4$, it follows from 
Lemma~\ref{deltauv} and Theorem \ref{coolcondition} that $\rho$ is birational.
Since $\delta_{12}\cap\delta_{34}$ is birational to~$\bD$
under $\Pi$, $\delta_{12}\cap\delta_{34}$ is the proper transform
of~$\bD$ under $\rho$:
\begin{equation}\label{DefF}
\rho^{-1}(\bD)=\delta_{12}\cap\delta_{34}+G,\quad\hbox{with}\quad G\subseteq\Exc(\rho),\quad \rho(G)\subset\bD.
\cooltag\end{equation}
Let $\De=\delta_{12n}+\delta_{34n}+\delta_{5n}+\ldots+\delta_{n-1,n}$
and let $\De'=\De|_{\delta_{34}}$.
Let $\delta'_{12}$ be $\delta_{12}\cap\delta_{34}$ as a boundary divisor of $\delta_{34}$. 
By Prop.~\ref{bdry chunk}, $\De\subset\Pi^{-1}(\bD)$.
If a component $\De'_0$ of $\De'$ is contained in
$\rho^{-1}(\bD)$ with multiplicity $\geq2$ then
$$\rho^{-1}(\bD)-\De'-\De'_0=\de'_{12}+G',$$
where $G'=G-\De'_0$ is $\rho$-exceptional. 

Let $\ga$ be the class of a general fiber $C$
of $\pi_{N'}|_{\delta_{12}\cap\delta_{34}}$, 
where $\pi_{N'}:\,\oM_{0,n}\to\oM_{0,n-1}$ is a forgetful map.
Then 
$$\ga.\de'_{12}=-1,\quad\ga.\de_{1,2,n}=\ga.\de_{3,4,n}=1,$$
$$\ga.\de_{i,n}=1,\quad i=5,6,\ldots,n-1.$$
(Here the first intersection is in $\de_{34}\cong\M_{0,n-1}$, while the rest are in $\M_{0,n}$.)

For any $i=1,\ldots,n-4$, 
the map $C\ra\PP^1$ given by the cross-ratio $\Ga_i\cup\{n\}$ is an isomorphism.
It follows that  
$\rho^{-1}(\bD).\ga=n-4$
by projection formula and therefore $(\rho^{-1}(\bD)-\De').\ga=-1$.
Since $\ga.\de'_{12}=-1$ ($\de'_{12}$ is the only boundary component in $\M_{0,n-1}$ that intersects $\gamma$ negatively) and since
$$(\rho^{-1}(\bD)-\De'-\De_0).\ga\leq-2,$$
it follows that $2\de'_{12}$ is a fixed part of
any effective divisor linearly equivalent to $\de'_{12}+G'$. In particular,
$G'-\de'_{12}$ is effective. This is a contradiction, as $G'$ is
$\rho$-exceptional, while $\de'_{12}$ is not.
\end{proof}

\begin{Notation}
Let $D=\ov{\Pi^{-1}(\bD)\cap M_{0,n}}$ 
\end{Notation}

\begin{Proposition}
The divisor $D$ is irreducible, non-empty, and surjects onto $\M_{0,n-1}$ via
$\pi:=\pi_{N\setminus\{i\}}:\,\M_{0,n}\ra\M_{0,n-1}$
for any $i\in N$.
\end{Proposition}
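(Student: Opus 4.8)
The plan is to prove the three assertions in the order: $D\neq\emptyset$ (and is pure of dimension $n-4$); $D$ is irreducible; $D$ surjects under every $\pi_{N\setminus\{i\}}$.

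Since $\bD$ is a Cartier divisor and $\Pi$ is a morphism out of the smooth variety $\oM_{0,n}$, the scheme $\Pi^{-1}(\bD)$ is an effective Cartier divisor, hence pure of codimension one; write $\Pi^{-1}(\bD)=D+B$, where $B$ is the sum, with multiplicities, of the boundary divisors $\delta_S$ with $\Pi(\delta_S)\subseteq\bD$, and $D=\ov{\Pi^{-1}(\bD)\cap M_{0,n}}$. To see $D\neq\emptyset$ I must show $\Pi^{-1}(\bD)\not\subseteq\partial\oM_{0,n}$, i.e.\ that the irreducible divisor $\bD$ (Lemma~\ref{birational}) meets the dense image $\Pi(M_{0,n})$; since $\Pi|_{M_{0,n}}$ is dominant with irreducible one\-dimensional fibres (Lemma~\ref{morphismPi}), it then follows that $\Pi^{-1}(\bD)\cap M_{0,n}$ is a non\-empty open subset of the pure $(n-4)$\-dimensional $\Pi^{-1}(\bD)$, so $D$ is pure of dimension $n-4$. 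For this I would first note that the locus of $q$ with $\dim\Pi^{-1}(q)\geq 2$ has dimension $\leq n-5$, and if it contained $\bD$ it would equal $\bD$, forcing $\Pi^{-1}(\bD)=\oM_{0,n}$ — absurd; so for general $q\in\bD$ the fibre $\Pi^{-1}(q)$ is a curve. This curve cannot lie in $\delta_{12}$ or in $\delta_{34}$, because $\Pi$ is generically finite on each (Prop.~\ref{deltauv}, resp.\ the birationality of $\rho=\Pi|_{\delta_{34}}$ from the proof of Lemma~\ref{multip}); it cannot lie in any $\delta_{uv}$ with $u,v\in N'$ for the same reason combined with the computation of $\Pi(\delta_{uv})$ in Prop.~\ref{deltauv}; and for every remaining boundary divisor $\delta_S$ one reads off from the arguments of Prop.~\ref{deltauv}, Prop.~\ref{bdry chunk} and Lemma~\ref{multip} that $\Pi(\delta_S)$ is either all of $(\bP^1)^{n-4}$, or a proper ``coordinate'' subvariety (a product of points and $\bP^1$\-factors), whence $\dim\Pi(\delta_S)<\dim\bD$ or $\Pi(\delta_S)$ is an irreducible divisor of class $\neq\cO(1,\ldots,1)$ and so $\neq\bD$; in all cases a general $q\in\bD$ avoids $\Pi(\delta_S)$. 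Thus $\Pi^{-1}(q)$ meets $M_{0,n}$ for general $q\in\bD$.

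For irreducibility, $\Pi|_{M_{0,n}}\colon M_{0,n}\to(\bP^1)^{n-4}$ is dominant with irreducible equidimensional fibres and $\bD$ is irreducible, so $\Pi^{-1}(\bD)\cap M_{0,n}$ has exactly one component dominating $\bD$ (cf.\ \cite[Lem.~2.6]{T}) and every other component maps to a proper closed subset of $\bD$, hence has dimension $<n-4$. As $\Pi^{-1}(\bD)\cap M_{0,n}$ is pure of dimension $n-4$ by the previous step, it is irreducible, and so is its closure $D$.

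For surjectivity fix $i\in N$ and put $p=\pi_{N\setminus\{i\}}\colon\oM_{0,n}\to\oM_{0,n-1}$, flat with one\-dimensional fibres. Since $D$ is irreducible of dimension $n-4=\dim\oM_{0,n-1}$, $p|_D$ is dominant, hence surjective, unless $D$ is $p$\-vertical; I will exclude the latter. If $D=p^{-1}(p(D))$ with $\dim p(D)\leq n-5$, then for general $y\in M_{0,n-1}$ the fibre $C_y=p^{-1}(y)\cong\bP^1$ misses $D$, so $\Pi^{-1}(\bD)\cdot C_y=B\cdot C_y$. The only boundary divisors meeting such a $C_y$ are the sections $\delta_{ij}$, $j\neq i$, each with intersection number $1$, so $B\cdot C_y=\sum_{j\neq i}m_{ij}$ with $m_{ij}$ the multiplicity of $\delta_{ij}$ in $\Pi^{-1}(\bD)$, and $m_{ij}=0$ for $j\neq n$ by Prop.~\ref{deltauv}. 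On the other hand, writing $\bD\sim\cO(1,\ldots,1)$, the projection formula gives $\Pi^{-1}(\bD)\cdot C_y=\sum_{\ell\,:\,i\in\Gamma_\ell\cup\{n\}}\deg(\pi_{\Gamma_\ell\cup\{n\}}|_{C_y})$, and each such degree is $1$, so this equals $|\{\ell\leq n-4:\ i\in\Gamma_\ell\cup\{n\}\}|$, which is $v_i(\Gamma)\geq 2$ for $i\in\{5,\ldots,n-1\}$, is $n-4$ for $i=n$, and is $v_i(\Gamma')=v_i(\Gamma)-1\geq 1$ for $i\in\{1,2,3,4\}$. For $i\in\{5,\ldots,n-1\}$ only $\delta_{in}$ can appear, with multiplicity $1$ by Lemma~\ref{multip}, so $B\cdot C_y=1<v_i(\Gamma)$ — contradiction. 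For $i=n$ one has $B\cdot C_y=(n-5)+\sum_{j=1}^{4}m_{jn}$, and for $i\in\{1,2,3,4\}$ one has $B\cdot C_y=m_{in}$; in both cases the contradiction follows once $\delta_{jn}\not\subseteq\Pi^{-1}(\bD)$ for $j\in\{1,2,3,4\}$. This last point I would settle by computing $\Pi(\delta_{jn})$: on $\delta_{jn}\cong\oM_{0,(N\setminus\{j,n\})\cup\{\ast\}}$ the coordinate $\pi_{\Gamma_\ell\cup\{n\}}$ is constant whenever $j\in\Gamma_\ell$, while the remaining coordinates give, via \eqref{CondM} and Theorem~\ref{coolcondition}, a surjection onto the corresponding product of $\bP^1$'s; so $\Pi(\delta_{jn})$ is a coordinate product with at least one point\-factor (as $v_j(\Gamma')\geq 1$). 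But an irreducible divisor of class $\cO(1,\ldots,1)$ contains no coordinate\-product subvariety of this shape — restricting its multilinear equation to a point in one factor would make it divisible by a coordinate, contradicting the irreducibility of $\bD$ — so $\Pi(\delta_{jn})\not\subseteq\bD$. Hence $D$ is not $p$\-vertical.

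The main obstacle is the boundary bookkeeping underpinning non\-emptiness — showing that $\bD$ is not swallowed by the dimension\-jump locus of $\Pi$ nor by any $\Pi(\delta_S)$ — together with the parallel point in the surjectivity argument, namely ruling out $\delta_{jn}\subseteq\Pi^{-1}(\bD)$ for $j\in\{1,2,3,4\}$; both are purely combinatorial and are driven by \eqref{CondM} through the descriptions of the images $\Pi(\delta_S)$ already set up in Prop.~\ref{deltauv}, Prop.~\ref{bdry chunk} and Lemma~\ref{multip}.
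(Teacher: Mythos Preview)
Your irreducibility argument and your surjectivity argument are both fine; in fact your intersection--theoretic treatment of $i\in\{1,2,3,4\}$ and $i=n$ is cleaner than the paper's, and your proof that $\delta_{jn}\not\subseteq\Pi^{-1}(\bD)$ for $j\le4$ (via the coordinate--slice argument for an irreducible $(1,\dots,1)$ divisor) fills a point the paper uses implicitly in the $i=n$ computation but does not spell out.

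The gap is in your non-emptiness step. You assert that for every boundary divisor $\delta_S$ other than the $\delta_{uv}$ with $u,v\in N'$, the image $\Pi(\delta_S)$ is ``either all of $(\bP^1)^{n-4}$ or a proper coordinate subvariety''. This is not what Propositions~\ref{deltauv}, \ref{bdry chunk} and Lemma~\ref{multip} say, and it is false in the relevant cases. For $\delta_S$ with $n\in S$ and $1,2,3,4\notin S$, Proposition~\ref{bdry chunk} gives $\Pi(\delta_S)=\Pi(\delta_S\cap\delta_{12}\cap\delta_{34})\subseteq\bD$, which is not a coordinate subvariety and can very well have dimension $n-5=\dim\bD$ (already $\delta_{un}$ for $u>4$ is an example). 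So for general $q\in\bD$ you have not ruled out that $\Pi^{-1}(q)$ meets, or even has a whole component inside, such a $\delta_S$. Since you have not established that $\Pi^{-1}(q)$ is irreducible (Lemma~\ref{morphismPi} only controls the fiber inside $M_{0,n}$, which is exactly what you are trying to show is non-empty), knowing that the fiber is not contained in any \emph{single} boundary divisor does not prevent it from lying in a union of several.

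The paper avoids this entirely by a one--line stratification argument: $\delta_{12}\cap\delta_{34}$ is contained in some irreducible component $D^{main}$ of $\Pi^{-1}(\bD)$; the only boundary divisors containing the stratum $\delta_{12}\cap\delta_{34}$ are $\delta_{12}$ and $\delta_{34}$, and by Proposition~\ref{deltauv} both surject onto $(\bP^1)^{n-4}$, hence are not components of $\Pi^{-1}(\bD)$. Therefore $D^{main}$ is not a boundary divisor, so it meets $M_{0,n}$, proving $D\neq\emptyset$ and simultaneously $\delta_{12}\cap\delta_{34}\subseteq D$. The latter is then reused to handle surjectivity for $i\in\{1,2,3,4\}$ directly: $\pi_{N\setminus\{i\}}(\delta_{12}\cap\delta_{34})$ is a boundary \emph{divisor} of $\oM_{0,n-1}$, while $D$ also meets $M_{0,n}$, so $\pi(D)$ strictly contains a divisor and hence equals $\oM_{0,n-1}$. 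Your intersection--number approach works too once non-emptiness is known, but you should replace your fiberwise non-emptiness argument by this component argument.
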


\begin{proof}
$D$ is irreducible by Lemma~\ref{morphismPi}.
Let $D^{main}$ be an irreducible component of $\Pi^{-1}(\bD)$ that contains $\de_{12}\cap\de_{34}\cong\oM_{0,n-2}$. 
By Lemma~\ref{deltauv},  $\delta_{12}$ and $\delta_{34}$ surject onto $(\bP^1)^{n-4}$.
It follows that $D^{main}$ intersects $M_{0,n}$ and therefore is equal to $D$.

If $i\in\{1,2\}$ (resp.~$i\in\{3,4\}$) then $\pi(\oM_{0,n-2})=\delta_{34}$
(resp.~$\delta_{12}$). Since $D$ intersects $M_{0,n}$, its image in $\M_{0,n-1}$
is not a subset of the boundary. Therefore, $\pi(D)=\oM_{0,n-1}$. 
Now we assume $i\not\in\{1,2,3,4\}$.

Arguing from contradiction, suppose $\pi(D)\neq\M_{0,n-1}$. 
Then $D.C=0$, where $C$ is a general fiber of $\pi$.
We prove this is not the case. 
Let 
$$I_i=\{k\in\{1,\ldots,n-4\}\,|\,i\in\Ga_k\cup\{n\}\}$$
and let $m_i=|I_i|$.
Then  
$$m_n=n-4\quad\hbox{\rm and}\quad m_i\ge2\quad \hbox{\rm if}\quad 4<i<n$$ 
because 
each $i\in N$ belongs to at least two triples from~$\Gamma$
(if $i$ belongs to at most one triple, say $\Gamma_k$, then 
$\bigl|\bigcup_{j\in\{1,\ldots,n-2\}\setminus\{k\}}\Ga_j\bigr|<n$
and this contradicts $(\dag)$).

It is clear that $\pi_{\Gamma_j\cup\{n\}}(C)$ is a point if $i\not\in\Gamma_j\cup\{n\}$ and 
$\pi_{\Gamma_j\cup\{n\}}:\,C\ra\PP^1$ is an isomorphism if $i\in\Gamma_j\cup\{n\}$. It follows (by projection formula) that 
$$\pi^{-1}(\bD).C=m_i.$$

Note that $\delta_{ij}$ are the only boundary divisors of $\oM_{0,n}$ that intersect $C$
(and the intersection number is~$1$).
By Lemma~\ref{deltauv}, $\delta_{uv}\nsubseteq\Pi^{-1}(\bD)$ for any $u,v\in N'$.
So it follows from Lemma~\ref{multip} that  
$D.C=m_n-(n-5)>0$ (if $i=n$) and $D.C=m_i-1>0$ (if $i\ne n$). 
\end{proof}

Theorem \ref{NewDivisors} now follows from the following:
\begin{Lemma}
$D$ is in the image of the exceptional divisor
of $\pi_{\Gamma\cup\{n+1\}}$.
\end{Lemma}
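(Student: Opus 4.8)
The plan is to connect the divisor $D\subset\oM_{0,n}$, defined as $\overline{\Pi^{-1}(\bD)\cap M_{0,n}}$, to the exceptional locus of $\pi_{\Gamma\cup\{n+1\}}$ via the Brill--Noether interpretation of Theorem~\ref{ID}. Recall from that theorem that $\tilde G^1(\Sigma)\simeq M_{0,n+1}$ and that under $\pi_N:\,\oM_{0,n+1}\to\oM_{0,n}$ we land in $G^1(\Sigma)\simeq M_{0,n}$, with the exceptional locus of $\pi_{\Gamma\cup\{n+1\}}$ being $\tilde G^2$. So it suffices to show that $D$ (or rather its preimage in $M_{0,n+1}$, i.e.\ the proper transform of $\tilde G^2$) is contracted, equivalently that $D$ lies in the image of $G^2(\Sigma)$ under $\pi_N$ composed with the identification.

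The key observation is that $\Pi=\pi_{\Gamma'\cup\{n\}}$ is exactly the map to $\cC^0$-type data for the sub-hypergraph $\Gamma'$: by the commutative diagram of Theorem~\ref{ID}, the fibers of $\pi_{\Gamma'\cup\{n\}}$ correspond to choices of admissible divisor $D'+$ marking. The divisor $\bD=\Pi(\delta_{12}\cap\delta_{34})$ has the property that a point of $\oM_{0,n}$ maps into $\bD$ precisely when the two lines $L_{12}$ and $L_{34}$ (determined by the images of the markings under the linear system) coincide, i.e.\ when the points $1,2,3,4$ become collinear in $\bP^{n-4}$ — which is exactly the degeneration making $h^0$ jump: the line bundle on $\Sigma$ for which $\Gamma_{n-3}=\{1,2,n\}$ and $\Gamma_{n-2}=\{3,4,n\}$ span the same line drops a condition, so $L\in W^2(\Sigma)$ rather than $W^1(\Sigma)$. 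Thus a generic point of $D$ corresponds to a curve $\Sigma$ with an admissible $L\in W^2\setminus W^1$, together with the extra pencil and divisor data, which is precisely a point of $\tilde G^2$.

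Concretely, the steps I would carry out: (1) Using Lemma~\ref{specialtriangles} and the diagram of Theorem~\ref{ID}, reinterpret $\Pi^{-1}(\bD)\cap M_{0,n}$ as the locus in $M_{0,n}\simeq G^1(\Sigma)$ where the associated line bundle $L=v(-)$ has an extra section, i.e.\ where the configuration of lines forces $h^0\geq3$; this is exactly the preimage of $W^2(\Sigma)$ under $v$. (2) Invoke Theorem~\ref{ID} again: the exceptional locus of $v$ is $G^2$, and the exceptional locus of $\pi_{\Gamma\cup\{n+1\}}$ is $\tilde G^2$, whose image in $\oM_{0,n}$ under $\pi_N$ is the image of $G^2$ — call this $G^2$'s image, which contains $v^{-1}(W^2)$'s closure. (3) Check that $D$, being irreducible and equal to $D^{main}$ (the component through $\delta_{12}\cap\delta_{34}\cong\oM_{0,n-2}$), is the closure of exactly this locus, so $D$ is contained in the image of $\tilde G^2=\Exc(\pi_{\Gamma\cup\{n+1\}})$.

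The main obstacle will be step (1): carefully matching the scheme-theoretic/set-theoretic description of $\Pi^{-1}(\bD)$ with the Brill--Noether locus, in particular verifying that the condition ``$L_{12}=L_{34}$ as lines in $\bP^{n-4}$'' is equivalent to ``$L\in W^2(\Sigma)$'' given the specific choice $\Gamma_{n-3}=\{1,2,n\}$, $\Gamma_{n-2}=\{3,4,n\}$. One must use that under the canonical-type embedding the components indexed $n-3$ and $n-2$ both pass through the point $n$, so their spans coincide exactly when $1,2,3,4$ lie on a common line through (the image of) $n$; and that the hypotheses of Theorem~\ref{NewDivisors} ($W^3(\Sigma')=\emptyset$ etc.) guarantee this jump is by exactly one, landing in $W^2\setminus W^3$ generically, so the fiber is genuinely positive-dimensional and $D$ genuinely lies in the exceptional locus rather than merely in its closure. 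The transversality and genericity bookkeeping here, together with confirming $\Pi^{-1}(\bD)\cap M_{0,n}$ is nonempty (already done via Proposition~\ref{deltauv}), is the delicate part; everything else is a formal consequence of Theorem~\ref{ID}.
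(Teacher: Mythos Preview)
Your overall strategy is right and matches the paper's: use the Brill--Noether identification of Theorem~\ref{ID} to show that a general point of $D$ lies in $G^2(\Sigma)$, hence in the image under $\pi_N$ of $\tilde G^2(\Sigma)=\Exc(\pi_{\Gamma\cup\{n+1\}})$. The paper carries this out exactly by writing $z\in D\cap M_{0,n}$ in the $\tilde G^1(\Sigma')=\tilde G^2(\Sigma')$ picture as a triple $(f':\Sigma'\to\bP^2,\ |V|,\ H)$, then extending $f'$ to $f:\Sigma\to\bP^2$ by sending the two extra components to $L_{12}$ and $L_{34}$, with $f(n)=P:=L_{12}\cap L_{34}$.

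However, your step~(1) misidentifies the geometric condition cutting out $D$. You assert that $z\in\Pi^{-1}(\bD)$ is characterized by ``$L_{12}=L_{34}$'' (equivalently ``$1,2,3,4$ collinear''). This is not correct: that condition defines a locus of codimension $\ge 2$ in $W^2(\Sigma')$ (indeed, it is precisely the locus whose smallness is the \emph{extra} hypothesis in Proposition~\ref{Irred}), not the divisor $D$. The actual condition, which is what makes the paper's extension argument work, is that the intersection point $P=L_{12}\cap L_{34}$ lies on the chosen line $H$. To see this, note that the fiber of $\Pi$ through $z$ is parametrized by the base point $b$ of the pencil running along $H$; the limit lies in $\delta_{12}$ (resp.\ $\delta_{34}$) exactly when $b\to L_{12}\cap H$ (resp.\ $L_{34}\cap H$), so the closure meets $\delta_{12}\cap\delta_{34}$ iff these two points coincide, i.e.\ $P\in H$. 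It is precisely $P\in H$ that guarantees the line of the pencil $|V|$ through $P=f(n)$ is $H$, so that $(f,|V|,\tilde H)\in\tilde G^2(\Sigma)$ maps to $z$ under $\pi_N$. If you attempt to verify your stated condition, the argument will not close; once you replace it with $P\in H$, your sketch becomes essentially the paper's proof.
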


\begin{proof}
Indeed, take a general geometric point $z\in D\subset\oM_{0,n}\simeq\tilde G^1(\Sigma')$.
It is given by a morphism $f':\,\Sigma'\to\bP^2$, a pencil $|V|$ of lines in $\bP^2$,
and its member $H$. 
Points of the fiber of $\Pi:\,\tilde G^1(\Sigma')\to C^1(\Sigma')$ 
through $z$ correspond to the same $f'$ and $H$ but varying the pencil.
Since this fiber intersects $\delta_{12}\cap\delta_{34}$, it follows that the line $L_{12}$
connecting $1,2$ and the line $L_{34}$ 
connecting $3,4$ intersect at a point $P$ that belongs to $H$.
So now consider a morphism $f:\,\Sigma\to\bP^2$ which is equal to $f'$ on $\Sigma'$,
and which sends the remaining two components to $L_{12}$ and $L_{34}$
(and so the image of the $n$-th point is $P$).
Take a general line $\tilde H$ of $V$. Then $(f,|V|, \tilde H)$ is a point of 
$\tilde G^1(\Sigma)$ that maps to the given point of $\tilde G^1(\Sigma')$.
\end{proof}

We prove now Proposition \ref{Irred}. By Theorem \ref{ID} the exceptional locus of $\pi_{\Gamma\cup\{n+1\}}$ on $M_{0,n+1}$ is $\tilde{G}^2(\Sigma)$ 
and its image in $M_{0,n}$ is $G^2(\Sigma)$. Note, $\tilde{G}^2(\Sigma)$ has pure dimension 
$n-3$ (by van der W\"arden's purity theorem). As the canonical morphisms $\tilde{G}^2(\Sigma)\ra~G^2(\Sigma)$, $G^2(\Sigma)\ra W^2(\Sigma)$ have irreducible, equidimensional fibers, it follows that $G^2(\Sigma)$ (resp. $W^2(\Sigma)$) have pure dimension $n-4$ (resp. $n-6$). Moreover, irreducible components of 
$\tilde{G}^2(\Sigma)$ (resp. $G^2(\Sigma)$) are inverse images of irreducible components of $G^2(\Sigma)$ (resp. $W^2(\Sigma)$). Proposition \ref{Irred} is then a consequence of the following:

\begin{Lemma}\label{IrredLemma}
$W^2(\Sigma)$ is irreducible. In particular,  $G^2(\Sigma)$, $\tilde{G}^2(\Sigma)$ are irreducible.
\end{Lemma}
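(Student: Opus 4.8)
The plan is to show that restriction of invertible sheaves to the subcurve $\Sigma'\subset\Sigma$ identifies $W^2(\Sigma)$ with a nonempty open subset of $W^2(\Sigma')$, and then to check directly that $W^2(\Sigma')$ is irreducible. Recall $\Sigma'$ is obtained from $\Sigma$ by deleting the two components attached to $\Gamma_{n-3}=\{1,2,n\}$ and $\Gamma_{n-2}=\{3,4,n\}$, which meet each other only at the point $n$. First I would analyze a geometric point $(\Sigma,L)$ of $W^2(\Sigma)$: here $L$ is admissible with $h^0(\Sigma,L)\ge 3$, so $|L|$ gives a morphism $f_L\colon\Sigma\to\bP^r$ ($r\ge 2$), linear on components, separating singular points, with image a union of lines. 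The image $f_L(\Sigma')$ is not contained in a line, for otherwise the points $1,2,3,4$ would all lie on one line $\ell$, the two deleted components would map into $\ell$ too, and all of $\Sigma$ would map into $\ell$, contradicting $h^0(L)\ge 3$. Hence $f_L(\Sigma')$ spans a plane; since $W^3(\Sigma')=\eset$ this forces $h^0(\Sigma',L|_{\Sigma'})=3$, so $r=2$, $L|_{\Sigma'}\in W^2(\Sigma')$, and then $f_L(\Sigma)$ lies in the same plane, so $h^0(\Sigma,L)=3$ and $W^3(\Sigma)=\eset$ as well. A short cohomology computation with $0\to L(-\Sigma')\to L\to L|_{\Sigma'}\to 0$ shows $H^0(\Sigma,L)\to H^0(\Sigma',L|_{\Sigma'})$ is an isomorphism, so $|L|_{\Sigma'}|$ is the restriction of $|L|$ and $L|_{\Sigma'}$ is admissible.

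This produces a morphism $r\colon W^2(\Sigma)\to W^2(\Sigma')$, $(\Sigma,L)\mapsto(\Sigma',L|_{\Sigma'})$, with image the open locus $U\subset W^2(\Sigma')$ of those $(\Sigma',L')$ for which the line $L_{12}$ through $f_{L'}(1),f_{L'}(2)$ and the line $L_{34}$ through $f_{L'}(3),f_{L'}(4)$ are distinct and their intersection point $P:=L_{12}\cap L_{34}$ differs from $f_{L'}(q)$ for every singular point $q$ of $\Sigma'$. Indeed, for a point of $W^2(\Sigma)$ the two deleted components are forced to map onto $L_{12}$ and $L_{34}$, with $n\mapsto P$, so $r$ is injective; conversely, for $(\Sigma',L')\in U$ one glues the two components back in as the lines $L_{12},L_{34}$ with $n\mapsto P$, the listed conditions being exactly what is needed for $|L|$ to be globally generated and to separate the singular points of $\Sigma$. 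Both constructions work in flat families, so $r$ is an isomorphism $W^2(\Sigma)\simeq U$. As $W^2(\Sigma)\ne\eset$ by the hypothesis of Theorem~\ref{NewDivisors}, $U$ is nonempty; the codimension assumption of Proposition~\ref{Irred} describes its complement more precisely, but for the Lemma we only need that a nonempty open subset of an irreducible variety is dense.

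So it remains to prove $W^2(\Sigma')$ is irreducible. By Theorem~\ref{ID}, $G^1(\Sigma')\simeq M_{0,n-1}$, which is irreducible. Every $L'\in\Pic^{\uone}(\Sigma')$ has degree equal to the number $n-4$ of components of $\Sigma'$, while $p_a(\Sigma')=n-6$, so by Riemann--Roch $\chi(L')=(n-4)-(n-6)+1=3$ and hence $h^0(\Sigma',L')\ge 3$. Therefore $G^2(\Sigma')=G^1(\Sigma')$, and $W^2(\Sigma')$ is the image of the irreducible $M_{0,n-1}$ under $v$, hence irreducible. (This also confirms the identity $G^2(\Sigma')=G^1(\Sigma')\cong M_{0,n-1}$ used in the proof of Theorem~\ref{FunnyConstruction}.) The irreducibility of $G^2(\Sigma)$ and $\tilde G^2(\Sigma)$ is then immediate, since their irreducible components are the preimages of those of $W^2(\Sigma)$ under the maps $\tilde G^2(\Sigma)\to G^2(\Sigma)\to W^2(\Sigma)$, whose fibers are irreducible and equidimensional (as observed just before the statement).

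The step I expect to be the main obstacle is making the isomorphism $r\colon W^2(\Sigma)\simeq U$ precise: one must pin down the exact open condition cutting out $U$, verify that both restriction and the gluing construction define morphisms of the relevant moduli functors (not merely bijections on closed points), and match the reduced scheme structures on the two sides. The Riemann--Roch count on $\Sigma'$ and the dimension bookkeeping are routine by comparison.
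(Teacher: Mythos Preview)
Your overall strategy matches the paper's: restrict along $\Sigma'\hookrightarrow\Sigma$ to get a morphism $r\colon W^2(\Sigma)\to W^2(\Sigma')$, show $W^2(\Sigma')$ is irreducible (via $G^2(\Sigma')=G^1(\Sigma')\cong M_{0,n-1}$), and deduce irreducibility of $W^2(\Sigma)$. The Riemann--Roch computation and the argument for irreducibility of $W^2(\Sigma')$ are fine.

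The gap is your claim that $r$ is an open immersion with image $U$. It is not true that every $(\Sigma,L)\in W^2(\Sigma)$ satisfies $L_{12}\neq L_{34}$. Admissibility only asks that $|L|$ be globally generated and separate the \emph{points} $1,\dots,n$; it does not prevent two components from mapping to the same line. If $L_{12}=L_{34}$, both deleted components are forced onto that common line, but then $n$ may be sent to \emph{any} point of it avoiding the images of $1,\dots,n-1$; this yields a one--parameter family of admissible $L$ on $\Sigma$ restricting to the same $L'$. So $r$ has one--dimensional fibers over the locus $\{L_{12}=L_{34}\}\subset W^2(\Sigma')$ and is not injective there. Your sentence ``for the Lemma we only need that a nonempty open subset of an irreducible variety is dense'' is exactly the step that fails: the preimage $r^{-1}\{L_{12}=L_{34}\}$ could a priori be a second $(n-6)$--dimensional component of $W^2(\Sigma)$.

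This is precisely why the paper invokes the extra hypothesis of Proposition~\ref{Irred}: the locus $\{L_{12}=L_{34}\}$ has codimension $\ge 2$ in $W^2(\Sigma')$, so its preimage under $r$ has dimension $\le (n-6)-2+1<n-6$ and cannot be a component of the pure $(n-6)$--dimensional $W^2(\Sigma)$. With that correction your argument becomes the paper's proof; without it the irreducibility does not follow.
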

 
 \begin{proof} 
There is a well-defined morphism $\phi:W^2(\Sigma)\ra W^2(\Sigma')$ that maps an element $(\Sigma,L)$ of $G^2(\Sigma)$
to its restriction $(\Sigma',L|_{\Sigma'})$.  If $g:\Sigma\ra\PP^2$ is the map given by the complete linear system $|L|$, then 
$g|_{\Sigma'}:\Sigma'\ra\PP^2$ has non-degenerate image, hence $\dim\HH^0(\Sigma',L|_{\Sigma'})\geq2$. Clearly, $L|_{\Sigma'}$ is admissible.

We claim that the fiber of $\phi$ at a point $G^2(\Sigma')$ is either
(1) a point if $L_{12}\neq L_{34}$ and $P=L_{12}\cap L_{34}$ is not the image of a singular point of $\Sigma'$, or (2)
$1$-dimensional  if $L_{12}=L_{34}$ (and empty if $P=L_{12}\cap L_{34}$ is the image of a singular point of $\Sigma'$).  This is clear: in case (1) there is a unique way to extend a morphism $\Sigma'\ra\PP^2$ to a morphism $\Sigma\ra\PP^2$ by sending $n$ to $P$; in case (2), we send $n$ to a  point on the line $L_{12}=L_{34}$.

By Theorem \ref{ID} we have $\tilde{G}^1(\Sigma')=\tilde{G}^2(\Sigma')\cong M_{0,n}$. In particular, $\tilde{G}^2(\Sigma')$ is irreducible; hence, $W^2(\Sigma')$, $G^2(\Sigma'$ are irreducible.  Since $W^2(\Sigma)$ is pure-dimensional and since by assumption the locus in $W^2(\Sigma')$ where $L_{12}=L_{34}$ has codimension at least $2$, it follows that $W^2(\Sigma)$ is irreducible.
\end{proof}

\begin{Remark}
An extremal irreducible divisor $D$ on $\oM_{0,n}$ constructed in this section has an interesting
interpretation as a ``degeneracy'' divisor, which makes it similar, for example, to Eckhart divisors
on the Naruki moduli space of marked cubic surfaces \cite{HKT,CvG}
(and, by the way, we would conjecture that Eckhart divisors generate rays of the effective
cone of the Naruki space).
Namely, consider $M_{0,n}$ as the moduli space of pairs $(\PP^{n-3},p_1,\ldots,p_{n})$ of
$n$ points in linearly general position in $\PP^{n-3}$, up to automorphisms of $\PP^{n-3}$.
Let $u:\bX\ra M_{0,n}$ be the universal family of blow-ups:
the fiber of $u$ at the geometric point $(\PP^{n-3},p_1,\ldots,p_{n})$ is the
blow-up $\Bl_{p_1,\ldots,p_{n}}\PP^n$.
The effective cone of this blow-up does not depend on moduli and was computed in \cite{CT}.
But one can look at loci in the moduli space where the generators of the effective cone of the fiber
have unexpected intersection. For example, 
$(n-3)$-tuple from $\{1,\ldots,n\}$ uniquely determines a divisor
on $\Bl_{p_1,\ldots,p_{n}}\PP^{n-3}$: the proper transform of the hyperplane
in $\PP^{n-3}$ determined by the points in the $(n-3)$-tuple.
For simplicity,
we think of an $(n-3)$-tuple in terms of its complement in
$\{1,\ldots,n\}$, i.e.~a triple!
Now if we take  $n-2$ triples, i.e.~a hypergraph $\Gamma$ considered in this section,
then one expects that the corresponding
$n-2$ effective divisors in $\Bl_{p_1,\ldots,p_{n}}\PP^{n-3}$ do not intersect.
And we can define the locus in the moduli space where they do intersect.
It almost immediately follows from our calculations in this section
that this ``degeneracy'' divisor contains $D$ as an irreducible component.
\end{Remark}

Next we prove Theorem~\ref{canonical}.

\begin{Lemma}
$\omega_{\Sigma^s}$ is very ample.
\end{Lemma}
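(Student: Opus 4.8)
The plan is to translate very ampleness of $\omega_{\Sigma^s}$ into a connectivity statement about the dual graph $\Pi$ of $\Sigma^s$, and then to extract that statement from $(\dag)$.

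First recall, as in Section~\ref{smaps}, that $\Pi$ is connected and trivalent: it has $n-2$ black vertices (one per triple $\Gamma_i$) and $n-6$ white vertices (one per index of valence $3$); a black vertex has valence $3$ because $|\Gamma_i|=3$, a white vertex has valence $3$ because its index lies in exactly three triples, and $b_1(\Pi)=n-3=g\geq 3$. Thus $\Sigma^s$ is the graph curve $X_\Pi$ of \cite{BE}, and $\omega_{\Sigma^s}$ is its dualizing sheaf, of multidegree $\uone$ with $h^0=g$. I would then invoke the Bayer--Eisenbud description of graph curves: the graph curve of a simple, $3$-connected trivalent graph (of genus $\geq3$) is canonically embedded by its dualizing sheaf. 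So everything reduces to showing that $(\dag)$ forces $\Pi$ to be simple and $3$-edge-connected (equivalently, $\Pi$ being cubic, $3$-vertex-connected).

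The combinatorial core is elementary. $\Pi$ has no loop, since an index cannot label two of the three edges at one vertex; it has no double edge joining black vertices $\Gamma_a,\Gamma_b$, since that would put two valence-$2$ indices in $\Gamma_a\cap\Gamma_b$, forcing $|\Gamma_a\cup\Gamma_b|\leq 4$ in violation of $(\dag)$ for $S=\{a,b\}$; and a white vertex is joined to three distinct black vertices, ruling out the remaining multiple edges. For $3$-edge-connectivity, suppose some $k\leq2$ edges disconnect $\Pi$, and write the vertex set as $\Pi_1\sqcup\Pi_2$ with all deleted edges running between the two parts. A nonempty part with no black vertex would consist of white vertices, which are pairwise non-adjacent, hence of a single white vertex using up all three of its edges in the cut; since $k\leq2$ this is impossible, so each of $\Pi_1,\Pi_2$ contains a black vertex. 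Let $S$, $S^c$ be the nonempty sets of black vertices in $\Pi_1$, $\Pi_2$ and put $A=\bigcup_{i\in S}\Gamma_i$, $B=\bigcup_{j\in S^c}\Gamma_j$, so $A\cup B=\{1,\dots,n\}$ because every index lies in a triple. Each index of $A\cap B$ lies in an $S$-triple and in an $S^c$-triple, hence contributes at least one edge of the cut, and distinct indices contribute distinct edges; so $|A\cap B|\leq k\leq2$ and $|A|+|B|=n+|A\cap B|\leq n+2$. If $2\leq|S|\leq n-4$, then $(\dag)$ applied to $S$ and to $S^c$ gives $|A|+|B|\geq (|S|+3)+(n-2-|S|+3)=n+4$, a contradiction. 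If $|S|=1$, say $S=\{a\}$, then $\Gamma_a$ is the only black vertex of $\Pi_1$; each white vertex of $\Pi_1$ is joined to $\Gamma_a$ and to two black vertices of $\Pi_2$, and the remaining edges at $\Gamma_a$ also cross, so the cut has at least three edges; the case $|S|=n-3$ is symmetric. Hence $\Pi$ has no edge-cut of size $\leq2$.

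Granting the graph-curve statement, this finishes the proof. If one prefers to stay self-contained, the same conclusion should be reachable directly: $3$-edge-connectivity gives that $\Sigma^s$ has no disconnecting node, so $\omega_{\Sigma^s}$ is globally generated and the canonical morphism is finite, and one then checks that it is injective and unramified --- i.e., that $\Sigma^s$ has no base-point-free degree-$2$ pencil and that $\omega_{\Sigma^s}$ separates the two branches at every node --- each failure translating, through the induced map of dual graphs, into an edge-cut of $\Pi$ of size $\leq2$. The one genuinely delicate point will be exactly this last step: verifying (or locating in the literature) that it is $3$-connectedness of $\Pi$, and not merely $2$-edge-connectedness, that upgrades the finite canonical morphism to a closed embedding. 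The counting with $(\dag)$ above is routine by comparison.
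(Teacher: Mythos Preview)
Your proof is correct and follows the same overall route as the paper: both invoke \cite[Prop.~2.5]{BE} to reduce very ampleness of $\omega_{\Sigma^s}$ to $3$-edge-connectivity of the dual graph $\Pi$, and then extract $3$-edge-connectivity from~$(\dag)$.

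The difference lies in how the $3$-edge-connectivity is verified. The paper argues by contradiction, assumes each half contains at least two black vertices, introduces quantities $w_i,b_i,c_i,r_i$ for the two halves, and then performs a case analysis according to the colors (black--black or black--white) of the two removed edges, deriving a numerical contradiction in each case. Your argument is shorter and avoids the case split: you observe that every index in $A\cap B$ produces a distinct edge of the cut (this is the key point, and your justification is correct---valence-$2$ indices give black--black edges labelled by that index, valence-$3$ indices give edges incident to the corresponding white vertex), hence $|A\cap B|\le k\le 2$; applying $(\dag)$ to $S$ and to $S^c$ simultaneously then gives $|A|+|B|\ge n+4>n+2\ge |A|+|B|$. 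This is a genuine simplification of the paper's counting. Your separate treatment of $|S|=1$ matches the paper's ``otherwise it is easy to get a contradiction''.

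One remark: your closing paragraph hedges unnecessarily. The criterion you want---that $3$-edge-connectivity of a trivalent graph of first Betti number $\ge 3$ implies the canonical map of its graph curve is a closed embedding---is exactly the content of \cite[Prop.~2.5]{BE}, which the paper cites directly. So there is no ``genuinely delicate point'' left; you can drop that caveat and simply invoke Bayer--Eisenbud.
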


\begin{proof}
We use \cite[Prop.~2.5]{BE}:
it suffices to show that the removal of two edges from the dual graph $\Pi$ of $\Sigma^s$
does not disconnect it.
$\Pi$ has $n-2$ {\em black} vertices that correspond to components of~$\Sigma$
and $n-6$ {\em white} vertices that correspond to $\bP^1$'s inserted at triple points of~$\Sigma$. 
We argue by contradiction and assume that the removal of two edges does disconnect $\Pi$.
We can assume that each of the two ``halves'' of $\Pi$ contain at least two black vertices
(otherwise it is easy to get a contradiction). 

For $i=1,2$, let $w_i$ (resp.~$b_i$)
be the number of white (resp.~black) vertices in each half (hence, $b_1+b_2=n-2$, $w_1+w_2=n-6$).
Let $c_i$ be the number of singular points of $\Sigma$
covered by black components from the same half (i.e., using the dual graph, $c_i$ is the number of black-to-black edges in that half, 
plus the number of white vertices that are connected only to (black) vertices from the same half).
Note that $c_i\ge b_i+3$ by \eqref{CondM}. 
Let $r_i$ be the number of black--black edges in each component.
Consider several cases. 

Assume first that the two removed edges are both disconnecting. 
Suppose both disconnecting edges connect vertices of different color and the vertices 
in each half have different color.
Then we have 
$$w_i=c_i-1-r_i\ge b_i+2-r_i.$$
By adding these two inequalities (for $i=1,2$), we get $n-6\ge n-2+4-6$, a contradiction. (Note, $r_1+r_2=6$ in this case.)

Suppose both disconnecting edges connect vertices of different color and the vertices 
in each half have the same color.
Then we have
$$w_1=c_1-2-r_1\ge b_1+1-r_1,\quad w_2=c_2-r_2\ge b_2+3-r_2.$$
Suppose one disconnecting edge is black--black and another black--white. Then
$$w_1=c_1-2-r_1\ge b_1+1-r_1,\quad w_2=c_2-1-r_2\ge b_2+2-r_2.$$
Suppose both disconnecting edges are black--black. Then
$$w_1=c_1-2-r_1\ge b_1+1-r_1,\quad w_2=c_2-2-r_2\ge b_2+1-r_2.$$
In all cases adding these inequalities gives a contradiction
(note that in the last two cases $r_1+r_2=5$, respectively $r_1+r_2=4$).
The case when only one of the removed edges is disconnecting is similar, and we omit it.
\end{proof}

We now prove Theorem~\ref{canonical}. Since $W^2(\Sigma)$ has pure dimension $n-6$, 
Proposition \ref{Projections} implies that the closure of the image of $\psi$ is a component of 
$W^2(\Sigma)$.  Lemma \ref{IrredLemma} implies this is the only component of 
$W^2(\Sigma)$. 

\begin{Proposition}\label{Projections}
The map $\psi$ is injective on geometric points in its domain.
\end{Proposition}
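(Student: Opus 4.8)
The plan is to realise $\psi$, on its domain, as an explicit Abel-type map for the reducible curve $\Sigma^s$, and then to read the projection points back off the resulting line bundle. Write $c\colon\Sigma^s\to\Sigma$ for the contraction of the $n-6$ white components (the $\bP^1$'s inserted at the triple points of $\Sigma$). Recall from the setup of Theorem~\ref{canonical} that, for $q_\bullet=(q_1,\dots,q_{n-6})$ in the domain of $\psi$ — so each $q_\alpha$ is a general point of the white line $W_\alpha\cong\bP^1$, the span $\Lambda=\langle q_1,\dots,q_{n-6}\rangle\subset\bP^{n-4}$ has codimension $3$, and the induced map $g=g_{q_\bullet}\colon\Sigma\to\bP^2$ is a morphism with $g^*\cO(1)$ admissible — one has $\psi(q_\bullet)=g^*\cO_{\bP^2}(1)\in W^2(\Sigma)\subset\Pic^{\uone}(\Sigma)$, and $g\circ c$ is the composite $\Sigma^s\hookrightarrow\bP^{n-4}\dashrightarrow\bP^2$ of the canonical embedding $\iota$ of $\Sigma^s$ (which is rigid, as $\Sigma^s$ has no moduli) with the linear projection $\pi_\Lambda$.

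First I would prove the identity $c^*\psi(q_\bullet)=\omega_{\Sigma^s}\bigl(-\sum_\alpha q_\alpha\bigr)$, equivalently $\psi(q_\bullet)=c_*\bigl(\omega_{\Sigma^s}(-\sum_\alpha q_\alpha)\bigr)$. Since $c^*\psi(q_\bullet)=(g\circ c)^*\cO(1)=\iota^*\pi_\Lambda^*\cO(1)$, and the morphism $\pi_\Lambda\circ\iota$ is defined by the three-dimensional linear system $\iota^*|\mathcal I_\Lambda(1)|\subset|\omega_{\Sigma^s}|$ of canonical divisors through $\Lambda\cap\Sigma^s$ (using that $\iota$ is non-degenerate, so hyperplanes in $\bP^{n-4}$ correspond bijectively to canonical divisors of $\Sigma^s$), it suffices to identify the fixed divisor of this system. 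Genericity of $q_\bullet$ — exactly what belonging to the domain entails — forces $\Lambda\cap\Sigma^s=\{q_1,\dots,q_{n-6}\}$ to be a reduced set of smooth points, one on each white component, each met with multiplicity one by a general hyperplane through $\Lambda$; hence the fixed divisor is precisely $\sum_\alpha q_\alpha$, while admissibility of $g^*\cO(1)$ says the moving part is base-point-free. This is the identity. Throughout I use that $c_*\cO_{\Sigma^s}=\cO_\Sigma$ — so $c_*c^*=\mathrm{id}$ and $c^*$ is injective on Picard groups — which holds because $\Sigma$ is seminormal and $c$ is a proper birational morphism with connected fibres, so its Stein factorisation is trivial.

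Granting the identity, injectivity becomes cohomological. If $\psi(q_\bullet)=\psi(q'_\bullet)=:L$, applying $c^*$ gives $\omega_{\Sigma^s}(-\sum_\alpha q_\alpha)\cong\omega_{\Sigma^s}(-\sum_\alpha q'_\alpha)$, so the effective divisors $\sum_\alpha q_\alpha$ and $\sum_\alpha q'_\alpha$ both lie in the complete linear system $|M|$, where $M:=\omega_{\Sigma^s}\otimes c^*(L)^{-1}$ depends only on $L$. Now $M$ has degree $1$ on each of the $n-6$ white components and degree $0$ on each black component, so $\chi(\Sigma^s,M)=(n-6)+\bigl(1-p_a(\Sigma^s)\bigr)=-2$; Serre duality on the Gorenstein curve $\Sigma^s$ then gives $h^1(M)=h^0(\omega_{\Sigma^s}\otimes M^{-1})=h^0(\Sigma^s,c^*L)=h^0(\Sigma,L)=3$ on the domain (where $\psi(q_\bullet)\notin W^3(\Sigma)$), whence $h^0(\Sigma^s,M)=1$. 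Thus $|M|$ is a single point, so $\sum_\alpha q_\alpha=\sum_\alpha q'_\alpha$; and since $M$ has degree exactly one on each $W_\alpha$, this common divisor has one point on each white component, giving $q_\alpha=q'_\alpha$ for all $\alpha$.

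The step I expect to be the main obstacle is the identity of the second paragraph — concretely, verifying that for $q_\bullet$ in the domain the fixed divisor of $\iota^*|\mathcal I_\Lambda(1)|$ is \emph{exactly} $\sum_\alpha q_\alpha$, with multiplicity one and with no extra base points on the black components or at nodes. This is where one must genuinely use that the projection centre $\Lambda$ is spanned by general points of the white lines: one has to rule out $\Lambda$ containing a node of $\Sigma^s$, meeting a black component, or containing (or being tangent to) a white line. A secondary point requiring care is the cohomological input $h^0(\Sigma,\psi(q_\bullet))=3$, i.e. that the general projection misses $W^3(\Sigma)$; this is what promotes the linear equivalence $\sum q_\alpha\sim\sum q'_\alpha$ to an equality of divisors, and it should either be part of the definition of the domain of $\psi$ or follow from the hypotheses of Theorem~\ref{NewDivisors} together with admissibility.
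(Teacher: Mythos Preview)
Your argument is correct and is essentially the same as the paper's: identify $\psi(q_\bullet)$ with (the pushforward of) $\omega_{\Sigma^s}(-\sum q_\alpha)$, deduce that two points with the same image give linearly equivalent divisors $\sum q_\alpha\sim\sum q'_\alpha$, and then use Riemann--Roch and Serre duality together with $h^0(\omega_{\Sigma^s}(-Q))=3$ to conclude $h^0(\cO(Q))=1$, forcing equality of divisors. You are in fact more careful than the paper on two points it treats briskly: the role of the contraction $c\colon\Sigma^s\to\Sigma$ in identifying line bundles on the two curves, and the need for $\psi(q_\bullet)\notin W^3(\Sigma)$, which the paper simply asserts ``by assumption''.
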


\begin{proof}
By assumption, the domain of~$\psi$ is non-empty. 
Let $\Sigma_1,\ldots,\Sigma_{n-6}$ be the white components of $\Sigma^s$. Let $q_i\in\Sigma_i$ general, $Q=\sum q_i$.
Then $\psi(Q)$ is the map $\Sigma\ra\PP^2$ given by the complete linear system $|\omega_{\Sigma^s}(-Q)|$. 
If $R=\sum r_i\in(\PP^1)^{n-6}$ determines the same map, then $\omega_{\Sigma^s}(-Q)\cong\omega_{\Sigma^s}(-R)$.
However, we claim that $\h^0(\Sigma^s, \O(Q))=1$ (hence, $q_i=r_i$, for all $i$). Using Riemann-Roch and Serre duality
$$\h^0(\Sigma^s, \O(Q))-\h^0(\Sigma^s, \omega_{\Sigma^s}(-Q))=(n-6)+1-(n-3)=-2.$$
By assumption, 
$\omega_{\Sigma^s}(-Q)\in W^2(\Sigma)$ and $W^3(\Sigma)=\eset$. Hence, 
$$\h^0(\Sigma^s, \omega_{\Sigma^s}(-Q))=3$$ 
and the claim follows.
\end{proof}

Proposition \ref{ExtraCond} is a consequence of the following:
\begin{Lemma}\label{Hyp}
Let $L\in W^2(\Sigma)$ and let $H'$ be an admissible section of $L$. Consider the canonical embedding $\Sigma^s\hra\PP^{n-4}$.
There exists a unique hyperplane $H$ in $\PP^{n-4}$ containing $H'$. Moreover, $H$ does not contain any of the singular points of $\Sigma^s$.
\end{Lemma}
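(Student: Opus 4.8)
The plan is to reduce the statement to a cohomology computation on the stable hypergraph curve $\Sigma^s$, whose canonical model $\Sigma^s\hra\bP^{n-4}$ is the given union of lines, and then to examine which points of a hyperplane section can be nodes. Since $H'$ is admissible it is a Cartier divisor on $\Sigma^s$ supported away from the singular locus, so a hyperplane of $\bP^{n-4}$ contains $H'$ exactly when the corresponding section of $\omega_{\Sigma^s}$ vanishes on $H'$; hence the hyperplanes through $H'$ are parametrized by $\bP\bigl(H^0(\Sigma^s,\omega_{\Sigma^s}(-H'))\bigr)$, and the first assertion amounts to $h^0(\Sigma^s,\omega_{\Sigma^s}(-H'))=1$. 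By Serre duality on the nodal (hence Gorenstein) curve $\Sigma^s$ this equals $h^1(\Sigma^s,\cO_{\Sigma^s}(H'))$. The sheaf $\cO_{\Sigma^s}(H')$ has degree $1$ on each of the $n-2$ black components and degree $0$ on every white $\bP^1$; I would identify its cohomology with that of $L$ on $\Sigma$ by writing $\cO_{\Sigma^s}(H')=c^*L$ for the contraction $c\colon\Sigma^s\to\Sigma$ of the white components and checking $Rc_*\cO_{\Sigma^s}=\cO_\Sigma$ by a local computation at the coordinate-axes points of $\Sigma$, so that $h^i(\Sigma^s,\cO_{\Sigma^s}(H'))=h^i(\Sigma,L)$. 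Since $\chi(\Sigma^s,\cO_{\Sigma^s}(H'))=(n-2)+(1-g)=2$ with $g=n-3$, we get $h^1(\Sigma^s,\cO_{\Sigma^s}(H'))=h^0(\Sigma,L)-2$, and $h^0(\Sigma,L)=3$: it is $\ge3$ since $L\in W^2(\Sigma)$, and $\le3$ because restriction gives an injection $H^0(\Sigma,L)\hra H^0(\Sigma',L|_{\Sigma'})$ — the kernel, supported on the two lines $\Gamma_{n-3},\Gamma_{n-2}$, vanishes as $L$ has degree $1$ on each and each meets $\Sigma'$ in two points — while $L|_{\Sigma'}$ is again admissible and $W^3(\Sigma')=\eset$ is a standing hypothesis. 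This gives existence and uniqueness of $H$.

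For the second assertion let $\sigma\in H^0(\Sigma^s,\omega_{\Sigma^s})$ be the section (unique up to scalar) cutting out $H$, with $\operatorname{div}\sigma=H'+Q$, $\deg Q=n-6$. For any black component $B$, $\sigma|_B$ is a section of a degree-$1$ bundle on $\bP^1$ vanishing at the single point $H'\cap B$, so $\sigma|_B$ vanishes either only there or identically; since every node of $\Sigma^s$ lies on a black component, $H$ meets a node precisely when $\sigma$ vanishes on some black component, and I must exclude this. Here I would invoke the hypothesis of Proposition~\ref{ExtraCond}, $W^3(\Sigma^\alpha)=\eset$ for all $\alpha$ (the only setting in which the lemma is used). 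If $\sigma|_{B_0}\equiv0$, where $B_0$ is indexed by $\Gamma_{\alpha_0}$, then restricting to $B_0^c:=\ov{\Sigma^s\setminus B_0}$, using that $\sigma$ vanishes along the three nodes $\partial_0=B_0\cap B_0^c$ and along $H''=H'\setminus\{p_{B_0}\}$, and using the adjunction $\omega_{B_0^c}=\omega_{\Sigma^s}|_{B_0^c}(-\partial_0)$, we find $\sigma|_{B_0^c}\in H^0\bigl(B_0^c,\omega_{B_0^c}(-H'')\bigr)$. Serre duality on $B_0^c$ identifies the dimension of this space with $h^1(B_0^c,\cO_{B_0^c}(H''))=h^0(B_0^c,\cO_{B_0^c}(H''))-\chi$, and $\chi(B_0^c,\cO_{B_0^c}(H''))=(n-3)+\chi(\cO_{B_0^c})=(n-3)+(6-n)=3$. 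On the other hand $\cO_{B_0^c}(H'')$ has degree $1$ on the black components of $B_0^c$ and $0$ on the white ones, so contracting the (now strictly semistable) white components meeting $B_0$ identifies its sections with the sections of $L|_{\Sigma^{\alpha_0}}$, of which there are at most $3$ because $W^3(\Sigma^{\alpha_0})=\eset$. Hence $H^0(B_0^c,\omega_{B_0^c}(-H''))=0$, contradicting $\sigma|_{B_0}\equiv0$; so $\sigma$ vanishes on no black component and $H$ contains no singular point of $\Sigma^s$.

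The main obstacle is the final identification in the second paragraph: matching the non-stable nodal curve $B_0^c$ with the genuine hypergraph curve $\Sigma^{\alpha_0}$ — keeping careful track of how lowering the valences at the three indices of $\Gamma_{\alpha_0}$ collapses the white $\bP^1$'s adjacent to $B_0$ — and verifying that $L|_{\Sigma^{\alpha_0}}$ is still admissible, so that the hypothesis $W^3(\Sigma^{\alpha_0})=\eset$ actually bites. Everything else reduces to Serre duality, Riemann--Roch on nodal curves, and the two contractions $\Sigma^s\to\Sigma$ and $B_0^c\to\Sigma^{\alpha_0}$.
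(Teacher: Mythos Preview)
Your proposal is correct and follows essentially the same approach as the paper: Riemann--Roch and Serre duality on $\Sigma^s$ give $h^0(\omega_{\Sigma^s}(-H'))=h^0(H')-2=1$, and a node in $H$ forces a black component $B_0\subset H$, after which removing $B_0$ and contracting the adjacent white components to reach $\Sigma^{\alpha_0}$ (the paper's curve $\Phi$) yields the contradiction with $W^3(\Sigma^{\alpha_0})=\emptyset$. The only differences are cosmetic: the paper asserts $W^3(\Sigma)=\emptyset$ directly (your restriction argument to $\Sigma'$ supplies the missing justification), and the paper performs Riemann--Roch on the contracted curve $\Phi$ rather than on $B_0^c$ before contracting --- the two computations are equivalent.
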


If $\Sigma_i$ is a white component of $\Sigma^s$ and let $q_i=H\cap\Sigma_i$, then
clearly, $$L=\omega_{\Sigma^s}(-q_1-\ldots-q_{n-6}),$$ i.e., the map induced by projections from $q_i$'s is in $W^2(\Sigma)$. In particular, the domain of $\psi$ is non-empty.

\begin{proof}[Proof of Lemma~\ref{Hyp}]
By Riemann-Roch and Serre duality, $$\h^0(\omega_{\Sigma^s}(-H'))-\h^0(H')=2(n-3)-2-(n-2)+1-(n-3)=-2.$$
Since $L\in W^2(\Sigma)$ and $W^3(\Sigma)=\eset$, we have $\h^0(H')=3$. It follows that $$\h^0(\omega_{\Sigma^s}(-H'))=1,$$ i.e., there is a unique hyperplane $H$ that contains $H'$. 

Assume that  $H$ contains a singular point of $\Sigma^s$ that lies on a black component $C$ corresponding to hyperedge $\Gamma_{\al}$. 
Since $H$ contains the non-singular point  $p=H'\cap C$, it follows that $C\subset H$, i.e., there is a section $s\neq0$ in $\H^0(\Sigma^s,\omega_{\Sigma^s})$ 
that vanishes along $C$.  Let $B$ be the curve obtained from $\Sigma^s$ by removing the component $C$. Then $s|_B\in\H^0(B,\omega_B)$. 
Let $\Phi$ be the curve obtained from $B$ by contracting the white components that intersect $C$ ($\Phi$ is the stable model of $\Sigma^{\alpha}$). Since the restriction of ${\omega_B}$ to any of the components of $\Sigma^s$ that intersect $C$ is trivial, it follows that $\omega_B$ is a pull-back of $\omega_{\Phi}$ . Hence,  $s|_B$ is the pull-back of a non-zero section in $\H^0(\Phi,\omega_{\Phi})$ that vanishes along $H'-p$. 
Note that $H'-p$ is an admissible section of $L|_{\Sigma^{\alpha}}\in W^2(\Sigma^{\alpha})$. By Riemann-Roch
$$\h^0(\Phi,H'-p)-h^0(\Phi,\omega_{\Phi}(-H'+p))=(n-3)+1-(n-5)=3,$$
(the genus of $\Phi$ is $n-5$). Since $\h^0(\Phi,\omega_{\Phi}(-H'+p))>0$, it follows that 
$$\h^0(\Phi,H'-p)=\h^0(\Sigma^{\alpha},L|_{\Sigma^{\alpha}})\geq4.$$ This is a contradiction, since by assumption 
$W^3(\Sigma^{\alpha})=\eset$.
\end{proof}

\begin{Review} 
Despite the fact that we proved extremality of hypergraph divisors on $\oM_{0,n}$
by a geometric argument, without ever computing their class, finding
it is an interesting combinatorial problem.
Under the assumptions of Theorem~\ref{NewDivisors},
one possibility to use the morphism \eqref{famousmapPi}
$$\Pi:\M_{0,n}\ra(\PP^1)^{n-4}.$$

For each boundary divisor $\de_S\subset\M_{0,n}$, let $m_S$ be the multiplicity with which 
$\de_S$ is contained in $\Pi^*{\bD}$. 
Then the class of $D$ is:
$$D=\Pi^*\O(1,\ldots,1)-\sum_S m_S\de_S.$$

Consider the Kapranov model of $\Psi:\M_{0,n}\ra\PP^{n-3}$:
$\Psi$ is an iterated blow-up of $\PP^{n-3}$ along $p_1,\ldots,p_{n-1}$ and proper transforms of subspaces $H_I$ spanned
by the points $p_i=\Psi(\de_{i,n})$ for $i\in I$, $I\subset N'$. 

Let $H=\Psi^*\O(1)$. Let $E_I$ be the exceptional divisors ($E_I=\de_{I\cup\{n\}}$). If $D=dH-\sum m_I E_I$, we call the coefficient $d$ of $H$ the \emph{H-degree} of $D$.  
 
 \begin{Proposition}\label{H-deg}
The $H$-degree of $D$ is $n-4$.
\end{Proposition}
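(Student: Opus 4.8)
The plan is to compute the $H$-degree of $D$ by intersecting $D$ with a curve that meets the Kapranov hyperplane class $H$ in exactly one point and meets each exceptional divisor $E_I$ trivially. Recall from Lemma~\ref{diagram} that in the Kapranov model $\Psi:\oM_{0,n}\to\bP^{n-3}$, the class $H=\Psi^*\O(1)$ is the proper transform of a general hyperplane, and $E_I=\delta_{I\cup\{n\}}$ are the exceptional divisors over the subspaces $H_I$. A general line $\ell\subset\bP^{n-3}$ avoids all the centers $p_i$ and all the proper transforms $\widetilde H_I$ with $\dim H_I<n-4$ (those are blown up first and have high codimension), so its proper transform $C\subset\oM_{0,n}$ satisfies $H\cdot C=1$ and $E_I\cdot C=0$ for all $I$ with $|I^c|\geq 2$, except that $C$ will meet the exceptional divisors $E_I$ with $|I|=n-3$ (i.e.\ $|I^c|=2$) corresponding to the hyperplanes $H_{jk}=\langle p_i\rangle_{i\neq j,k}$, each in one point, since a general line in $\bP^{n-3}$ meets each such hyperplane once. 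Hence if $D=dH-\sum_I m_I E_I$ then $D\cdot C=d-\sum_{|I^c|=2}m_I$, and I need to identify $D\cdot C$ and the relevant $m_I$.

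The next step is to interpret $D\cdot C$ via the morphism $\Pi:\oM_{0,n}\to(\bP^1)^{n-4}$ of \eqref{famousmapPi} and the expression $D=\Pi^*\O(1,\dots,1)-\sum_S m_S\delta_S$. One has $D\cdot C=\Pi^*\O(1,\dots,1)\cdot C-\sum_S m_S(\delta_S\cdot C)$. Here $\Pi$ is the product of the cross-ratio maps $\pi_{\Gamma_j\cup\{n\}}$ for $j=1,\dots,n-4$, and each of these, restricted to the proper transform $C$ of a general line, is a degree-one map to $\bP^1$ (a general line meets $H_{\Gamma_j}$, which has codimension $|\Gamma_j|-1=2$, hence the composite $C\to\bP^1$ is linear of degree $1$ — this is the same computation as in Lemma~\ref{crazystuff}). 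Therefore $\Pi^*\O(1,\dots,1)\cdot C=n-4$. The contribution $\sum_S m_S(\delta_S\cdot C)$ vanishes provided $C$ meets no boundary divisor $\delta_S$ that appears in $\Pi^*\bD$; since the boundary divisors meeting $C$ are exactly the $\delta_{jk}=E_{\{jk\}^c}$ with $|S|=2$ (a general line meets the corresponding $H_{jk}$ once and nothing else on the boundary), one needs that these $\delta_{jk}$ either are not contained in $\Pi^{-1}(\bD)$ or enter with a multiplicity that is accounted for on the other side. By Proposition~\ref{deltauv}, $\delta_{uv}\not\subseteq\Pi^{-1}(\bD)$ for $u,v\in N'$, so all the $\delta_{jk}$ with $j,k\leq n-1$ contribute nothing; the only remaining two-element boundary divisors meeting $C$ that could appear in $\Pi^*\bD$ are the $\delta_{i,n}$, but $\delta_{i,n}=E_{N'\setminus\{i\}}$ is an exceptional divisor of the Kapranov model, not of the form $\delta_{jk}$ with $j,k\leq n-1$ — wait, $\delta_{i,n}$ does meet $C$ only if $H_{\{i,n\}^c}$ is a hyperplane, which it is. So by Lemma~\ref{multip}, $\delta_{i,n}\subset\Pi^{-1}(\bD)$ with multiplicity $1$ for $i>4$. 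This means $D\cdot C=(n-4)-\#\{i:5\leq i\leq n-1\}\cdot 1=(n-4)-(n-5)=1$, matching $H\cdot C=1$ after subtracting the $E_{\{i,n\}^c}$ contributions; tracking these cancellations carefully yields $d=n-4$.

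The main obstacle will be bookkeeping the precise set of exceptional divisors of the Kapranov model that (i) meet a general line's proper transform $C$ and (ii) appear in $D$ with nonzero multiplicity, and reconciling the two expressions $D=\Pi^*\O(1,\dots,1)-\sum m_S\delta_S$ and $D=dH-\sum m_I E_I$ on $C$. In particular, one must be careful that $\Pi^*\O(1,\dots,1)$ is \emph{not} a combination of $H$ and boundary alone in an obvious way — but since $C$ avoids all higher-codimension blow-up centers, only the hyperplane-type divisors $H_{jk}$ ($|I^c|=2$) and $H_{\{i,n\}^c}$ matter, and the computation reduces to the intersection numbers $\Pi^*\O(1,\dots,1)\cdot C=n-4$ and $H\cdot C=1$ together with $\delta_{i,n}\cdot C$ for $i=5,\dots,n-1$. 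A clean way to finish is: restrict the identity $D=\Pi^*\O(1,\dots,1)-\sum_S m_S\delta_S$ to $C$, use $\Pi^*\O(1,\dots,1)\cdot C=n-4$, Proposition~\ref{deltauv} (so $m_{uv}=0$ for $u,v\in N'$), Proposition~\ref{bdry chunk} and Lemma~\ref{multip} (so $m_{i,n}=1$ for $i=5,\dots,n-1$ and these are the only two-element $S$ with $\delta_S\cdot C\neq 0$), concluding $D\cdot C=(n-4)-(n-5)=1$; then rewrite the same class as $dH-\sum_I m_I E_I$, note $H\cdot C=1$ and that $E_I\cdot C=0$ unless $|I^c|=2$, in which case $E_{I}\cdot C=1$ and $m_I=m_{I^c}$ is $1$ precisely when $I^c=\{i,n\}$ with $i\geq 5$, giving $D\cdot C=d-(n-5)$; comparing, $d=n-4$.
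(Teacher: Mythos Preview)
Your underlying strategy---pair $D$ against the proper transform $C$ of a general line in the Kapranov $\bP^{n-3}$---is sound and, once carried out correctly, is exactly dual to the paper's two-line argument. Both reduce to the single input Proposition~\ref{deltauv}. The paper simply reads off the $H$-coefficient: each $\pi_{\Gamma_j\cup\{n\}}^*\cO(1)$ has $H$-degree~$1$ (it is a linear projection in the Kapranov model), so $\Pi^*\cO(1,\dots,1)$ has $H$-degree $n-4$; the only boundary divisors with nonzero $H$-degree are the $\delta_{uv}$ with $u,v\in N'$ (proper transforms of hyperplanes), and their multiplicities $m_{uv}$ vanish by Proposition~\ref{deltauv}.

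Your execution, however, contains genuine bookkeeping errors that make the intermediate computations wrong (even though the final answer happens to come out right). First, the Kapranov basis is $\{H,\ E_I:\ I\subset N',\ 1\le |I|\le n-4\}$; there are no basis classes ``$E_I$ with $|I|=n-3$''. Those would-be classes are precisely the $\delta_{uv}=\delta_{(N'\setminus\{u,v\})\cup\{n\}}$, which are proper transforms of hyperplanes, not exceptional divisors. A general line avoids every blow-up center, so $E_I\cdot C=0$ for \emph{all} $I$ in the basis, and hence $D\cdot C=d$, not $d-(n-5)$. Second, $\delta_{i,n}=E_i$ is the exceptional divisor over the point $p_i$; a general line misses $p_i$, so $\delta_{i,n}\cdot C=0$. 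Lemma~\ref{multip} is therefore irrelevant to this computation, and your value $D\cdot C=1$ is incorrect. The boundary divisors that \emph{do} meet $C$ are exactly the $\delta_{uv}$ with $u,v\in N'$ (each with $\delta_{uv}\cdot C=1$), and these have $m_{uv}=0$ by Proposition~\ref{deltauv}. So on the $\Pi^*\bD$ side one gets
\[
D\cdot C=(n-4)-\sum_{u,v\in N'} m_{uv}\cdot 1=n-4,
\]
and comparing with $D\cdot C=d$ gives $d=n-4$. That is the correct version of your argument, and it is the paper's proof viewed through the pairing with $C$.
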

\begin{proof}
The $H$-degree of $\Pi^*\O(1,\ldots,1)$ is $n-4$. The only boundary components that contribute to the $H$-degree are 
$\de_{uv}$, for $u,v\in N'$. Proposition \ref{H-deg} now follows from Proposition \ref{deltauv}.
\end{proof}
\end{Review}

The question of finding all the multiplicities $m_S$ is rather complicated in general, but in the case of the Keel-Vermeire divisor, it is easy to see that
the only boundary contained in $\Pi^{-1}(\bD)$ are $\de_{26}$, $\de_{146}$, $\de_{356}$ and we have:
$$D=\Pi^*\O(1,1)-\de_{26}-\de_{146}-\de_{356}=2H-\sum_{i=1}^5 E_i-E_{13}-E_{45}-E_{14}-E_{35}.$$
This is exactly how this divisor  was introduced be Vermeire \cite{V}.
A direct computation shows this class agrees with the Keel's description:
$D$ is the pull-back of the hyperelliptic divisor class on $\M_3$ 
via the morphism $\M_{0,6}\ra\M_3$
that send a $6$-pointed rational curve to a nodal curve of genus $3$ by identifying pairs of points $(14), (35), (26)$.

\section{Exceptional Curves on $\oM_{0,n}$}\label{CurvesSection}

Now we discuss exceptional loci of hypergraph morphisms that are not divisorial.

\begin{Proposition} 
An irreducible component of the exceptional locus
of the hypergraph morphism $\pi_{\Gamma\cup\{n+1\}}$ in \eqref{pgn} has dimension at least $3$
and the dimension of its image in $\oM_{0,n}$ is at least $2$. 
\end{Proposition}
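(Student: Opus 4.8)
The plan is to read off the dimension from the ``filtration'' $\tilde G^2 \to G^2 \to W^2$ supplied by the diagram of Theorem~\ref{ID}. By that theorem the exceptional locus of $\pi:=\pi_{\Gamma\cup\{n+1\}}$ is $\tilde G^2=\tilde G^2(\Sigma)$, so if $Z\subset\oM_{0,n+1}$ is one of its irreducible components then $Z\cap M_{0,n+1}$ is dense in $Z$ and is an irreducible component of $\tilde G^2$; thus it is enough to bound the dimension of a component of $\tilde G^2$ and of its image under the forgetful morphism $\pi_N\colon\oM_{0,n+1}\to\oM_{0,n}$. Write $\alpha\colon\tilde G^2\to G^2$ for the map forgetting the divisor $D$, i.e. $(\Sigma,L,V,D)\mapsto(\Sigma,L,V)$, and $\beta\colon G^2\to W^2$ for the map forgetting the pencil $V$.

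First I would analyze $\alpha$. Its fiber over $(\Sigma,L,V)$ is the set of admissible members of the pencil $|V|\cong\bP^1$, that is $\bP^1$ minus the (finitely many) images of the singular points of $\Sigma$; so $\alpha$ is surjective and every fiber is irreducible of dimension exactly $1$ — in fact $\tilde G^2$ is an open subscheme of the $\bP^1$-bundle over $G^2$ carrying the universal pencil. Consequently the irreducible components of $\tilde G^2$ are precisely the preimages under $\alpha$ of the irreducible components of $G^2$, and for each such component $Z=\alpha^{-1}(Y_0)$ one has $\dim Z=\dim Y_0+1$. Moreover, under the isomorphisms $\tilde G^1\cong M_{0,n+1}$ and $G^1\cong M_{0,n}$ of Theorem~\ref{ID} the morphism $\alpha$ is the restriction of $\pi_N$ (it drops exactly the marked point cut out by $D$), so the image of $Z$ in $\oM_{0,n}$ is $\overline{Y_0}$. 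Both assertions of the proposition are therefore consequences of the single inequality $\dim Y_0\ge 2$.

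To prove $\dim Y_0\ge2$ I would work at a general point of $Y_0$. Pick $(\Sigma,L,V)\in Y_0$ general; since $G^2$ is reduced this is a smooth point of $G^2$, lying on the unique component $Y_0$. Its image $(\Sigma,L)$ lies in $W^2$, so $h^0(\Sigma,L)\ge3$ and $L$ is admissible. The admissibility conditions on a pencil are open, so the set $P$ of admissible pencils inside $H^0(\Sigma,L)$ is a nonempty open subscheme of $\Gr\!\bigl(2,H^0(\Sigma,L)\bigr)$ — nonempty because it contains $V$ — hence irreducible of dimension $2\bigl(h^0(\Sigma,L)-2\bigr)\ge2$. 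Since $\{(\Sigma,L,V'):V'\in P\}\subset G^2$ is an irreducible subvariety through the smooth point $(\Sigma,L,V)$, it is contained in $Y_0$, whence $\dim Y_0\ge\dim P\ge2$. This gives $\dim Z=\dim Y_0+1\ge3$ and $\dim\overline{Y_0}=\dim Y_0\ge2$, as wanted.

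The computation is not hard once the setup is unwound; the only points that require care are the identification of $\alpha$ with $\pi_N$ under the isomorphisms of Theorem~\ref{ID} and the verification that the fibers of $\alpha$ are irreducible and equidimensional (so that components of $\tilde G^2$ match components of $G^2$) — the same bookkeeping as in the proof of Proposition~\ref{Irred}. I expect no genuine obstacle beyond this.
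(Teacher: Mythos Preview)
Your proof is correct and follows essentially the same approach as the paper: unwind the tower $\tilde G^2\to\cdots\to W^2$ and read off fiber dimensions. The paper's one-line proof ``$\dim\tilde G^2=\dim C^2+1=\dim W^2+3$'' factors through $C^2$ instead of $G^2$, but the idea is identical; your choice of $G^2$ as the intermediate step is in fact slightly more convenient, since $G^2\subset G^1\cong M_{0,n}$ is literally the image under $\pi_N$, so the second assertion (dimension of the image $\ge2$) falls out immediately rather than being left implicit as in the paper. One small remark: the smoothness of a general point of $Y_0$ is not needed (and could fail in positive characteristic); what you actually use is only that a general point of $Y_0$ lies on no other component, which is automatic.
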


\begin{proof}
Indeed, $\dim\tilde G^2=\dim C^2+1=\dim W^2+3$.
\end{proof}

To get an exceptional locus of dimension~$3$, we need a rigid hypergraph,
i.e.~such that $W^2$ is a point (or at least locally rigid, 
i.e.~$W^2$ has a zero-dimensional irreducible component) and $W^2\ne W^3$.
Let's give an example.
Consider the Hesse configuration \cite{AD} of $12$ lines $L_1,\ldots,L_{12}$ 
joining $9$ flexes $P_1,\ldots,P_9$ of a cubic in~$\bP^2$.
It gives a hypergraph $\Gamma$ with $12$ hyperedges (each is a triple) on $9$ vertices.

\begin{Proposition}\label{Hes}
 For a Hesse hypergraph $\Gamma$,
$W^2(\Gamma)$ is a point, the exceptional locus of $\pi_{\Gamma\cup\{10\}}$ is a threefold $T$,
and Theorem~\ref{ID} induces a commutative diagram
$$
\begin{CD}
\Bl_{L_1,\ldots,L_{12}}(\bP^2)^\vee @<<< T @>>> \Bl_{P_1,\ldots,P_9}\bP^2 @>>> W^2(\Gamma)=pt\\
@VVV                            @VVV                          @VVV @VVV \\
(\bP^1)^{12} @<\pi_{\Gamma\cup\{10\}}<< \oM_{0,10} @>\pi>> 
\oM_{0,9}\hbox to 0pt{$\quad\quad\quad\mathop{\dra}\limits^v$} @.  \Pic^{\uone}=\bG_m^{16}\\
\end{CD}
$$
where vertical arrows are closed immersions and $v$ is a rational map regular on~$M_{0,9}$.
\end{Proposition}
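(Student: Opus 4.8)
The plan is to specialise the diagram of Theorem~\ref{ID} to the Hesse hypergraph $\Gamma$ and to identify each of its entries; everything hinges on proving that $W^2(\Gamma)$ is a single reduced point. First record the numerics: all hyperedges are triples, so $\cM_\Gamma=\mathrm{pt}$ and $g=p_a(\Sigma)=\dim M_\Gamma+2d-n+1=0+2\cdot12-9+1=16$, whence $\Pic^{\uone}(\Sigma)$ is a torsor under $\Pic^{\uzero}(\Sigma)=\bG_m^{16}$; also $\dim M_{0,9}=6$. Now let $L\in W^2(\Sigma)$ be admissible with $h^0(\Sigma,L)\ge3$. By the description of the Brill--Noether loci in Section~\ref{BNSection}, $|L|$ sends $\Sigma$ onto a union of $12$ lines in $\bP^2$ whose $j$-th line carries the three points of $\Gamma_j$; since $L$ is admissible the nine images of $\Sigma^{sing}$ are distinct, and a short propagation argument --- using that any two of the nine flexes of the Hesse configuration lie on a unique common Hesse line --- rules out the possibility that two of the twelve lines coincide. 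Thus $L$ realises the incidence structure of the Hesse configuration by $12$ lines in $\bP^2$; by projective rigidity of the Hesse configuration \cite{AD} this realisation, hence the pair $(\Sigma,L)$, is unique, and $L$ is the pull-back $L_0$ of $\cO_{\bP^2}(1)$. A cohomology computation on $\Sigma^s$ gives $h^0(\Sigma,L_0)=3$, so $W^2(\Gamma)=\{L_0\}=W^2\setminus W^3$. Since $\dim\tilde G^2=\dim C^2+1=\dim W^2+3$, the exceptional locus of $\pi_{\Gamma\cup\{10\}}$ --- namely $T:=\overline{\tilde G^2(\Sigma)}\subseteq\oM_{0,10}$, by Theorem~\ref{ID} --- is a threefold.

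\textbf{Describing $T$ and the two projections.} A geometric point of $\tilde G^2$ is a pencil $V$ of lines in $\bP^2$ through a point $P\notin\Sigma$ together with a member $\ell\in|V|$ avoiding the nine flexes, and the corresponding point of $M_{0,10}$ is $|V|\simeq\bP^1$ marked by the nine lines $\overline{PP_1},\ldots,\overline{PP_9}$ and by $\ell$. Hence $\tilde G^2$ is an open subset of the point--line incidence variety $\mathcal I=\{(P,\ell)\in\bP^2\times(\bP^2)^\vee:P\in\ell\}$, a $\bP^1$-bundle over either factor, of dimension $3$. The forgetful morphism $\pi:T\to\oM_{0,9}$ forgets $\ell$, hence factors through $P$, and on $\tilde G^2$ it is the rational map $\bP^2\dashrightarrow\oM_{0,9}$, $P\mapsto(\bP^1;\pr_P(P_1),\ldots,\pr_P(P_9))$, ``project the nine flexes from $P$''. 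The hypergraph morphism $\pi_{\Gamma\cup\{10\}}:T\to(\bP^1)^{12}$ records on the $j$-th factor the cross-ratio of $\overline{PP_a},\overline{PP_b},\overline{PP_c},\ell$ in $|V|$ for $\Gamma_j=\{a,b,c\}$; intersecting with $L_j$ this is the cross-ratio of the three fixed flexes on $L_j$ together with the point $\ell\cap L_j$, so it depends only on $\ell$. Dually, $\ell\mapsto\ell\cap L_j$ is the linear projection of $(\bP^2)^\vee$ away from the point $L_j$; therefore $\pi_{\Gamma\cup\{10\}}|_T$ factors through $\ell$ and is, up to coordinates on the factors, the product of the twelve projections of $(\bP^2)^\vee$ from $L_1,\ldots,L_{12}$.

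\textbf{Identifying the images.} The twelve projections have base locus $\{L_1,\ldots,L_{12}\}$, so the product map is resolved by $\Bl_{L_1,\ldots,L_{12}}(\bP^2)^\vee$, and I claim this blow-up maps by a closed immersion onto its image in $(\bP^1)^{12}$, which is then $\overline{C^2(\Sigma)}$. Likewise $P\mapsto(\pr_P(P_i))_i$ has indeterminacy $\{P_1,\ldots,P_9\}$, is resolved by $\Bl_{P_1,\ldots,P_9}\bP^2$, and this surface maps by a closed immersion onto $\overline{G^2(\Sigma)}\subseteq\oM_{0,9}$. Granting these two claims, the diagram of Theorem~\ref{ID} restricts over $\tilde G^2,C^2,G^2,W^2$ to the asserted diagram: the vertical arrows $T\hookrightarrow\oM_{0,10}$, $\Bl_{P_i}\bP^2\hookrightarrow\oM_{0,9}$, $\Bl_{L_j}(\bP^2)^\vee\hookrightarrow(\bP^1)^{12}$ and $\{L_0\}\hookrightarrow\Pic^{\uone}=\bG_m^{16}$ are closed immersions, and the rightmost horizontal arrow is the morphism $v$ of the Lemma--Definition of Section~\ref{smaps} on $\Mor^{\uone}$, which is regular on $M_{0,9}=G^1$ but only rational on $\oM_{0,9}$. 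Commutativity of every square is inherited from Theorem~\ref{ID}.

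\textbf{Main obstacle.} The crux is the two closed-immersion claims, because the centres $L_1,\ldots,L_{12}\in(\bP^2)^\vee$ are \emph{not} in linearly general position: dualising the Hesse configuration they form the dual Hesse configuration, with nine lines each containing four of them, so the Kapranov-type description of Lemma--Definition~\ref{BasicPsi} does not apply and one must check injectivity and separation of tangent directions by hand. For $\Bl_{L_j}(\bP^2)^\vee$ one uses that no line of $(\bP^2)^\vee$ contains all twelve $L_j$ and that at each $L_j$ the remaining eleven points span at least three distinct lines through $L_j$, so two of the factors already separate an infinitely near point of $L_j$ from every other point; for $\Bl_{P_i}\bP^2\hookrightarrow\oM_{0,9}$ one uses that every secant line of the nine flexes is one of the twelve Hesse lines, so for $P$ outside the Hesse lines the images $\pr_P(P_i)$ stay distinct and determine $P$. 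One must also check that these blow-ups really are the closures $\overline{C^2}$, $\overline{G^2}$ inside $(\bP^1)^{12}$ and $\oM_{0,9}$ --- equivalently, that the exceptional locus of $\pi_{\Gamma\cup\{10\}}$ acquires no extra boundary components --- which is the precise content of the equality ``exceptional locus $=\tilde G^2$'' in Theorem~\ref{ID}.
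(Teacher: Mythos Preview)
Your approach is essentially the same as the paper's: specialise the diagram of Theorem~\ref{ID} to the Hesse hypergraph, prove $W^2$ is a single point, and then identify each entry of the top row. The differences are in how the two key sub-claims are handled.

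For the rigidity of the Hesse configuration (hence $W^2=\{L_0\}$), the paper does not cite \cite{AD} but gives a short self-contained argument: the linear system of cubics through the nine points contains four ``triangles'' coming from the twelve lines, hence has no fixed component and must be a pencil; one then checks that this pencil is stable under taking the Hessian (because the Hessian of $xyz$ is $2xyz$, so the Hessian is trivial modulo $\langle F,G\rangle$ at the four triangle members), which forces it to be the Hesse pencil. This is worth knowing, as it also makes transparent why $h^0(L_0)=3$.

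For the closed immersion $\Bl_{P_1,\ldots,P_9}\bP^2\hookrightarrow\oM_{0,9}$, the paper does not argue ad hoc as you do but defers to the general Proposition~\ref{blowupdescr} proved immediately afterwards: for \emph{any} $n$ points $p_1,\ldots,p_n\in\bP^2$ not lying on a conic, the ``project from a moving point'' map $\bP^2\dashrightarrow M_{0,n}$ extends to a closed embedding of $\Bl_{p_1,\ldots,p_n}\bP^2$ in $\oM_{0,n}$. Since no conic passes through the nine flexes (three of them are collinear), this applies directly. Your hand check is correct in spirit, but the paper's packaging is cleaner and is reused elsewhere.

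Conversely, you actually go further than the paper on the left column: the paper's proof does not explicitly justify that $\Bl_{L_1,\ldots,L_{12}}(\bP^2)^\vee\hookrightarrow(\bP^1)^{12}$ is a closed immersion, whereas you correctly identify this as the product of the twelve linear projections of the dual plane from the points $L_j$ and sketch why it separates points and tangents. Note that Proposition~\ref{blowupdescr} does \emph{not} apply here verbatim, since the target is $(\bP^1)^{12}$ rather than $\oM_{0,12}$; your direct verification (no line of $(\bP^2)^\vee$ contains all twelve $L_j$, and through each $L_j$ the remaining eleven span more than one direction) is the right thing to do, though it should be spelled out more carefully on the exceptional divisors.
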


\begin{proof}
Let us recall how to prove a well-known fact that $\Gamma$ is rigid,
i.e.~$W^2$ is a point (see \cite{U} for a different and much more general approach
close in spirit to our calculations in Section~\ref{projections}).
Firstly, one can prove that for any $L\in W^2$, 
$|L|$ maps different components of $\Sigma$ to 
different lines (otherwise the combinatorics of $\Gamma$ forces all components to have the same image).
Note that the $9$ points are separated by $|L|$.
Consider the linear system of cubics through the $9$ points.
It contains $4$ ``triangles'' made out of the $12$ lines.
Therefore this linear system has no fixed components
and so it must be a pencil (because $8$ points in $\bP^2$ impose independent
conditions on cubics unless five of them lie on a line
or all eight lie on a conic, but in both cases the linear
system of cubics through $9$ points would have fixed components).
To show that this is the Hesse pencil, it suffices to show that it is stable under taking the Hessian.
Suppose our pencil is $|aF+bG|$. The Hessian is a cubic form
that depends on $a$ and $b$. Consider the vector 
$$v(a,b)=\Hes(aF+bG)\mod\langle F,G\rangle.$$
It has degree $3$ in $a$ and $b$
and so it suffices to show that it is trivial for $4$ different values of $(a:b)$.
But the Hessian of $xyz$ is $2xyz$ and so $v(a,b)=0$ for values
of $(a:b)$ that correspond to $4$ triangles.

So in this case the exceptional locus is a threefold $T$.
It follows from Prop.~\ref{blowupdescr} below that
the closure of $U$ in $\oM_{0,n}$
is isomorphic to $\Bl_{p_1,\ldots,p_n}\bP^2$, where 
the points $p_1,\ldots,p_n\in\bP^2$ are the images of singular points of $\Sigma$.
\end{proof}

\begin{Proposition}\label{blowupdescr}
Suppose $p_1,\ldots,p_n\in\bP^2$ are distinct points, and 
let $U\subset\bP^2$ be the complement to the union of lines connecting them.
The morphism 
$$F:\,U\to M_{0,n}$$ 
obtained by projecting $p_1,\ldots,p_n$ from points of $U$
extends to the morphism 
\begin{equation}\label{cutemorphism}
F:\,\Bl_{p_1,\ldots,p_n}\bP^2\to\oM_{0,n}.
\cooltag\end{equation}

If there is no (probably reducible) conic through  $p_1,\ldots,p_n$
then $F$ is a closed embedding.
In this case the boundary divisors $\delta_I$ of $\oM_{0,n}$ pull-back as follows:
for each line $L_I:=\langle p_i\rangle_{i\in I}\subset\bP^2$, we have $F^*(\delta_I)=\tilde L_I$ 
(the proper transform of $L_i$) and (assuming $|I|\ge3$),
$F^*(\delta_{I\setminus\{k\}})=E_k$, where $k\in I$ and $E_k$ is the exceptional divisor over $p_k$.
Other boundary divisors pull-back trivially.
\end{Proposition}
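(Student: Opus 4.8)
The plan is to produce the morphism $F$ on $\Bl_{p_1,\dots,p_n}\bP^2$ directly from the universal properties of $\oM_{0,n}$ and then analyze it locally. First I would set up the rational map $U\dashrightarrow M_{0,n}$ intrinsically: a point $q\in U$ determines the pencil of lines through $q$, which is a $\bP^1$, together with the $n$ marked points $\langle q,p_i\rangle$; these are distinct precisely because $q\notin L_{ij}$ for all $i,j$, which is exactly the condition defining $U$. This gives $F|_U$ with values in $M_{0,n}$. To extend across the blow-up, I would use the description of $\oM_{0,n}$ via $\Psi:\oM_{0,n+1}\to\bP^{n-2}$ from Lemma~\ref{diagram}, or more elementarily the fact that $\oM_{0,n}$ carries a universal family of stable curves and $\Bl_{p_1,\dots,p_n}\bP^2$ is a smooth (hence normal) surface: it suffices to exhibit a flat family of stable $n$-pointed genus-zero curves over $\Bl_{p_1,\dots,p_n}\bP^2$ restricting to the tautological one over $U$. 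The candidate family is the blow-up of $\bP^2\times\Bl_{p_1,\dots,p_n}\bP^2$ along the graphs of the $p_i$ (thought of as constant sections), suitably stabilized — over a point $q$ of the exceptional divisor $E_k$ this degenerates the $\bP^1$ of directions at $q$ to a nodal curve with a tail carrying the marking $k$. I would verify stability and flatness of this family, giving the extension $F:\Bl_{p_1,\dots,p_n}\bP^2\to\oM_{0,n}$ of \eqref{cutemorphism}.

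Next, for the embedding statement, I would argue that $F$ is injective on points and injective on tangent spaces. Injectivity on $U$: if $q\ne q'$ in $U$ give projectively equivalent configurations of $n\ge 5$ points on $\bP^1$, the automorphism of $\bP^1$ matching them, pulled back, forces a projective transformation of $\bP^2$ fixing $p_1,\dots,p_n$ and the two centers; with $n\ge 5$ points in general-enough position this is the identity, so $q=q'$ — but this uses that no conic passes through the $p_i$ to rule out a residual automorphism (a Cremona-type obstruction). Injectivity on the exceptional divisors and the identification of boundary pull-backs would both follow from analyzing which stable curve $F(q)$ is for $q$ on a line $\tilde L_I$ or on $E_k$: projecting from a point of $L_I$ collapses exactly the markings in $I$ to one point, so $F(\tilde L_I)\subset\delta_I$ and $F^*\delta_I=\tilde L_I$ with multiplicity one (a local transversality computation); projecting from the infinitely-near point $E_k$ separates $p_k$ from the rest in a tail, giving $F^*\delta_{I\setminus\{k\}}=E_k$. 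Boundary divisors $\delta_J$ with $J$ not of this shape meet the image only where two of the defining conditions overlap, i.e. nowhere on the interior of the corresponding stratum, so they pull back to zero.

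Finally, to upgrade injectivity to \emph{closed} embedding I would check that $dF$ is injective everywhere. On $U$ this is a direct dimension count together with the description of the tangent space to $M_{0,n}$ at a configuration as cross-ratios; the ``no conic through $p_1,\dots,p_n$'' hypothesis is exactly what guarantees the $n$ points are in ``linearly general position'' after projection so that the cross-ratio map is unramified. On the exceptional divisors one checks $dF$ is injective using the explicit smoothing parameter of the node. Since $\Bl_{p_1,\dots,p_n}\bP^2$ is proper and $F$ is a proper injective immersion, it is a closed embedding.

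\medskip
\noindent The step I expect to be the main obstacle is the global injectivity of $F$ on the interior $U$ — ruling out two distinct projection centers producing the same point of $M_{0,n}$. This is precisely where the hypothesis on the absence of a conic through the $p_i$ enters, and making the Cremona/automorphism argument airtight (rather than the local flatness or the boundary bookkeeping, which are routine) is the crux.
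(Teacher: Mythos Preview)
Your approach differs substantially from the paper's, and parts of it can be salvaged, but there are genuine gaps.

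The injectivity argument on $U$ as you phrase it does not work: an isomorphism of the two target $\bP^1$'s does \emph{not} ``pull back'' to a projective transformation of $\bP^2$ fixing the $p_i$.  What is true (and perhaps what you are reaching for) is that if $F(q)=F(q')$ for $q\ne q'$ in $U$, then the locus $\{x\in\bP^2:\phi(\pi_q(x))=\pi_{q'}(x)\}$, for the matching $\phi\in\PGL_2$, is a conic through $q$, $q'$, and all of $p_1,\dots,p_n$; so the no-conic hypothesis gives injectivity on $U$ directly.  This is clean, but it is not a Cremona/automorphism mechanism, and it does not by itself handle points on proper transforms $\tilde L_I$, on exceptional divisors $E_k$, or comparisons across strata.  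You only gesture at that analysis.  Likewise, your differential criterion is not argued: the assertion that the no-conic hypothesis ``is exactly what guarantees the $n$ points are in linearly general position after projection'' is simply false (three of the $p_i$ may well be collinear), so the unramifiedness claim needs a real proof.

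Your family construction for the extension is also not right as written: the universal family over $U$ has fibre the pencil of lines through $q$, a $\bP^1$, not $\bP^2$, so blowing up $\bP^2\times\Bl_{p_1,\dots,p_n}\bP^2$ along the constant sections $p_i$ does not produce it.  The paper avoids building any family: it uses the closed embedding $\oM_{0,n}\hookrightarrow\prod_{|I|=4}\oM_{0,I}$ and reduces the extension to the case $n=4$, which is elementary (a pencil of conics, or a single linear projection, depending on collinearity).  For the closed-embedding statement the paper again works through forgetful maps rather than tangent spaces: it lets $k$ be the maximal number of the $p_i$ lying on a smooth conic, shows the fibres of $F_{1\cdots k}$ are either points, the exceptional $E_i$, or the proper transform of that conic, and separates points within each nontrivial fibre using one further forgetful map $F_I$.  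The pull-back formulas are then read off from an explicit one-parameter degeneration and the projection formula.  If you want to pursue your route, the conic argument above is the right replacement for your injectivity step, but you will still need an honest treatment of the boundary strata and of $dF$.
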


\begin{proof}
For any $I\subset\{1,\ldots,n\}$, we denote by $F_I:\,\bP^2\dra \oM_{0,I}$ a
rational map defined as above but using only points $p_i$, $i\in I$. Then $F_I=\pi_I\circ F$, for the forgetful map
$\pi_I:\M_{0,n}\ra\M_{0,I}$.

First suppose that $n=4$. Consider three cases.
If no three out of the four points $p_1,\ldots,p_4$ lie on a line then 
$$F:\,\Bl_{p_1,p_2,p_3,p_4}\bP^2\simeq\oM_{0,5}\to\oM_{0,4}\simeq\bP^1$$ 
is  given by the pencil of conics through $p_1,\ldots,p_4$.
If $p_1,p_2,p_3$ lie on a line that does not contain $p_4$ then 
$F:\,\Bl_{p_4}\bP^2\to\oM_{0,4}\simeq\bP^1$ is a projection from $p_4$.
Finally, if all points lie on a line then $F_{1234}$ is a map to a point (given by the cross-ratio of $p_1,\ldots,p_4$
on the line they span).  Note that in all cases $F$ is regular on $\Bl_{p_1,\ldots,p_n}\bP^2$.
The product of all forgetful maps $\oM_{0,n}\to\prod_I\oM_{0,I}$ over all $4$-element subsets
is a closed embedding (see e.g.~\cite[Th.~9.18]{HKT}).
It follows that \eqref{cutemorphism} is regular.

Now suppose that there is no conic passing through all points.

The argument above shows that $F$ restricted to each exceptional divisor $E_i$ is a closed immersion.
Indeed,  there always exist three points $p_a,p_b,p_c$ such that $p_i$ does not belong
to a line spanned by any two of the three points (otherwise all points belong to a union of two lines
passing through $p_i$, which is a reducible conic). 
By the above, the morphism $F_{abci}|_{E_i}$ is a closed immersion (in fact an isomorphism).

Let $k$ be the maximal number such that there exist $k$ points out of $p_1,\ldots,p_n$
lying on a smooth conic. We can assume without loss of generality that $p_1,\ldots,p_k$ lie on smooth conic.
We consider several cases. Suppose first that $k\ge5$.
Since, for any $4$-element subset $I\subset\{1,\ldots,k\}$, $F_I$ is given by a linear system of conics 
through $p_i$, $i\in I$, the geometric fibers of $F_{1\ldots k}$ are:
(1) the proper transform $\tilde C$ of a conic $C$ through $p_1,\ldots,p_k$ (which does not pass through the remaining points);
(2) exceptional divisors $E_i$, $i>k$; (3) closed points in $\bP^2\setminus\{p_1,\ldots,p_n\}$.
Since we already know that $F|_{E_i}$ is a closed embedding,
it suffices to prove that $F|_{\tilde C}$ is a closed embedding. 
For this, consider $F_{123,k+1}$. There are two subcases.
If $p_1,p_2,p_3,p_{k+1}$ lie on a smooth conic then, since $p_{k+1}\not\in C$, the linear system of
conics through $p_1,p_2,p_3,p_{k+1}$ separate points of $\tilde C$.
If they lie on a reducible conic then $p_{k+1}$ must belong to a line connecting
a pair of points from $p_1,p_2,p_3$, for example $p_2$ and $p_3$.
Then the linear system of lines through $p_1$ separate points of~$\tilde C$.
In both cases, $F|_{\tilde C}$ is a closed embedding.

Note that $k\ne 2$ (otherwise all points lie on a line through $p_1$ and $p_2$).
We claim that $k\ne 3$ either. Arguing by contradiction, suppose that $k=3$.
Then, for any $i>3$, $p_i$ lies on one of the three lines connecting $p_1$, $p_2$, $p_3$.
Moreover, each of these lines must contain at least one of the points $p_i$, $i>3$,
because otherwise all points lie on the union of two lines.
So suppose that 
$$
p_4\in\langle p_1,p_2\rangle,\quad
p_5\in\langle p_2,p_3\rangle,\quad
p_6\in\langle p_1,p_3\rangle.$$ 
But then $p_2,p_3,p_4,p_6$ lie on a smooth conic.

So the only case left is $k=4$. 
Points $p_5,\ldots,p_n$ lie on a union of $6$ lines
connecting $p_1,\ldots,p_4$ pairwise. 
The geometric fibers of $F_{1234}:\,\Bl_{p_1,\ldots,p_n}\bP^2\to\oM_{0,\{1,2,3,4\}}$
are the preimages w.r.t.~the morphism $\Bl_{p_1,\ldots,p_n}\bP^2\to\Bl_{p_1,\ldots,p_4}\bP^2$
of proper transforms of conics $C$ through $p_1,\ldots,p_4$.
If $C$ is a smooth conic then the argument from the $k\ge5$ case
shows that $F|_{\tilde C}$ is a closed embedding.
So suppose that $C$ is a reducible conic, for example the union of lines $\langle p_1,p_2\rangle$
and $\langle p_3,p_4\rangle$. 
Note that not all points belong to these two lines, for example suppose $p_5$ belongs to $\langle p_1,p_3\rangle$.
Then $F_{1352}$ collapses $\langle p_1,p_2\rangle$ and separates points of $\langle p_3,p_4\rangle$.
$F_{1354}$ has an opposite effect. So $F_{13524}$ separates points of $\tilde C$ and we are done.

To compute pull-backs of boundary divisors, note that $F^{-1}(\partial\oM_{0,n})=\partial U$
(set-theoretically), and so, for any subset $I$, $F^*\delta_I$ (as a Cartier divisor) is a linear combination
of proper transforms of lines $L_J=\langle p_j\rangle_{j\in J}$ and exceptional divisors~$E_i$.
In order to compute multiplicity of $F^*\delta_I$ at one of these divisors $D$,
we can argue as follows: suppose $C\subset\Bl_{p_1,\ldots,p_n}\bP^2$
is a proper curve intersecting $D$ transversally at a point $p\in C$ that does not belong to any other boundary component.
By the projection formula, the multiplicity is equal to the local intersection number of $F(C)$ with 
$\delta_I$ at $F(p)$. But this intersection number can be immediately computed 
from the pullback of the universal family of $\oM_{0,n}$ to $C$.
To implement this program, we consider two cases.
First, suppose that $D=L_J$.
Working locally on $\bA^2_{x,y}\subset\bP^2$, we can assume that $p=(x,y)$,
$D=(x)$, $C=(y)$, $J=\{1,\ldots,k\}$, $p_i=(x,y-b_i)$, $b_i\ne0$, for $i\le k$, and $p_i=(x-a_i,y-b_i)$, for $i>k$, where 
$a_i\ne0$, $b_i\ne 0$, and $a_i/b_i\ne a_j/b_j$. Then (locally near $p$) the pull-back of the universal family
of $\oM_{0,n}$ to the punctured neighborhood $U\subset C$ of $p$ has a chart $\Spec k[x,1/x,s]_{(x)}\to\Spec k[x,1/x]_{(x)}$
with sections $(x+sb_i)$ for $i\le k$ and $(x+sb_i-a_i)$ for $i>k$.
Closing up the family in $\Spec k[x,s]_{(x)}\to\Spec k[x]_{(x)}$ and blowing-up the origin
$(x,s)\in \Spec k[x,s]_{(x)}$
separates the first $k$ sections. The special fiber has two components,
with points marked by $J$ one component and points marked by $J^c$ on the other.
This proves the claim in the first case.

Secondly, suppose that $D=E_1$. We assume that $p_1=(x,y)\in\bA^2\subset\bP^2$.
We work on the chart $\Spec k[x,t]\subset\Bl_{p_1}\bA^2$ where $y=tx$.
Then $E_1=(x)$. We can assume that $p=(x,t)$, $C=(t)$, and that $p_i=(x-a_i, t-t_i)$ for $i>1$, where $a_i\ne0$, $t_i\ne0$.
Then (locally near $p$) the pull-back of the universal family
of $\oM_{0,n}$ to the punctured neighborhood $U\subset C$ of $p$ has a chart $\Spec k[x,1/x,s]_{(x)}\to\Spec k[x,1/x]_{(x)}$
with sections $s_1=(s)$, $s_i=(s-t_i-sxa_i^{-1})$ for $i>1$. 
We close-up in $\Spec k[x,s]_{(x)}\to\Spec k[x]_{(x)}$
and resolve the special fiber by blowing up points $(x,s-t_i)$ each time there is more than one point 
with the same slope $t_i$. This yields a family of stable curves with a special fiber that contains
(a) a ``main'' component with points marked by $1$ and by $i$ each time 
there is just one point with the slope~$t_i$; (b) one component (attached to the main component)
for each $t_i$ that repeats more than once marked by $j$ such that $t_i=t_j$.
This proves the claim in the second case.
\end{proof}

\begin{Example}
Applying this to $n=6$ gives a covering of $\oM_{0,6}$ by cubic surfaces. This is related to the fact that $\oM_{0,6}$
is a resolution of singularities of the Segre cubic threefold
$$\cS=\{(x_0:\ldots:x_5)\,|\,\sum x_i=\sum x_i^3=0\}\subset\bP^5.$$
It is easy to check that our blow-ups are pull-backs of hyperplane sections of~$\cS$.
This proves the well-known fact that moduli of cubic surfaces are generated by hyperplane sections of $\cS$
(the Cremona hexahedral equations, see \cite{Do}).
It deserves mentioning that one of the (non-general) blow-ups of $\bP^2$ in $6$ points 
embedded in $\oM_{0,6}$ this way is the Keel--Vermeire divisor, 
see Example~\ref{KeelVermeire}.
\end{Example}

In our quest for even smaller exceptional loci let's first look at the morphism
$$M_{0,n}\simeq G^1 \arrow^v W^1$$
of Theorem~\ref{ID}. 
We have the following corollary of Theorem~\ref{ID}:

\begin{Corollary}\label{Cor1}
(a) The exceptional locus of $v$ is equal to $G^2$.
Suppose that $\Sigma/k$ is a hypergraph curve and that $L\in W^2(\Sigma)\setminus W^3(\Sigma)$
is an admissible line bundle giving a morphism
$$f:\,\Sigma\to\bP^2.$$
Let $U:=\bP^2\setminus f(\Sigma)$.
The geometric fiber of $v:\,G^1 \to W^1$ over $(\Sigma,L)\in W^2$
is isomorphic to~$U$.
Its geometric points correspond to morphisms 
$$\Sigma\to f(\Sigma)\arrow^{pr_x}\bP^1,$$ 
where $pr_x:\,\bP^2\dra\bP^1$ is a linear projection from $x\in U$.

(b) If $W^2$ is a point (and $W^3$ is empty) then $U$ is the exceptional locus of $v$.
\end{Corollary}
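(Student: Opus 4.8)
The first assertion of (a) is already part of Theorem~\ref{ID}, so the real content is the description of the fiber; (b) will then follow formally. Under the isomorphism $G^1\simeq M_{0,n}$ of Theorem~\ref{ID} the morphism $v$ is $(\Sigma,L,V)\mapsto(\Sigma,L)$, so its geometric fiber over a point $(\Sigma,L)\in W^2$ is the set of admissible pencils $V\subseteq H^0(\Sigma,L)$, each remembered together with the induced morphism $\Sigma\to\bP(V^\vee)\simeq\bP^1$. The plan is to make this set explicit, the decisive input being $L\notin W^3$.

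Since $L$ is admissible and $L\notin W^3$, we have $h^0(\Sigma,L)=3$, so $|L|$ is the morphism $f\colon\Sigma\to\bP^2$ with $\bP^2=\bP\big(H^0(\Sigma,L)^\vee\big)$, and the Grassmannian $\Gr\big(2,H^0(\Sigma,L)\big)$ is canonically identified with $\bP^2$ via $V\mapsto x_V$, the point cut out by the linear forms annihilating $V$. For such $V$ the base locus of $V$ on $\Sigma$ is exactly $f^{-1}(x_V)$, and the morphism attached to $V$ is $\Sigma\arrow^{f}f(\Sigma)\arrow^{pr_{x_V}}\bP^1$, i.e.\ $f$ followed by linear projection away from $x_V$. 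Hence $V$ is base-point-free exactly when $x_V\notin f(\Sigma)$, i.e.\ $x_V\in U$; and for such $x_V$ the pencil $V$ separates the singular points of $\Sigma$ because $f$ does and the relevant secant lines lie inside $f(\Sigma)$ (this last check is the one slightly delicate point, and for the hypergraphs of interest, e.g.\ the Hesse hypergraph where any two vertices share a hyperedge, it is transparent). Thus $x\mapsto(\Sigma,L,V_x)$ is a morphism $U\to G^1$ landing in the fiber and bijective on geometric points; since $G^1$ is reduced by Theorem~\ref{ID}, a short argument with the tautological family $\Sigma\times U\to\bP^1\times U$ (or, for the scheme structure of the closure, Proposition~\ref{blowupdescr}) promotes it to an isomorphism onto the fiber.

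Part (b) is immediate from (a): the exceptional locus of $v$ is $G^2$, and $G^2=v^{-1}(W^2)$ because a point of $G^1$ lies in $G^2$ precisely when its line bundle lies in $W^2$; so when $W^2$ is a single point and $W^3=\emptyset$ (so that point lies in $W^2\setminus W^3$ and (a) applies to it), $G^2$ is exactly the fiber of $v$ over it, hence isomorphic to $U$. The main obstacle is therefore the fiber computation in (a): its essential ingredients are $h^0(\Sigma,L)=3$, which forces $\Gr(2,H^0(\Sigma,L))\cong\bP^2$, together with the base-locus/linear-projection dictionary; the separation-of-singular-points verification and the scheme-theoretic bookkeeping are the remaining, more routine, points.
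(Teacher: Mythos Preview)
The paper gives no separate proof of this corollary; it is stated as an immediate consequence of Theorem~\ref{ID} and left at that. Your argument is exactly the natural unpacking of that claim (identify $\Gr(2,H^0(\Sigma,L))$ with $\bP^2$ once $h^0=3$, then read off admissibility as the base-point-free and separation conditions), and it is essentially correct. You even correctly flag the one genuinely delicate point the paper glosses over: the identification of the fiber with $U=\bP^2\setminus f(\Sigma)$ is literally true only when every secant line through two singular-point images already lies in $f(\Sigma)$, i.e.\ when any two vertices of $\Gamma$ share a hyperedge---which is the case for the Hesse and dual Hesse hypergraphs where the corollary is actually applied.
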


The problem is that $M_{0,n}\arrow^v W^1$ does not induce the morphism
from~$\oM_{0,n}$. We explored this issue in detail in Section~\ref{smaps}.

\

Our last hope is the hypergraph morphism
$$\pi_\Gamma:\,M_{0,n}=G^1\to M_\Gamma=\prod\limits_{j=1}^d M_{0,\Gamma_j}.$$ 
It extends to a morphism from $\oM_{0,n}$ but it is much trickier to study then $\pi_{\Gamma\cup\{n+1\}}$.
Let  $p:\,W^1\to M_\Gamma$ be the structure map. From now on we assume that we have
the setup of Cor.~\ref{Cor1}~(b), i.e., ~$W^2$ is a point and $W^3$ is empty.
Let $m_0=p(W^2)$ and let $\Sigma$ be the fiber of the universal family of hypergraph curves
over $m_0\in M_\Gamma$.
Let $p_1,\ldots,p_n\in\bP^2_k$ be the images of singular points of $\Sigma$ under 
the linear system $|W^2|$.

\begin{Proposition}\label{propermap}
In the setup of Cor.~\ref{Cor1} (b), 
$U$ belongs to the exceptional locus of $\pi_\Gamma:\,M_{0,n}\to M_\Gamma$.
If, moreover,  points  $p_1,\ldots,p_5$ lie on a smooth conic~$C$, then
$C\cap U$ belongs to the exceptional locus of the morphism 
\begin{equation}\label{record}
\pi_\Gamma\times\pi_I:\,M_{0,n}\arrow M_\Gamma\times M_{0, I},
\cooltag\end{equation}
where $I=\{1,2,3,4,5\}$.
If $C\cap U$ is an irreducible component of the exceptional locus then $\overline{C\cap U}\subset\oM_{0,n}$ 
is a rigid curve on $\oM_{0,n}$.
\end{Proposition}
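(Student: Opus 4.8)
The plan is to read off all three assertions from Corollary~\ref{Cor1} together with the fact that forgetful morphisms are defined on all of $\oM_{0,n}$. For the first assertion I would factor $\pi_\Gamma = p\circ v$, where $v\colon M_{0,n} = G^1 \to W^1$. Under the standing hypotheses $W^2 = \{L_0\}$ a point and $W^3 = \eset$, Corollary~\ref{Cor1}(b) identifies $U$ with the single fibre $v^{-1}(L_0)$; hence $v$, and therefore $\pi_\Gamma = p\circ v$, contracts the surface $U$ to the point $m_0 = p(L_0)\in M_\Gamma$. Since $\dim U = 2$, $U$ lies in a positive-dimensional fibre of $\pi_\Gamma$, so $U\subseteq\Exc(\pi_\Gamma)$.

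For the second assertion I must follow $\pi_I$ along $U$. By Corollary~\ref{Cor1}(a) a point $x\in U$ corresponds to the $n$-pointed line obtained by projecting the images $p_1,\dots,p_n\subset\bP^2$ of $\Sigma^{sing}$ from $x$; since $\pi_I$ merely drops the points labelled by $N\setminus I$, the induced rational map $U\dra M_{0,5}$ sends $x$ to the point of $M_{0,5}$ determined by $\pr_x(p_1),\dots,\pr_x(p_5)$ (this is in the spirit of Proposition~\ref{blowupdescr}). Now I would invoke the elementary fact that a line through a point $x$ of a smooth conic $C$ meets $C$ in exactly one further point, so $\pr_x|_C\colon C\to\bP^1$ is an isomorphism; thus for $x\in C$ the five points $\pr_x(p_i)$, $i\le 5$, carry the same cross-ratio datum as $p_1,\dots,p_5$ do along $C\cong\bP^1$, independently of $x$. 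Hence $\pi_I$ is constant along $C\cap U$, and since $\pi_\Gamma$ is already constant on $U$ by the first part, $\pi_\Gamma\times\pi_I$ contracts $C\cap U$. As a smooth conic cannot be covered by the finite union of lines $f(\Sigma)$, the set $C\cap U$ is a non-empty irreducible curve, so $C\cap U\subseteq\Exc(\pi_\Gamma\times\pi_I)$.

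For the last assertion I would pass to the morphism of smooth projective varieties $\bar\phi\colon\oM_{0,n}\to Y$ extending $\pi_\Gamma\times\pi_I$ (legitimate because each factor is a forgetful morphism), and put $\Gamma_0 := \overline{C\cap U}$. Then $\bar\phi$ contracts $\Gamma_0$ to a point, and because $M_{0,n}$ is dense in $\oM_{0,n}$ and $\bar\phi$ restricts to $\pi_\Gamma\times\pi_I$, the hypothesis that $C\cap U$ is an irreducible component of $\Exc(\pi_\Gamma\times\pi_I)$ is equivalent to $\Gamma_0$ being a one-dimensional irreducible component of $\Exc(\bar\phi)$. Suppose $\Gamma_0$ moved in a non-trivial algebraic family $\{C_t\}_{t\in T}$ with $C_{t_0} = \Gamma_0$; each $C_t$ is algebraically, hence numerically, equivalent to $\Gamma_0$, so $\bar\phi_*[C_t] = \bar\phi_*[\Gamma_0] = 0$, which forces $\bar\phi(C_t)$ to be a point, i.e.\ $C_t\subseteq\Exc(\bar\phi)$. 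After the standard reduction to an irreducible one-parameter family, the union $Z = \bigcup_t C_t$ is an irreducible two-dimensional subvariety of $\Exc(\bar\phi)$ strictly containing $\Gamma_0$, contradicting the maximality of the component $\Gamma_0$. Hence $\Gamma_0$ does not move, i.e.\ it is a rigid curve.

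I expect the only genuine obstacle to be the bookkeeping in the second assertion: one has to chase the dictionary of Corollary~\ref{Cor1}(a) carefully enough to be sure that the composite $U\dra M_{0,5}$ is literally $x\mapsto[\pr_x(p_1),\dots,\pr_x(p_5)]$ and that it contracts precisely the conic $C$ and nothing smaller, since it is exactly this extra contraction that shrinks the exceptional locus from the surface $U$, already contracted by $\pi_\Gamma$, down to the curve $C\cap U$. In the last step the only technical points are the routine reduction to an irreducible one-parameter family and the dimension count for $Z$, both of which are standard.
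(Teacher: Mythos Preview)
Your argument is correct and, for the first two assertions, coincides with the paper's (very terse) proof: the paper merely says ``$C\cap U$ is clearly the exceptional locus for the map $U\to M_{0,5}$ given by projecting $p_1,\ldots,p_5$ from points of $U$,'' and your explanation via the fact that projection from a point of a smooth conic restricts to an isomorphism $C\to\bP^1$ is exactly what makes this ``clear''.

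For the rigidity assertion the paper takes a slightly different route: it invokes Mumford's rigidity lemma \cite[1.6]{KoM} directly. Applied to the family $p\colon S\to B$ from the definition of ``moves'' and the composite $\bar\phi\circ h\colon S\to Y$, the lemma says that since one fibre of $p$ is collapsed by $\bar\phi\circ h$, every fibre is; hence $h(S)$ is a surface contained in $\Exc(\bar\phi)$ and one gets the same contradiction you reach. Your variant, pushing forward numerical classes and using that $Y$ is projective to force $\bar\phi_*[C_t]=0\Rightarrow\bar\phi(C_t)=\mathrm{pt}$, is also valid; note only that with the paper's definition of ``moves'' one should phrase this as $h_*[F_b]$ being algebraically (hence numerically) equivalent in $\oM_{0,n}$, since $h$ need not be flat. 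One minor point: you assert an \emph{equivalence} between ``$C\cap U$ is a component of $\Exc(\pi_\Gamma\times\pi_I)$'' and ``$\Gamma_0$ is a one-dimensional component of $\Exc(\bar\phi)$''; only the forward implication is needed (and it does follow from openness of $M_{0,n}$), while the reverse would require a separate check.
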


\begin{proof}
$C\cap U$ is clearly the exceptional locus for the map $U\to M_{0,5}$
given by projecting $p_1,\ldots,p_5$ from points of $U$. 
Rigidity of $\overline{C\cap U}$ follows from  Mumford's rigidity lemma \cite[1.6]{KoM}.
\end{proof}

It remains to find a hypergraph that satisfies the last condition of Prop.~\ref{propermap}.
At the very least we need $\Gamma$ such that $W^1$ has relative dimension~$0$.
By the Brill--Noether theory, the relative dimension of $W^1$ is at least 
$$g-2(g-d+1)=\dim M_{0,n}-\dim M_\Gamma.$$

\begin{Review}
Consider the hypergraph of the {\em dual}\/ Hesse configuration.
We use the following enumeration of its hyperedges:
 $$\Ga_1=\{p, 1,b,\ga\},\quad \Ga_2=\{p,2,c,\be\},\quad \Ga_3=\{p,3,a,\al\}$$
$$\Ga_4=\{n, 2,a,\ga\},\quad \Ga_5=\{n,3,b,\be\},\quad\Ga_6=\{n,1,c,\al\}$$
$$\Ga_7=\{m, 1,2,3\},\quad \Ga_8=\{m,\al,\be,\ga\},\quad  \Ga_9=\{m,a,b,c\}$$
It has $d=9$ hyperedges with $4$ points on each hyperedge, with $12$ vertices.
Note that $g=16$ and the expected relative dimension of $W^1$ is $0$.

Let $\Ga$ be the hypergraph $\{\Ga_1,\ldots,\Ga_9,\Ga_0\}$ where:
$$\Ga_0=\{m,n,p,1,a\}$$
(this corresponds to adding a conic $C$ through $5$ points in Prop.~\ref{propermap}).
\end{Review}

\begin{Theorem}\label{HesseMain} 
A hypergraph morphism  $$\pi:M_{0,12}\ra M_{\Ga}=(M_{0,4})^9\times M_{0,5}$$ 
has a $1$-dimensional connected component in the closure of its exceptional locus in $M_{0,12}$.
This connected component is in fact irreducible and is 
the proper transform $C$ in $\Bl_{12}\bP^2$ of the conic in $\bP^2$
passing through $5$ points $\{m,n,p,1,a\}$ of the dual Hesse configuration.\end{Theorem}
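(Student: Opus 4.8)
The plan is to obtain Theorem~\ref{HesseMain} as an application of Proposition~\ref{propermap}. I regard $\Gamma$ as $\Gamma'\cup\{\Gamma_0\}$, where $\Gamma'=\{\Gamma_1,\dots,\Gamma_9\}$ is the dual Hesse hypergraph and $\Gamma_0=\{m,n,p,1,a\}$, so that $M_\Gamma=(M_{0,4})^9\times M_{0,5}$ and $\pi=\pi_\Gamma=\pi_{\Gamma'}\times\pi_{\Gamma_0}$ is exactly the morphism \eqref{record} with $I=\Gamma_0$. First I would verify the hypotheses of Corollary~\ref{Cor1}(b) for the dual Hesse curve $\Sigma$: that $W^2(\Sigma)$ is a single reduced point and $W^3(\Sigma)=\eset$. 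The equality $W^2(\Sigma)=\pt$ is the rigidity of the dual Hesse configuration; I would run the argument of Proposition~\ref{Hes} in the dual picture — a bundle $L\in W^2$ sends the nine components of $\Sigma$ to nine distinct lines meeting in twelve triple points forming a dual Hesse configuration, and such a configuration is projectively unique — or simply invoke \cite{U}. The vanishing $W^3(\Sigma)=\eset$ is genuinely special to this hypergraph (for a general curve of genus $16$ one has $\rho=4$ and $W^3\ne\eset$), and amounts to showing that no configuration of nine lines in $\bP^3$, not contained in a plane, realizes the dual Hesse incidences; this I would establish by the linear-algebra method of Section~\ref{projections} or by a direct incidence count. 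With this done, Corollary~\ref{Cor1}(b) gives that $U=\bP^2\setminus f(\Sigma)$ is the exceptional locus of $v$, where $f\colon\Sigma\to\bP^2$ is defined by $|W^2|$; write $p_1,\dots,p_{12}$ for the images of the singular points.

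Next I would dispose of the elementary configuration facts. No three of $m,n,p,1,a$ lie on a line of the configuration — equivalently, no $\Gamma_j$ with $1\le j\le 9$ contains three of them, which one reads directly off the list — so these five points lie on a unique conic $C$, necessarily smooth (a reducible conic carries at most four points no three of which are collinear). Hence Proposition~\ref{propermap} applies and gives that $C\cap U$ lies in the exceptional locus of $\pi$ and that its closure in $\oM_{0,12}$ is a rigid curve. To name that closure, observe that no conic passes through all twelve points $p_1,\dots,p_{12}$: some $\Gamma_j$ is not a component of it and hence meets it in at most two points, contradicting the four points of $\Gamma_j$ on it. Therefore Proposition~\ref{blowupdescr} applies, $\overline U\subset\oM_{0,12}$ is the image of the closed embedding $F\colon\Bl_{p_1,\dots,p_{12}}\bP^2\hookrightarrow\oM_{0,12}$, and $\overline{C\cap U}$ is the proper transform $\tilde C$ of $C$ in $\Bl_{12}\bP^2$ — the curve named in the statement.

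The heart of the matter is to show that $\tilde C$ is a connected component of the exceptional locus of $\pi$. I would do this by computing the fibre $\pi^{-1}(w_0)$ over $w_0:=\pi(\tilde C)$. On the one hand $\pi_{\Gamma'}$ is constant on $\overline U$: a point $x\in U$ represents the morphism $\Sigma\to f(\Sigma)\arrow^{\pr_x}\bP^1$, and since $x$ avoids each of the nine lines $\ell_j=f(\Gamma_j\text{-component})$, the map $\pr_x$ restricts to an isomorphism of $\ell_j$ with $\bP^1$ and so preserves the cross-ratio of the four marked points of $\Gamma_j$; thus $\pi_{\Gamma_j}(x)$ is the intrinsic cross-ratio, independent of $x$, whence $\pi_{\Gamma'}(\overline U)=\{m_0\}$ with $m_0=p(W^2)$. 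On the other hand, by Proposition~\ref{blowupdescr} the restriction $\pi_{\Gamma_0}|_{\overline U}$ is the morphism $\Bl_{12}\bP^2\to\oM_{0,5}$ projecting $p_m,p_n,p_p,p_1,p_a$; it factors through the blow-up of $\bP^2$ at those five points, a del Pezzo surface of degree $4$, and the induced map to $\oM_{0,5}$, a del Pezzo surface of degree $5$, is the contraction of its unique $(-1)$-curve, the proper transform of $C$, and is an isomorphism elsewhere. Checking on the configuration that $C$ contains no $p_i$ other than $p_m,p_n,p_p,p_1,p_a$, one gets $(\pi_{\Gamma_0}|_{\overline U})^{-1}(s_0)=\tilde C$, where $s_0$ is the image of $\tilde C$. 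Finally, rigidity together with the Brill--Noether genericity of $\Sigma$ — so that over $m_0$ the fibre $W^1(\Sigma)$ consists of the point $W^2$ and finitely many further points, over each of which $v$ is injective by Corollary~\ref{Cor1} — shows that $\pi_{\Gamma'}^{-1}(m_0)$ equals $\overline U$ together with finitely many isolated points off $\overline U$; intersecting with $\pi_{\Gamma_0}^{-1}(s_0)$ gives $\pi^{-1}(w_0)=\tilde C\sqcup(\text{a finite set disjoint from }\tilde C)$. Since any irreducible positive-dimensional component of any fibre of $\pi$ that meets $\tilde C$ is contained in $\pi^{-1}(w_0)$ and hence equals $\tilde C$, the curve $\tilde C$ is a connected component of the exceptional locus, which finishes the proof.

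I expect the main obstacles to be the two non-generic properties of the dual Hesse hypergraph curve — the vanishing $W^3(\Sigma)=\eset$ and the Brill--Noether genericity of $W^1(\Sigma)$ used in the last step — since everything else rests on them; the remaining configuration checks ($C$ smooth, $C$ avoiding the other seven points, and no conic through all twelve) are routine but must be carried out in explicit coordinates for the dual Hesse configuration.
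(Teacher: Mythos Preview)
Your approach is conceptually different from the paper's, and it has a genuine gap at the decisive step.

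The paper does not verify any of the Brill--Noether hypotheses you propose to check. Instead it works in coordinates: fixing $1'=\infty$, $m'=0$, $a'=1$, $b'=t$, it shows that the nine cross-ratios $w_1,\dots,w_9$ together with $u,v$ and the free parameter $t$ determine all twelve marks (hence fibres of $\pi$ are at most one-dimensional), and then the three unused cross-ratio relations become polynomials in $t$ whose thirteen coefficients $f_1,\dots,f_{13}\in k[w_1,\dots,w_9,u,v]$ cut out the locus $\cZ\subset M_\Gamma$ of points with positive-dimensional fibre. The whole proof is the verification that $\cZ$ is zero-dimensional at $w_0$: the Jacobian $[\partial f_i/\partial w_j]$ has rank $9$ and all $\partial f_i/\partial u,\partial f_i/\partial v$ vanish at $w_0$, and three explicit combinations $g_1,g_2,g_3$ of the $f_i$ have linearly independent Hessians in $(u,v)$, so the tangent cone of $\cZ$ at $w_0$ is supported at the origin.

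Your final implication is wrong. From ``every positive-dimensional irreducible component of every fibre of $\pi$ that meets $\tilde C$ lies in $\pi^{-1}(w_0)$ and hence equals $\tilde C$'' you cannot conclude that $\tilde C$ is a connected component of the exceptional locus. The exceptional locus is the \emph{union} of all such fibre components; an irreducible component of that union is typically a family of fibre components over a positive-dimensional locus in $M_\Gamma$, not a single one. Concretely, if $\cZ$ were, say, a curve through $w_0$, then near $\tilde C$ the exceptional locus would be a surface swept out by a one-parameter family of fibre curves degenerating to $\tilde C$, and $\tilde C$ would not be a component. Ruling this out requires exactly that $w_0$ be isolated in $\cZ$, which is the content of the paper's tangent-cone computation and is not implied by anything you assume: even granting $W^2=\pt$, $W^3=\eset$, and $W^1(\Sigma_{m_0})$ finite, for $m$ near $m_0$ with $m\ne m_0$ the fibre of $\pi_{\Gamma'}$ is in bijection with $W^1(\Sigma_m)$, and nothing prevents this from being positive-dimensional (the relevant maps are not proper, so semi-continuity on the source does not close up the bad locus on the target). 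The auxiliary hypotheses you flag --- $W^3(\Sigma)=\eset$ and finiteness of $W^1(\Sigma_{m_0})$ for this highly non-generic curve --- are themselves substantial and not addressed anywhere in the paper; but even if true they would not close the gap.
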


\begin{proof}
Let $\rho$ be a closed point of $M_{0,12}=G^1(\Sigma)$. Then $\rho$ gives rise to the morphism $\Sigma\to\bP^1$
and we let $x'=\rho(x)$ for any singular point $x$ of $\Sigma$.
Without loss of generality we can assume that 
$$1'=\infty,\quad m'=0,\quad a'=1,$$
and we let
$$b'=t,$$
where $t\in k$ is a parameter.

In this coordinates the hypergraph morphism  
$\pi:M_{0,12}\ra M_{\Ga}=(M_{0,4})^9\times M_{0,5}$ has the following form:
$$w_1=\{p', 1',b',\ga'\},\quad w_2=\{p',2',c',\be'\},\quad w_3=\{p',3',a',\al'\},$$
$$w_4=\{n', \ga',a',2'\},\quad w_5=\{n',\be',b',3'\},\quad w_6=\{n',\al',c',1'\},$$
$$w_7=\{m', 1',2',3'\},\quad w_8=\{m',\ga',\be',\al'\},\quad  w_9=\{m',b',c',a'\},$$
$$u=p',\quad v=n',$$
where $$\{x,y,z,s\}=\frac{(s-x)(y-z)}{(y-x)(s-z)}$$ is the cross-ratio
and $(u,v)$ are coordinates on $M_{0,5}$.

\begin{Claim}
The natural morphism $M_{0,12}\to (M_{0,4})^9\times M_{0,\{1,m,a,b,p,n\}}$
is injective on closed points. In particular, $\pi_\Gamma$ has at most one-dimensional fibers.
\end{Claim}

\begin{proof}
We will show how to recover all points $x'$ starting from $1'$, $m'$, $a'$, $b'$, $p'$, $n'$
and using coordinates on $M_\Gamma$.  From the cross-ratio $w_9$ we find that:
$$c'=\frac{(w_9-1)t}{w_9t-1}.$$
From the cross-ratio $w_1$ we find that:
$$\ga'=\frac{w_1t-u}{w_1-1}.$$
From the cross-ratio $w_4$ we find that:
$$2'=\frac{-w_4v+v+\ga'(w_4-v)}{-w_4v+1+\ga'(w_4-1)}=\frac{-v(w_4-1)(w_1-1)+(w_4-v)(w_1t-u)}{(1-w_4v)(w_1-1)+(w_4-1)(w_1t-u)}.$$
For simplicity, we think of this as $2'=\frac{C}{D}$ where
\begin{equation}\label{C}
C=-v(w_4-1)(w_1-1)+(w_4-v)(w_1t-u),
\cooltag\end{equation}
\begin{equation}\label{D}
D=(1-w_4v)(w_1-1)+(w_4-1)(w_1t-u).
\cooltag\end{equation}
From the cross-ratio $w_6$ we find that:
$$\al'=\frac{w_6v-c'}{w_6-1}=\frac{w_6v(w_9t-1)-(w_9-1)t}{(w_6-1)(w_9t-1)}.$$
For simplicity, we think of this as $\al'=\frac{A}{B}$ where
\begin{equation}\label{A}
A=w_6v(w_9t-1)-(w_9-1)t,
\cooltag\end{equation}
\begin{equation}\label{B}
B=(w_6-1)(w_9t-1).
\cooltag\end{equation}
From the cross-ratio $w_7$ we find that:
$$3'=\frac{w_72'}{w_7-1}=\frac{w_7C}{(w_7-1)D}.$$
Finally, from the cross-ratio $w_8$ we find that:
$$\be'=\frac{M}{N}$$ where we denote:
\begin{equation}\label{M}
M=(1-w_8)(w_1t-u)A,
\cooltag\end{equation}
\begin{equation}\label{N}
N=(w_1-1)A-w_8(w_1t-u)B.
\cooltag\end{equation}
This shows the claim.
\end{proof}

\begin{Lemma}
The locus in $M_{\Ga}$ where the fiber of the hypergraph map is positive dimensional 
is given by those points for which the following polynomials in $t$
with coefficients in $k[w_1,\ldots,w_9,u,v]$  
are identically zero:
\begin{equation}\label{C3}
(A-uB)[w_7C-(w_7-1)D]-w_3(A-B)[w_7C-u(w_7-1)D]=\\
f_1t^2+f_2t+f_3,
\cooltag\end{equation}

$$
[w_7C-v(w_7-1)D](M-tN)-w_5(M-vN)[w_7C-t(w_7-1)D]\\
$$
\begin{equation}\label{C5}
=f_4t^4+f_5t^3+\ldots+f_8,
\cooltag\end{equation}

$$
[(w_9t-1)C-(w_9-1)tD](M-uN)-w_2[(w_9t-1)M-(w_9-1)tN](C-uD)
$$\begin{equation}\label{C2}
=f_9t^4+f_{10}t^3+\ldots+f_{13},
\cooltag\end{equation}
where $A,B,C,D,M,N$ are as in \eqref{C} -- \eqref{N}.
\end{Lemma}

\begin{proof}
We get equations on $t$ by utilizing the cross-ratios not used in the proof of the previous Claim.
Namely, we get \eqref{C3} from the points $3'$, $p'$, $a'$, $\al'$ and $w_3$;
we get \eqref{C5} from the points $n'$, $3'$, $b'$, $\be'$ and $w_5$;
we get \eqref{C2} from the points $p'$, $2'$, $c'$, $\be'$ and $w_2$.
\end{proof}

Let $m_0\in M_\Gamma$ be a point that corresponds to the dual Hesse configuration in $\bP^2$.
It is not realizable over $\bR$, so we can give 
only its vague sketch, see Fig.~\ref{dHesse}.

\begin{figure}[htbp]
\includegraphics[width=4in]{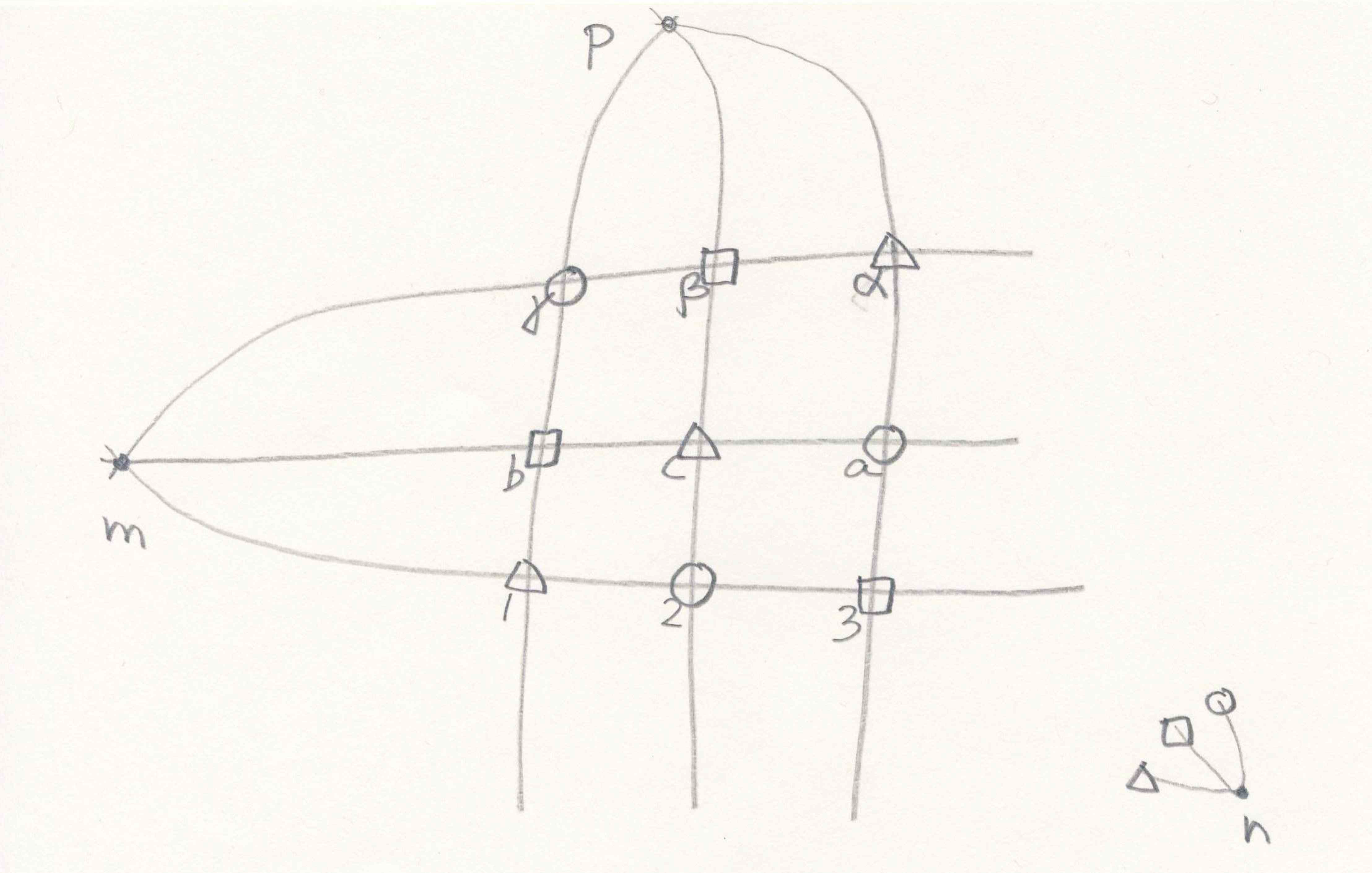}
\caption{\small A dual Hesse hypergraph.
}\label{dHesse}
\end{figure}
 
Note that ``circles'' (resp.~``squares'', resp.~``triangles'') span lines $\Ga_4$, $\Ga_5$, and $\Ga_6$.
Alternatively, one can choose coordinates in $\bP^2$ such that
$$\Ga_1\Ga_2\Ga_3=X^3-Y^3,\quad  \Ga_4\Ga_5\Ga_6=Y^3-Z^3,
\quad \Ga_7\Ga_8\Ga_9=Z^3-X^3.$$

\begin{Lemma} Let $\omega$ be the primitive cubic root of~$1$.
$m_0$ has coordinates 
$$w_1^0=\ldots=w_9^0=-\omega^2,\quad  u^0=1-\omega,\quad v^0=1-\omega^2.$$
The differentials of functions $f_1,\ldots,f_{13}$ at $m_0$ do not depend on $u$ and $v$
and the Jacobian matrix $[\partial f_i/\partial w_j]$ at $m_0$
is given by  
$$\left[\small\begin{array}{ccccccccccccccccccccccc}
0& 0& 0& 1& 0& \omega+1& -\omega-1& 0& \omega+1\cr
-1& 0& \omega& \omega& 0& -\omega-3& 2 \omega+3& 0& -\omega-1\cr
-\omega+1& 0& 0& 0& 0& -\omega+1& -\omega-2& 0& 0\cr
0& 0& 0& 1& -\omega-1& \omega& -\omega-1& 1& \omega\cr
0& 0& 0& 2 \omega-1& 3 \omega+4& -5 \omega-3& 3 \omega+5& 3 \omega-1& -3 \omega-1\cr
0& 0& 0& -5 \omega-1& -3 \omega-9& 8 \omega+10& -2 \omega-10& -9 \omega-3& 5 \omega+4\cr
0& 0& 0& 3 \omega+3& 9& -3 \omega-12& -3 \omega+9& 9 \omega+9& -3 \omega-3\cr
0& 0& 0& 0& 3 \omega-3& -3 \omega+3& 3 \omega-3& -3 \omega-6& 0\cr
0& 0& 0& -2 \omega& 0& 2& 0& -2 \omega-2& 2 \omega+2\cr
-2& 4 \omega+4& 0& 6 \omega+7& 0& 9 \omega-1& 0& -\omega+8& -\omega-6\cr
-7 \omega+1& -12& 0& \omega-7& 0& -20 \omega-16& 0& 15 \omega& -4 \omega+4\cr
12 \omega+9& 3-12 \omega& 0& -3 \omega& 0& 6 \omega+18& 0& -15 \omega-12& 3 \omega\cr
-3 \omega-6& 6 \omega+3& 0& 0& 0& 3 \omega-3& 0& 3 \omega+6& 0\cr
\end{array}\right]$$
It has rank~$9$ (rows $1,2,3,6,7,8,11,12,13$ are linearly independent). 
Consider the following functions:
$$g_1=45f_4+27f_5+(3-6\omega)f_6-(10\omega+5)f_7-(6\omega+3)f_8$$
$$g_2=-18f_4+(6\omega-6)f_5+6\omega f_6+(4\omega+2)f_7+(2\omega+2)f_8$$
$$g_3=126f_9+(63\omega+126)f_{10}+(105\omega+126)f_{11}+(161\omega+112)f_{12}+(189\omega+42)f_{13}$$
Their differentials at $m_0$ are identically $0$ and the Hessians
$$\left[\begin{matrix}{\partial^2g_k\over\partial u\partial u}&{\partial^2g_k\over\partial u\partial v}\cr
{\partial^2g_k\over\partial v\partial u}&{\partial^2g_k\over\partial v\partial v}\end{matrix}\right],\qquad k=1,2,3$$
at $m_0$ are equal to 
$$
\left[\begin{matrix}-18\omega-18&-30\omega-12\cr
-30\omega-12&-12\omega+54\end{matrix}\right],\quad
\left[\begin{matrix}4\omega+8&16\omega+8\cr
16\omega+8&16\omega-16\end{matrix}\right],\quad
\left[\begin{matrix}-126\omega+42&42\omega+84\cr
42\omega+84&42\omega+42\end{matrix}\right].$$
These three matrices are linearly independent.
\end{Lemma}

\begin{proof}
This is a straightforward calculation and a joy of substitution.
\end{proof}

Now we can finish the proof of the Theorem.
It suffices to show that the scheme $\cZ$ cut out by 
the ideal $\langle f_1,\ldots,f_{13}\rangle$ is zero-dimensional at $m_0$.
This would follow at once if the tangent cone of $\cZ$ at $m_0$ is zero-dimensional.
By the Lemma, the ideal of the tangent cone contains functions $w_i-w_i^0$ (for $i=1,\ldots,9$),
$(u-u^0)^2$, $(u-u^0)(v-v^0)$, and $(v-v^0)^2$, which clearly cut out $m_0$ set-theoretically.
\end{proof}

\begin{Remark}
The dual Hesse configuration is a $q=3$ case of the $\Ceva(q)$ arrangement
with $3q$ lines that satisfy 
$$\Ga_1\ldots\Ga_q=X^q-Y^q,\quad \Ga_{q+1}\ldots\Ga_{2q}=Y^q-Z^q,\quad \Ga_{2q+1}\ldots\Ga_{3q}=Z^q-X^q.$$
We think it is plausible that these hypergraphs also give rise to $1$-dimensional exceptional
loci (on $\oM_{0,q^2+3}$).
\end{Remark}

It is not difficult to compute the numerical class of the exceptional curve, i.e.,~its intersection
numbers with boundary divisors of $\oM_{0,n}$. The following is an immediate
corollary of  Prop.~\ref{blowupdescr}:

\begin{Corollary}
In the setup of Prop.~\ref{propermap}, and assuming $C$ passes only through $p_1,\ldots,p_5$, 
the proper transform $\tilde C\subset\oM_{0,n}$ of $C$ has the following intersections with boundary divisors:
for each line $L_I$, 
$$\delta_I\cdot\tilde C=2-|I\cap\{1,\ldots,5\}|,$$ and for each $k\in I$, 
$$\delta_{I\setminus\{k\}} \cdot\tilde C=\begin{cases}
1& \hbox{if}\ k\le5\cr
0 & \hbox{otherwise}.
\end{cases}$$
Other intersection numbers are trivial.

In particular, in the setup of Th.~\ref{HesseMain}, $C$ has the following numerical class:
$$
\De_{1,b,\ga}+\De_{p, b,\ga}+
\De_{p,2,c,\be}+\De_{2,c,\be}+
\De_{3,a,\al}+\De_{p,3,\al}+
\De_{2,a,\ga}+\De_{n, 2,\ga}+
\De_{n,3,b,\be}$$
$$+\De_{3,b,\be}+
\De_{1,c,\al}+\De_{n,c,\al}+
\De_{1,2,3}+\De_{m, 2,3}+
\De_{m,\al,\be,\ga}+\De_{\al,\be,\ga}+
\De_{a,b,c}+\De_{m,b,c}$$
$$+\De_{1,\be}+2\De_{2,b}+2\De_{2,\al}+
2\De_{3,c}+2\De_{3,\ga}+\De_{a,\be}+
2\De_{b,\al}+2\De_{c,\ga,}$$
where $\De_I$ is a formal curve class that has intersection $1$ with $\de_I$ and
$0$ with the rest of  boundary divisors.
\end{Corollary}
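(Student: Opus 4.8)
The plan is to deduce everything from Proposition~\ref{blowupdescr}, reducing the corollary to an intersection computation on the blow-up $\Bl_{p_1,\ldots,p_n}\bP^2$. First I would observe that in the setup of Proposition~\ref{propermap}, since the conic $C$ meets the configuration only in $p_1,\ldots,p_5$, no conic passes through all of $p_1,\ldots,p_n$ (for the dual Hesse configuration $n=12$); hence Proposition~\ref{blowupdescr} applies, the morphism $F:\Bl_{p_1,\ldots,p_n}\bP^2\hra\oM_{0,n}$ is a closed embedding, $\tilde C\subset\oM_{0,n}$ is the image of the proper transform of $C$, and the pull-backs of boundary divisors are exactly as listed there. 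Writing $H$ for the hyperplane class and $E_1,\ldots,E_n$ for the exceptional divisors, the proper transform of the smooth conic through $p_1,\ldots,p_5$ (and through no other $p_i$) has class $[\tilde C]=2H-E_1-\cdots-E_5$.

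Next I would compute the intersection numbers by the projection formula, $\de_J\cdot\tilde C=(F^*\de_J)\cdot\tilde C$, using $H^2=1$, $H\cdot E_i=0$, $E_i\cdot E_j=-\delta_{ij}$. For a line $L_I=\langle p_i\rangle_{i\in I}$ of the configuration (with $I$ the full set of indices $i$ such that $p_i\in L_I$, and $|I|\ge2$) Proposition~\ref{blowupdescr} gives $F^*\de_I=H-\sum_{i\in I}E_i$, so $\de_I\cdot\tilde C=2-|I\cap\{1,\ldots,5\}|$; for $|I|\ge3$ and $k\in I$ it gives $F^*\de_{I\setminus\{k\}}=E_k$, so $\de_{I\setminus\{k\}}\cdot\tilde C$ equals $1$ if $k\le5$ and $0$ otherwise; every remaining boundary divisor pulls back trivially and meets $\tilde C$ in degree $0$. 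This establishes the first (general) assertion.

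Finally, for the explicit numerical class in the setting of Theorem~\ref{HesseMain}, one runs this tally through the incidence structure of the dual Hesse arrangement, taking $\{1,\ldots,5\}$ to be $\{m,n,p,1,a\}$. The relevant lines are the nine four-point hyperedges $\Ga_1,\ldots,\Ga_9$ together with the twelve ``ordinary'' lines joining those pairs of the twelve points that do not lie on a common hyperedge. For each $\Ga_i$ one records $\De_{\Ga_i}$ with coefficient $2-|\Ga_i\cap\{m,n,p,1,a\}|$ and, for each $k\in\Ga_i\cap\{m,n,p,1,a\}$, the class $\De_{\Ga_i\setminus\{k\}}$ with coefficient $1$; for each ordinary line $\{i,j\}$ one records $\De_{ij}$ with coefficient $2-|\{i,j\}\cap\{m,n,p,1,a\}|$. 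Collecting the nonzero contributions reproduces the displayed class. The only real work, and the only place where a slip is likely, is this bookkeeping: there is no further geometry, though one should record the classical fact that in the $\Ceva(3)$ arrangement no line through exactly three of the twelve points occurs beyond the nine hyperedges, so that each ``ordinary'' line carries exactly two configuration points.
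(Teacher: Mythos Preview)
Your proposal is correct and is exactly the approach the paper has in mind: the paper states only that the result ``is an immediate corollary of Prop.~\ref{blowupdescr}'', and your write-up spells out precisely that deduction --- computing $[\tilde C]=2H-E_1-\cdots-E_5$ on $\Bl_{p_1,\ldots,p_n}\bP^2$, applying the projection formula with the pull-back rules $F^*\delta_I=\tilde L_I$ and $F^*\delta_{I\setminus\{k\}}=E_k$, and then running the dual Hesse bookkeeping. One cosmetic point: the clause ``since the conic $C$ meets the configuration only in $p_1,\ldots,p_5$, no conic passes through all of $p_1,\ldots,p_n$'' is a non sequitur as written (the first fact does not imply the second); the hypothesis of Prop.~\ref{blowupdescr} is checked directly for the dual Hesse points (no smooth conic contains more than five, and no union of two lines contains more than eight), and in any case the intersection computation only uses the morphism $F$ and the projection formula, not that $F$ is a closed embedding.
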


\section{Appendix 
(after Sean Keel \& James McKernan \cite{KM})}

\begin{Theorem}\label{CorolT}
Suppose that the Mori cone $\oNE_1(\oM_{0,n})$ is finitely generated and that
a curve $C \subset \oM_{0,n}$ generates its extremal ray.
If $C\cap M_{0,n}\ne\emptyset$ then $C$ is rigid.
\end{Theorem}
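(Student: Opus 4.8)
The plan is to show that such a $C$ forces $R:=\bR_{\ge0}[C]$ to be a $K_{\oM_{0,n}}$-negative extremal ray, to contract it, and then to see that the only contraction compatible with $C$ meeting $M_{0,n}$ is a small one whose exceptional locus is exactly $C$ — which makes $C$ rigid.

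\textbf{Step 1: the ray is $K$-negative, hence contractible.} Fixing three marked points realizes $M_{0,n}$ as the complement of a hyperplane arrangement in $\bA^{n-3}$, so $M_{0,n}$ is affine and contains no complete curve; hence the complete curve $C$, which meets $M_{0,n}$, also meets $\partial\oM_{0,n}$ and lies in no boundary divisor $\delta_I$, so $\delta_I\cdot C\ge 0$ for every $I$ with strict inequality for at least one $I$. Since $-K_{\oM_{0,n}}$ is, as is standard, a positive linear combination of the $\delta_I$, this gives $(-K_{\oM_{0,n}})\cdot C>0$. As $\oM_{0,n}$ is smooth projective and $\oNE_1(\oM_{0,n})$ is assumed finitely generated, the Contraction Theorem provides $\phi=\operatorname{cont}_R\colon\oM_{0,n}\to Z$ onto a normal projective variety, with connected fibers, contracting precisely the curves of class in $R$, with $-K_{\oM_{0,n}}$ relatively ample over $Z$.

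\textbf{Step 2: suppose $C$ moves.} If $C$ is not rigid, the component of $\Chow(\oM_{0,n})$ through $[C]$ is positive-dimensional; its members are numerically equivalent to $C$, hence $\phi$-contracted, and they sweep out an irreducible complete subvariety $X$ with $\dim X\ge 2$, contracted by $\phi$ to a point and meeting $M_{0,n}$. Since $X\cap M_{0,n}$ is then a dense affine open of the projective variety $X$, its complement $X\cap\partial\oM_{0,n}$ is the support of a big divisor on $X$, of pure codimension one.

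\textbf{Step 3: exclude the bad contraction types (the crux).} It remains to contradict the existence of such an $X$; equivalently, to prove that $\operatorname{cont}_R$ is a small contraction with one-dimensional exceptional locus (forcing $\Exc=C$, hence $C$ rigid). I would distinguish $\phi$ of fiber type, $\phi$ divisorial, and $\phi$ small with $\dim\Exc(\phi)\ge 2$. In each case the $\phi$-fiber $F\supseteq X$ has $\dim F\ge 2$, a general nearby $\phi$-fiber $F'$ is rationally connected with Picard number one and $-K_{\oM_{0,n}}|_{F'}$ ample, and $F'$ again meets $M_{0,n}$, so $F'\cap\partial\oM_{0,n}$ is an ample divisor on $F'$ that is a positive combination of the $F'\cap\delta_I$. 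The plan is to play this against the Ionescu--Wi\'{s}niewski inequality $\dim F+\dim\phi(\Exc(\phi))\ge\dim\oM_{0,n}-1+\ell(R)$ and against the structure of the boundary of $\oM_{0,n}$ (the splitting $\delta_I\cong\oM_{0,I\cup\ast}\times\oM_{0,I^c\cup\ast}$, and the fact that the $\delta_I$ generate $\NS(\oM_{0,n})$) to contradict either the dimension bookkeeping or the positivity of $-K_{\oM_{0,n}}$ along the contracted fiber. This is the main obstacle: the elementary dimension counts by themselves do not rule out a fiber-type or divisorial contraction, so one must genuinely use that the complete subvarieties of $\oM_{0,n}$ at issue are pinned down by the affineness of $M_{0,n}$ together with the combinatorics of the boundary — this is the content of the Keel--McKernan contraction analysis that the Appendix rearranges. (Consistently, this is why the Keel--Vermeire divisor, although extremal and meeting $M_{0,6}$, cannot be the exceptional divisor of any contraction of $\oM_{0,6}$ itself.)
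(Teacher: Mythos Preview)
Your approach is different from the paper's and has two genuine problems.

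First, Step~1 contains a false claim: $-K_{\oM_{0,n}}$ is \emph{not} a positive linear combination of boundary divisors for $n$ large. The standard expression is
\[
K_{\oM_{0,n}}=\sum_{S}\Bigl(\tfrac{|S|(n-|S|)}{n-1}-2\Bigr)\delta_S,
\]
and the coefficient is positive once $|S|(n-|S|)>2(n-1)$, e.g.\ for balanced $S$ when $n\ge 7$. So your argument that $-K\cdot C>0$ does not go through as written, and the applicability of the Contraction Theorem is not established.

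Second, and more seriously, Step~3 is by your own admission not a proof: you write that ``the elementary dimension counts by themselves do not rule out a fiber-type or divisorial contraction'' and then defer to ``the Keel--McKernan contraction analysis that the Appendix rearranges.'' That is precisely the content to be supplied; invoking it is circular.

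The paper's argument is quite different and does not use the Contraction Theorem at all. The finite-generation hypothesis is used only to guarantee that the extremal ray $R$ is an \emph{edge} (Definition~\ref{ConvexEdge}). The key step is Proposition~\ref{Prop2.2}: if $C$ moves and generates an edge $R$, then $R$ is also generated by some curve contained in the boundary. This is proved by working on a surface $S\to B$ whose fibres sweep out $C$ and exploiting two facts specific to $\oM_{0,n}$: the boundary has ample support, and each $\delta_I$ has anti-nef normal bundle. The edge condition enters through Claim~\ref{PullBack}, which lets one reduce a class vanishing on the general fibre to a pullback from $B$. Once one has a boundary curve $C'$ with $[C']\in R$, a direct induction shows that some $\delta_I\cdot C'<0$; but then $\delta_I\cdot C<0$ as well, forcing $C$ into the boundary and contradicting $C\cap M_{0,n}\neq\emptyset$. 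None of this structure appears in your sketch; the Ionescu--Wi\'{s}niewski inequality and the taxonomy of contractions are not what drive the result here.
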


Here we use the following definition:

\begin{Definition}
A proper curve $C$
on a variety $X$ {\em moves} if there is a
proper surjection $p:S \rightarrow B$ from a surface $S$
to a non-singular curve $B$, and a map
$h:\,S \rightarrow X$ with $h(S) \subset X$ a surface, and
a fibre $F$ of $p$ with $h(F)$ set theoretically equal to $C$.
If $C$ does not move then we say that $C$ is {\em rigid}.
\end{Definition}

\begin{Definition}\label{2.1}
We say that an effective Weil
divisor on a projective variety
has ample support if it has the same support as
some effective ample divisor.
\end{Definition}

\begin{Definition}\label{ConvexEdge}
We say that an extremal ray $R$ of a closed convex cone $C \subset \bR^n$
is an {\em edge} if the vectorspace
$R^{\perp} \subset (\bR^n)^*$ (of linear forms that vanish
on $R$) is generated by supporting hyperplanes for $C$. 
This technical condition means that $C$ is ``not rounded'' at $R$.
\end{Definition}

\begin{Definition}\label{AntiNef}
We say that an effective divisor $D$ is antinef if $D.C\leq0$ for any curve $C$ contained in the support of $D$.
\end{Definition}

\begin{Proposition}\label{Prop2.2}
Let $M$ be a ${\Bbb Q}$-factorial projective variety
and $D$ an effective divisor with ample support, each
of whose irreducible components has anti-nef normal
bundle. Let $C \subset M$ be a moving irreducible proper curve
which generates an extremal ray $R$ of the Mori cone such that $R$ is an edge.
Then $R$ is generated by a curve contained in the support of $D$.
\end{Proposition}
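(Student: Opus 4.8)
The plan is as follows. First I would turn the edge hypothesis into a single polarization. Since $R$ is an edge, $R^{\perp}$ is spanned by classes of supporting hyperplanes of $\oNE_1(M)$ through $R$, i.e.\ by nef classes $N_{1},\dots,N_{m}$ with $N_{j}\cdot C=0$; put $N=N_{1}+\dots+N_{m}$. Then $N$ is nef, $N\cdot C=0$, and if $\gamma\in\oNE_1(M)$ has $N\cdot\gamma=0$ then every $N_{j}\cdot\gamma=0$, so $\gamma$ lies in $(R^{\perp})^{\perp}\cap\oNE_1(M)=R$. Thus a curve of $M$ generates $R$ if and only if $N$ meets it in degree $0$. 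If $C\subset\Supp D$ we are done with $\gamma=C$, so assume $C\not\subset\Supp D$. Fix an ample $A=\sum c_{i}D_{i}$ ($c_{i}>0$) with $\Supp A=\Supp D$. If the whole deformation of $C$ stayed in $\Supp D$ we would be back in the previous case, so the general deformation $C_{b}$ of $C$ is not contained in any $D_{i}$; hence $C\cdot D_{i}\ge0$ for all $i$, and $C\cdot A>0$ forces $C\cdot D_{i_{0}}>0$ for some $i_{0}$.

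Next I would use that $C$ moves to build a fibered surface. Let $p\colon S\to B$, $h\colon S\to M$, $F$ be as in the definition of ``$C$ moves'', and let $\{C_{b}\}_{b\in B}$ be the family sweeping out the surface $T:=h(S)\subset M$; passing to a suitable resolution I may assume I have a smooth projective surface $\T$ with a fibration $p\colon\T\to B$ and a birational morphism $h\colon\T\to T\subset M$ carrying the general fibre $F$ of $p$ onto $C_{b}$, all the $C_{b}$ being numerically equivalent to $C$. Pulling back, $h^{*}N$ is nef on $\T$ and $h^{*}N\cdot F=N\cdot C=0$; since $F^{2}=0$ the Hodge index theorem forces $h^{*}N\equiv qF$ with $q\ge0$. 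If $q=0$ then $N|_{T}\equiv 0$, so every curve of $T$ generates $R$; as $A|_{T}$ is ample, $T\cap\Supp D$ contains such a curve and we are done. So assume $q>0$: now the curves on $\T$ killed by $h^{*}N$ are exactly the $p$-vertical ones.

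It therefore suffices to produce a $p$-vertical curve $\gamma\subset\Supp(h^{*}D)$ that is not contracted by $h$, for then $h(\gamma)\subset\Supp D$ is a curve with $N\cdot h(\gamma)=0$; equivalently, I must show that $\Supp D$ contains some $C_{b}$, or a component of a reducible $C_{b}$. This is where the anti-nef hypothesis enters. Note $h^{*}A$ is big and nef on $\T$ (it is $h^{*}$ of an ample class and $h$ is birational onto the surface $T$) and semiample (a pullback of a base-point-free system is base-point-free), and $\Supp(h^{*}A)=\Supp(h^{*}D)$. If $\Gamma$ is a component of $\Supp(h^{*}D)$ not contracted by $h$, say $h(\Gamma)=\Gamma_{0}\subset D_{i}$, then the anti-nef normal bundle of $D_{i}$ gives $h^{*}D_{i}\cdot\Gamma=D_{i}\cdot\Gamma_{0}\le0$; writing $h^{*}D_{i}=m\Gamma+(\text{effective, not containing }\Gamma)$ with $m\ge1$ this forces $\Gamma^{2}\le0$ on $\T$. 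The $h$-exceptional components have negative self-intersection anyway. Hence, if $\Supp(h^{*}D)$ had no $p$-vertical component beyond the $h$-exceptional ones, then after contracting the $h$-exceptional curves we would be presenting an ample class on a surface as a positive combination of curves all horizontal for the induced fibration and all of non-positive self-intersection; together with $h^{*}N\equiv qF$ this is to be ruled out by a Hodge-index estimate. The degenerate case in which the family $\{C_{b}\}$ has base points should be treated separately: Mori's bend-and-break lemma then produces a rational curve $C'$ with $[C']\in R$ through such a base point $x\in\Supp D$, and $C'$ must lie in the component of $\Supp D$ through $x$, giving the curve directly.

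The step I expect to be the real obstacle is exactly this last one --- forcing $\Supp D$ to contain a curve numerically proportional to $C$ --- and it is where the two hypotheses on $D$ and the edge condition must all be used together. That the edge condition cannot be dropped is already visible on $M=\Bl_{9}\PP^{2}$, the blow-up at the base points of a general cubic pencil: there $[-K_{M}]=[f]$ spans an extremal ray swept out by the moving elliptic fibres, and $D=\sum_{i}E_{i}+\sum_{i<j}\ell_{ij}$ has ample support with every component of $(-1)$- (hence anti-nef) normal bundle, yet no component of $\Supp D$ is proportional to $f$; the point is that $\langle f\rangle$ is not an edge, because every nef class orthogonal to $f$ is already a multiple of $f$. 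So the argument must genuinely exploit the single supporting form $N$ with $N^{\perp}\cap\oNE_1(M)=R$, i.e.\ the relation $h^{*}N\equiv qF$, rather than merely the numerics of $D$.
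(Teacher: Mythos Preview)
Your setup is sound and in one respect cleaner than the paper's: packaging the edge hypothesis into a single nef $N$ with $N^{\perp}\cap\oNE_1(M)=R$ and then using Hodge index on the resolved surface to get $h^*N\equiv qF$ gives a self-contained substitute for the lemma the paper imports from \cite{Keel03}. The deduction $\Gamma^2\le0$ for each non-$h$-exceptional component $\Gamma\subset\Supp(h^*D)$ is also correct.

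The argument fails at exactly the point you flag. Knowing that every horizontal component of $h^*D$ has non-positive square is \emph{not} enough to force a vertical one: there is no general Hodge-index obstruction to writing a big and nef class on a fibered surface as a positive combination of horizontal curves of non-positive self-intersection, so ``to be ruled out by a Hodge-index estimate'' is a hope, not a proof. Your fallback to bend-and-break is also unjustified: Mori's lemma needs positivity of $-K_M$ along the curve (or a two-point rigidification), neither of which is available here.

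The paper closes this gap by a different mechanism. It assumes for contradiction that \emph{every} curve in $D\cap h(S)$ is a multi-section, and then exploits the combinatorics of containment. Using ample support and the anti-nef hypothesis it locates curves $B_1,B_2\subset D\cap h(S)$ and components $D_1,D_2$ of $D$ with $B_i\subset D_i$, $D_2\cdot B_1>0$, and $D_1\cdot B_2\ge0$ (while $D_i\cdot B_i\le0$ by anti-nefness). Choosing $\lambda>0$ so that $h^*(D_1-\lambda D_2)$ is fibre-trivial, the edge condition (your $h^*N\equiv qF$ argument, applied to the nef pieces of $D_1-\lambda D_2\in R^\perp$) makes $h^*(D_1-\lambda D_2)$ numerically a multiple of $F$. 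But then its intersections with the two multi-sections $B_1',B_2'$ must have the same sign, whereas the inequalities force one to be negative and the other non-negative. This pairing of two curves against two divisors is the idea your outline is missing.
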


\begin{proof}
Let $p:\,S \rightarrow B$
be a proper surjection from a surface $S$
to a non-singular curve $B$ and let
$h:\,S \rightarrow M$ be a morphism such that
$E=h(S) \subset M$ is a surface
and there exists a fibre $F$ of $p$ with $h(F)$ set theoretically equal to $C$.
Clearly, we may assume $S$ is smooth.  
Suppose on the contrary that no
curve in $D \cap E$ generates the same extremal ray as $C$.

Let $D = \sum D_i$ be the decomposition into irreducible components.
Then each $D_i \cap E$ is an effective $\bQ$-Cartier divisor
of $E$, and in particular, is purely one dimensional.
Let $I = D \cap E$, which is non-empty as $D$ has
ample support.

We show next that we can find two irreducible
curves $B_1, B_2 \subset I$
and (after renaming) two divisors $D_1,D_2$ with $B_i \subset D_i$
such that $B_1.D_2>0$ and $B_2.D_1\geq0$.

Choose an irreducible component $B_1$ of $I$ contained
in a maximal number of~$D_i$.
Suppose that $G_1,\dots,G_k$ are the components
of $D$ containing $B_1$.
Since the $D_i$ have anti-nef
normal bundles, $G_i\cdot B_1\le 0$.
Since $D$ has ample support,
there exists a component $D_2$ of $D$ such that
$D_2 \cdot B_1 > 0$.
Let $B_2$ be an irreducible component
of $D_2 \cap I$.  By the choice of $B_1$
there exists $G_i\not\supset B_2$.
We set $D_1= G_i$.

Now choose irreducible curves $B_i' \subset S$ with
$h(B_i') = B_i$. Let $D_i' = h^*(D_i)$. Note that $B_i'$
are multi-sections of $p$ (otherwise $B_i$ generates the
extremal ray). Thus $D_1',D_2'$ have positive intersection
with the general fibre of $p$ and so we can choose
choose $\lambda > 0$ such that
$D_1' - \lambda D_2'$ has zero intersection with the general fibre.

\begin{Claim}\label{PullBack}
Let $L \in N^1(M)$ be a class whose pullback
to $S$ has zero intersection with the general fibre. Then
$h^*(L)$ is pulled back from $N^1(B)$ (numerically).
\end{Claim}

Thus by the claim, $J:=D_1' - \lambda D_2'= h^*(D_1 - \lambda D_2)$
is pulled back from $B$.
Then $J \cdot B_1< 0$ and
$J \cdot B_2 \geq 0$. Since
$J$ is pulled back from $B$, and the $B_i$ are
multi-sections, this is a contradiction.
\end{proof}

\begin{proof}[Proof~Claim \ref{PullBack}]
Since $C$ generates an edge, we may assume $L$ is nef ($L=L_1-L_2$, where $L_1,L_2$ are as in Claim \ref{PullBack} and nef).
Since $L$ is zero on the general fibre of
$p$, it is numerically $p$-trivial. Now apply
\cite[Lem. 4.4]{Keel03}
\end{proof}

\begin{proof}[Proof of Theorem~\ref{CorolT}]
Suppose, on the contrary, that $C$ is a moving curve that intersects $M_{0,n}$ and generates
an edge of the Mori cone.
By \cite[Lem. 3.6]{KM} the boundary has ample support,
and by \cite[Lem. 4.5]{KM} each boundary divisor
has anti-nef normal bundle. Now by Prop.~\ref{Prop2.2}, $C$ is
numerically equivalent to a positive multiple of a curve $C'$ on the boundary. We claim that for 
any $C'$ contained in the boundary, there is some boundary divisor which intersects it negatively.
Then this divisor intersects $C$ negatively and therefore $C$ is contained in the boundary. 
Contradiction.

We prove the claim:
If $C'$ is contained in $\de_{ij}$, consider the Kapranov morphism $\Psi:\M_{0,n}\ra\PP^{n-3}$
with the $i$-th marking as a moving point. Then $\Psi(\de_{ij})= pt$; if we let $\psi_i=\Psi^*\O(1)$, then $C'.\psi_i=0$.
It is not hard to see that the class $\Psi_i$ can be written as $\sum\al_I\de_I$ with $\al_I>0$ for all $I$. If $C'.\de_I\geq0$ for all $I$, then
it follows that $C'.\de_I=0$ for all $I$, which is a contradiction, since the boundary has ample support. If $C'$ is contained in some $\de_I$ 
with $|I|\geq3$, we prove the statement by induction on $|I|$: consider the forgetful map $\pi:\M_{0,n}\ra\M_{0,n-1}$ that forgets a marking $i\in I$.
Then $\pi(\de_I)=\de_{I\setminus\{i\}}$. If $C'$ is not contracted by $\pi$, then by induction, $\pi(C').\de_J<0$ for some $J\subset N\setminus i$.
By the projection formula, $C'.\pi^{-1}\de_J=\pi(C').\de_J<0$ and the statement follows, as $\pi^{-1}\de_J=\de_J+\de_{J\cup\{n\}}$. If $\pi(C')=pt$ 
then  $C'$ is a fiber of $\pi$ and it is an easy calculation to show that $C'.\de_I<0$.
\end{proof}

\end{document}